\newtheorem{theorem}{Theorem}[section]
\newtheorem*{theorem'}{Main Theorem}
\newtheorem*{theoremnn}{Theorem}
\newtheorem{prop}[theorem]{Proposition}
\newtheorem{facts}{Facts}
\newtheorem{cor}[theorem]{Corollary}
\newtheorem{remark}[theorem]{Remark}
\newtheorem{lemma}[theorem]{Lemma}
\newtheorem{definition}[theorem]{Definition}
\renewcommand{\(}{\left(}
\renewcommand{\)}{\right)}
\renewcommand{\]}{\right]}
\renewcommand{\~}[1]{\overline{#1}}
\renewcommand{\geq}{\geqslant}
\renewcommand{\leq}{\leqslant}
\renewcommand{\>}{\right\rangle}
\newcommand{\8}{\infty}
\renewcommand{\.}{\cdot}
\renewcommand{\:}{\colon}
\renewcommand{\a}{\alpha}
\newcommand{\Aut}{\text{Aut}}
\newcommand{\ch}[1]{\check{#1}}
\renewcommand{\Cap}[2]{\underset{#1}{\overset{#2}{\cap} }}
\renewcommand{\Cup}[2]{\underset{#1}{\overset{#2}{\cup} }}
\newcommand{\f}{\varphi}
\newcommand{\frakH}{\mathfrak H}
\newcommand{\g}{\gamma}
\newcommand{\G}{\Gamma}
\newcommand{\Ga}{\mathcal{B}}
\newcommand{\I}{\mathcal{I}}
\renewcommand{\int}{\varint}
\newcommand{\Isom}{\mathrm{Isom}}
\newcommand{\M}{\mathcal{M}}
\renewcommand{\max}[1]{\underset{#1}{\mathrm{max}}}
\newcommand{\N}{\mathbb{N}}
\renewcommand{\o}{{{\scriptsize \sun}}}
\newcommand{\Om}{\Omega}
\newcommand{\om}{\omega}
\renewcommand{\P}{\mathcal{P}}
\newcommand{\R}{\mathbb{R}}
\renewcommand{\R}{\mathbb{R}}
\renewcommand{\S}{\mathcal{S}}
\newcommand{\stab}{\mathrm{stab}}
\newcommand{\Sum}[1]{\underset{#1}{{\sum} }}
\newcommand{\supp}{\mathrm{supp}}
\newcommand{\Z}{\mathbb{Z}}
  \def\firstcircle{(90:1cm) circle (1.5cm)}
  \def\secondcircle{(210:1cm) circle (1.5cm)}
  \def\thirdcircle{(330:1cm) circle (1.5cm)}
\title{The Furstenberg Poisson Boundary and CAT(0) Cube Complexes}
\author{Talia Fern\'os}
\thanks{The author was partially supported by NSF Grant number DMS-1312928, and UNCG New Faculty Summer Excellence Research Grant}
\address{Department of Mathematics and Statistics, University of North Carolina at Greensboro,  317 College Avenue, Greensboro, NC 27412, USA}
\email{\url{t_fernos@uncg.edu}}
\begin{document}
\maketitle

\begin{abstract}
We show under weak hypotheses that  $\partial X$, the Roller boundary of a finite dimensional CAT(0) cube complex $X$ is the Furstenberg-Poisson boundary of a sufficiently nice random walk on an acting group $\G$. In particular, we show that if $\G$ admits a nonelementary proper action on $X$, and $\mu$ is a generating probability measure of finite entropy and finite first logarithmic moment, then there is a $\mu$-stationary measure on $\partial X$ making it the Furstenberg-Poisson boundary for the $\mu$-random walk on $\G$. We also show that the support is contained in the closure of the regular points. Regular points exhibit strong contracting properties. 
\end{abstract}
{\footnotesize 
\tableofcontents
}
\normalsize
\section{Introduction}
CAT(0) cube complexes are fascinating objects of study, thanks in part to the interplay between two metrics that they naturally admit, the CAT(0) metric, and the median metric. Restricted to each cube, these  coincide either with the standard Euclidean metric ($\ell^2$) or with the ``taxi-cab'' metric ($\ell^1$).  Somewhat recently, CAT(0) cube complexes played a crucial roll in Agol's proof of the  Virtual Haaken Conjecture (an outstanding problem in the theory of 3-manifolds) \cite{Agol}, \cite{KahnMarcovic}, \cite{HaglundWise}, \cite{WiseMalnormal}, \cite{BergeronWise}. Examples of CAT(0) cube complexes and groups acting nicely on them include trees, (universal covers of) Salvetti complexes associated to right angeled Artin groups, Coxeter Groups, Small Cancellation groups, and are closed under taking finite products. 

Associated to a random walk on a group one has the Furstenberg-Poisson boundary. It is in some sense, the limits of the trajectories of the random walk.  Its existence, as an abstract measure space, for a generating random walk is guaranteed by the seminal result of Furstenberg \cite{Furstenberg}. This important object has since established itself as an integral part in the study of rigidity (see for example \cite{BaderFurman}
) in particular by realizing it as a geometric boundary of the group in question. 

One may associate to any CAT(0) space a visual boundary where each point is an equivalence class of geodesic rays.  The visual boundary for a CAT(0) space gives a compactification  of the space, at least when the space is locally compact \cite{Bridson_Haefliger}. For a wide class of hyperbolic groups, and more generally, certain groups acting on CAT(0) spaces, the visual boundary is a Furstenberg-Poisson boundary for suitably chosen random walks \cite{Kaimanovich1994}, \cite{KarlssonMargulis}.

The wall metric naturally leads to the Roller compactification of a CAT(0) cube complex. Nevo and Sageev show that the Roller boundary (see Section \ref{Subsection Roller Duality}) can be made to be a Furstenberg-Poisson boundary for a group $\G$ when the group admits a nonelementary proper co-compact action on $X$ \cite{NevoSageev}.  The purpose of this paper is to give a generalization of this result to groups which admit a nonelementary proper action on a finite dimensional CAT(0) cube complex. The complex is not assumed to be locally compact, and in particular, the action is not required to be co-compact. Our approach will be somewhat different to that of Nevo and Sageev and in particular, we will not address several of the dynamical questions that they consider: for example that the resulting stationary measure is unique, or that the  action is minimal or strongly proximal. Such questions will be examined in a forthcoming paper by L\'ecureux, Math\'eus, and the present author. 

Let $\mu$ be a probability measure on a discrete countable group $\G$. Assume  that it is generating, i.e. that the semi-group generated by the support of $\mu$ is the whole of $\G$. Recall that a probability measure $\mu$ on $\G$ is said to have finite entropy if  
$$H(\mu):= -\Sum{\g\in \G}{}\mu(\g)\log\mu(\g)<\8.$$ 
Also, if $|\cdot|: \G \to \R$ is a pseudonorm on $\G$ then $\mu$ is said to have finite first logarithmic moment (with respect to $|\cdot|$) if $\Sum{\g\in \G}{}\mu(\g)\log|\g|<\8$. (See Section \ref{Kaimanovich Strip} for more details.) If we have an action of $\G$ on $X$, then fixing a basepoint  $ o\in X$ allows us to consider the pseudonorm defined by $|\g|_o:=d(\g  o,  o)$.

\break

\begin{theorem'}\label{MainTheorem}
 Let $X$ be a finite dimensional CAT(0) cube complex, $\G$  a discrete countable group, $\G\to \Aut(X)$ a nonelementary proper action by automorphisms on  $X$, and $\mu$ a generating probability measure on $\G$ of finite entropy. If there is a base point $o\in X$ for which $\mu$ has finite first logarithmic moment then there exists a probability measure $\vartheta$ on the Roller boundary $\partial X$ such that  $(\partial X, \vartheta)$ is the Furstenberg-Poisson boundary for the $\mu$-random walk on $\G$. Furthermore, $\vartheta$ gives full measure to the regular points in $\partial X$. 
\end{theorem'}

The proof of the Main Theorem follows a standard path. We first show that the Roller Boundary is a quotient of the Furstenberg-Poisson boundary (Section 7) and then apply Kaimanovich's celebrated Strip Condition to prove maximality (Section 8). 

We note that Karlsson and Margulis show that the visual boundary of a CAT(0) space is the Furstenberg-Poisson boundary for suitable random walks \cite{KarlssonMargulis}. They assume very little about the space, but assume that the measure $\mu$ has finite first moment and that orbits grow at most exponentially. The Main Theorem above applies to the restricted class spaces (i.e.  CAT(0) cube complexes), which pays off by allowing for significantly weaker hypotheses on the action and the measure $\mu$. 

Observe that our Main Theorem applies for example to any non-elementary subgroup of a right angeled artin group or more generally of a graph product of finitely generated abelian groups \cite{RuaneWitzel}. 

Furthermore, we remark on the importance that the regular points are of full measure: they exhibit strong contracting properties. This will be exploited to study random walks on CAT(0) cube complexes in the forthcoming paper of L\'ecureux, Math\'eus, and the present author mentioned above.

An action on a CAT(0) cube complex is said to be Roller \emph{nonelementary} if every orbit in the Roller compactification is infinite (see Section \ref{Subsection Roller Duality}). This notion guarantees nonamenability of the closure of the acting group in $\Aut(X)$, and characterizes it for $X$ locally compact. This Tits' alternative, is essentially an encapsulation of results of Caprace and Sageev \cite{CapraceSageev}, Caprace \cite{CFI}, and Chatterji, Iozzi, and the author \cite{CFI}. It also comes after several versions of Tits' alternatives (see \cite{CapraceSageev}, \cite{Sageev_Wise_tits}). The statement is in the spirit of  Pays and Valette \cite{PaysValette}:

\begin{theorem}[Tits' Alternative]\label{Amen Closure}  Let $X$ be a  finite dimensional CAT(0) cube complex and $\G \leq \Aut(X)$.  If $X$ is locally compact then the following are equivalent:
\begin{enumerate}
\item \label{Interval} $\G$ does not preserve any interval $\I \subset \~X$.
\item\label{nonelem} The $\G$-action is  Roller nonelementary. 
\item\label{free subgroup}  $\G$ contains a  nonabelian  free subgroup acting freely on $X$. 
\item\label{nonamen closure} The closure $\~\G$ in $\Aut(X)$ is nonamenable. 
\end{enumerate}
\end{theorem}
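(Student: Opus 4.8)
The plan is to prove the cycle of implications $(\ref{Interval}) \Rightarrow (\ref{nonelem}) \Rightarrow (\ref{free subgroup}) \Rightarrow (\ref{nonamen closure}) \Rightarrow (\ref{Interval})$, with the bulk of the work landing on the passage from a Roller nonelementary action to a free nonabelian subgroup. Throughout, I would exploit the combinatorial structure of the Roller compactification $\~X$: the half-space pocset, the separation/crossing relation between half-spaces, and the fact that on a finite-dimensional complex any descending chain of half-spaces stabilizes, so that intervals and medians behave well. The first step, $(\ref{Interval}) \Rightarrow (\ref{nonelem})$, I would argue by contraposition: if some $\G$-orbit in $\~X$ is finite, then its convex hull (in the median sense) is a finite-dimensional interval, and by finiteness of the orbit together with the bounded-dimension hypothesis one extracts a $\G$-invariant interval $\I \subset \~X$; the key combinatorial input is that the interval spanned by a finite set of points in a finite-dimensional cube complex is itself finite, hence $\G$ permutes a finite set of half-spaces and one can take $\I$ to be the interval between two well-chosen points.

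For $(\ref{nonelem}) \Rightarrow (\ref{free subgroup})$, the strategy is a ping-pong argument on the Roller boundary, which is where local compactness gets used in an essential way. Since the action is Roller nonelementary, I would first produce a \emph{contracting} (rank-one / hyperbolic-like) isometry $g \in \G$: using the results of Caprace--Sageev on essential cores and the double-skewering lemma, Roller nonelementarity gives two strongly separated half-spaces and an element skewering them, and iterating yields $g$ with two distinct fixed points $\xi^\pm \in \partial X$ at which $g$ acts with north-south dynamics. Roller nonelementarity then guarantees a second such element $h$ whose fixed-point pair is disjoint from $\{\xi^\pm_g\}$ (otherwise the $\G$-orbit of $\xi^+_g$ would be finite, or $\G$ would fix a pair of points and hence preserve an interval); after passing to high powers of $g$ and $h$ one sets up ping-pong on four disjoint neighborhoods in the (compact, by local compactness) Roller compactification, producing a free group $F_2 = \langle g^N, h^N\rangle$. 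Freeness of the \emph{action} on $X$ (not merely of the subgroup) is then arranged by a further iteration: replace each generator by a power that acts without fixed points on $\~X$ — a contracting element has no fixed point in $X$ itself, and one controls the finitely many cube-stabilizers using properness or a Baire/counting argument — so every nontrivial reduced word, being again contracting, moves every point of $X$.

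The implication $(\ref{free subgroup}) \Rightarrow (\ref{nonamen closure})$ is soft: a group containing a nonabelian free subgroup is nonamenable, and nonamenability passes to any group in which it sits as a (dense, or merely abstract) subgroup — in particular $\G \leq \~\G$ forces $\~\G$ nonamenable, since an invariant mean on $\~\G$ would restrict to one on $\G$. Finally, $(\ref{nonamen closure}) \Rightarrow (\ref{Interval})$ I would also do by contraposition: if $\G$ preserves an interval $\I$, then $\I$, being a product of intervals in the factors and order-isomorphic to a subset of $\{0,1\}^{\mathcal{H}}$ with a $\G$-action by coordinate permutations and flips, carries an invariant probability measure (e.g. the weak-$*$ limit of Cesàro averages of Dirac masses, or a Furstenberg-type average using that $\I$ is compact and $\G$ acts by affine median maps), which transports to a $\~\G$-invariant mean, contradicting nonamenability; more concretely one shows the stabilizer of an interval is (virtually) abelian-by-(a permutation group of bounded degree), hence amenable, and its closure is amenable too.

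The main obstacle, and the step deserving the most care, is $(\ref{nonelem}) \Rightarrow (\ref{free subgroup})$ — specifically producing the contracting elements and running ping-pong cleanly on the Roller compactification rather than the CAT(0) visual boundary, and then upgrading to a \emph{free action on $X$}. The delicate points are: (i) extracting strongly separated half-spaces from Roller nonelementarity without cocompactness, which is exactly where one leans on Caprace--Sageev and the author's earlier work with Chatterji and Iozzi; (ii) checking that the north-south dynamics on $\partial X$ is strong enough (attracting/repelling neighborhoods that are genuinely open in the Roller topology) for ping-pong, which needs local compactness so that $\~X$ is compact and metrizable; and (iii) eliminating fixed points on cubes to get a free action, where one must rule out that high powers of a contracting element fix a vertex — true because a contracting isometry has unbounded orbits and no bounded orbit in $X$ — while simultaneously keeping the two generators' dynamics disjoint.
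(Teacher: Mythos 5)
Your overall plan (a cycle of implications) is reasonable, but several of the steps as sketched have genuine gaps, and the two most serious ones concern exactly the points the paper has to work for. For (\ref{free subgroup}) $\implies$ (\ref{nonamen closure}), your claim that nonamenability passes to any group containing an abstract or dense nonabelian free subgroup is false: $\mathrm{SO}(3)$ is compact, hence amenable, yet contains dense free subgroups, and an invariant mean on $\linfty(\~\G)$ (with respect to Haar measure) does not restrict to a mean on a countable non-closed subgroup. What rescues the implication is precisely the requirement in (\ref{free subgroup}) that the free subgroup act \emph{freely} on $X$: freeness forces the subgroup to be discrete, hence closed, in $\Aut(X)$, and closed subgroups of amenable locally compact groups are amenable --- this is the ``closed free subgroup'' obstruction the paper invokes, and your proposal never uses freeness of the action here. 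Likewise, for (\ref{nonamen closure}) $\implies$ (\ref{Interval}) your first mechanism is invalid (admitting an invariant probability measure on some compact space does not make a group amenable), and your ``more concrete'' structural claim that an interval stabilizer is virtually abelian-by-(bounded permutations) is unjustified: the stabilizer of a boundary point contains the $X$-locally elliptic radical, which need be neither abelian nor small. The paper's route is: preserving an interval yields a finite-index subgroup fixing a point of $\~X$ (Corollary \ref{Interval on Finitely Many}); vertex stabilizers are compact because $X$ is locally compact; and for $x\in\partial X$ Caprace's Theorem \ref{CapraceShortExact} presents $\stab(x)$ as (locally elliptic radical)-by-(virtually abelian), the radical being amenable as a directed union of compact groups. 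This is the \emph{only} place local compactness enters; you instead invoke it in (\ref{nonelem}) $\implies$ (\ref{free subgroup}), where it is not needed --- $\~X\subset 2^{\frakH}$ is compact and metrizable for every $X$, and the paper's remark notes that all implications except (\ref{nonamen closure}) $\implies$ (\ref{Interval}) hold without local compactness.

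For (\ref{nonelem}) $\implies$ (\ref{free subgroup}), a ping-pong built from contracting elements could plausibly be carried out, but the heart of the matter is upgrading to a \emph{free action}, and your sketch does not deliver it: you appeal to ``properness'' (not a hypothesis of this theorem, where $\G\leq\Aut(X)$ is arbitrary) and assert without proof that every nontrivial reduced word is again contracting. The paper runs ping-pong purely combinatorially on a facing quadruple of half-spaces $A, aA^*, B, bB^*$ obtained from Corollary \ref{finallyIntervalEuclidean} and the Double Skewering Lemma \ref{Double Skewering Lemma}, and then proves freeness by a separate argument: after passing to the cubical subdivision and to suitable powers so that the $F_2$-orbits of the four half-spaces are disjoint, it shows every vertex of $X$ lies in a translate $w\mathcal F$ of $\mathcal F=A\cap aA^*\cap B\cap bB^*$, where ping-pong rules out fixed points. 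Something of this caliber (or an actual proof that every nontrivial word skewers a half-space, hence cannot fix a vertex) is required. Finally, in (\ref{Interval}) $\implies$ (\ref{nonelem}) the median convex hull of a finite orbit need not be an interval (three endpoints of a tripod already fail), and if the orbit lies in $\partial X$ the ``interval spanned is finite'' claim is false; the paper instead averages Dirac masses over the finite orbit and takes the invariant interval $\~X_m$ supplied by Lemma \ref{Hmufacts}. Note also that the paper closes the loop by proving (\ref{free subgroup}) $\implies$ (\ref{Interval}) directly via Theorem \ref{CapraceShortExact} (the commutator subgroup of the free group would fix a vertex), rather than routing through (\ref{nonamen closure}), which keeps the local compactness hypothesis isolated in the single implication where it is indispensable.
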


\begin{remark}
In fact, the condition that $X$ be locally compact is only necessary for the implication  (\ref{nonamen closure}) $\implies $ (\ref{Interval}). All the other implications, namely (\ref{Interval}) $\iff$ (\ref{nonelem}) $\iff$(\ref{free subgroup}) $\implies$ (\ref{nonamen closure}) hold in the non-locally compact case as well. 
\end{remark}

\emph{Acknowledgements:} The author is grateful to the following people for their kindness and generosity: Uri Bader, Greg Bell, Ruth Charney, Indira Chatterji, Alex Furman, Alessandra Iozzi, Vadim Kaimanovich, Jean L\'ecureux,  Seonhee Lim, Amos Nevo, Andrei Malyutin, Fr\'ed\'eric Math\'eus, and Michah Sageev. Conversations and collaborations with these people made this article possible. Further thanks go to the University of Illinois at Chicago, the Centre International de Rencontres Math\'ematique, and the Institut Henri Poincar\'e. 


\section{CAT(0) Cube Complexes and Medians}\label{subsec:2.1}

We will say that a metric space is a {\em Euclidean cube} if there is an $n\in \N$ for which it is isometric to $[0,1]^n$ with the standard induced Euclidean metric from $\R^n$.

\begin{definition}
A second countable finite dimensional simply-connected  metric polyhedral complex $X$ is a {\em CAT(0) cube complex} if the closed cells are Euclidean cubes, the gluing maps are isometries and
the link of each vertex is a flag complex.
\end{definition}

Recall that a {\em flag complex} is a simplicial complex in which each complete subgraph on $(k+1)$-vertices is the 1-skeleton of a $k$-simplex in the complex. That the link of every vertex is a flag complex is equivalent to the condition of being locally CAT(0), thanks to Gromov's Link Condition. 

We remark that we absorb the condition of finite dimensionality in the definition of a CAT(0) cube complex and as such, we will not explicitly mention it in the sequel. Furthermore, if the dimension of the CAT(0) cube complex is $D$, then this is equivalent to the existence of a maximal dimensional cube of dimension $D$.

A {\em  morphism} between two CAT(0) cube complexes is an
isometry that preserves the cubical structures, i.e. it is an isometry $f:X\to Y$ such that $f(C)$ is a cube of $Y$ whenever $C$ is a cube in $X$. We denote by
$\Aut(X)$ the group of automorphisms of $X$ to itself.

\subsection{Walled Spaces}\label{Walled Spaces}
A \emph{space with walls} or a \emph{walled space} is a set $S$ together with a countable collection of non-empty subsets $\frakH \subset 2^S$ called half-spaces with the following properties:

\begin{enumerate}
\item If $h\in \frakH$ then $h\neq \varnothing$.
\item There is a fixed-point free involution $*\: \frakH\to \frakH$

\begin{equation*}
h\mapsto\*h:=X\setminus h.
\end{equation*}
\item The collection of half-spaces separating two points of $S$ is finite, i.e. for every $p,q\in S$ the set of half-spaces $h\in \frakH$ such that $p\in h$ and $q\in h^*$ is finite. 
\item There is a $D\in \N$ such that for every collection of pairwise transverse half-spaces, $\{h_1, \dots, h_n\}$ we must have that $n\leq D$. 
\end{enumerate}

A pair of half-spaces $h,k \in \frakH$ is said to be \emph{transverse} if the following four intersections are all non-empty:
$$h\cap k, \; h\cap k^*, \; h^*\cap k^*, \; h^*\cap k.$$

Associated to a walled space is the wall pseudo-metric $d: S\times S \to \R$:
$$d(p,q) = \frac{1}{2}\#\(\{h\in \frakH: p\in h, q\in h^*\}\cup \{h\in \frakH: q\in h, p\in h^*\}\).$$
This satisfies the properties of a metric, with the exception that $d(p,q) =0$ does not necessarily imply that $p=q$.

Let us then consider the associated quotient $S_\sim$ consisting of equivalence classes of points of $S$ whose pseudo-wall distance is 0. Clearly, the wall pseudo-metric descends to a metric on $S_\sim$. 

For $h\in \frakH$ the  \emph{wall} associated to $h$ is the unordered pair $\{h, h^*\}$. This explains the terminology, as well as the factor of $\frac{1}{2}$ in the definition of the (pseudo-)wall metric.


\subsection{CAT(0) Cube Complexes as Walled Spaces}

As we shall now see, CAT(0) cube complexes naturally admit a walled (pseudo-)metric and are in some sense the unique examples of such spaces.

Let $[0,1]^n$ be an $n$-dimensional cube. The $i$th coordinate projection is denoted by $\mathrm{pr}_i : [0,1]^n \to [0,1]$. A \emph{wall} of a cube $[0,1]^n$ is the set $\mathrm{pr}_i ^{-1}\{1/2\}$. Observe that the complement of each wall in a cube has two connected components. 

\begin{definition}
 A \emph{wall} of a CAT(0) cube complex $X$ is a convex subset whose intersection with each cube is either a wall of the cube or empty.  
\end{definition}

The complement of a wall in a CAT(0) cube complex has two connected components \cite[Theorem 4.10]{Sageev_95} which we call half-spaces and we denote them by $\frakH(X)$. Observe that since $X$ is second countable, there are countably many half-spaces in $\frakH(X)$. 

The notation and terminology here is purposefully chosen to remind the reader of a walled space. Indeed, in essence, a walled space uniquely generates a CAT(0) cube complex \cite{Sageev_95}, \cite{Chatterji_Niblo}, \cite{Nica}. And it is this walled space structure of the CAT(0) cube complex that we will ultimately be interested in, if not fascinated by. Since walls separate points in the zero-skeleton of a CAT(0) cube complex, we will in fact consider the zero-skeleton as our object of study.

Let $X_0$ denote the vertex set of $X$ and $\frakH(X_0) = \{h\cap X_0: h\in \frakH(X)\}$. 
 This yields a fixed-point free involution $*\: \frakH(X_0) \to \frakH(X_0) $
\begin{equation}\label{eq:involution}
h_0\mapsto\*h_0:=X_0\setminus h_0.
\end{equation}

One drawback of passing to the zero-skeleton, is that a wall is no longer a subset of $X_0$. Therefore, for $h_0\in \frakH(X_0)$, we will denote by $\^h_0$ the pair $\{h_0,\*h_0\}$ and think of it as a wall, as in Section \ref{Walled Spaces}. 

\begin{theorem}[\cite{Sageev_95},\cite{Nica},\cite{Chatterji_Niblo}]\label{CCCMagic}
 Let $(S,\frakH)$ be a walled metric space. Then, there exists a CAT(0) cube complex $X$ and an embedding $\iota: S\hookrightarrow X_0$ such that:
 \begin{enumerate}
\item If $S$ and $X_0$ are endowed with their respective wall metrics then $\iota$ is an isometry onto its image.
\item The set map induced by $\iota$ is a bijection $\frakH \to\frakH(X_0)$, $h\mapsto k$ such that 
$$k\cap \iota(S) = \iota(h).$$
\item If $\g: S\to S$ is a wall-isometry then there exists a unique extension to an automorphism $\g_0: X_0\to X_0$ that  agrees with $\g$ on $\iota(S)$. 
\end{enumerate}
Furthermore, if $(X_0, \frakH(X_0))$ is the walled space associated to the vertex set of a CAT(0) cube complex $X$, then the above association  applied to $(S,\frakH) = (X_0, \frakH(X_0))$ yields once more $X$, and $\iota : X_0 \to X_0$ can be taken to be the identity and the induced homomorphism $\Aut(X_0) \to \Aut(X_0)$ is the identity isomorphism. 
\end{theorem}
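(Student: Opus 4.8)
The plan is to realize $X$ as the \emph{dual cube complex} of the walled space $(S,\frakH)$ and to read off each of the three conclusions more or less axiom by axiom, deferring to the cited constructions \cite{Sageev_95}, \cite{Nica}, \cite{Chatterji_Niblo} for the by-now-standard facts about dual complexes. First I would set up orientations: call a subset $\sigma\subseteq\frakH$ an \emph{ultrafilter} if it contains exactly one of $h,\*h$ for every wall and is closed upward under inclusion of half-spaces (equivalently, any two of its members meet). Each $p\in S$ gives the \emph{principal} ultrafilter $\sigma_p:=\{h\in\frakH: p\in h\}$; axioms (1)--(2) make this well defined and upward closure is automatic. Let $X_0$ be the set of ultrafilters $\sigma$ with $\#(\sigma\triangle\sigma_p)<\8$ for some $p$; by axiom (3) all $\sigma_p$ are pairwise at finite symmetric difference, so the choice of $p$ is immaterial and every $\sigma_p$ lies in $X_0$, and since $\frakH$ is countable $X_0$ is countable. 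Declare $\sigma,\tau\in X_0$ adjacent when $\#(\sigma\setminus\tau)=1$, and say a finite family of walls spans a cube at $\sigma$ when flipping any subcollection of its chosen half-spaces again lands in $X_0$; one checks that this forces the corresponding half-spaces to be pairwise transverse, so axiom (4) caps the dimension at $D$ and the resulting $X$ is $D$-dimensional. That $X$ is then a genuine CAT(0) cube complex --- connected, simply connected, with flag vertex links --- is exactly the content of the cited construction, the crucial point being Sageev's lemma that $n$ pairwise transverse walls through a vertex span an $n$-cube.

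Conclusions (1) and (2) I would then extract. For (1), take $\iota\colon p\mapsto\sigma_p$; with $S$ carrying its wall metric, $\#(\sigma_p\setminus\sigma_q)=\#\{h\in\frakH: p\in h,\ q\in\*h\}=d(p,q)$, the last equality being the definition of the wall metric (the two sets appearing there are exchanged by $*$, hence equinumerous --- this is the point of the factor $\tfrac12$), so $\iota$ is an isometry onto its image, in particular injective since $(S,\frakH)$ is a metric walled space. For (2), each $h\in\frakH$ determines $\^h:=\{\sigma\in X_0: h\in\sigma\}$, which is nonempty since $\iota(h)\subseteq\^h$, and from which $h$ is recovered: $h\mapsto\^h$ is $*$-equivariant and injective, because if $h\notin\{k,\*k\}$ then picking a point of $h\cap\*k$ or of $k\cap\*h$ --- whichever is nonempty --- yields a principal ultrafilter separating them. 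The construction of $X$ identifies the image of $h\mapsto\^h$ with $\frakH(X_0)$, and $\sigma_p\in\^h\iff p\in h$ gives $\^h\cap\iota(S)=\iota(h)$.

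For conclusion (3), a wall-isometry $\g\colon S\to S$ induces a $*$-equivariant permutation $h\mapsto\g h$ of $\frakH$, hence a permutation $\sigma\mapsto\{\g h: h\in\sigma\}$ of the set of ultrafilters carrying $\sigma_p$ to $\sigma_{\g p}$; it preserves finite symmetric differences, adjacency, and cubes, so it restricts to an automorphism $\g_0$ of $X$ agreeing with $\g$ on $\iota(S)$. Uniqueness is forced: an automorphism is determined by its action on $X_0$, that action is determined by the induced permutation of $\frakH(X_0)\cong\frakH$, and agreement on $\iota(S)$ together with $\^h\cap\iota(S)=\iota(h)$ pins that permutation down. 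For the final idempotency statement I would run the construction on $(X_0,\frakH(X_0))$ and show every ultrafilter $\sigma$ in the new vertex set is principal: starting from a $v\in X_0$ with $\#(\sigma\triangle\sigma_v)<\8$, flip the finitely many disagreeing walls one at a time along a geodesic gallery, staying inside the vertex set and terminating at some $\sigma_w$, with $\sigma=\sigma_v$ already when the symmetric difference is empty; hence $v\mapsto\sigma_v$ is a cube-complex isomorphism, so we may take $\iota=\mathrm{id}$, after which the induced map $\Aut(X_0)\to\Aut(X_0)$ is visibly the identity.

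\emph{The main obstacle} is the verification that the dual complex really is CAT(0): producing enough ultrafilters (nonemptiness of the $\^h$ and the gallery argument), checking the flag and simple-connectivity conditions, and tracking precisely where finite dimensionality --- i.e.\ axiom (4) --- enters. This is exactly the part one hands off to \cite{Sageev_95}, \cite{Nica}, \cite{Chatterji_Niblo}; the rest of the proof is the bookkeeping above, organized so that each of the three conclusions, plus second countability and finite dimensionality, is traced back to a specific walled-space axiom.
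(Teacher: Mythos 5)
The paper does not prove this theorem at all --- it is quoted directly from \cite{Sageev_95}, \cite{Nica}, \cite{Chatterji_Niblo} --- and your proposal is a correct outline of exactly the dual cube complex construction those references carry out: principal ultrafilters, the finite-symmetric-difference vertex set, half-spaces $\^h=\{\sigma: h\in\sigma\}$, the induced action, and the flipping/gallery argument for idempotency, with the genuinely hard verifications (simple connectivity, the flag condition, and that every wall of the dual complex arises from $\frakH$) correctly identified and deferred to the same sources the paper cites. So you are taking essentially the same route the paper relies on, just spelled out in more detail.
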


When a collection of half-spaces $\frakH$ is given, we will denote the associated CAT(0) cube complex as $X(\frakH)$,  leading to the somewhat abusive formulation of the last part of Theorem \ref{CCCMagic}: 
$$X(\frakH(X_0)) = X.$$

This striking result shows that the combinatorial information of the wall structure completely captures the geometry of the CAT(0) cube complex. This will be exploited in what follows. To this end, we now set $X= X_0$, and $\frakH=\frakH(X_0)$. Unless otherwise stated, every metric property will be taken with respect to the wall metric.

The first of many beautiful properties of CAT(0) cube complexes is  a type of Helly's Theorem:

\begin{theorem}\cite{Roller}\label{Helly}
 Let $h_1, \dots, h_n \in \frakH$ be half-spaces. If $h_i\cap h_j\neq \varnothing$ then 
 $$\Cap{i=1}{n} h_i \neq \varnothing.$$
\end{theorem}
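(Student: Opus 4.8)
The plan is to prove this by induction on $n$, using two standard structural facts about the vertex set $X=X_0$ with its wall metric: each half-space $h\in\frakH$ is \emph{convex} (a wall-geodesic between two vertices of $h$ stays inside $h$), and $X$ is a \emph{median graph}, so that any triple of vertices $x,y,z$ admits a median $m(x,y,z)$ lying on some geodesic joining each of the three pairs among $x,y,z$. Both are part of the basic theory of CAT(0) cube complexes (Sageev, Chatterji--Niblo, Nica) and are already implicit in the walled-space picture of Theorem~\ref{CCCMagic}; I treat them as given. In fact I would prove the slightly more general statement that finitely many pairwise-intersecting \emph{convex} subsets of $X$ have nonempty common intersection, of which the half-space statement is the special case $C_i=h_i$.

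The cases $n\leq 2$ are the hypothesis. For $n=3$, given pairwise-intersecting convex sets $C_1,C_2,C_3$, pick $x_1\in C_2\cap C_3$, $x_2\in C_1\cap C_3$, $x_3\in C_1\cap C_2$, and set $m=m(x_1,x_2,x_3)$. Since $m$ lies on a geodesic from $x_2$ to $x_3$, both of which belong to the convex set $C_1$, we get $m\in C_1$; symmetrically $m\in C_2$ and $m\in C_3$, so $C_1\cap C_2\cap C_3\neq\varnothing$. For the inductive step with $n\geq 4$, put $C=C_{n-1}\cap C_n$, which is convex and which, by the case $n=3$ applied to $C_i,C_{n-1},C_n$, meets $C_i$ for every $i\leq n-2$. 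Hence $C_1,\dots,C_{n-2},C$ is a family of $n-1$ pairwise-intersecting convex sets, and the inductive hypothesis gives $C_1\cap\cdots\cap C_{n-2}\cap C=\bigcap_{i=1}^n C_i\neq\varnothing$. Specializing to $C_i=h_i$ proves the theorem.

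The combinatorics here are essentially free once the two inputs are available, so the only genuine obstacle is foundational: one must know that half-spaces are convex and that $X$ is median. These are consequences of the walled/cubical structure and are needed elsewhere in the development in any case; alternatively, as is done here, the statement can simply be cited from Roller \cite{Roller}.
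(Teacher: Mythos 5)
Your argument is correct, but note that the paper itself does not prove this statement at all: it is quoted from Roller \cite{Roller} as a known fact, so there is no internal proof to compare against. What you give is the standard Helly argument for median spaces, and it is sound: pairwise-intersecting convex subsets, the median point for the $n=3$ case, and the reduction $C=C_{n-1}\cap C_n$ for the induction, with the half-space case as the specialization $C_i=h_i$. The two inputs you flag are indeed the only foundational content, and both are already available inside the paper's own framework: the interval $\I(v,w)$ is described as $\{m: U_v\cap U_w\subset U_m\}$, which immediately gives convexity of half-spaces (if $x,y\in h$ then $h\in U_x\cap U_y\subset U_z$ for any $z$ on a geodesic from $x$ to $y$), and the median is characterized by $U_m=(U_u\cap U_v)\cup(U_v\cap U_w)\cup(U_w\cap U_u)$. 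In fact that last formula lets you shortcut your $n=3$ step for half-spaces without mentioning geodesics or convexity at all: with $x_1\in h_2\cap h_3$, $x_2\in h_1\cap h_3$, $x_3\in h_1\cap h_2$ and $m=m(x_1,x_2,x_3)$, one has $h_i\in U_{x_j}\cap U_{x_k}\subset U_m$ for $\{i,j,k\}=\{1,2,3\}$, so $m\in h_1\cap h_2\cap h_3$; the general convex-set version you prove is slightly stronger than what the theorem asserts, which costs nothing. So: correct proof, strictly more self-contained than the paper's treatment, at the price of invoking (or re-deriving) the median/convexity facts that the paper in any case uses throughout.
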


 Keeping with the terminology of transverse half-spaces introduced in Section \ref{Walled Spaces},  if $h_1, \dots, h_n\in \frakH$ are pairwise transverse half-spaces then  $n\leq D$.

\subsection{Roller Duality}\label{Subsection Roller Duality}

Given a subset $\mathfrak{s}\subset\frakH$ of halfspaces, 
we denote by $ \mathfrak{s}^* $ the collection $\{\*h:\,h\in \mathfrak{s}\}$.
We say that $ \mathfrak{s} $ satisfies:

\begin{enumerate}
\item the {\em totality} condition if $ \mathfrak{s} \cup \mathfrak{s}^* =\frakH$;
\item the {\em consistency} condition if, $\mathfrak{s}\cap \mathfrak{s}^* = \varnothing$ and if $h\in \mathfrak{s}$ and $h\subset k$,
then $k\in \mathfrak{s} $.
\end{enumerate}

Fix $v\in X$ and consider the collection $U_v= \{h\in\frakH: v\in h\}$. It is straightforward to verify that $U_v$ satisfies both totality and consistency as a collection of half-spaces. 
 \emph{Roller Duality} is then obtained via the following observation: 
$$\Cap{h \in U_v}{} h = \{v\}.$$
This shows that if $w\in X$ then we have that 
$$U_v= U_w \iff v=w,$$
giving an embedding $X\hookrightarrow 2^\frakH$ obtained by $v\mapsto U_v$. This embedding is made isometric by endowing  $2^\frakH$  with the extended metric\footnote{ We note that this extended metric is not continuous unless $\frakH$ is finite.}
$$d(A,B) = \frac{1}{2}\#(A\triangle B).$$

For now, let us consider $X\subset 2^\frakH$. Then, the \emph{Roller compactification} is denoted by $\~X$ and is the closure of $X$ in $2^\mathfrak H$. The \emph{Roller boundary} is then $\partial X = \~X \setminus X$. Observe that in general, while $\~X$ is a compact space containing $X$ as a dense subset, it is not a compactification in the usual sense. Indeed, unless $X$ is locally compact, the embedding $X\hookrightarrow \~X$ does not have an open image, and $\partial X$ is not closed. This is best exemplified by taking the wedge sum of countably many lines. The limit of any sequence of distinct points in the boundary will be the wedge point. While it is also true that the visual boundary is not a compactification when $X$ is not locally comapact, the Roller boundary does present one significant advantage: the union $X\sqcup \partial X$ is indeed compact.

With this notation in place, the partition $\{h, h^*\}$  extends to a partition of $\~X$ and hence, when we speak of a half-space as a collection of points, we mean
$$h \subset \~X = h\sqcup h^*.$$ 

\begin{remark}\label{What is a wall}
Given $h\in \frakH$, we denote the set $\{h,h^*\}$ by $\^h$. By abuse of notation, for $k\in \frakH$, we will say that $\^h\subset k$ if and only if $h\subsetneq k$ or $h^*\subsetneq k$. This is consistent with the standard notion of the wall corresponding to a \emph{mid-cube}.
\end{remark}

We now give characterizations of special types of subsets of $\~X$. To this end, we say that $\mathfrak{s}\in 2^\frakH$ satisfies the \emph{descending chain condition} if every infinite descending chain of half-spaces is eventually constant.

\break

\begin{facts} The following are true for a non-empty $\mathfrak{s}\in 2^\frakH$:
\begin{enumerate}
\item If $\mathfrak{s}$  satisfies the consistency condition then
$$\varnothing \neq \Cap{h\in \mathfrak{s}}{} h \subset \~X.$$
\item \label{DCC} If $\mathfrak{s}$ satisfies the consistency condition and the descending chain condition then 
$$\varnothing \neq \(\Cap{h\in \mathfrak{s}}{} h \)\cap X.$$
\item The collection $\mathfrak{s}$  satisfies both the totality and consistency conditions if and only if there exists $v\in \~X$ such that $\mathfrak{s}= U_v$. Fixing $U_v\in 2^\frakH$ we have that 
\begin{itemize}
\item $v\in X$ if and only if $U_v$  satisfies the descending chain condition.
\item $v\in \partial X$ if and only if $U_v$ contains a nontrivial infinite descending chain, i.e. for each $n$ there is an $h_n \in S$ such that $h_{n+1}\subsetneq h_n$.
\end{itemize}
\end{enumerate}
\end{facts}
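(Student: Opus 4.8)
The plan is to prove the three parts of the Facts in sequence, using Helly's Theorem (Theorem~\ref{Helly}) together with compactness of $\~X \subset 2^\frakH$, each part feeding into the next.

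\textbf{Part (1).} Suppose $\mathfrak{s}$ is consistent and non-empty. First I would observe that any two half-spaces $h, k \in \mathfrak{s}$ must intersect: if $h \cap k = \varnothing$ then $h \subset k^*$, so consistency forces $k^* \in \mathfrak{s}$, contradicting $\mathfrak{s} \cap \mathfrak{s}^* = \varnothing$. Hence $\mathfrak{s}$ is a pairwise-intersecting family of half-spaces. For any \emph{finite} subfamily $h_1, \dots, h_n \in \mathfrak{s}$, Helly's Theorem gives $\bigcap_{i=1}^n h_i \neq \varnothing$. Now view each $h_i$ as a closed subset $h_i \sqcup$-component of the compact space $\~X$ — here I would use that $h \subset \~X$ is clopen, being one half of the partition $\~X = h \sqcup h^*$ coming from the coordinate function $2^\frakH \to \{0,1\}$ at $h$, which is continuous. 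So $\{ h : h \in \mathfrak{s}\}$ is a family of closed subsets of the compact space $\~X$ with the finite intersection property, whence $\bigcap_{h \in \mathfrak{s}} h \neq \varnothing$ as a subset of $\~X$.

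\textbf{Part (2).} Now additionally assume $\mathfrak{s}$ satisfies the descending chain condition. I want to upgrade the conclusion of (1) so the intersection meets $X$, i.e.\ contains a vertex, equivalently a point whose associated ultrafilter $U_v$ satisfies DCC (using Fact (3), or directly: a point of $2^\frakH$ lies in $X$ iff it satisfies totality, consistency and DCC — this is Roller duality, already in the excerpt). The idea: extend $\mathfrak{s}$ to a maximal consistent family $\mathfrak{t} \supset \mathfrak{s}$ by a Zorn's Lemma argument, where at each step, given a half-space $k$ with neither $k$ nor $k^*$ currently forced, one adjoins whichever of $k, k^*$ keeps consistency — one must check that at least one choice works, which again reduces to Helly / the pairwise-intersection property. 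A maximal consistent family is automatically total (if neither $k$ nor $k^*$ were in $\mathfrak{t}$, adjoining one of them as above contradicts maximality), so $\mathfrak{t} = U_v$ for some $v \in \~X$ with $\bigcap_{h \in \mathfrak{t}} h = \{v\} \subset \bigcap_{h \in \mathfrak{s}} h$. The remaining point is to arrange that $v \in X$, i.e.\ that $U_v = \mathfrak{t}$ inherits DCC from $\mathfrak{s}$: an infinite strictly descending chain $k_1 \supsetneq k_2 \supsetneq \cdots$ in $\mathfrak{t}$ that is not eventually in $\mathfrak{s}$ would have to be avoidable by making the Zorn choices greedily — whenever we are free to choose, picking the half-space that does \emph{not} start a new descending chain. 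Making this precise (that the greedy choice is always consistent and that it kills all new descending chains) is the technical heart of this part.

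\textbf{Part (3).} The first sentence — totality plus consistency iff $\mathfrak{s} = U_v$ for some $v \in \~X$ — is essentially the definition of $\~X$ as the closure of $\{U_v : v \in X\}$ in $2^\frakH$: one direction is that $U_v$ visibly satisfies both conditions for every $v \in \~X$ (a pointwise limit of $\{0,1\}$-valued constraints preserves "exactly one of $h, h^*$" and the implication clause, both being closed conditions), and the converse is that the set of total consistent families is closed in $2^\frakH$ and contains all $U_v$, $v \in X$, hence contains their closure $\~X$; conversely every total consistent family, being in this closed set, and by the already-established Part (1) having non-empty intersection giving a candidate point, equals some $U_v$. For the two bullet points: $v \in X$ iff $U_v$ satisfies DCC is immediate from Part (2) in one direction (DCC $\Rightarrow$ the intersection meets $X$, and the intersection is $\{v\}$ so $v \in X$) and from the observation that for an actual vertex $v$, a strictly descending chain $h_1 \supsetneq h_2 \supsetneq \cdots$ in $U_v$ would give infinitely many walls separating $v$ from any $w \in h_n^* $ for the $n$ where it stabilizes — more cleanly, $U_v$ for $v \in X$ has the property that $\bigcap U_v = \{v\}$ with $v$ a vertex, and DCC is what the excerpt's Facts~\ref{DCC} shows is equivalent; and the second bullet is the logical negation of the first, since $v \in \partial X = \~X \setminus X$. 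The main obstacle I anticipate is Part (2): the Zorn/greedy extension argument requires care to show simultaneously that consistency is maintained at every step \emph{and} that DCC is not destroyed, and one should be careful that "eventually constant" descending chains in $\mathfrak{s}$ do not secretly become strictly descending after extension.
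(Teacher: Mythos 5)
Your Part (1) is fine and is exactly the argument the paper itself sketches: consistency rules out a disjoint pair (if $h\cap k=\varnothing$ then $h\subset k^*$ forces $k^*\in\mathfrak{s}$), Helly's Theorem \ref{Helly} upgrades pairwise to finite intersections, and compactness of $\~X$ together with the fact that each half-space is clopen in $\~X$ gives the infinite intersection. Note, however, that item (1) is the only one the paper argues in-house; for item (2) it cites Nevo--Sageev (Lemma 2.3) and for item (3) it cites Roller, so for (2) and (3) you are genuinely on your own, and that is where your proposal has a real gap.

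The gap is Part (2). The Zorn extension to a maximal (hence total) consistent family is fine, but the ``greedy'' prescription for preserving the descending chain condition is not a proof and, as described, is not even well-defined: whether a given choice ``starts a new descending chain'' is not determined by the finitely many choices already made --- an infinite strictly descending chain of newly adjoined half-spaces only materializes in the limit of the transfinite recursion, and the enumeration of walls bears no relation to the inclusion order, so there is no local criterion to be greedy about, nor an argument that such a choice is always consistent or that it suffices; you concede this yourself. There is also a circularity in your architecture: inside Part (2) you invoke ``total $+$ consistent $+$ DCC $\iff$ vertex'' (Fact (3)), and then in Part (3) you obtain precisely the needed direction of that equivalence \emph{from} Part (2). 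A clean repair is to orient all undecided walls at once by a single vertex rather than greedily: with $\frakH_{\mathfrak{s}}=\frakH\setminus(\mathfrak{s}\sqcup\mathfrak{s}^*)$ and a fixed vertex $x_0\in X$, set $\mathfrak{t}=\mathfrak{s}\cup\{k\in U_{x_0}: k\in\frakH_{\mathfrak{s}}\}$. A short case check gives consistency and totality, and $\mathfrak{t}$ satisfies DCC because any infinite strictly descending chain in $\mathfrak{t}$ has an infinite subchain lying entirely in $\mathfrak{s}$ or entirely in $U_{x_0}$, and both satisfy DCC (for $U_{x_0}$: a chain $h_1\supsetneq h_2\supsetneq\cdots$ in $U_{x_0}$ and any vertex $w\in h_1^*$ would give infinitely many walls separating $x_0$ from $w$ --- this is also the correct argument for the ``$v\in X\Rightarrow$ DCC'' direction in your Part (3), where your phrase ``for the $n$ where it stabilizes'' does not parse). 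Even so, you still need the Roller-duality statement that a total consistent DCC family equals $U_v$ for an actual vertex $v$, so the logical order must be: establish (or cite) that statement first, then deduce (2), then (3); alternatively, bypass all of this by quoting Proposition \ref{LiftingDecomp}: its second and third bullets identify $\bigcap_{h\in\mathfrak{s}}h$ with $\~X(\frakH_{\mathfrak{s}})$ and place the image of $X(\frakH_{\mathfrak{s}})$ (which is nonempty) inside $X$ when $\mathfrak{s}$ satisfies DCC, which is exactly Fact (2).
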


Let us say a few words about why these facts are true, or where one can find proofs, though likely several proofs are available. In case of Item (1), this is simple if one can show that the collection has the finite intersection property as $\~X$ is compact. Furthermore, the CAT(0) cube complex version of Helly's Theorem \ref{Helly} allows one to pass from finite intersections to pairwise intersections, and this last case is easy to verify given the condition of consistency.
For the second item, we refer the reader to Lemma 2.3 of \cite{NevoSageev}. 
Finally, for the last item, we refer the reader to \cite{Roller}.

There are also other special sets which will be of interest:

\begin{definition}\label{Nonterminating} 
The collection of \emph{nonterminating} elements is denoted by $\partial_{NT}X$ and consists of the elements $v\in \partial X$ such that  every finite descending chain can be extended, 
i.e. given $h\in  U_v$ there is a $k\in U_v$ such that 
$$k\subset h.$$ 
\end{definition}

In general, it may be the case that $\partial_{NT}X$ is empty. However, in case $X$ admits a nonelementary action  (see Section \ref{Section Nonelem})  then  $\partial_{NT}X$ is not empty \cite{NevoSageev},  \cite{CFI}.

\subsection{The Median}\label{subsec:intervals-median}

The vertex set of a CAT(0) cube complex 
with the edge metric (equivalently with the wall metric) is a median space \cite{Roller}, \cite{Chatterji_Niblo}, \cite{Nica}. The median structure extends nicely to the Roller compactification. 

We define the interval:
\begin{equation*}
\I(v,w) := \{m\in \~X :  U_v\cap U_w \subset  U_m\}.
\end{equation*}
In the special case that $v,w\in X$ this is the collection of vertices that are crossed by an edge geodesic connecting $v$ and $w$.

Then, the fact that $\~X$ is a \emph{median space}%
\footnote{ A median space is usually required to satisfy the condition that intervals are finite. However, we weaken this assumption here in order to extend the notion to the Roller compactification.
}
 is captured by the following: for every $u,v,w \in \~X$ there is a unique $m\in \~X$ such that 
$$\{m\}= \I(u,v) \cap \I(v,w) \cap \I(w,u).$$
This unique point is called the {\em median} of $u$, $v$, 
and $w$ and will sometimes be denoted by $m(u,v,w)$. In terms of half-spaces, we have:
$$\label{eq:median}
U_m = (U_u\cap  U_v) \cup ( U_v\cap  U_w)\cup ( U_w\cap  U_u),
$$
which is captured by this beautiful Venn diagram:
\begin{center}
 
    \begin{tikzpicture}
      \begin{scope}
    \clip \secondcircle;
    \fill[lightgray] \thirdcircle;
      \end{scope}
      \begin{scope}
    \clip \firstcircle;
    \fill[lightgray] \thirdcircle;
      \end{scope}
        \begin{scope}
    \clip \firstcircle;
    \fill[lightgray] \secondcircle;
      \end{scope}
      \draw \firstcircle node[text=black,above] {$U_v$};
      \draw \secondcircle node [text=black,below left] {$U_u$};
      \draw \thirdcircle node [text=black,below right] {$U_w$};
    \end{tikzpicture}
.
\end{center}

While general CAT(0) cube complexes can be quite wild,\footnote{  Indeed, if $T_\8$ is the tree of countably infinite valency, then the stabilizer group $\stab(v)$ of any vertex $v\in T_\8$ contains every discrete countable group.} the structure of intervals is  tamable by the following (see
\cite[Theorem 1.16]{BCGNW}):

\begin{theorem}\label{embedintervals}\label{th:Euclidean equivalences}\cite[Theorem 1.16]{BCGNW} Let $v,w\in\~{X}$.  
Then the vertex interval $\I(v,w)$ isometrically embeds into $\~{\Z}^D$ 
(with the standard cubulation) where $D$ is the dimension of $X$.
\end{theorem}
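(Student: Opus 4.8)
The plan is to reduce the statement to a direct combinatorial description of the half-spaces that "appear" inside the interval $\I(v,w)$ and to exhibit the embedding coordinate-by-coordinate. Fix $v,w\in\~X$. The relevant half-spaces are those in $\frakH_{v,w}:=\{h\in\frakH: v\in h,\ w\in h^*\}$ together with their mirrors; equivalently, these are the walls $\^h$ with $\^h$ separating $v$ from $w$ in the sense that $h\in U_v\setminus U_w$ (up to swapping $h,h^*$). First I would check that a point $m\in\~X$ lies in $\I(v,w)$ if and only if $U_m$ agrees with $U_v$ and $U_w$ on every wall on which the latter agree, so that the only "freedom" in $m$ is how it chooses sides of the walls in $\frakH_{v,w}$; moreover $U_m$ restricted to $\frakH_{v,w}$ must itself be a totality-and-consistency collection, i.e. a "subcomplex" of the Boolean cube on $\frakH_{v,w}$. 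This identifies $\I(v,w)$ with the set of admissible sign patterns on $\frakH_{v,w}$, and the median structure on $\I(v,w)$ is the restriction of the coordinatewise majority operation.

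Next I would build the map into $\~\Z^D$. The key point is that within $\I(v,w)$, the "depth" of $m$ along a nested family of half-spaces from $\frakH_{v,w}$ is measured by a $\Z$-valued (or $\Z\cup\{\8\}$-valued, after passing to $\~\Z$) coordinate, and transversality controls how many independent such families there can be. Concretely, I would order $\frakH_{v,w}$ by inclusion; by consistency of $U_v$ and $U_w$ this is a partial order, and Helly's Theorem~\ref{Helly} together with condition (4) in the definition of a walled space bounds the width of any antichain of pairwise transverse elements by $D$. The strategy is then to decompose $\frakH_{v,w}$ into at most $D$ chains (this is where Dilworth-type reasoning enters, using that an antichain in this poset of nested/transverse half-spaces consists of pairwise transverse ones), and to send $m$ to the tuple whose $i$-th entry records how far down the $i$-th chain the pattern $U_m$ descends — a nonnegative integer when $m\in X$, possibly $+\8$ (hence a point of $\~\Z$) when $m\in\partial X$. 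One checks this is injective (two distinct admissible patterns differ on some half-space, hence on some chain, hence in some coordinate), distance-preserving (the symmetric difference $U_m\triangle U_{m'}$ inside $\frakH_{v,w}$ splits as a disjoint union over the chains, each contributing an interval of $\Z$), and median-preserving, so it is the desired isometric embedding $\I(v,w)\hookrightarrow\~\Z^D$ with the standard cubulation.

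The main obstacle I anticipate is the chain decomposition step: one must verify that the partial order on $\frakH_{v,w}$ by inclusion is such that any two incomparable elements are genuinely transverse as half-spaces (so that antichains are bounded by $D$ via the walled-space axiom), and then invoke a Dilworth-type argument valid for this possibly-infinite poset, producing a partition into at most $D$ chains in a way compatible with passing to the Roller boundary (i.e. stable enough that infinite descending chains land in the $\{+\8\}$ coordinate of $\~\Z$). Care is needed because $X$ need not be locally compact, so $\frakH_{v,w}$ can be infinite and the chains can be infinite; the finiteness one does have is only that each wall separates $v$ from $w$ and that transverse families have size $\le D$. Since the cited source is \cite[Theorem~1.16]{BCGNW}, I would either reproduce this chain-decomposition argument or simply quote it; everything else (the identification of $\I(v,w)$ with admissible sign patterns, and the verification that the coordinate map is an isometric median embedding) is routine bookkeeping with the Facts and Helly's Theorem already established above.
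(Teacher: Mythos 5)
Your proposal is correct and follows essentially the same route the paper sketches (and attributes to \cite[Theorem 1.16]{BCGNW}): order the separating half-spaces $U_w\setminus U_v$ by inclusion, observe that antichains consist of pairwise transverse half-spaces and hence have size at most $D$, apply Dilworth's theorem to partition into at most $D$ chains, and record in each $\~\Z$-coordinate how far $U_m$ descends along the corresponding chain. The obstacle you anticipate is not a real one: two incomparable elements $h,k\in U_w\setminus U_v$ are automatically transverse (since $w\in h\cap k$ and $v\in h^*\cap k^*$, incomparability supplies the other two intersections), and Dilworth's theorem for posets of finite width holds for infinite posets, while finiteness of the set of half-spaces between any nested pair makes each chain order-embed in $\Z$ so that infinite descent lands at $\pm\8$ in $\~\Z$.
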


The proof of this employs Dilworth's Theorem which states that a partially ordered set has finite width $D$ if and only if it can be partitioned into $D$-chains. Here the partially ordered set is $U_w\setminus U_v$. Set inclusion yields the partial order and an antichain corresponds to a set of pairwise transverse half-spaces. By reversing the chains of half-spaces in $U_w\setminus U_v$ in a consistent way, we may find other pairs $x,y \in \~X$ such that $\I(x,y) = \I(v,w)$. This yields the following:

\begin{cor}\label{Interval on Finitely Many}
If $X$ has dimension $D$, then for any interval $I\subset \~X$, there are at most $2^D$ elements on which $I$ is an interval.
 \end{cor}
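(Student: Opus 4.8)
The plan is to leverage Theorem \ref{embedintervals} together with the description of intervals via consistent orderings of half-spaces, as indicated in the paragraph preceding the statement. First I would fix an interval $I = \I(v,w) \subset \~X$ and recall that $I$ is determined entirely by the set $\mathfrak{s} = U_w \setminus U_v$: indeed $m \in \I(v,w)$ iff $U_m \supset U_v \cap U_w$ and $U_m$ is total and consistent, so the ``variable part'' of such an $U_m$ is a sub-collection of $\mathfrak{s} \cup \mathfrak{s}^*$. The key structural input is that $\mathfrak{s}$, partially ordered by inclusion, has width at most $D$ (an antichain is a pairwise-transverse family, bounded by $D$ via Theorem \ref{Helly} and condition (4) of a walled space), hence by Dilworth's Theorem decomposes into at most $D$ chains $C_1, \dots, C_D$.

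Next I would observe that a pair $x, y \in \~X$ satisfies $\I(x,y) = I$ precisely when $U_x \cap U_y = U_v \cap U_w$ and $U_x \cup U_y \supset \mathfrak{s} \cup \mathfrak{s}^*$ in the appropriate complementary fashion; concretely, such a pair is obtained by choosing, for each chain $C_i$, one of the two ``ends'' — i.e. either keeping $C_i$ as is (assigning it to the $x$-side) or reversing it (assigning the reversed chain to the $x$-side and the original orientation contributes to $y$). Because the chains are linearly ordered and consistency must be preserved, within each chain the choice is binary: the flip is global on that chain. This is exactly the ``reversing the chains in a consistent way'' procedure referenced in the text. With at most $D$ chains and two choices per chain, there are at most $2^D$ resulting endpoint-pairs, and every pair $x,y$ with $\I(x,y) = I$ arises this way. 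Hence $I$ is an interval on at most $2^D$ elements of $\~X$ (counting the at most $2^D$ distinct values taken by $x$, say).

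The main obstacle I anticipate is making precise the claim that a pair $(x,y)$ with $\I(x,y) = I$ is uniquely encoded by an independent binary choice on each of the $D$ chains — in particular, verifying that reversing a single chain (while leaving the others fixed) always yields a genuine element of $\~X$, i.e. that the resulting collection of half-spaces is still total and consistent. Totality is immediate, but consistency requires checking that no forbidden inclusion $h \in U_x$, $h \subset k$, $k \notin U_x$ is created across chains; this is where one uses that transverse half-spaces (which is what distinct chains contribute when comparable elements do not exist) impose no consistency constraint, while comparable half-spaces were, by Dilworth, placed in the same chain and so are flipped together. A secondary subtlety is that different binary choices could in principle yield the same point $x$ (e.g. when a chain is trivial or when $v = w$), which only helps the bound; so the inequality $\leq 2^D$ is safe. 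I would also remark that the case $v = w$ (so $I$ a single point, on which only that point is an interval) and the degenerate case $D = 0$ are consistent with the bound.
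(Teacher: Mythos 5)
Your proposal is correct and follows essentially the same route as the paper: Dilworth's theorem applied to $U_w\setminus U_v$ (width at most $D$ since antichains are pairwise transverse families), with the other endpoint pairs obtained by reversing whole chains, giving at most $2^D$ choices. The one step you flag as delicate is cleanly settled by noting that if $\I(x,y)=\I(v,w)$ then consistency of \emph{both} $U_x$ and $U_y$ forces each chain of the decomposition to lie entirely in $U_x$ or entirely in $U_y$ (an up-set of a chain whose complement is also an up-set is empty or everything), which is exactly the ``global flip per chain'' you describe.
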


\subsection{Projections and Lifting Decompositions}\label{subsec:isom-emb}

It is straightforward, thanks to Theorem \ref{CCCMagic} that if $\frakH' \subset \frakH$ is an involution invariant subset, then there is a natural quotient map $X(\frakH) \to X(\frakH')$. Furthermore, if $\frakH'$ is $\G$-invariant for some acting group $\G$ then the quotient is $\G$-equivariant as well.   One can ask to what extent this can be reversed. Namely, when is it possible to find an embedding $X(\frakH') \hookrightarrow X(\frakH)$? And if $\frakH'$ is assumed to be $\G$-invariant, can the embedding be made to be $\G$-equivariant?

\begin{definition}\label{defi:lifting}  Given a subset $\frakH'\subset \frakH(X)$, a  {\em lifting decomposition} 
is a choice of a consistent subset $\mathfrak{s}\subset\frakH(X)$ such that
 $$\frakH(X)=\frakH'\sqcup(\mathfrak{s}\sqcup \mathfrak{s}^*).$$
\end{definition}

We note that a necessary condition for the existence of a lifting decomposition is that $\frakH'$ be involution invariant and that it be convex, i.e. if $h,k \in\frakH'$, and $h \subset \ell \subset k$ then $\ell \in \frakH'$.

Given a consistent set $\mathfrak{s}\subset \frakH(X)$, one can associate a set of walls (viewed as an involution invariant set of half-spaces)  $\frakH_\mathfrak{s} := \frakH(X) \setminus(\mathfrak{s}\sqcup \mathfrak{s}^*)$ so that $\mathfrak{s}$ is \emph{a} lifting decomposition of $\frakH_\mathfrak{s}$, though there could of course be others.

The terminology is justified by:

\break

\begin{prop}\label{LiftingDecomp} \cite[Lemma 2.6]{CFI} 
The following are true:\begin{itemize}
\item Suppose that $\frakH'\subset \frakH(X)$. If there exists $\mathfrak{s}$ a lifting decomposition for $\frakH'$ then there is an isometric embedding $\~X(\frakH') \hookrightarrow \~X$ induced from the map $2^{\frakH'} \hookrightarrow 2^{\frakH(X)}$ where $U \mapsto U\sqcup \mathfrak{s}$ and the image of this embedding is 
$$\Cap{h\in \mathfrak{s}}{} h\subset \~X.$$
\item Conversely, if $\mathfrak{s}\subset \frakH(X)$ is a consistent set of half-spaces, then, setting $\frakH_\mathfrak{s} = \frakH(X)\setminus(\mathfrak{s}\sqcup \mathfrak{s}^*)$ we get an isometric embedding 
$\~X(\frakH_\mathfrak{s}) \hookrightarrow \~X$ obtained as above, onto
$$\Cap{h\in \mathfrak{s}}{} h\subset \~X.$$
\item\label{DCClifting} If $\mathfrak{s}$ satisfies the descending chain condition, then the image of $X(\frakH_\mathfrak{s})$ is in $X$. 
\end{itemize}
Furthermore, if the set $\mathfrak{s}$ is $\G$-invariant then, with the restricted action on the image, the above natural embeddings are  $\G$-equivariant.
\end{prop}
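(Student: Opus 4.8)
The plan is to treat the three bullets as variations on a single construction and then verify equivariance at the end. For the first bullet, assume $\mathfrak{s}$ is a lifting decomposition for $\frakH'$, so $\frakH(X) = \frakH' \sqcup (\mathfrak{s} \sqcup \mathfrak{s}^*)$ with $\mathfrak{s}$ consistent. I would define $j\colon 2^{\frakH'} \to 2^{\frakH(X)}$ by $j(U) = U \sqcup \mathfrak{s}$, which is well-defined precisely because of the disjoint-union decomposition. The first thing to check is that $j$ maps $\~X(\frakH')$ into $\~X$: it suffices to show $j$ carries $U_v$-type sets (i.e., collections satisfying totality and consistency over $\frakH'$) to collections satisfying totality and consistency over $\frakH(X)$, and then invoke Facts~(3) to conclude the image lies in $\~X(\frakH(X)) = \~X$. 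Totality is immediate from the decomposition; consistency requires checking the ``upward closed under $\subset$'' condition, which splits into cases according to whether the pair of nested half-spaces lies in $\frakH'$, in $\mathfrak{s}\sqcup\mathfrak{s}^*$, or straddles the two — the cross cases are where one uses both that $\mathfrak{s}$ is consistent and that $\frakH'$ is convex (the necessary condition noted just before the Proposition). Next, $j$ is isometric because $d(A,B) = \frac12 \#(A \triangle B)$ and $(U \sqcup \mathfrak{s}) \triangle (V \sqcup \mathfrak{s}) = U \triangle V$; continuity and the fact that $\~X(\frakH')$ is compact then give that the image is closed, hence all of the closure of $j(X(\frakH'))$. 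Finally I would identify the image with $\Cap{h\in\mathfrak{s}}{} h$: a point $w \in \~X$ lies in every $h \in \mathfrak{s}$ iff $\mathfrak{s} \subset U_w$, and since $U_w$ is total and consistent over $\frakH(X)$ and $\mathfrak{s}$ already accounts for one side of every wall outside $\frakH'$, consistency forces $U_w \cap (\mathfrak{s}\sqcup\mathfrak{s}^*) = \mathfrak{s}$ exactly, so $U_w = (U_w \cap \frakH') \sqcup \mathfrak{s} = j(U_w \cap \frakH')$ with $U_w \cap \frakH'$ total and consistent over $\frakH'$, i.e.\ $w \in j(\~X(\frakH'))$.

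For the second bullet, the content is just that any consistent $\mathfrak{s}$ \emph{is} a lifting decomposition of $\frakH_\mathfrak{s} := \frakH(X) \setminus (\mathfrak{s}\sqcup\mathfrak{s}^*)$ in the sense of Definition~\ref{defi:lifting} — the decomposition $\frakH(X) = \frakH_\mathfrak{s} \sqcup (\mathfrak{s}\sqcup\mathfrak{s}^*)$ holds tautologically and $\mathfrak{s}$ is consistent by hypothesis — so the statement follows from the first bullet applied to $\frakH' = \frakH_\mathfrak{s}$. (Note $\frakH_\mathfrak{s}$ is automatically involution-invariant and convex, so no extra hypothesis is needed.) For the third bullet, suppose $\mathfrak{s}$ satisfies the descending chain condition. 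I want to show $j(X(\frakH_\mathfrak{s})) \subset X$, i.e.\ that for $v \in X(\frakH_\mathfrak{s})$ the set $U_v \sqcup \mathfrak{s}$ satisfies the descending chain condition as a subset of $\frakH(X)$, by Facts~(3). Given an infinite descending chain $h_1 \supsetneq h_2 \supsetneq \cdots$ in $U_v \sqcup \mathfrak{s}$, I would argue it is eventually entirely inside $\frakH_\mathfrak{s}$ or eventually entirely inside $\mathfrak{s}$: once a tail enters $\mathfrak{s}$ it stays there by consistency of $\mathfrak{s}$ (a half-space below one in $\mathfrak{s}$ is in $\mathfrak{s}$), and a chain in $\mathfrak{s}$ stabilizes by the DCC on $\mathfrak{s}$; a chain whose every term is in $\frakH_\mathfrak{s}$ gives a descending chain in $U_v$ over $\frakH_\mathfrak{s}$, which stabilizes because $v \in X(\frakH_\mathfrak{s})$ — here one uses that $\frakH_\mathfrak{s}$ is convex so that this really is an honest descending chain inside $U_v \cap \frakH_\mathfrak{s}$.

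For equivariance, if $\mathfrak{s}$ is $\G$-invariant then so is $\frakH_\mathfrak{s}$, and the induced action of $\G$ on $2^{\frakH'}$ (resp.\ $2^{\frakH_\mathfrak{s}}$) commutes with $j$: for $\g \in \G$, $j(\g U) = \g U \sqcup \mathfrak{s} = \g U \sqcup \g\mathfrak{s} = \g(U \sqcup \mathfrak{s}) = \g\, j(U)$, using $\g\mathfrak{s} = \mathfrak{s}$. Since $\G$ acts on $\~X$ by the $2^{\frakH(X)}$-action and on $\~X(\frakH')$ by the $2^{\frakH'}$-action, this is exactly $\G$-equivariance of the embedding, and the image $\Cap{h\in\mathfrak{s}}{} h$ is then a $\G$-invariant subset carrying the restricted action. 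The main obstacle I anticipate is the careful bookkeeping in the consistency verification of the first bullet — in particular handling nested pairs that straddle $\frakH'$ and $\mathfrak{s}\sqcup\mathfrak{s}^*$, where one must use convexity of $\frakH'$ in the right direction to avoid a spurious element being forced into the lifted collection; everything else is formal manipulation of the symmetric-difference metric and appeals to Facts~(3) and Theorem~\ref{CCCMagic}.
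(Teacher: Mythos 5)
The paper itself offers no argument for this proposition -- it is imported wholesale as \cite[Lemma 2.6]{CFI} -- so there is nothing internal to compare you against; judged on its own terms, your direct verification is correct in structure and is the natural proof: check that $U\mapsto U\sqcup\mathfrak{s}$ carries total--consistent subsets of $\frakH'$ to total--consistent subsets of $\frakH(X)$, note the symmetric difference is unchanged so the map is isometric, identify the image with $\Cap{h\in\mathfrak{s}}{}h$ by intersecting an ultrafilter $U_w\supset\mathfrak{s}$ with $\frakH'$, observe the second bullet is the tautological special case $\frakH'=\frakH_{\mathfrak{s}}$, and check equivariance formally. (You implicitly use, as you may, the paper's Remark~\ref{rem half-spaces in proj} that containment among half-spaces of $\frakH'$ is the same whether read in $X$ or in $X(\frakH')$.)

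Two corrections. First, in the third bullet your justification of the dichotomy is stated backwards: you say that once a descending chain enters $\mathfrak{s}$ it stays there because ``a half-space below one in $\mathfrak{s}$ is in $\mathfrak{s}$,'' but consistency is \emph{upward} closure (if $h\in\mathfrak{s}$ and $h\subset k$ then $k\in\mathfrak{s}$). The correct statement is the mirror image: in a descending chain the terms lying in $\mathfrak{s}$ form an initial segment, i.e.\ once the chain leaves $\mathfrak{s}$ and enters $U_v\subset\frakH_{\mathfrak{s}}$ it cannot return, since a later term in $\mathfrak{s}$ would force every earlier (larger) term into $\mathfrak{s}$. The dichotomy you want -- a tail lies entirely in $\mathfrak{s}$ or entirely in $U_v$ -- is still true, and the rest of your argument (DCC on $\mathfrak{s}$ in the first case, $v\in X(\frakH_{\mathfrak{s}})$ in the second) goes through verbatim, so this is a one-line fix rather than a structural gap. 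Second, a cosmetic point: in the cross cases of the consistency check you invoke convexity of $\frakH'$, but what actually does the work is that consistency of $\mathfrak{s}$ is upward closure in \emph{all} of $\frakH(X)$: if $h\in U\subset\frakH'$ and $h\subset k$ with $k\in\mathfrak{s}^*$, then $k^*\subset h^*$ forces $h^*\in\mathfrak{s}$, contradicting $\frakH'\cap\mathfrak{s}=\varnothing$; and no element of $\mathfrak{s}$ can be contained in an element of $\frakH'$ at all, so that cross case is vacuous. Convexity of $\frakH'$ is a consequence of the existence of a lifting decomposition, not an input to this verification.
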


\begin{remark}\label{rem half-spaces in proj}
 We note that the projection $X\to X(\frakH')$ obtained by forgetting the half-spaces $\frakH\setminus \frakH'$ is onto. This means that if there is a lifting decomposition $\~X(\frakH') \hookrightarrow \~X$ then the relationship between two half-spaces (i.e. facing, transverse, etc.) is equivalent if one considers them as half-spaces in $X$ or in $X(\frakH')$.
\end{remark}

Let us interpret the significance of Proposition \ref{LiftingDecomp} in the context of the collection of the involution-invariant set of half-spaces $\frakH(v,w):=U_v\triangle U_w$, for  $v,w\in \~X$. These are the half-spaces separating $v$ and $w$.  Then, the collection of half-spaces  $\frakH(v,w)^+ := U_v\cap U_w$, i.e. those that contain both $v$ and $w$, is a consistent set of half-spaces and it is straightforward to verify that $\frakH(v,w)^+$ is a lifting decomposition for $\frakH(v,w)$, yielding an isometric embedding of the  CAT(0) cube complex associated to $\frakH(v,w)$ onto $ \I(v,w)$.

\section{Three Key Notions}\label{subsec:essential}

There are three notions that together form a powerful framework within which to study CAT(0) cube complexes. The first is the classical notion of a nonelementary action. Caprace and Sageev showed that this allows one to study the \emph{essential core} of a CAT(0) cube complex \cite{CapraceSageev}, which is the second notion. Finally, Behrstock and Charney introduced the notion of strong separation which allows for the local detection of irreducibility \cite{Behrstock_Charney}, which was shown by Caprace and Sageev to be available in the nonelementary setting \cite{CapraceSageev}.

\subsection{Nonelementary Actions}\label{Section Nonelem}

As a CAT(0) space, a CAT(0) cube complex has  a visual boundary
$\partial_\sphericalangle X$ which is obtained by considering equivalence classes of geodesic rays, where two rays are equivalent if they are  at bounded distance
from each other. The topology on $\partial_\sphericalangle X$ is the cone topology (which coincides with the topology of uniform convergence on compact subsets, when one considers geodesic rays emanating from the same base point)
\cite{Bridson_Haefliger}. While the visual boundary is not well behaved for non-proper spaces in general, the assumption that the space is finite dimensional is sufficient \cite{CapraceLytchak}.

\begin{definition}
 An isometric action on a CAT(0) space is said to be elementary if there is a finite orbit in either the space or the visual boundary. 
\end{definition}

To exemplify the importance of this property, we have:

\begin{theorem}\cite{CapraceSageev}
Suppose $\G\to \Aut(X)$ is an action on the CAT(0) cube complex $X$. Then either the action is elementary, or $\G$ contains a freely acting non-abelian free group. 
 
\end{theorem}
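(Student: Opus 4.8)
The statement is a dichotomy for an isometric action $\G \to \Aut(X)$: either it is elementary in the sense just defined (a finite orbit in $X$ or in $\partial_\sphericalangle X$), or $\G$ contains a freely acting nonabelian free subgroup. The plan is to argue by contraposition: assuming the action is nonelementary, produce two ``independent'' hyperbolic isometries and run a ping-pong argument. The first step is to invoke the structure theory of isometries of CAT(0) spaces, or rather its cubical refinement (Caprace--Sageev): a nonelementary action on a finite dimensional CAT(0) cube complex admits hyperbolic (rank-one, or at least ``skewering'') elements. More precisely, I would use the Caprace--Sageev machinery of the essential core: after passing to the essential core $X'$ (on which $\G$ still acts nonelementarily, with no global fixed point at infinity and no invariant proper subcomplex), one knows that $\G$ contains elements that skewer a pair of strongly separated half-spaces, and that one can find two such elements whose skewered half-space pairs are ``transverse-free'' / nested in a way that makes them play ping-pong.

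The core of the argument is then the ping-pong lemma. Given two hyperbolic isometries $a, b$ whose axes (or rather whose pairs of attracting/repelling endpoints in $\partial X$) are disjoint, a standard contracting-dynamics argument shows that sufficiently high powers $a^N$ and $b^N$ satisfy the hypotheses of the Table-Tennis Lemma: there are disjoint neighborhoods (in $\~X$, say neighborhoods determined by the relevant half-spaces) $U_a^\pm, U_b^\pm$ such that $a^{\pm N}$ maps the complement of $U_a^\mp$ into $U_a^\pm$, and similarly for $b$. Hence $\langle a^N, b^N\rangle$ is free of rank $2$. The point that needs the cube complex structure (as opposed to a general CAT(0) space, where one would need a rank-one/hyperbolicity hypothesis) is precisely that strong separation of half-spaces — available in the nonelementary setting by Behrstock--Charney / Caprace--Sageev — gives the required strict nesting $g h \subsetneq h$ with $\overline{gh}$ and $\overline{h^*}$ disjoint, so that the ping-pong sets can be taken to be half-spaces and their closures in $\~X$, which are genuinely disjoint. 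Finally, to get the free group acting \emph{freely} on $X$ (no nontrivial element fixes a vertex): since each generator is hyperbolic it acts freely, and a ping-pong element that is not conjugate into a single generator's cyclic group is again hyperbolic (its attracting set is nonempty and disjoint from its repelling set), hence acts freely; alternatively one notes a finite-order element fixing a point would have a bounded orbit, contradicting the ping-pong dynamics.

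The main obstacle, and the step I would expect to occupy most of the work, is the \emph{existence} of two independent skewering isometries with disjoint endpoint pairs in $\partial X$ — i.e. upgrading ``nonelementary'' to ``contains two hyperbolics in general position.'' In a general CAT(0) space this is genuinely delicate (one worries about parabolics, about a group of hyperbolics all sharing an axis or an endpoint, about flats); the cubical setting tames this via the double skewering lemma and the fact that strong separation is abundant, but one still has to rule out that all skewering elements share a common pair of boundary points or that their half-space pairs are all pairwise transverse (which would be an invariant ``product'' situation). This is exactly where one cites Caprace--Sageev: nonelementarity implies the essential core is unbounded, has no $\G$-fixed point at infinity, and either is irreducible with strongly separated half-spaces — giving contracting, independent skewerers directly — or splits as a product, in which case one projects to an irreducible factor on which the induced action is still nonelementary and recurses on the dimension. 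Once the two independent hyperbolics are in hand, the remaining ping-pong is routine and I would not belabor it.
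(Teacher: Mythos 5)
Your overall route is essentially the paper's own (the statement is quoted from Caprace--Sageev, but the paper reproves it within the Tits' Alternative, implication (2)$\Rightarrow$(3), using that visually nonelementary actions are Roller nonelementary): pass to the essential core, use the Caprace--Sageev flipping/double-skewering machinery (the paper uses a facing quadruple where you use strongly separated pairs and high powers) to produce two skewering elements $a,b$ whose attracting and repelling half-spaces form a four-set ping-pong table, and conclude that $\langle a,b\rangle$ is free of rank two. Up to that point your sketch is sound, and deferring the existence of two ``independent'' skewerers to the essential-core results is exactly what the paper does.

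The genuine gap is in the final step, freeness of the \emph{action}, which is precisely the extra content of the theorem and the part on which the paper spends most of its proof. Your justification --- ``a ping-pong element \dots is again hyperbolic (its attracting set is nonempty and disjoint from its repelling set), hence acts freely'' --- is not a valid inference for isometries of a possibly non-proper, non-locally-compact CAT(0) space: disjoint nonempty attracting and repelling sets do not by themselves exclude a fixed vertex, and your fallback about finite-order elements is vacuous, since $F_2$ is torsion-free; the real danger is infinite-order elliptic elements, which do occur in non-locally-finite cube complexes. The correct, cube-specific repair is: every nontrivial element of the ping-pong group is conjugate to a cyclically reduced word $w$, the ping-pong containments give $wh\subsetneq h$ for $h$ the attracting half-space of the first letter of $w$, and an automorphism with $wh\subsetneq h$ fixes no vertex, because the infinitely many distinct half-spaces $w^kh$ would otherwise all separate that vertex from any vertex of $h^*$, contradicting finiteness of wall-separation between vertices (this also handles powers of the generators, and freeness descends to $X$ from the essential core or an irreducible factor since fixed vertices project equivariantly). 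The paper instead proves freeness by a median argument showing that every vertex of $X$ lies in an $F_2$-translate of $\mathcal{F}=A\cap aA^*\cap B\cap bB^*$, while ping-pong forbids fixed points in any such translate; either route works, but as written your step is an assertion, not a proof.
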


\subsection{Essential Actions}

Caprace and Sageev \cite{CapraceSageev} showed that for nonelementary actions, there is a nonempty ``essential core" where  the action is well behaved. Let us now develop the necessary terminology and recall the key facts.

\begin{definition} Fix a group $\G$ acting by automorphisms on the CAT(0) cube complex $X$. A  half-space $h \in \frakH$ is called ...
\begin{itemize}
 \item  \emph{$\G$-shallow} if for some (and hence all) $x\in X$, the set $\G\cdot x \cap h$ is at bounded distance from $h^*$,  otherwise, it is said to be \emph{$\G$-deep}.
  \item  \emph{$\G$-trivial} if $h$ and $h^*$ are both shallow. 
    \item  \emph{$\G$-essential} if $h$ and $h^*$ are both deep. 
     \item  \emph{$\G$-half-essential} if it is deep and $h^*$ is shallow. 
\end{itemize}

\end{definition}

\begin{remark}\label{Finite Index Ess}
 Observe that the collections of essential and trivial half-spaces are both closed under involution and that  the collection of half-essential half-spaces is consistent. Furthermore, a half-space $h\in \frakH$ is $\G$-essential if and only if it is $\G_0$-essential for any $\G_0\leq \G$ of finite index. 
\end{remark}

\begin{theorem}\cite[Proposition~3.5]{CapraceSageev}\label{EssNonEmpty}
Assume $\G\to \Aut(X)$ is a nonelementary action on the CAT(0) cube complex $X$ then the collection of $\G$-essential half-spaces is non-empty. Furthermore, if $Y$ is the CAT(0) cube complex associated to the $\G$-essential half-spaces  then $Y$ is unbounded and there is a $\G$-equivariant embedding $Y \hookrightarrow X$.
\end{theorem}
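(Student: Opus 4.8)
The plan is to prove the statement in three movements: first isolate the essential half-spaces as a consistent-friendly subcollection, then use a lifting decomposition to realize the associated cube complex $Y$ inside $\~X$ equivariantly, and finally produce the unboundedness via a standard ``flipping'' argument for nonelementary actions.

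First I would record the structural facts about essential half-spaces already noted in Remark \ref{Finite Index Ess}: the set $\frakH_{ess}$ of $\G$-essential half-spaces is involution-invariant and (being a complement of the trivial and half-essential ones in $\frakH$) one checks it is convex in the sense needed for Proposition \ref{LiftingDecomp} — if $h\subset \ell \subset k$ with $h,k$ essential then $\ell^*\supset k^*$ is deep and $\ell\supset h$ is deep, so $\ell$ is essential. Next I would invoke Proposition \ref{LiftingDecomp}: since $\frakH_{ess}$ is involution-invariant and convex, I need to exhibit a lifting decomposition, i.e. a consistent $\mathfrak{s}$ with $\frakH = \frakH_{ess} \sqcup (\mathfrak{s}\sqcup\mathfrak{s}^*)$. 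The natural candidate is to take $\mathfrak{s}$ to consist of all the trivial half-spaces together with one half-space from each half-essential wall — but a cleaner choice, which manifestly gives $\G$-invariance, is to build $\mathfrak{s}$ from the half-essential half-spaces oriented so that $h^*$ is shallow (this collection is consistent by Remark \ref{Finite Index Ess}) and adjoin the trivial walls with a choice of orientation that is forced to be consistent after fixing a basepoint and its associated ultrafilter $U_o$; since $\G$ fixes $\frakH_{ess}$ setwise, and the half-essential choice is canonical (``the deep side''), the only non-canonical part is the trivial walls, and one checks those can be absorbed compatibly. Then Proposition \ref{LiftingDecomp} hands me a $\G$-equivariant isometric embedding $\~Y = \~X(\frakH_{ess}) \hookrightarrow \~X$ whose image is $\bigcap_{h\in\mathfrak{s}} h$, and restricting to $Y = X(\frakH_{ess})$ gives $Y\hookrightarrow X$ equivariantly (using Item \ref{DCClifting} if $\mathfrak{s}$ satisfies DCC, or otherwise just noting the embedding of $Y$ itself).

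The main obstacle — and the heart of the matter — is showing $Y$ is \emph{unbounded}. Equivalently, $\frakH_{ess}$ is infinite, and moreover contains infinitely many half-spaces with a common orientation so that the wall metric on $Y$ is unbounded. Here I would argue by contradiction: if $Y$ were bounded, then only finitely many essential walls separate any two points, and combined with the essential core being $\G$-invariant, $\G$ would act on the finite-diameter complex $Y$, hence fix its (CAT(0)) circumcenter, or at least stabilize a bounded set — which then forces a bounded orbit back in $X$ after accounting for the trivial and half-essential directions (those contribute only bounded displacement by definition of shallow). That would make the $\G$-action elementary, contradicting the hypothesis. The delicate point is checking that boundedness of $Y$ really does force a finite orbit or a fixed point in $X$ or $\partial_\sphericalangle X$ — one must rule out the action drifting off to infinity along the non-essential directions, but those are all shallow, so displacements in those coordinates are uniformly bounded; hence a $\G$-fixed bounded set in $Y$ pulls back to a $\G$-invariant bounded set in $X$, giving a fixed point in $X$ by the Bruhat–Tits / center-of-a-bounded-set argument for complete CAT(0) spaces, the desired contradiction.

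I would then close by remarking that in fact one typically gets more — the action on $Y$ is essential in the sense of Caprace–Sageev and nonelementary — but for the stated proposition only unboundedness and equivariance of the embedding are needed, so I would stop there. The one routine verification I would relegate to the reader is the consistency check on the chosen $\mathfrak{s}$, which is the kind of finite case-analysis on orientations of trivial walls that Proposition \ref{LiftingDecomp} is designed to package.
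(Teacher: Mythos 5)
This statement is quoted in the paper directly from Caprace--Sageev (their Proposition~3.5); the paper gives no internal proof, so your proposal has to stand on its own, and it has a genuine gap at exactly the point where the nonelementary hypothesis must do its work. Your unboundedness argument rests on the claim that the non-essential walls ``contribute only bounded displacement by definition of shallow,'' so that a bounded essential core would force a bounded orbit in $X$. This is not what shallowness gives you: for each individual half-space $h$ with $h^*$ shallow, the orbit penetrates $h^*$ only to a bounded depth, but that bound depends on $h$, and an orbit may cross infinitely many half-essential walls and escape to infinity while meeting no essential wall at all. A horospherical-type action on a tree fixing an end is the model case: every wall is half-essential (deep side pointing to the fixed end), orbits are unbounded, and there is no essential half-space. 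The correct dichotomy, which is the actual content of Caprace--Sageev's proof, is that in the absence of essential half-spaces one gets either a bounded orbit or a fixed point in the visual boundary (the consistent, $\G$-invariant family of deep sides of half-essential walls is what produces the boundary point); both branches contradict nonelementarity. Your argument only sees the first branch and derives it by an unjustified uniformity, so the nonemptiness of the essential half-spaces and the unboundedness of $Y$ --- the heart of the statement --- are not established.

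The embedding step also has an unproved ingredient. To invoke Proposition~\ref{LiftingDecomp} and get a $\G$-equivariant embedding you need a consistent lifting decomposition $\mathfrak{s}$ that is $\G$-invariant. The deep sides of the half-essential walls are fine (consistent and canonical, as in Remark~\ref{Finite Index Ess}), but the trivial walls are not: orienting them by a basepoint ultrafilter $U_o$ is not a $\G$-invariant choice, since $\G$ does not fix $o$, and ``one checks those can be absorbed compatibly'' hides both the consistency constraint (the upward closure of a chosen trivial half-space must avoid $\frakH_{ess}$ and $\mathfrak{s}^*$, which forbids certain orientations) and the invariance issue. Producing an invariant consistent orientation of the trivial walls is a real argument, again tied to the nonelementary hypothesis, not a routine finite case-analysis. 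Finally, note that Proposition~\ref{LiftingDecomp} a priori only embeds $\~X(\frakH_{ess})$ into $\~X$; to land $Y$ in $X$ (rather than having vertices of $Y$ map into $\partial X$) you need the descending chain condition for $\mathfrak{s}$ or a substitute, which you flag but do not supply.
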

 
 The image of $Y$ under this embedding is called the \emph{$\G$-essential core}. If all half-spaces are essential, then the action is said to be \emph{essential}. 
 
A simple but powerful concept introduced by Caprace and Sageev is that of flipping a half-space. A half-space $h\in \frakH$ is said to be $\G$-flippable if there is a $\g\in \G$ such that $h^* \subset \g h$. 

\begin{lemma} \cite[Flipping Lemma]{CapraceSageev}\label{flip}
Assume $\G\to \Aut(X)$ is  nonelementary. If $h\in \frakH$ is essential, then $h$ is $\G$-flippable. 
\end{lemma}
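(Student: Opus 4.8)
The goal is: given a nonelementary action $\G\to\Aut(X)$ and an essential half-space $h\in\frakH$, produce $\g\in\G$ with $h^*\subset\g h$. The natural strategy is a ping-pong argument, and the plan is to obtain the required $\g$ either directly from a ``long'' translation or by combining two elements that push orbit points deep into $h$ and $h^*$ respectively. First I would fix a basepoint $x\in X$. Since $h$ is essential, both $h$ and $h^*$ are $\G$-deep, so there exist group elements moving $x$ arbitrarily far from the wall $\^h$ into $h$, and arbitrarily far into $h^*$: choose $a\in\G$ with $d(ax,h^*)$ large (so $ax$ is deep in $h$) and $b\in\G$ with $d(bx,h)$ large (so $bx$ is deep in $h^*$). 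The element I want to test is $\g=ab^{-1}$ (or a suitable power / conjugate thereof), the idea being that $\g$ carries a neighborhood of $bx$ — which engulfs $h^*$ — onto a neighborhood of $ax$, which sits inside $h$.

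The key quantitative point to nail down is the following: if $v$ is a vertex with $d(v,h^*)>N$ where $N$ bounds the ``thickness'' we need, then the set of half-spaces containing $v$ but not separating $v$ from deep points includes $h$, and moreover every half-space $k$ with $k^*$ minimal (in the sense of containing $v$ on its deep side) can be controlled. Concretely I would argue: the half-spaces separating $ax$ from $bx$ form a finite set $\frakH(ax,bx)$ of cardinality $d(ax,bx)$; among these is $h$ itself (since $ax\in h$, $bx\in h^*$). I want to show $h^*\subset \g h = ab^{-1}h$, equivalently $b^{-1}h^* \subset$ (something pushed by $b$), i.e. after translating, that $h^*$ is contained in the half-space $ab^{-1}h$; dualizing, this says $(ab^{-1}h)^*=ab^{-1}h^*\subset h$. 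So it suffices to show $ab^{-1}h^*\subset h$, i.e. that $b^{-1}h^*$, translated by $a$, lands strictly inside $h$. Because $bx$ is deep in $h^*$, the half-space $b^{-1}h^*$ contains $x$ together with a large ball; applying $a$ and using that $ax$ is deep in $h$, one checks that $a\cdot(b^{-1}h^*)$ must be one of the half-spaces on the $h$-side, and the only obstruction is the finitely many half-spaces transverse to $h$ — here finite-dimensionality $D$ bounds the number of pairwise transverse half-spaces, so by choosing the depths larger than a bound depending only on $D$ and on a fixed finite orbit-configuration, transversality is excluded and strict nesting $ab^{-1}h^*\subsetneq h$ follows.

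I expect the main obstacle to be precisely this control of transverse half-spaces: one must rule out that the translated half-space $ab^{-1}h^*$ is merely \emph{transverse} to $h$ rather than nested inside it. The tool for this is the bound $n\le D$ on pairwise transverse half-spaces (Theorem \ref{Helly} and the walled-space axiom) together with the essential/deep hypothesis, which guarantees we have ``room'' to move past any bounded configuration of walls. A clean way to package the argument is to first establish a lemma of the form: \emph{if $v,w\in X$ with $d(v,h)$ and $d(w,h^*)$ both exceeding $D$ (or a similar explicit bound), and if $\g v = w$ is impossible but $\g$ maps a deep point to a deep point on the opposite side, then $h^*\subset\g h$}; then feed in $v=bx$, $w=ax$, $\g=ab^{-1}$. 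Alternatively, and perhaps more transparently, one can invoke the essential core and the fact that on it the action has no fixed point in $\~X$, run a standard double-skewering / ping-pong on the pair of ``ends'' determined by deep sequences into $h$ and into $h^*$, and extract $\g$ skewering the wall $\^h$; this is the approach I would write up, since it makes the role of nonelementarity most visible. The routine verifications — that deep points exist, that the separating set is finite, that translates of nested pairs are nested — I would treat as immediate from the definitions and the results already recalled.
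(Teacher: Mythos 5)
You should first note that the paper does not prove this lemma at all: it is quoted verbatim from Caprace--Sageev, so the relevant comparison is with their argument. Measured against that (or against any complete proof), your main line of attack has a genuine gap at exactly the step you flag as the "main obstacle." Knowing that $ax$ lies deep in $h$ and deep in $k:=ab^{-1}h^*$ constrains the pair $(h,k)$ not at all: two half-spaces containing a common very deep point can perfectly well be transverse (in the standard cubulation of $\Z^2$, the point $(N,N)$ is $N$-deep in both $\{x\geq 0\}$ and $\{y\geq 0\}$) or nested the wrong way around. The bound $n\leq D$ on \emph{families} of pairwise transverse half-spaces gives no information about whether a single pair is transverse, so no choice of depth "larger than a bound depending on $D$" can exclude transversality or reverse nesting. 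A concrete test case shows the construction itself fails: for $\Z^2$ acting on the plane and $h=\{x\geq 0\}$ (which is essential), your $\g=ab^{-1}$ is a translation and $\g h^*$ \emph{contains} $h^*$ rather than sitting inside $h$. Of course the lemma's conclusion is false there only because that action is elementary --- which exposes the structural problem: your primary argument uses only essentiality (deepness), never nonelementarity, and the lemma is genuinely false without the latter. Your fallback route is also not available: Double Skewering (Lemma \ref{Double Skewering Lemma}) is \emph{deduced} from the Flipping Lemma in Caprace--Sageev (one flips twice), so invoking a "standard double-skewering / ping-pong" to extract the flipping element is circular.

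The actual proof runs by contradiction and is not quantitative. If no $\g\in\G$ flips $h$, then $\g h^*\cap h^*\neq\varnothing$ for every $\g$, so the $\G$-orbit of $h^*$ is a family of pairwise intersecting convex sets; Helly for half-spaces (Theorem \ref{Helly}) upgrades this to all finite intersections being nonempty. One then uses finite dimensionality together with the Adams--Ballmann/Caprace--Lytchak circumcenter machinery for such filtering families of convex subsets: either one obtains a bounded $\G$-invariant configuration, which essentiality rules out, or a canonical $\G$-fixed point in the visual boundary $\partial_\sphericalangle X$, contradicting nonelementarity. This is where the hypothesis of nonelementarity does its work, via the visual boundary rather than via any depth estimate; if you want to write a self-contained proof, that is the mechanism you need to reproduce.
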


Recall that a measure $\lambda$ is said to be \emph{quasi-$\G$-invariant} whenever the following holds for every $\g \in \G$, and every measurable set $E$: if $\lambda(E) >0$ then $\lambda(\g E) >0$.

\begin{cor}\cite{CFI}\label{PosMeasHalfSpaces}
Suppose $\G\to \Aut(X)$ is a nonelementary and essential action  on the CAT(0) cube complex $X$. If $\lambda$ is a quasi-$\G$-invariant probability measure on $\~ X$ then $\lambda(h)>0$ for every half space $h\in \frakH(X)$. 
\end{cor}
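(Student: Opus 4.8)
The plan is to exploit the Flipping Lemma together with the fact that a quasi-invariant probability measure cannot have all its mass pushed arbitrarily far into a complementary half-space. Fix a half-space $h\in\frakH(X)$. Since the action is essential, $h$ is $\G$-essential, hence by the Flipping Lemma \ref{flip} there is $\g\in\G$ with $\g h\subset h^*$, equivalently $h\subset \g h^* = (\g h)^*$. Iterating, I would like to produce for each $n$ an element $\g_n\in\G$ such that the sets $\g_1 h, \g_2 h, \dots$ are pairwise disjoint (or more precisely nested inside a strictly decreasing chain of half-spaces), so that $\sum_n \lambda(\g_n h)\le \lambda(\~X)=1$ forces $\lambda(\g_n h)\to 0$; but since $\lambda$ is quasi-$\G$-invariant, $\lambda(h)>0$ would imply $\lambda(\g_n h)>0$, and without a uniform lower bound this alone is not a contradiction. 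So the right move is different: I would argue by contradiction assuming $\lambda(h)=0$, and show that then $\lambda$ must be supported on the "limit" of a descending chain, contradicting that $\lambda$ is a probability measure, or more cleanly, derive $\lambda(X)$-type constraints.

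Here is the approach I actually expect to work. Suppose $\lambda(h)=0$ for some $h\in\frakH$. Consider the collection of half-spaces $k$ with $\lambda(k)=0$; by quasi-invariance this collection is $\G$-invariant (if $\lambda(k)=0$ and $\lambda(\g k)>0$ would contradict quasi-invariance applied to $\g^{-1}$), and it is a consistent set: if $\lambda(k)=0$ and $k\subset \ell$ then $\ell^*\subset k^*$, and $\lambda(\ell) = 1-\lambda(\ell^*)\ge 1-\lambda(k^*)$... that inequality goes the wrong way, so consistency of the zero-measure collection is not immediate. Instead note that if $\lambda(h)=0$ then $\lambda(h^*)=1$, and for any $\g$, $\lambda(\g h^*)=1$ as well would follow if $\lambda$ were $\G$-invariant, but we only have quasi-invariance. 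Since $\lambda(h)=0$, quasi-invariance does give $\lambda(\g h)=0$ hence $\lambda(\g h^*)=1$ for all $\g\in\G$. Thus the $\G$-invariant consistent collection $\mathfrak{s}:=\{k\in\frakH : \lambda(k)=1\}$ contains $\g h^*$ for all $\g$. Now $\mathfrak{s}$ is consistent: $k\in\mathfrak{s}$ and $k\subset\ell$ gives $\lambda(\ell)\ge\lambda(k)=1$, and $\mathfrak{s}\cap\mathfrak{s}^*=\varnothing$ since $\lambda$ is a probability measure. Since $h$ is essential (both $h$ and $h^*$ are deep), I claim $\mathfrak{s}$ cannot satisfy the descending chain condition — indeed essentiality and flippability let one build a strictly decreasing chain $\g_1 h^*\supsetneq \g_2 h^*\supsetneq\cdots$ inside $\mathfrak{s}$ using the Flipping Lemma repeatedly (flip $h$, pull back, flip again within the essential core). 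Then $\bigcap_{k\in\mathfrak{s}}k$ is a single point $\xi\in\partial X$ by the Facts and Roller Duality, and $\lambda$ must be the Dirac mass at $\xi$ (since every half-space containing $\xi$ has full measure and $\~X$ is the nested intersection). But a Dirac mass $\delta_\xi$ that is quasi-$\G$-invariant forces $\G\xi$ to be finite-to-one... actually it forces every $\g$ to fix $\xi$, i.e. $\G$ fixes a point of $\~X$, contradicting nonelementarity (a fixed point in $\~X$ yields a fixed point in $X$ or $\partial_\sphericalangle X$, or more directly contradicts that the action is essential and nonelementary since essential nonelementary actions have no global fixed point in $\~X$).

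So the key steps, in order, are: (1) assume $\lambda(h)=0$; (2) use quasi-invariance to get $\lambda(\g h^*)=1$ for all $\g\in\G$, so the full-measure collection $\mathfrak{s}$ is a nonempty $\G$-invariant consistent set; (3) use the Flipping Lemma and essentiality to show $\mathfrak{s}$ contains a strictly infinite descending chain, hence $\bigcap_{k\in\mathfrak{s}}k=\{\xi\}$ for a single $\xi\in\partial X$; (4) conclude $\lambda=\delta_\xi$; (5) observe $\delta_\xi$ quasi-$\G$-invariant means $\g\xi=\xi$ for all $\g$, i.e. a global fixed point in $\~X$, contradicting that the action is nonelementary and essential. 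The main obstacle I anticipate is step (3): carefully producing the strictly descending chain inside $\mathfrak{s}$ using flippability — one must chain applications of Lemma \ref{flip} so that each flipped half-space still lies in the full-measure family and is strictly smaller than the previous, which requires tracking the essential core and possibly passing through deep half-spaces nested inside $h$; there may also be a subtlety if $\mathfrak{s}$ happens to already determine a boundary point without needing an infinite chain, but essentiality of $h$ should rule out $\mathfrak{s}$ being, e.g., a vertex ultrafilter $U_v$ with $v\in X$. An alternative cleaner route for step (3)–(5): simply note that a $\G$-invariant consistent set $\mathfrak{s}$ with $h^*\in\mathfrak{s}$ and $\g h^*\in\mathfrak{s}$ for all $\g$ gives $\bigcap_{\g}\g h^* \ne\varnothing$ a $\G$-invariant nonempty subset of $\~X$; essentiality forces this to be strictly smaller than $\~X$, and minimality/nonelementarity considerations (or directly that essential nonelementary actions fix no point of $\~X$, cf. the Tits alternative above) give the contradiction.
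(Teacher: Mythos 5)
There is a genuine gap in your argument, and it occurs exactly at the steps you flagged as delicate. Your step (2) is fine: if $\lambda(h)=0$ then quasi-invariance gives $\lambda(\g h)=0$, so $\mathfrak{s}=\{k\in\frakH:\lambda(k)=1\}$ is a nonempty, consistent, $\G$-invariant family containing every $\g h^*$. But in step (3)--(4) you conclude that $\Cap{k\in \mathfrak{s}}{}k$ is a single boundary point and hence $\lambda=\delta_\xi$. Consistency without \emph{totality} does not give a point: by Proposition \ref{LiftingDecomp} the intersection of a consistent family is a whole subcomplex $\~X(\frakH_{\mathfrak{s}})$, and exhibiting an infinite descending chain inside $\mathfrak{s}$ does not shrink it to a singleton (there may be many half-spaces $k$ with $0<\lambda(k)<1$, and these all survive in $\frakH_{\mathfrak{s}}$). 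So $\lambda$ need not be a Dirac mass, and step (5) never gets off the ground. Your ``cleaner alternative route'' fails for the same reason: a nonempty closed $\G$-invariant subset of $\~X$ is not in conflict with the action being nonelementary and essential (orbit closures, the essential core, etc.\ all provide such sets); what nonelementarity forbids is a \emph{finite} orbit, and the invariant subcomplex you produce need not be finite.

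Ironically, the contradiction is already sitting inside your own construction, and it is the one-step use of the Flipping Lemma that you discarded at the outset because you were thinking in terms of disjoint translates and summability rather than containment and monotonicity. Since the action is essential, $h$ is $\G$-flippable: there is $\g\in\G$ with $h^*\subset \g h$. If $\lambda(h)=0$ then $\lambda(h^*)=1$, so by monotonicity $\lambda(\g h)\geq \lambda(h^*)>0$, and quasi-invariance (applied with $\g^{-1}$) forces $\lambda(h)>0$, a contradiction. This is precisely the paper's proof. Equivalently, in your notation: the first flip gives $\g h^*\subset h$ with $\g h^*\in\mathfrak{s}$, i.e.\ a full-measure set contained in the null set $h$ --- no chains, Dirac masses, or fixed-point arguments are needed.
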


\begin{proof}
Let $h\in \frakH$. Then $\lambda(h\sqcup h^*) = 1$ which means that either $\lambda(h) >0$ or $\lambda(h^*)>0$. If $\lambda(h^*)>0$ then apply the Flipping Lemma \ref{flip} and deduce that there is a $\g\in \G$ such that $h^* \subsetneq \g h$ and hence $\lambda(\g h) \geq \lambda(h^*) >0$. But of course, $\lambda$ is $\G$-quasi-invariant so $\lambda(h)>0$.
\end{proof}

Another very important operation on half-spaces developed by Caprace and Sageev is the notion of double skewering:
\begin{lemma}\cite[Double Skewering]{CapraceSageev}\label{Double Skewering Lemma}
Suppose $\G\to \Aut(X)$ is a nonelementary  action on the CAT(0) cube complex $X$. If $h\subsetneq k$ are two essential half spaces, then there exists a $\g \in \G$ such that 
$$\g k\subsetneq h \subsetneq k.$$
\end{lemma}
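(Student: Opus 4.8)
The plan is to derive Double Skewering from the Flipping Lemma applied twice, together with the elementary combinatorics of nested and transverse half-spaces. The setup: we are given essential half-spaces $h \subsetneq k$, and we want $\g \in \G$ with $\g k \subsetneq h \subsetneq k$. The key idea is that flipping $h$ gives us an element pushing $h^*$ deep inside $h$, and then a second flip (or a product of the two group elements) pushes the complement $k^*$ all the way across to land inside $h$, so that after the appropriate move $k$ itself lands inside $h$.

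First I would apply the Flipping Lemma \ref{flip} to the essential half-space $k$: there is $\a \in \G$ with $k^* \subsetneq \a k$, equivalently $(\a k)^* \subsetneq k$, i.e. $\a^{-1}$ skewers $k$ in the sense that $\a(k^*) \subset k$ is a proper containment; rewriting, $\a^{-1} k \subsetneq k$ would be the single-skewering statement but we need the inner half-space to be contained in $h$, not merely in $k$. So the actual argument must be more careful. The standard Caprace--Sageev approach: apply the Flipping Lemma to $h$ to obtain $\b \in \G$ with $h^* \subsetneq \b h$. Now consider the half-space $\b h$; it strictly contains $h^*$, hence (since $h \subsetneq k$ gives $k^* \subsetneq h^*$) it strictly contains $k^*$ as well. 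The element I want is a suitable power or product that iterates this ``pushing across'' so that $k$, not just $k^*$, ends up inside $h$. Concretely, I would look at $\b^{-1}$: from $h^* \subsetneq \b h$ we get $\b^{-1} h^* \subsetneq h$, i.e. $\b^{-1}(X \setminus h) \subsetneq h$, so $X \setminus \b^{-1} h \subsetneq h$, giving $h^* \subsetneq \b^{-1} h$ — hmm, I need to track orientations precisely. The clean statement is: there is $\g_0$ with $h \subsetneq \g_0 h \subsetneq \g_0 k$ is wrong too; rather, skewering $k$ by an element that also respects the nesting with $h$ is what is needed, and this is where one uses that $h \subsetneq k$ to control where the images land.

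The cleanest route, which I would follow: apply the Flipping Lemma to $h$ to get $\b$ with $h^* \subsetneq \b h$, hence $\b^{-1} h^{*} \subsetneq h$, so $\b^{-1} k^{*} \subsetneq \b^{-1} h^{*} \subsetneq h \subsetneq k$ (using $k^* \subsetneq h^*$). Thus $\b^{-1} k^* \subsetneq k$, which means $\b^{-1}$ properly skewers $k$: $\b^{-1}(k^*) \subsetneq k$, equivalently $k^* \subsetneq \b k$, equivalently $(\b k)^* \subsetneq k$. Now set $\g = \b^{-1}$; then $\g(k^*) \subsetneq h$ says $X \setminus \g k \subsetneq h$, i.e. $h^* \subsetneq \g k$, i.e. $\g k \supsetneq h^*$ — that places $h^*$ inside $\g k$, not $\g k$ inside $h$. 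To get $\g k \subsetneq h$ I instead want the inclusion oriented the other way, so I should flip $h$ to find $\b$ with $h^* \subsetneq \b h$ and then use $\b$ directly on $k$: we have $k \subsetneq$? No — the correct final move is to note $\b h^* \supsetneq h$, take complements: $\b h \subsetneq h^*$, so $\b h \subsetneq h^* \subsetneq$ nothing helpful unless $h \subsetneq k$ is used on $\b h$. I expect the actual proof records: $h^* \subsetneq \b h \Rightarrow \b h \subsetneq h \Rightarrow \b k \subsetneq$? This requires knowing $\b k \subset \b h$ fails since $h \subsetneq k$ gives $\b h \subsetneq \b k$, the wrong direction. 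Hence one genuinely needs to skewer $k$ first (Flipping Lemma on $k$ giving $\d$ with $k^* \subsetneq \d k$, so $\d k \subsetneq k$) and then compose with a flip of $h$ to slide the inner copy from being inside $k$ to being inside $h$; the composition $\g = \b \d$ for appropriate $\b, \d$ should work.

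The main obstacle — and the step I expect to be delicate — is exactly this bookkeeping of orientations and the argument that, after skewering $k$ to get $\d k \subsetneq k$, one can further conjugate/compose to force the inner half-space past $h$ while keeping it nested below $k$; this uses essentiality of $h$ crucially (via a second application of the Flipping Lemma, possibly to a translate of $h$) and the transitivity of strict inclusion among the translates. If the direct composition does not immediately land $\g k$ strictly inside $h$, the fallback is to iterate: produce $\d$ skewering $k$, observe the descending chain $k \supsetneq \d k \supsetneq \d^2 k \supsetneq \cdots$, and use that an infinite strictly descending chain of half-spaces through a fixed essential $h$ must eventually pass below $h$ — since otherwise the $\d^n k$ would all contain the fixed point set relationship with $h$, contradicting that the chain is strictly descending and $h$ is essential (deep on both sides). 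Choosing $\g = \d^n$ for $n$ large then gives $\g k \subsetneq h \subsetneq k$ as required.
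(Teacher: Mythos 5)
The paper does not actually prove this lemma---it is quoted directly from Caprace--Sageev---so the benchmark is the standard argument there, which is exactly the two-flip composition you are circling. Your overall plan is right, but you never land it, and the two places where you try to close the argument contain genuine errors. The correct bookkeeping is: since $k$ is essential so is $k^*$, and the Flipping Lemma gives $g_1\in\G$ with $k\subsetneq g_1k^*$, i.e.\ $g_1k\subsetneq k^*$; since $h$ is essential, the Flipping Lemma gives $g_2\in\G$ with $h^*\subsetneq g_2h$, i.e.\ $g_2h^*\subsetneq h$. As $h\subsetneq k$ gives $k^*\subsetneq h^*$, we obtain $g_2g_1k\subsetneq g_2k^*\subseteq g_2h^*\subsetneq h\subsetneq k$, so $\gamma=g_2g_1$ works. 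In your text this step is replaced by ``Flipping Lemma on $k$ giving $\delta$ with $k^*\subsetneq\delta k$, so $\delta k\subsetneq k$,'' which is false: $k^*\subsetneq\delta k$ forces $\delta k\not\subset k$ (it contains $k^*$); the flip you need is of $k^*$, not of $k$. Likewise ``$h^*\subsetneq\beta h\Rightarrow\beta h\subsetneq h$'' is false. You do correctly derive $\beta^{-1}k^*\subsetneq h$ and correctly recognize that this has the wrong orientation, but the asserted composition ``$\gamma=\beta\delta$ should work'' is never verified and, with the inclusions you actually established, cannot be.

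The fallback is not a repair. First, a single flip does not produce an element $\delta$ with $\delta k\subsetneq k$ (already that requires composing a flip of $k$ with a flip of $k^*$). More importantly, even granted such a $\delta$, the strictly descending chain $\delta^nk$ need not ever pass below $h$, and essentiality of $h$ does not force it to: in the Cayley tree of $F_2=\langle a,b\rangle$ take $k$ to be the half-space of reduced words beginning with $a$, take $h\subsetneq k$ to be the words beginning with $a^2$, and let $\delta=aba^{-1}$. Then $\delta^nk$ is the half-space of words beginning with $ab^n$: a strictly descending chain inside $k$, every term disjoint from $h$, with all half-spaces involved essential for $F_2$. The chain descends toward the attracting end of $\delta$, and nothing in your argument places that end inside $h$---arranging exactly that is what the double-skewering element must accomplish, so the fallback assumes what is to be proved.
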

 
The following is almost a direct consequence of the definitions. The reader will find a more in depth formulation in \cite[Proposition 3.2]{CapraceSageev} 

\begin{lemma}\label{lem:NoFiniteColInvHalf}
 If $\G$ acts on the CAT(0) cube complex $X$ and preserves a finite collection of half-spaces then the $\G$-action is either elementary or not essential.
\end{lemma}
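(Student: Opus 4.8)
The statement to prove is Lemma \ref{lem:NoFiniteColInvHalf}: if $\G$ acts on $X$ and preserves a finite collection of half-spaces, then the action is elementary or not essential. My plan is to argue by contrapositive: assume the action is both nonelementary and essential, and derive that no finite $\G$-invariant collection of half-spaces can exist other than the empty one (and the empty collection trivially satisfies nothing to preserve, so really I want to show that any nonempty $\G$-invariant collection is infinite). So suppose $\mathcal{F} \subset \frakH$ is a nonempty finite $\G$-invariant collection of half-spaces. Pick $h \in \mathcal{F}$; by essentiality $h$ is essential, so by the Double Skewering Lemma \ref{Double Skewering Lemma} applied to $h \subsetneq h$... no — I need two nested essential half-spaces. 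Let me instead use the Flipping Lemma or, more directly, produce an infinite strictly nested chain inside $\mathcal{F}$.

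Here is the cleaner route. Since the action is nonelementary and essential, every $h \in \frakH$ is essential, hence $\G$-deep on both sides. Take any $h \in \mathcal{F}$. I want to find $\g \in \G$ with $\g h \subsetneq h$; then $\g^n h$, $n \geq 0$, is a strictly decreasing sequence of distinct half-spaces, all lying in $\mathcal{F}$ by $\G$-invariance, contradicting finiteness. To get such a $\g$: because $h$ is deep, the orbit $\G \cdot o \cap h$ is \emph{not} at bounded distance from $h^*$, so there is a point $\g_1 o$ deep inside $h$, i.e. there is a half-space $k$ with $\g_1 o \in k \subsetneq h$ and $k$ far from $h^*$ — more carefully, one invokes the standard Caprace–Sageev machinery. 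Actually the simplest citation-based step: by Double Skewering, I first need $h \subsetneq k$ with $k$ essential. Since the essential core $Y$ is unbounded and all half-spaces of $X$ are essential, for any $h$ there exists an essential $k$ with $h \subsetneq k$: indeed take a point $x \in h$ far from $h^*$ and a point $y \in \G x$ with $y \in h^*$ (using flippability / deepness of $h^*$); the half-spaces separating a suitable pair of points give a nested pair. Then Double Skewering gives $\g$ with $\g k \subsetneq h \subsetneq k$, and in particular $\g k \subsetneq h$; iterating $\{\g^n h\}$ — here I should be careful that $\g^n h$ strictly nest. From $\g k \subsetneq h \subsetneq k$ one gets $\g h \subsetneq \g k \subsetneq h$, so $\g h \subsetneq h$, and then $\g^{n+1} h \subsetneq \g^n h$ for all $n$ by applying $\g^n$. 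These are infinitely many distinct half-spaces in $\mathcal{F}$, the desired contradiction.

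The main obstacle I anticipate is the bookkeeping in producing the initial essential half-space $k$ with $h \subsetneq k$, and making sure the double-skewering output genuinely gives a \emph{strict} infinite descending chain rather than something that could stabilize — but strictness of $\g k \subsetneq h$ propagates immediately under applying the automorphism $\g^n$, so once the single step $\g h \subsetneq h$ is in hand the chain argument is automatic. Alternatively, and perhaps more in the spirit of the remark that this "is almost a direct consequence of the definitions," one can avoid Double Skewering entirely: if $\mathcal{F}$ is finite and $\G$-invariant, then $\G$ acts on $\mathcal{F}$ by permutations, so the kernel $\G_0$ of $\G \to \mathrm{Sym}(\mathcal{F})$ has finite index and fixes every $h \in \mathcal{F}$ setwise. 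A half-space fixed by $\G_0$ is not $\G_0$-deep on at least one side (the orbit $\G_0 x$ stays in $h$ for $x \in h$, in fact never crosses the wall $\^h$), hence not $\G_0$-essential; by Remark \ref{Finite Index Ess}, essentiality for $\G$ and $\G_0$ coincide, so $h$ is not $\G$-essential. Thus if $\mathcal{F} \neq \varnothing$ the action is not essential; and if $\mathcal{F} = \varnothing$ there is nothing preserved, so we may assume $\mathcal{F} \neq \varnothing$. Wait — $\G_0$ fixing $h$ setwise does not preclude $h$ being essential for $\G$ if... no: $h$ is $\G$-essential $\iff$ $h$ is $\G_0$-essential, and $\G_0 \cdot x \subset h$ for $x\in h$ forces $h^*$ to be $\G_0$-shallow (indeed empty of $\G_0 x$), so $h^*$ is $\G$-shallow, so $h$ is not $\G$-essential. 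This second argument is shorter and I would present it as the main proof, keeping the double-skewering approach as a remark or omitting it. The only subtlety to check is the case analysis when $\G$ preserves $\mathcal{F}$ but also the action might be elementary for a different reason — but that is fine: we only need "elementary OR not essential," and the non-essential conclusion is exactly what the fixed-half-space argument yields whenever $\mathcal{F}$ is nonempty.
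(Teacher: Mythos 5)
Your second (finite-index kernel) argument is correct and is exactly the ``direct consequence of the definitions'' the paper has in mind: the paper offers no proof of this lemma, only the remark that it is almost immediate and a citation to Caprace--Sageev, and your argument — the kernel $\G_0$ of $\G\to\mathrm{Sym}(\mathcal F)$ has finite index and fixes each $h\in\mathcal F$ setwise, so for $x\in h$ the orbit $\G_0x$ misses $h^*$, making $h^*$ $\G_0$-shallow, hence $h$ not $\G_0$-essential, hence not $\G$-essential by Remark \ref{Finite Index Ess} — is complete and in fact yields the stronger conclusion ``not essential'' whenever the preserved collection is nonempty. The double-skewering sketch you open with is both unnecessary and looser (it presupposes a nested essential pair $h\subsetneq k$ whose existence you never fully justify), so presenting the kernel argument as the proof, as you propose, is the right call.
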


 An action of $\G$ on $X$ is said to be \emph{Roller nonelementary} if there is no finite orbit in the Roller compactification. Of course, having a finite orbit in $X$ is equivalent to having a fixed point, and so, what distinguishes the Roller nonelementary from the visual nonelementary actions is the existence of finite orbits in the corresponding boundaries. Furthermore, (visual) nonelementary actions are necessarily Roller nonelementary,  though the converse is false in general. One can take for example the standard action of $\G=\Z\times F_2$ on $\Z\times T$, where $T$ is the standard Cayley tree. It is straightforward to see that this example is essential and  elementary but not Roller elementary. On the other hand, if we set $\partial \Z =\{-\8, \8\}$ then both $\{\8\}\times T$ and $\{-\8\}\times T$ are $\G$-invariant and nonelementary. This phenomenon is captured in the following:

\begin{prop} (\cite[Proposition 2.26]{CFI}, \cite{CapraceSageev})\label{prop:visual-to-roller} 
Let $X$ be a finite dimensional CAT(0) cube complex
and let $\Gamma\to\Aut(X)$ be an action on $X$.  One of the following hold:
\begin{enumerate}
\item The $\G$-action is Roller-elementary. 
\item There is a finite index subgroup $\Gamma'<\Gamma$ and a $\Gamma'$-invariant 
subcomplex $\~X'\hookrightarrow \~ X$ associated to a $\G'$-invariant $\frakH'\subset \frakH(X)$ on which the $\Gamma'$-action is nonelementary and essential.
\end{enumerate}
Moreover, if the action of $\G$ is nonelementary on $X$ then  $X'\hookrightarrow X$ and $X'$ is the $\G$-essential core. 
\end{prop}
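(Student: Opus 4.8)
\emph{Proof proposal.} The plan is to dispose of the Roller-elementary case and, in its negation, to produce the desired subcomplex from the Caprace--Sageev \emph{essential core} by stripping off its Euclidean direct factors.

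If the $\G$-action is Roller-elementary we are in case (1); assume henceforth that it is Roller-nonelementary. First I would pass to the essential core. By \cite[Proposition~2.26]{CFI} (together with \cite{CapraceSageev}, of which Theorem~\ref{EssNonEmpty} is the instance where $\G$ itself acts nonelementarily), after replacing $\G$ by a finite-index subgroup $\G_1$ the collection $\frakH_{\mathrm{ess}}$ of $\G_1$-essential half-spaces is non-empty; it is involution-invariant and convex (if $h\subset\ell\subset k$ with $h,k$ essential, then $\ell$ is essential), and it admits a $\G_1$-invariant consistent lifting decomposition $\mathfrak{s}_0$. By Proposition~\ref{LiftingDecomp} the essential core $Y:=X(\frakH_{\mathrm{ess}})$ is unbounded and embeds $\G_1$-equivariantly and isometrically, $\~{Y}\hookrightarrow\~{X}$, with image $\Cap{h\in\mathfrak{s}_0}{}h$; the $\G_1$-action on $Y$ is essential by construction, and remains Roller-nonelementary, since a finite orbit in $\~{Y}$ would push forward to one in $\~{X}$, while finite-index subgroups of Roller-nonelementary groups are Roller-nonelementary.

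Next I would decompose and strip. Discarding its bounded factors, $Y$ has a canonical decomposition $Y=Y_1\times\dots\times Y_k$ into finitely many unbounded irreducible CAT(0) cube complexes \cite{CapraceSageev}, which $\G_1$ permutes; after passing to a further finite-index subgroup $\G_2$ fixing each factor, $\G_2$ acts essentially on every $Y_i$ (a half-space $h\times\prod_{j\neq i}Y_j$ is essential in $Y$ precisely when $h$ is essential in $Y_i$, as orbits in a product project onto orbits in the factors). By the classification of essential actions on irreducible finite-dimensional CAT(0) cube complexes \cite{CapraceSageev}, for each $i$ either (a) $Y_i\cong\R$ with the $\G_2$-action on $Y_i$ Roller-elementary, or (b) the $\G_2$-action on $Y_i$ is nonelementary. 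Let $I$ be the set of indices of type (b). If $I=\varnothing$ then $\G_2$ has a finite orbit in each $\~{Y_i}$, hence in $\prod_i\~{Y_i}\subset\~{Y}\hookrightarrow\~{X}$, contradicting Roller-nonelementarity; so $I\neq\varnothing$. Set $X':=\prod_{i\in I}Y_i$ and let $\frakH'\subset\frakH_{\mathrm{ess}}$ be the half-spaces belonging to the factors $Y_i$, $i\in I$; it is involution-invariant, convex, and $\G_2$-invariant. For each $i\notin I$ the group $\G_2$ acts on $Y_i\cong\R$ by isometries; passing to a last finite-index subgroup $\G'$ acting by translations on each of these line factors, a consistent orientation of each such $Y_i$ becomes $\G'$-invariant, and these orientations form a $\G'$-invariant consistent set $\mathfrak{s}_1$ with $\frakH_{\mathrm{ess}}\setminus\frakH'=\mathfrak{s}_1\sqcup\mathfrak{s}_1^*$. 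Since half-spaces lying in distinct direct factors are never nested, and a deep half-space is never contained in a shallow one, $\mathfrak{s}:=\mathfrak{s}_0\sqcup\mathfrak{s}_1$ is a $\G'$-invariant consistent lifting decomposition of $\frakH'$; Proposition~\ref{LiftingDecomp} then gives a $\G'$-equivariant isometric embedding $\~{X'}=\~{X(\frakH')}\hookrightarrow\~{X}$ onto $\Cap{h\in\mathfrak{s}}{}h$. Finally the $\G'$-action on $X'=\prod_{i\in I}Y_i$ is essential (a product of essential actions) and nonelementary: $X'$ is unbounded, and a finite $\G'$-orbit in $X'$ or in the spherical join $\partial_\sphericalangle X'$ would, projecting to a factor $Y_i$ ($i\in I$) to which it is not perpendicular, yield a finite $\G'$-orbit in $Y_i\cup\partial_\sphericalangle Y_i$. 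This is case (2).

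For the last assertion, suppose the $\G$-action on $X$ is nonelementary. Then Theorem~\ref{EssNonEmpty} applies to $\G$ directly, so no finite-index passage is needed: $Y$ is non-empty, unbounded, and $\G$-equivariantly embedded in $X$ as a convex subcomplex \cite{CapraceSageev} (not merely in $\~{X}$), so that $\partial_\sphericalangle Y$ sits $\G$-equivariantly inside $\partial_\sphericalangle X$ and the $\G$-action on $Y$ is again nonelementary. The poles of any Euclidean direct factors of $Y$ would then form a finite $\G$-invariant subset of $\partial_\sphericalangle Y$, so there are none; hence $I=\{1,\dots,k\}$, $\mathfrak{s}_1=\varnothing$, and $X'=Y$ is the $\G$-essential core, which is embedded in $X$. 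The hard part is the third paragraph: for a Roller-nonelementary but visually elementary action --- the prototype being the action of $\Z\times F_2$ on $\Z\times T$, where $X'$ genuinely lies in $\partial X$ --- one must invoke the full Caprace--Sageev dichotomy for essential actions on irreducible complexes and track the chain of finite-index reductions carefully so that $\mathfrak{s}_0$, $\mathfrak{s}_1$, and hence the lifting decomposition, stay equivariant; and the fact that the essential core is non-empty already under the Roller-nonelementary hypothesis, not merely the visually nonelementary one, is precisely the content drawn from \cite[Proposition~2.26]{CFI}.
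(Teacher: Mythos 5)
The paper does not actually prove this proposition: it quotes it from \cite[Proposition 2.26]{CFI} and \cite{CapraceSageev}, so a proof must supply the content of that citation. Your proposal instead invokes \cite[Proposition 2.26]{CFI} at precisely the step that carries all the weight, namely that a Roller-nonelementary action yields, after passing to a finite-index subgroup, a nonempty invariant family of essential half-spaces together with an equivariant embedding of the associated complex into $\~X$ even when the action on $X$ is visually elementary --- and your closing paragraph concedes that this is where the fact comes from. That is circular. When the action on $X$ is (visually) nonelementary, everything you do is already covered by Theorem \ref{EssNonEmpty} and Lemma \ref{lem:boundary of factors}; the only case requiring an argument is the visually elementary, Roller-nonelementary one (the prototype $\Z\times F_2$ acting on $\Z\times T$, which you yourself name), and there your proposal produces neither the finite-index subgroup, nor the invariant consistent set $\mathfrak{s}_0$ (Theorem \ref{EssNonEmpty} is not available), nor the invariant $\frakH'$. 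The argument behind the citation treats this case by a different mechanism altogether: a finite orbit in the visual boundary gives, after a finite-index passage, an invariant subcomplex of the Roller boundary of strictly smaller dimension, and one inducts on dimension. Nothing in your proof plays that role.

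Separately, the stripping step rests on a dichotomy that is not in \cite{CapraceSageev} and is false as stated: an essential action on an unbounded irreducible finite-dimensional factor need not be either a Roller-elementary action on a line or a visually nonelementary action. For instance $BS(1,2)$ acting on its Bass--Serre tree is essential and irreducible, the tree is not a line, and the action fixes an end, hence is elementary; taking a product with a nonelementary tree action puts such a factor inside your setting. The correct dividing line per factor is Roller-elementary versus Roller-nonelementary: a Roller-elementary factor can be stripped, after a finite-index passage, by taking the ultrafilter $U_v$ of a fixed point $v\in\~{Y_i}$ as the consistent set in Proposition \ref{LiftingDecomp} (an invariant orientation of a line is only the special case), but one must then still prove that the surviving factors carry visually nonelementary actions --- and for an irreducible, essential, Roller-nonelementary factor that implication is once more the substance of Proposition \ref{prop:visual-to-roller}, not a consequence of the tools you assemble.
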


\subsection{Product Structures}\label{subsec:decomposition} 

A CAT(0) cube complex is said to be \emph{reducible} if it can be expressed as a nontrivial product. Otherwise, it is said to be \emph{irreducible}. A CAT(0) cube complex $X$ with half-spaces $\frakH$, admits a product decomposition $X=X_1\times \cdots \times X_n$ if and only if there is a decomposition 
$$\frakH= \frakH_1 \sqcup \cdots \sqcup \frakH_n$$
such that if $i\neq j$ then $h_i\pitchfork h_j$ for every $(h_i, h_j)\in \frakH_i\times \frakH_j$ and $X_i$ is the CAT(0) cube complex on half-spaces $\frakH_i$. 

\begin{remark}\label{Remark product intervals is interval in product}
 This means that an interval in the product is the product of the intervals. Namely if $(x_1, \dots, x_n), (y_1, \dots, y_n) \in \~X_1\times \dots \~X_n$ then $$\I\((x_1, \dots, x_n), (y_1, \dots, y_n)\) = \I(x_1, y_1) \times \cdots \times \I(x_n, y_n).$$
\end{remark}

The irreducible decomposition is unique (up to permutation of the factors) and $\Aut(X)$ contains $\Aut(X_1)\times \dots \times \Aut(X_n)$ as a finite index subgroup. Therefore, if $\G$ acts on $X$ by automorphisms, then there is a subgroup of finite index which preserves the product decomposition 
 \cite[Proposition 2.6]
{CapraceSageev}.

We take the opportunity to record here that the Roller boundary is incredibly well behaved when it comes to products:
\begin{equation*}
\partial X=\bigcup_{i=1}^{n}\~{X_1}\times\dots\~{X_{j-1}}\times\partial X_j\times
  \~{X_{j+1}}\times\dots\times\~{X_n}.
\end{equation*}

While the definition of (ir)reducibility for a CAT(0) cube complex in terms of its half-space structure is already quite useful, its global character makes it at times difficult to implement. Behrstock and Charney developed an incredibly useful notion for the Salvetti complexes associated to Right Angled Artin Groups, which was then extended by Caprace and Sageev.

\begin{definition}[\cite{Behrstock_Charney}]\label{defi:strongly separated} 
Two half-spaces $h, k \in \frakH$ are said to be \emph{strongly separated} if there is no half-space which is simultaneously transverse to both $h$ and $k$. For a subset $\frakH'\subset \frakH$ we will say that $h,k\in \frakH'$ are \emph{strongly separated in $\frakH'$} if there is no half-space in $\frakH'$ which is simultaneously transverse to both $h$ and $k$.
\end{definition}

The following is proved in \cite{Behrstock_Charney} for (the universal cover of) the Salvetti complex of non-abelian RAAGs:

\begin{theorem}\cite{CapraceSageev}\label{th:ss} Let $X$ be a finite dimensional irreducible CAT(0) cube complex such that the action of $\Aut(X)$ is essential and non-elementary. Then $X$ is irreducible if and only if there exists a pair of strongly separated half-spaces. 
\end{theorem}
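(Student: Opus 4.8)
The plan is to prove the two implications separately. The implication that the existence of a strongly separated pair forces $X$ to be irreducible is elementary. Indeed, if $X=X_1\times\cdots\times X_n$ is a nontrivial product, then $\frakH=\frakH_1\sqcup\cdots\sqcup\frakH_n$, each $\frakH_i\neq\varnothing$, and every half-space of $\frakH_i$ is transverse to every half-space of $\frakH_j$ when $i\neq j$. Given $h,k\in\frakH$: if they lie in a common factor $\frakH_i$, then, as $n\geq 2$, any $\ell\in\frakH_j$ with $j\neq i$ is transverse to both $h$ and $k$, so $h,k$ are not strongly separated; if $h\in\frakH_i$ and $k\in\frakH_j$ with $i\neq j$, then $h$ and $k$ are themselves transverse, hence not strongly separated (as is standard, strongly separated half-spaces are required to be non-transverse, i.e.\ to have disjoint hyperplanes). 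So a nontrivial product has no strongly separated pair, and the contrapositive is the desired implication.

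For the converse, assume $X$ is irreducible; we must exhibit a strongly separated pair. If $\dim X=1$, then $X$ is an unbounded tree — the action being non-elementary there is no fixed point — no two half-spaces are transverse at all, and any two disjoint half-spaces (which exist since $X$ is unbounded) are vacuously strongly separated; so assume $\dim X=D\geq 2$. Suppose, toward a contradiction, that no two disjoint half-spaces are strongly separated, i.e.\ every pair of disjoint half-spaces admits a common transversal. Using the Flipping Lemma~\ref{flip} to produce a nested pair of essential half-spaces, and then iterating the Double Skewering Lemma~\ref{Double Skewering Lemma}, construct a strictly nested bi-infinite chain of essential half-spaces $\cdots\subsetneq h_{-1}\subsetneq h_0\subsetneq h_1\subsetneq\cdots$. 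The key elementary observation is that a half-space $\ell$ transverse to both $h_i$ and $h_j$ (with $i<j$) is transverse to every $h_m$ with $i\leq m\leq j$: both sides of $\ell$ meet $h_i\subseteq h_m$, and both sides of $\ell$ meet $h_j^*\subseteq h_m^*$. Hence the decreasing family $\mathcal W_n:=\{\ell\in\frakH:\ \ell\text{ is transverse to }h_i\text{ for all }|i|\leq n\}$ consists of nonempty sets, and one shows — this is the crux — that $\mathcal W_\infty:=\bigcap_n\mathcal W_n$ is nonempty and that, under the standing assumption, $\mathcal W_\infty$ and the half-spaces transverse to none of the $h_i$ partition $\frakH$ into two mutually transverse families. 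This exhibits $X$ as a nontrivial product in the sense of Section~\ref{subsec:decomposition}, contradicting irreducibility.

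The whole difficulty lies in this last step: converting the combinatorial hypothesis ``every disjoint pair has a common transversal'' into an actual product splitting of $X$. Showing $\mathcal W_\infty\neq\varnothing$ is not automatic — one is intersecting infinitely many nonempty sets — and it rests essentially on finite-dimensionality, via Helly's Theorem~\ref{Helly} together with the Dilworth-type control of intervals behind Theorem~\ref{embedintervals}; the same finiteness input, combined with the essentiality and non-elementarity of the action (already used to build the bi-infinite chain), is what excludes half-spaces transverse to some but not all of the $h_i$, so that the partition above is exhaustive, and what guarantees that both families are nonempty, so that the resulting product is genuinely nontrivial. Carrying this through is precisely the argument of Caprace and Sageev~\cite{CapraceSageev}, which I would follow.
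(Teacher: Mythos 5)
The paper offers no proof of this statement---it is imported verbatim with the citation to Caprace--Sageev---and your treatment is essentially the same: you give a correct elementary argument for the direction ``nontrivial product $\Rightarrow$ no (disjoint) strongly separated pair,'' and for the substantive converse you outline and then explicitly defer to the contrapositive product-splitting argument of \cite{CapraceSageev} (flipping, double skewering, the sandwiching observation, and finite dimensionality), which is precisely the source the paper relies on. Your insistence that strong separation be taken for \emph{disjoint} half-spaces is the correct reading of Definition \ref{defi:strongly separated}: without it the ``only if'' direction would fail, e.g.\ in a product of two trees a half-space from each factor admits no common transversal yet the complex is reducible.
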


\subsection{Euclidean Complexes}

\begin{definition} Let $X$ be a CAT(0) cube complex. 
We say that $X$ is {\em Euclidean} if the vertex set with the combinatorial metric 
embeds isometrically in $\Z^D$ with the $\ell^1$-metric, for some $D<\infty$.
\end{definition}

As our prime example of a Euclidean CAT(0) cube complex is an interval, which is the content of Theorem \ref{th:Euclidean equivalences}.

\begin{definition}
 An $n$-tuple of  half-spaces $h_1, \dots, h_n\in \frakH$ is said to be \emph{facing} if for each $i\neq j$ 
 $$h_i^* \cap h_j^* = \varnothing.$$
\end{definition}

As an obstruction to when a CAT(0) cube complex is Euclidean, there is the following:

\begin{lemma}\label{NtuplesNonEuclidean}\cite[Lemma 2.33]{CFI} 
If $X$ is a Euclidean CAT(0) cube complex
that isometrically embeds into $\Z^D$, then any set of pairwise facing halfspaces
has cardinality at most $2D$.
\end{lemma}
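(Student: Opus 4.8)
\textbf{Proof plan for Lemma \ref{NtuplesNonEuclidean}.}

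The plan is to use the coordinate embedding $\iota \colon X \hookrightarrow \Z^D$ (with the $\ell^1$-metric) to control how many pairwise facing half-spaces can coexist. Recall that a half-space $h$ of $X$, after embedding, is cut out by a wall of $\Z^D$, and every wall of $\Z^D$ (with the standard cubulation) is of the form $\{x : x_i = c + \tfrac12\}$ for some coordinate $i \in \{1,\dots,D\}$ and some $c \in \Z$; the corresponding half-spaces are $\{x_i \le c\}$ and $\{x_i \ge c+1\}$. So to each half-space $h$ of $X$ I would associate a \emph{direction} $i(h) \in \{1,\dots,D\}$, namely the coordinate of the wall bounding the image of $h$, together with the information of which of the two sides (``low'' or ``high'' in coordinate $i(h)$) the half-space $h$ corresponds to. Strictly, one must be a little careful: $\iota$ need not be surjective, so distinct walls of $\Z^D$ may restrict to the same partition of $\iota(X)$; but since $\iota$ is an isometry onto its image and half-spaces of $X$ are exactly the genuine half-spaces (nonempty, with nonempty complement), each $h \in \frakH$ induces a genuine partition of $\iota(X)$ that is realized by at least one wall of $\Z^D$, and I just pick one such wall to define $i(h)$.

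The key combinatorial observation is then: \emph{for a fixed coordinate $i$, among any facing family there are at most two half-spaces with direction $i$.} Indeed suppose $h, h', h''$ all have direction $i$. Each is, up to the choice of side, of the form $\{x_i \le a\}$ or $\{x_i \ge b\}$ intersected with $\iota(X)$. If two of them, say $h$ and $h'$, are both ``low'' sides $\{x_i \le a\}$ and $\{x_i \le a'\}$ with $a \le a'$, then $h \subset h'$, so $h^* \supset (h')^*$, forcing $h^* \cap (h')^* = (h')^* \ne \varnothing$ (it is nonempty since $h'$ is a genuine half-space), contradicting the facing condition. The same argument applies if two are both ``high'' sides. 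Hence in a facing family at most one direction-$i$ half-space is a ``low'' side and at most one is a ``high'' side, giving at most $2$ per direction. Summing over the $D$ coordinates yields at most $2D$ half-spaces in any pairwise facing family, which is the assertion.

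The main obstacle, and the only subtle point, is the non-surjectivity of $\iota$ and the possibility that the partition of $\iota(X)$ induced by $h$ does not literally ``look like'' a single $\Z^D$-wall restricted to $\iota(X)$ — in principle it could be realized only by a wall of $\Z^D$ that is not uniquely determined, or the face labels ``low''/``high'' might not be intrinsically well defined. The clean way around this is to argue entirely inside $\Z^D$: note that $\iota$ extends to an isometric embedding of Roller compactifications, or more elementarily, observe that any half-space $h$ of $X$ determines, via $d(\cdot,\cdot)$ being the $\ell^1$-metric, a convex partition of $\iota(X)$, and any convex partition of a subset of $\Z^D$ into two half-spaces differing by a single wall is detected by \emph{some} coordinate hyperplane (this is where one uses that $d(p,q)$ counts separating walls and the walls of $\Z^D$ are exactly the coordinate ones). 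One then fixes, once and for all, a choice function assigning to each $h \in \frakH$ a pair $(i(h), \varepsilon(h))$ with $\varepsilon(h) \in \{\text{low},\text{high}\}$ such that $h = \iota^{-1}(H)$ for $H$ the corresponding $\Z^D$-half-space, and runs the counting argument above. I expect the write-up to be short once this bookkeeping is set up; the inclusion-forces-nonfacing step is immediate.
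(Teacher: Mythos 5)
Your proposal is correct, and in fact the paper does not prove this lemma at all: it is quoted verbatim from \cite[Lemma 2.33]{CFI}, so there is no in-paper argument to compare against. Your pigeonhole step is exactly right for this paper's convention (facing means pairwise disjoint \emph{complements}): two half-spaces detected by the same coordinate and the same side (both ``low'' or both ``high'') are nested, and $h\subset h'$ gives $h^*\cap (h')^*=(h')^*\neq\varnothing$, so each of the $D$ coordinates contributes at most two members of a facing family, whence the bound $2D$. The only place where the write-up is thinner than it should be is the assertion that every half-space of $X$ is induced by a coordinate wall of $\Z^D$: an isometric $\ell^1$-embedding $\iota$ is not a priori a median morphism, so this needs an actual argument rather than the phrase ``is detected by some coordinate hyperplane''. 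The fact is true, and your parenthetical points at the right mechanism; to nail it down, take any two edges $(p,q)$ and $(p',q')$ of $X$ dual to the same wall $\hat{h}$, use the relations $d(p,p')=d(q,q')$ and $d(p,q')=d(q,p')=d(p,p')+1$ together with a short $\ell^1$ computation to see that their images cross the \emph{same} coordinate wall $W(\hat{h})$ of $\Z^D$; then for arbitrary $p\in h$, $q\in h^*$ push a geodesic of $X$ forward to a geodesic of $\Z^D$ and compare the counts of separating walls on both sides (both equal $d(p,q)$) to conclude that $W(\hat{h})$ separates $\iota(p)$ from $\iota(q)$ precisely when $\hat{h}$ separates $p$ from $q$, i.e.\ the partition of $\iota(X)$ cut out by $W(\hat{h})$ is exactly $\{\iota(h),\iota(h^*)\}$. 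With that lemma in place, your choice of a coordinate-and-side label for each half-space and the counting argument close the proof.
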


The following is an important characterization of when a complex is Euclidean.

\begin{cor}\label{finallyIntervalEuclidean}\cite[Corollary 2.33]{CFI}, \cite{CapraceSageev} 
 Let $X$ be an irreducible finite dimensional  CAT(0) 
cube complex so that the action of $\Aut(X)$ is essential and non-elementary. The following are equivalent:
\begin{enumerate}
\item $X$ is an interval;
\item $X$ is Euclidean;
\item $\frakH(X)$ does not contain a facing triple of half-spaces.
\end{enumerate}
\end{cor}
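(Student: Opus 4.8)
The plan is to prove the chain $(1)\implies(2)\implies(3)\implies(1)$, using the irreducibility hypothesis together with the essential, non-elementary action of $\Aut(X)$ at the crucial step. The implication $(1)\implies(2)$ is immediate from Theorem~\ref{th:Euclidean equivalences}: an interval $\I(v,w)$ isometrically embeds into $\~\Z^D$, so in particular its vertex set embeds isometrically into $\Z^D$ with the $\ell^1$-metric, which is exactly the definition of Euclidean. The implication $(2)\implies(3)$ is the content of Lemma~\ref{NtuplesNonEuclidean}: if $X$ embeds isometrically into $\Z^D$ then any set of pairwise facing half-spaces has cardinality at most $2D$; in particular $X$ cannot contain an infinite pairwise facing family, and — more to the point — we argue that a facing triple forces an infinite facing family. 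Indeed, given a facing triple $h_1,h_2,h_3$ in an essential complex, one uses the Flipping Lemma~\ref{flip} (each $h_i$ is essential, hence flippable) and the double skewering Lemma~\ref{Double Skewering Lemma} to produce, inside the pocket cut out by the triple, arbitrarily large facing families; alternatively one notes directly that the CAT(0) cube complex on any single essential half-space is unbounded, so one can push deeper and deeper into each $h_i$ and extract new half-spaces facing all previous ones. Either way, a facing triple in an essential non-elementary $X$ yields facing $n$-tuples for all $n$, contradicting Lemma~\ref{NtuplesNonEuclidean}. Hence $(2)\implies(3)$.

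The substantive implication is $(3)\implies(1)$, and this is where I expect the main obstacle to lie. The strategy is contrapositive: assume $X$ is \emph{not} an interval and produce a facing triple. Since $X$ is irreducible with essential non-elementary $\Aut(X)$-action, Theorem~\ref{th:ss} gives a pair of strongly separated half-spaces $h\subsetneq k$ (after possibly replacing by a nested strongly separated pair using double skewering Lemma~\ref{Double Skewering Lemma}). The key geometric input is that strong separation, combined with $X$ failing to be an interval, forces "branching": because $X$ is not an interval, $\frakH(X)$ cannot be linearly quasi-ordered up to transversality, so there must be half-spaces that are neither nested with nor transverse to a given one in a controlled way. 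The plan is to use the strongly separated nested pair to localize: the region between $h$ and $k$ is (by strong separation) "thin" in the transverse directions, and if the complex were globally interval-like everywhere this thinness would propagate; the failure to be an interval produces, somewhere, a third direction of escape disjoint from both $h^*$ and $k^*$, i.e.\ the beginning of a facing triple. Making this precise is the crux: one wants to show that non-interval $\Rightarrow$ the existence of three half-spaces $a_1,a_2,a_3$ with $a_i^*\cap a_j^*=\varnothing$ for $i\neq j$.

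Concretely, here is the route I would take for $(3)\implies(1)$. First reduce to the essential core via Theorem~\ref{EssNonEmpty} and Proposition~\ref{prop:visual-to-roller} — one checks that being an interval, being Euclidean, and admitting a facing triple are all detected on the essential core (a facing triple in $X$ restricts to a facing triple among essential half-spaces, and conversely; this uses Remark~\ref{rem half-spaces in proj}). Next, assuming no facing triple, show $\frakH(X)$ has a very rigid structure: for any three half-spaces, at least two of them are nested or transverse in a way that, together with Helly's Theorem~\ref{Helly} and the bound $D$ on transverse families, forces the poset $U_w\setminus U_v$ (for suitable $v,w\in\~X$) to have width $\le D$ \emph{and} to exhaust all of $\frakH(X)$ up to involution — equivalently, $X=\I(v,w)$ for some $v,w$ in the Roller boundary. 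Here one picks $v,w$ as endpoints of an infinite descending/ascending chain of nested essential half-spaces produced by iterated double skewering; the "no facing triple" hypothesis is exactly what guarantees every other half-space is comparable with this chain and hence lies in $\frakH(v,w)$ or $\frakH(v,w)^+$. Then the lifting decomposition discussion after Proposition~\ref{LiftingDecomp} identifies $X$ with $\I(v,w)$, giving $(1)$. The main obstacle, as flagged, is the clean verification that "no facing triple" plus irreducibility plus essentiality forces \emph{every} half-space to be comparable to a single maximal nested chain — this is where irreducibility (via strong separation, Theorem~\ref{th:ss}) must be invoked to rule out the "product-like" configurations where half-spaces in different factors are mutually transverse but not facing, and I would expect to spend most of the proof there.
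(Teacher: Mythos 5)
The paper does not actually prove this corollary: it is quoted from \cite[Corollary 2.33]{CFI} together with \cite{CapraceSageev}, and the remark immediately following the statement explains that CFI's proof supplies $(2)\implies(3)$ and $(3)\implies(1)$, while $(1)\implies(2)$ comes from Theorem \ref{th:Euclidean equivalences}. Your overall architecture $(1)\implies(2)\implies(3)\implies(1)$ therefore matches how the result is assembled, and your $(1)\implies(2)$ is exactly the paper's. However, as a reconstruction of the two cited implications your sketch has two genuine soft spots.

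First, in $(2)\implies(3)$ the boosting of a facing triple to a facing family of size larger than $2D$ is asserted but not proved, and the mechanism you gesture at does not work as stated: double skewering a facing triple produces \emph{nested} translates of the given half-spaces, not new half-spaces facing all the previous ones, so ``pushing deeper into each $h_i$'' does not by itself enlarge the facing family. The clean repair is already in the paper: under the corollary's hypotheses (note that irreducibility is essential here and you do not invoke it at this step), Lemma \ref{extend4ss} gives facing $n$-tuples for every $n$, so a facing $(2D+1)$-tuple exists unconditionally and Euclideanness is excluded by Lemma \ref{NtuplesNonEuclidean}; the facing-triple hypothesis is not even needed. Second, in $(3)\implies(1)$ your stated goal is off: showing that every half-space ``lies in $\frakH(v,w)$ or $\frakH(v,w)^+$'' is not enough (if $\frakH(v,w)^+\neq\varnothing$ the lifting decomposition identifies only $\I(v,w)$, a proper subcomplex, with $X(\frakH(v,w))$); what must be shown is $\frakH(v,w)^+=\varnothing$, i.e.\ that \emph{every} half-space separates $v$ from $w$. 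The missing step, which your outline does not supply, is this: take the bi-infinite chain $\{s_n\}_{n\in\Z}$ to be pairwise \emph{strongly separated} (Theorem \ref{th:ss} plus the Double Skewering Lemma \ref{Double Skewering Lemma}, exactly as in Proposition \ref{Rank1Char}), so that its two ends are honest single points $v,w\in\partial X$ by Corollary \ref{distinct}; then any half-space $h$ is transverse to at most one $s_n$, hence is eventually nested with the chain on both sides, and if $h$ contained both $v$ and $w$ one would have $s_n\subset h$ and $h^*\subset s_{-m}$ for suitable $n,m$, so that $h^*$, $s_n$, $s_{-m}^*$ are pairwise disjoint and $\{h,s_n^*,s_{-m}\}$ is a facing triple, contradicting $(3)$ (and symmetrically if $h$ contained neither). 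With these two repairs your route goes through using only tools already in the paper; as written, the crux of $(3)\implies(1)$ is a plan rather than a proof.
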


\begin{remark}
 The statement of \cite[Corollary 2.33]{CFI} 
  states that $X$ is Euclidean if and only if $\frakH(X)$ does not contain a facing triple. However, the proof actually shows that (2) implies (3) and (3) implies (1). The missing (1) implies (2) is of course provided by Theorem \ref{th:Euclidean equivalences} \cite{BCGNW}.
\end{remark}

\break

\begin{lemma}
 Let $X$ be an interval on $v,w\in \~X$. Then $\Aut(X)$ is elementary. 
\end{lemma}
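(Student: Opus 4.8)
The plan is to show that if $X = \I(v,w)$ for some $v,w \in \~X$, then $\Aut(X)$ cannot be nonelementary (in the visual sense), hence is elementary. Suppose for contradiction that the $\Aut(X)$-action were nonelementary. The first step is to pass to the essential core: by Theorem \ref{EssNonEmpty} and Proposition \ref{prop:visual-to-roller}, the $\Aut(X)$-essential core $Y \hookrightarrow X$ is a nonempty, unbounded, $\Aut(X)$-invariant subcomplex on which the action is nonelementary and essential. I would first observe that $Y$, being a convex subcomplex of the interval $X = \I(v,w)$, is itself an interval: indeed $\frakH(Y)$ is an involution-invariant convex subcollection of $\frakH(X) = \frakH(v,w)$, and a convex involution-invariant subset of the half-spaces separating $v$ and $w$ corresponds to an interval (cf. the discussion following Proposition \ref{LiftingDecomp}, applied with the restricted lifting decomposition). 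So we may as well assume from the start that the action on $X = \I(v,w)$ is essential and nonelementary.

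Next I would decompose into irreducibles: write $X = X_1 \times \cdots \times X_n$ with each $X_i$ irreducible, and pass to a finite-index subgroup $\G' \leq \Aut(X)$ preserving the decomposition (using that $\Aut(X_1)\times\cdots\times\Aut(X_n)$ has finite index in $\Aut(X)$). By Remark \ref{Finite Index Ess} essentiality is inherited by $\G'$, and by Remark \ref{Remark product intervals is interval in product} each factor $X_i$ is again an interval, $X_i = \I(x_i, y_i)$. Since $X$ is unbounded, at least one factor $X_i$ is unbounded; on that factor the $\G'$-projection is still essential, and I claim it must still be nonelementary — here I'd invoke Proposition \ref{prop:visual-to-roller} / Lemma \ref{lem:NoFiniteColInvHalf}-type reasoning, or simply note that an unbounded essential action that is elementary on an irreducible factor would contradict the structure of the essential core. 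Thus we reduce to the case: $X$ is an \emph{irreducible} interval on which $\Aut(X)$ acts essentially and nonelementarily.

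Now I apply Corollary \ref{finallyIntervalEuclidean}: an irreducible finite-dimensional CAT(0) cube complex with essential nonelementary $\Aut(X)$-action that happens to be an interval is Euclidean, and in particular $\frakH(X)$ contains no facing triple. On the other hand, the action is nonelementary and essential, so by the Flipping Lemma \ref{flip} and Double Skewering (Lemma \ref{Double Skewering Lemma}) one can produce configurations of half-spaces forcing a facing triple — the standard way this is done is: take an essential half-space $h$, flip it to get $g$ with $gh \subsetneq h^*$ (well, $h^* \subset gh$), then use a third element to skewer and obtain three pairwise-facing half-spaces; more cleanly, nonelementarity of an essential action on an irreducible complex with $\dim \geq 1$ forces the existence of strongly separated half-spaces (Theorem \ref{th:ss}) and hence, combined with flipping, a facing triple. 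Either route yields a facing triple in $\frakH(X)$, contradicting Euclideanness. Therefore $\Aut(X)$ must be elementary.

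The main obstacle I anticipate is the reduction step asserting that an unbounded irreducible factor of an essential nonelementary action still carries a nonelementary action — one has to be careful because elementarity can hide in the boundary of a factor, and the clean statement I want is essentially packaged in Proposition \ref{prop:visual-to-roller} but applied in the right direction. A secondary technical point is verifying that a convex involution-invariant subcollection of $\frakH(v,w)$ really does present an interval; this should follow from the lifting-decomposition machinery of Section \ref{subsec:isom-emb}, taking the relevant consistent complement inside $U_v \cap U_w$, but it needs to be spelled out rather than waved at.
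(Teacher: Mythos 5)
Your overall strategy --- prove the contrapositive by showing a nonelementary $\Aut(X)$-action on an interval would force half-space configurations that an interval cannot carry --- is viable and genuinely different from the paper's argument, but as written it has a real gap exactly at the step you flag. The claim that the induced action on an (unbounded) irreducible factor of the essential core is still nonelementary cannot be extracted from ``the structure of the essential core'' or from Lemma~\ref{lem:NoFiniteColInvHalf}-type reasoning: unbounded, essential, \emph{elementary} actions exist (the paper's own example $\Z\times F_2$ acting on $\Z\times T$), and nonelementarity does not descend to factors by any soft argument. The statement you need is precisely Lemma~\ref{lem:boundary of factors}, an external citation to \cite{CFI}, so invoking it creates no circularity with the lemma being proved; similarly, your flipping/skewering production of a facing triple is only a sketch, and the clean input is Lemma~\ref{extend4ss}, which gives facing $n$-tuples of pairwise strongly separated half-spaces for every $n$ in an irreducible, essential, nonelementary complex. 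With those two citations in place the contradiction closes: the factor is an interval, hence Euclidean by Theorem~\ref{th:Euclidean equivalences}, and then either Corollary~\ref{finallyIntervalEuclidean} (no facing triple) or Lemma~\ref{NtuplesNonEuclidean} with $n>2D$ is violated. Your preliminary reductions are fine: every half-space of any subcollection of $\frakH(v,w)$ still separates the images of $v$ and $w$, so the essential core of an interval, and each irreducible factor of it, is again an interval.

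For comparison, the paper's proof is much shorter and avoids the essential core, irreducible decomposition, and any contradiction. By Corollary~\ref{Interval on Finitely Many} the set of points on which $X$ is an interval has at most $2^D$ elements, so a finite-index subgroup $\G_0\leq\Aut(X)$ fixes such a point $v$; the half-spaces in $U_v$ have nonempty finite intersections, so the corresponding visual boundaries intersect and their circumcenter is a $\G_0$-fixed point in the visual boundary by \cite[Proposition 3.6]{CapraceSageev}, giving a finite $\Aut(X)$-orbit and hence elementarity. That route is constructive (it exhibits the finite orbit) and uses only the finiteness of interval endpoints plus one result of Caprace--Sageev, whereas your route, once patched via Lemmas~\ref{lem:boundary of factors} and~\ref{extend4ss}, in effect re-proves Corollary~\ref{Cor.Not an interval} with heavier machinery.
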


\begin{proof}
If $X$ is an interval then  the collection of points on which it is an interval is finite and bounded above by $2^D$ by Corollary \ref{Interval on Finitely Many}. Let $\G_0$ be the finite index subgroup which fixes this set pointwise and let $v$ belong to this set. Then, for every finite collection $h_1, \dots, h_n \in U_v$ the intersection $\Cap{i= 1}{n} h_i$ is not empty. Hence, the intersection of the visual boundaries corresponding to the $h_i$ must be non-empty and its unique circumcenter is fixed for the $\G_0$-action by \cite[Proposition 3.6]{CapraceSageev}.
\end{proof}

\begin{lemma}\label{lem:boundary of factors} \cite[Lemma 2.28]{CFI}. 
Let  $\Gamma\to\Aut(X)$ be a non-elementary action. 
Then the $\Gamma_0$-action on the irreducible factors of the essential core is
also non-elementary and essential, where $\G_0$ is the finite index subgroup preserving this decomposition.
\end{lemma}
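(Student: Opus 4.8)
The plan is to push everything onto the essential core, where essentiality is automatic, and then verify the two conclusions factor by factor. First I would invoke Theorem~\ref{EssNonEmpty} together with the ``moreover'' clause of Proposition~\ref{prop:visual-to-roller}: since $\G\to\Aut(X)$ is non-elementary, the essential core $Y$ embeds $\G$-equivariantly into $X$ and some finite-index subgroup of $\G$ acts on $Y$ non-elementarily and essentially. Because essentiality is insensitive to passing between a group and a finite-index subgroup (Remark~\ref{Finite Index Ess}), and because a finite orbit of a group splits into finitely many finite orbits of any finite-index subgroup (and conversely), it follows that $\G$ itself, and hence the finite-index subgroup $\G_0$ preserving the irreducible decomposition $Y=Y_1\times\cdots\times Y_n$ (which exists by \cite[Proposition 2.6]{CapraceSageev}), acts on $Y$ non-elementarily and essentially. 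Now fix a factor index $j$; by Section~\ref{subsec:decomposition} we have $\frakH(Y)=\frakH_1\sqcup\cdots\sqcup\frakH_n$, each half-space of $Y_j$ is $\hat{h}$ for a unique $h=\hat{h}\times\prod_{i\neq j}Y_i\in\frakH_j$, and $\G_0$ acts on $Y$ coordinate-wise, inducing its action on $Y_j$.

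For essentiality, the key observation I would use is that, writing $\g\in\G_0$ as $(\g_1,\dots,\g_n)$ under the decomposition and $x\in Y$ as $(x_i)$, if $\g x\in h$ then $d_Y(\g x,h^*)=d_{Y_j}(\g_j x_j,\hat{h}^*)$: since the metric on the product is the $\ell^1$-sum of the factor metrics, a point of $h^*$ nearest to $\g x$ may be chosen to agree with $\g x$ in every coordinate but the $j$-th. Hence $h$ is $\G_0$-deep in $Y$ if and only if $\hat{h}$ is $\G_0$-deep in $Y_j$. Since every half-space of $Y$ is $\G$-essential, hence $\G_0$-essential (Remark~\ref{Finite Index Ess}), both $\hat{h}$ and $\hat{h}^*$ are $\G_0$-deep in $Y_j$, so $\hat{h}$ is $\G_0$-essential there; as $\hat{h}$ was arbitrary, the $\G_0$-action on $Y_j$ is essential.

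For non-elementarity, I would argue by contradiction: suppose the $\G_0$-action on $Y_j$ has a finite orbit $F$. If $F\subset Y_j$, then $F$ is bounded, so $\G_0$ fixes a point of the CAT(0) realization of $Y_j$, hence fixes the minimal cube containing it, so a finite-index subgroup $\G_1\le\G_0$ fixes a vertex $v\in Y_j$; but then every half-space of $Y_j$ meets $\G_1 v=\{v\}$ in $\{v\}$ or in $\varnothing$ and is therefore $\G_1$-shallow, so no half-space of $Y_j$ is $\G_1$-essential, equivalently (Remark~\ref{Finite Index Ess}) $\G_0$-essential, contradicting the previous paragraph. If instead $F\subset\partial_\sphericalangle Y_j$, then, using that the visual boundary of the product $Y$ is the spherical join of the $\partial_\sphericalangle Y_i$ with $\G_0$ acting join-factor-wise, the inclusion $\partial_\sphericalangle Y_j\hookrightarrow\partial_\sphericalangle Y$ is $\G_0$-equivariant and carries $F$ to a finite $\G_0$-orbit in $\partial_\sphericalangle Y$, contradicting non-elementarity of the $\G_0$-action on $Y$. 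Either way the $\G_0$-action on each $Y_j$ is non-elementary, which together with the previous paragraph gives the assertion.

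I expect the only genuinely non-formal point to be the deep/shallow translation in the second paragraph --- recognizing that escaping a half-space of the product $Y$ arbitrarily far is the same as escaping the corresponding half-space of the factor $Y_j$ arbitrarily far --- which is exactly where the $\ell^1$-product structure (Section~\ref{subsec:decomposition}, Remark~\ref{Remark product intervals is interval in product}) is used. Everything else is book-keeping about finite-index subgroups, harmless because both conclusions are stable under passing to and from finite-index subgroups.
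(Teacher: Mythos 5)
The paper does not actually prove this lemma --- it is imported wholesale as \cite[Lemma 2.28]{CFI} --- so there is no internal argument to measure you against; what you have written is a self-contained proof, and as far as I can check it is correct. Your reduction in the first paragraph is legitimate: Proposition \ref{prop:visual-to-roller} (with its ``moreover'' clause) gives a finite-index subgroup acting essentially and non-elementarily on the essential core $Y$, and both properties do pass up and down through finite-index subgroups (Remark \ref{Finite Index Ess} for essentiality; for non-elementarity, a finite orbit of a finite-index subgroup has finite $\G$-saturation, and conversely), so $\G_0$ acts essentially and non-elementarily on $Y$. The crux, as you say, is the deep/shallow translation: since the wall metric on $Y=Y_1\times\cdots\times Y_n$ is the $\ell^1$-sum of the factor metrics and a half-space $h$ coming from $\frakH_j$ is a factor half-space crossed with the remaining factors, the distance from an orbit point to $h^*$ is computed entirely in the $j$-th coordinate, and the $j$-th coordinate of a $\G_0$-orbit is a $\G_0$-orbit for the induced action on $Y_j$; hence $h$ is $\G_0$-deep in $Y$ if and only if the corresponding half-space is $\G_0$-deep in $Y_j$, which gives essentiality of each factor action. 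Your non-elementarity argument is also sound: a finite orbit inside $Y_j$ would (via the circumcenter, the minimal supporting cube, and a finite-index vertex stabilizer) make every half-space of $Y_j$ shallow for a finite-index subgroup, contradicting the essentiality just established via Remark \ref{Finite Index Ess}; a finite orbit in $\partial_\sphericalangle Y_j$ would give a finite $\G_0$-orbit in $\partial_\sphericalangle Y$ through the $\G_0$-equivariant inclusion of join factors, contradicting non-elementarity on $Y$. Two cosmetic remarks: in the first case the circumcenter detour is only needed because the finite orbit may not consist of vertices (for a vertex orbit, a point stabilizer is already of finite index); and it is worth saying explicitly that each irreducible factor carries at least one half-space, so ``every half-space of $Y_j$ is shallow'' genuinely contradicts essentiality rather than being vacuous. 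Neither affects correctness.
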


We immediately deduce (see also \cite[Corollary 2.34]{CFI}): 

\begin{cor}\label{Cor.Not an interval}
 If $\G\to \Aut(X)$ is nonelementary  then any irreducible factor in the essential core of $X$ is not Euclidean and hence not an interval.
\end{cor}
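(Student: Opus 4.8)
The plan is to argue by contraposition, combining the tools already assembled: Lemma~\ref{lem:boundary of factors}, Corollary~\ref{finallyIntervalEuclidean}, and the previous lemma stating that an interval has elementary automorphism group. Suppose $\G\to\Aut(X)$ is nonelementary, let $Y$ be the $\G$-essential core (Theorem~\ref{EssNonEmpty}), and let $Y_1\times\cdots\times Y_n$ be its irreducible decomposition, preserved by a finite index subgroup $\G_0\leq\G$. By Lemma~\ref{lem:boundary of factors}, the induced $\G_0$-action on each factor $Y_j$ is nonelementary and essential. Fix one such factor $Y_j$ and suppose for contradiction that it is Euclidean, equivalently (by Corollary~\ref{finallyIntervalEuclidean}, applicable since $Y_j$ is irreducible, finite dimensional, and carries an essential nonelementary $\Aut(Y_j)$-action) that it is an interval.

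The next step is to reach a contradiction from ``$Y_j$ is an interval.'' Here I would invoke the lemma preceding the statement: if $Y_j$ is an interval on some pair $v,w\in\~{Y_j}$, then $\Aut(Y_j)$ is elementary. But the $\G_0$-image in $\Aut(Y_j)$ is nonelementary by the previous paragraph, and a subgroup of an elementary group (in the sense of having a finite orbit in the space or visual boundary) inherits that finite orbit — a finite orbit for $\Aut(Y_j)$ is in particular a $\G_0$-invariant finite set, whose circumcenter (in $Y_j$ or in $\partial_\sphericalangle Y_j$) is then $\G_0$-fixed, contradicting nonelementarity. This forces each $Y_j$ to be non-Euclidean, hence by Corollary~\ref{finallyIntervalEuclidean} again not an interval, which is exactly the claim.

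One technical point worth spelling out, and the only place that needs genuine care, is the compatibility of the hypothesis of Corollary~\ref{finallyIntervalEuclidean} — which concerns the action of the \emph{full} automorphism group $\Aut(Y_j)$ — with what Lemma~\ref{lem:boundary of factors} delivers, namely nonelementarity and essentiality of the \emph{$\G_0$-action}. Since essentiality of $Y_j$ (every half-space deep) for the $\G_0$-action implies essentiality for the larger group $\Aut(Y_j)$, and likewise a nonelementary $\G_0$-action forces $\Aut(Y_j)$ to be nonelementary (a finite $\Aut(Y_j)$-orbit would again be $\G_0$-invariant), the hypotheses of Corollary~\ref{finallyIntervalEuclidean} are met. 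I would therefore organize the write-up as: (1) pass to the essential core and its irreducible factors with $\G_0$; (2) apply Lemma~\ref{lem:boundary of factors}; (3) upgrade to the $\Aut(Y_j)$-action and apply Corollary~\ref{finallyIntervalEuclidean} together with the ``interval $\Rightarrow$ elementary'' lemma to get the contradiction. The main obstacle is purely bookkeeping — tracking which group acts and ensuring ``nonelementary'' is not lost when enlarging or shrinking the group — rather than any substantive new geometry.
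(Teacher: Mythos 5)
Your argument is correct and is essentially the deduction the paper leaves implicit: combine Lemma~\ref{lem:boundary of factors} with the ``interval $\Rightarrow$ $\Aut$ elementary'' lemma and Corollary~\ref{finallyIntervalEuclidean}, after observing that $\G_0$-essentiality and $\G_0$-nonelementarity pass to the larger group $\Aut(Y_j)$. Your bookkeeping about which group acts is exactly the right point to spell out (the circumcenter step is not even needed, since a finite $\Aut(Y_j)$-orbit already contains finite $\G_0$-orbits), and otherwise your proof matches the paper's intended route.
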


\subsection{The Combinatorial Bridge}
\label{bridge}

Behrstock and Charney showed that the CAT(0) bridge connecting two strongly separated walls is a finite geodesic segment \cite{Behrstock_Charney}. In \cite{CFI} this idea is translated to the ``combinatorial", i.e. median setting.  Most of what follows is from or adapted from \cite{CFI}, though the notation differs slightly. Recall our convention that,  $\{k,k^*\}$ is denoted by $\^k$, for a half-space $k$, and that given another half-space $h$ we will say that $\^k\subset h$ if either $k$ or $k^*$ is a \emph{proper} subset of $h$. 

\begin{remark}\label{shortcut}
 Observe that for two half-spaces $h,k$ we have that $h\cap k$ and $h^*\cap k$ are both nonempty if and only if $h\pitchfork k$ or $\^h \subset k$.
\end{remark}
 
\begin{definition}
Let $h_1\subsetneq h_2^*$ be a nested pair of halfspaces and $\beta(h_1, h_2)$ denote the collection of half-spaces $h\in \frakH$ such that one of the following conditions hold:
\begin{enumerate}
\item $\^h_1 \subset h$ and $h_2\pitchfork h$;
\item $\^h_2 \subset h$ and $h_1\pitchfork h$;
\item $\^h_1, \^h_2\subset h$.
\end{enumerate}
Furthermore, a half-space $h$ will be said to be of type (1), (2), or (3) if it satisfies the corresponding property. 
\end{definition}

We note that both $h_1$ and $h_2$ are not of types (1)--(3). Furthermore, since $h_1$ and $h_2$ are disjoint, condition (1)  actually means that $h_1\subset h$ and $h_2\pitchfork h$ (and analogously for condition (2)). 

\begin{lemma}\label{Bridge Consistent}
 Given $h_1\subsetneq h_2^*$, the collection $\beta(h_1, h_2)$ is consistent. Furthermore, $\beta(h_1, h_2)$ satisfies the descending chain condition. 
\end{lemma}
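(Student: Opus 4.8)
The statement has two parts: consistency of $\beta(h_1,h_2)$, and the descending chain condition. I would handle them in that order, since the DCC argument will reuse the structural picture established while proving consistency.

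\textbf{Consistency.} Recall consistency means $\beta \cap \beta^* = \varnothing$ and closure under passing to larger half-spaces. For the first: suppose both $h$ and $h^*$ lie in $\beta(h_1,h_2)$. I would run through the nine cases (type of $h$ against type of $h^*$). The key observation is that if $\^h_1 \subset h$ (i.e. $h_1 \subsetneq h$, using the remark that $h_1,h_2$ disjoint forces condition (1) to mean $h_1\subset h$) then $h_1 \subset h$, so $h_1 \cap h^* = \varnothing$, and then $h^*$ cannot satisfy $\^h_1 \subset h^*$ (that would need $h_1\subsetneq h^*$, contradicting $h_1\subset h$) and cannot satisfy $h_1 \pitchfork h^*$ either (transversality needs $h_1\cap h^*\neq\varnothing$); so $h^*$ can only be of type (2), meaning $h_2 \subsetneq h^*$, i.e. $h_2\cap h=\varnothing$. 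But then $h\supset h_1$ and $h^*\supset h_2$, and since $h_1\subsetneq h_2^*$ means $h_1\cap h_2=\varnothing$, this is consistent set-theoretically — so I must dig further: type (2) for $h^*$ also requires $h_1\pitchfork h^*$, which demands $h_1\cap h^*\neq\varnothing$, contradicting $h_1\subset h$. So no case survives; symmetric reasoning dispatches the type-(2)-versus-anything and type-(3) cases, using Remark \ref{shortcut} to translate the transversality/nesting hypotheses into nonempty-intersection statements. For closure: take $h\in\beta(h_1,h_2)$ and $k\supsetneq h$; I want $k\in\beta(h_1,h_2)$. From $h\subset k$ one gets $\^h_i\subset h \implies \^h_i\subset k$ for $i=1,2$ (properness is preserved since $h_1$ resp. $h_2$ is disjoint from $h_1$ resp. $h_2$ in the relevant configuration — more carefully, $h_1\subsetneq h\subsetneq k$). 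The only thing that can fail in upgrading $h$'s type to $k$'s type is that a transversality $h_j\pitchfork h$ might degrade to $\^h_j\subset k$; but that is precisely what lands $k$ in type (3) (or keeps it in type (1)/(2) with the transversality intact), so in every case $k$ still satisfies one of (1)--(3). I'd organize this as: whichever of $\^h_1,\^h_2$ were already contained in $h$ stay contained in $k$; for the index $j$ where $h_j\pitchfork h$, either $h_j\pitchfork k$ still holds or $h_j\cap k^*=\varnothing$ which (with $h_j\cap k\neq\varnothing$ and $h_j^*\cap k\neq\varnothing$ inherited from transversality with $h$) forces $\^h_j\subset k$ via Remark \ref{shortcut}. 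Either way $k\in\beta(h_1,h_2)$.

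\textbf{Descending chain condition.} Suppose $k_1\supsetneq k_2\supsetneq\cdots$ is an infinite strictly descending chain in $\beta(h_1,h_2)$. I want a contradiction. Each $k_n$ is of some type $\in\{1,2,3\}$; by consistency-type reasoning, going down a chain can only keep the type or move $1\to 3$ or $2\to 3$ (a type-3 half-space has $\^h_1,\^h_2$ both strictly inside it, and anything inside it that still contains both walls is type 3; and if $k_{n+1}\subsetneq k_n$ with $k_n$ type 1 then $k_{n+1}$ is type 1 or type 3 since $h_1$ stays inside), so the sequence of types is eventually constant — say eventually all type 1 (the type-2 and type-3 cases are symmetric or easier). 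Then for all large $n$, $h_1\subsetneq k_n$ and $h_2\pitchfork k_n$. Now the half-spaces $k_n$ all satisfy $h_1\subset k_n$ and $h_2\cap k_n\neq\varnothing\neq h_2^*\cap k_n$; I would argue these $k_n$ together with $h_2$ and $h_1$ generate a configuration violating the bound $D$ on transverse families, or violating finiteness of the set of half-spaces separating two fixed points. Concretely: pick a vertex $x\in h_1$ (so $x\in k_n$ for all large $n$) and a vertex $y\in h_1^*\cap h_2$ — such $y$ exists since $h_1\subsetneq h_2^*$ gives $h_1^*\cap h_2 \supset$ ... hmm, actually $h_1\subsetneq h_2^*$ only gives $h_2 \subsetneq h_1^*$, so $h_2\subset h_1^*$; pick $y\in h_2$, then $y\in h_1^*$. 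For $k_n$ of type 1, $h_2\pitchfork k_n$ gives $h_2\cap k_n^*\neq\varnothing$; I want to see the walls $\^k_n$ all separating $x$ from a suitable $y$. Since $h_1\subset k_n$ we have $x\in k_n$; I need $y\in k_n^*$ for infinitely many $n$, which would contradict finiteness of $\{h:\ x\in h,\ y\in h^*\}$. The descending chain $k_n$ with a common point $x$ inside all of them: if their intersection $\bigcap k_n$ misses some vertex $y$ then $y\in k_n^*$ for some $n$, hence for all larger $n$, done. So the real content is showing the nested sequence cannot have all the $k_n^*$ avoid every vertex — i.e., $\bigcap_n k_n$ cannot be all of $X$, which it isn't since each $k_n\subsetneq \frakH$ is a proper half-space, and strict nesting $k_{n+1}\subsetneq k_n$ means $k_n\setminus k_{n+1}\neq\varnothing$, and picking $y$ in that difference for a fixed $n$ gives a point with $y\in k_n$ but there's a wall... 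Let me instead use the cleanest route: $\^k_1,\^k_2,\dots$ are infinitely many distinct walls all crossing the (finite, by Theorem \ref{embedintervals}) interval $\I(x,x')$ for appropriate endpoints, or all separating a fixed pair of vertices, contradicting axiom (3) of a walled space.

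\textbf{Main obstacle.} The bookkeeping in the nine-case consistency check is tedious but routine once Remark \ref{shortcut} is in hand; the genuine difficulty is the DCC argument — specifically, extracting from "infinite descending chain in $\beta(h_1,h_2)$, eventually all of one type" a concrete pair of vertices (or a bounded interval) that the infinitely many walls $\^k_n$ all separate (resp. all cross), so that axiom (3) of walled spaces (finiteness of separating half-spaces) or the finiteness of intervals from Theorem \ref{embedintervals} yields the contradiction. Pinning down the right pair of vertices — using $x\in h_1$ together with a point extracted from the transversality $h_2\pitchfork k_n$ (type 1), or the strict nesting of the walls $\^h_1,\^h_2$ inside the $k_n$ (type 3) — is the crux.
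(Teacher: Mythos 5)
Your proposal is correct and in substance follows the paper's own route: consistency is proved exactly as in the paper (containments of $\^h_1,\^h_2$ persist when passing to a larger half-space, and a transversality either persists or upgrades to containment via Remark \ref{shortcut}; the disjointness of $h\in\beta$ and $h^*\in\beta$ is the same observation the paper states without proof), and your descending chain argument ultimately rests on the same finiteness principle the paper invokes in a single sentence, namely that only finitely many half-spaces lie between any two (equivalently, separate a fixed pair of vertices).

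One local slip in your DCC reduction: the claim that going \emph{down} a chain the type can only stay the same or move $1\to 3$, $2\to 3$ ``since $h_1$ stays inside'' is backwards. Containment of the walls $\^h_1,\^h_2$ is inherited by \emph{larger} half-spaces, not smaller ones; the transitions actually possible when descending are $3\to 1$ and $3\to 2$, while below a type-(1) element only type (1) can occur (a smaller member of $\beta$ cannot contain $\^h_2$, since $h_2\subsetneq k_{n+1}\subset k_n$ or $h_2^*\subsetneq k_{n+1}\subset k_n$ each contradicts $h_2\pitchfork k_n$). The conclusion you actually use — the type is eventually constant, hence $h_1\subsetneq k_n$ for all large $n$, giving a common vertex $x$ — is nevertheless true, and the reduction is dispensable anyway: every element of $\beta(h_1,h_2)$ contains $h_1$ or $h_2$ as a subset, so infinitely many terms of the chain contain, say, $h_1$; taking a vertex $x\in h_1$ and a vertex $y\in k_n\setminus k_{n+1}$ for one such index $n$ produces infinitely many distinct half-spaces containing $x$ and not $y$, contradicting the wall axiom. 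This is precisely the paper's ``finitely many half-spaces in between any two'' argument, so your proof stands after correcting that justification.
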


\begin{proof}
We begin by observing that if $h\in \beta(h_1, h_2)$ then we necessarily have that $h^*\notin\beta(h_1,h_2)$. 

Now suppose that $h\in \beta(h_1, h_2)$ is of type (3). If $h\subset k$ then clearly $k$ is also of type (3) and hence $k\in \beta(h_1, h_2)$. 

Next suppose that $h$ is of type (1) and $h\subset k$. Then, $\^h_1\subset k$. Since $h_2\pitchfork h$ and $h\subset k$ we have that $ h_2\cap k$ and $h_2^*\cap k $ are both nonempty. By Remark \ref{shortcut}, either $k\pitchfork h_2$ or $\^h_2\subset k$, and so $k\in \beta(h_1, h_2)$. 

Of course, a symmetric argument shows that if $h$ of type (2) and $h\subset k$ then $k\in \beta(h_1, h_2)$.

Next we turn to the question of the descending chain condition. Since there are finitely many half-spaces in between any two, an infinite descending chain will eventually fail to satisfy all three conditions (1) through (3).
\end{proof}

\begin{definition}
 The (combinatorial) \emph{bridge} between $h_1\subsetneq h_2^*$ is denoted by $B(h_1, h_2)$ and corresponds to $\Cap{h\in \beta(h_1, h_2)}{} h \subset X$.  
\end{definition}

\begin{lemma}\label{this lemma}
Assume that $h_1 \subsetneq h_2^*$ and set $\beta= \beta(h_1, h_2)$. The collection $\frakH'= \frakH\setminus(\beta\sqcup \beta^*)$ consists of half-spaces $h$ such that one of the following hold:
 \begin{itemize}
\item $h\pitchfork h_1$ and $h\pitchfork h_2$; 
\item up to replacing $h$ by $h^*$ we have that 
$$h_1 \subseteq h \subseteq h_2^*.$$ 
\end{itemize}
\end{lemma}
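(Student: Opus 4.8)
The plan is to analyze what it means for a half-space $h$ to lie \emph{outside} $\beta \sqcup \beta^*$, i.e.\ to be neither of types (1)--(3) for the pair $(h_1,h_2)$ nor have $h^*$ of one of those types. Since $h \in \frakH'$ iff $h,h^*\notin\beta$, and by the first line of the proof of Lemma~\ref{Bridge Consistent} a half-space and its complement are never simultaneously in $\beta$, the condition $h\in\frakH'$ is symmetric in $h \leftrightarrow h^*$, which is why the conclusion is stated ``up to replacing $h$ by $h^*$.'' So I would fix $h\in\frakH'$ and split on the mutual positions of $h$ with $h_1$ and with $h_2$, using Remark~\ref{shortcut}: for each of $i=1,2$, either $h\pitchfork h_i$, or $\^h_i\subset h$, or $\^h_i\subset h^*$ (equivalently $h \subsetneq h_i$ or $h^*\subsetneq h_i$, etc.), or $h$ and $h_i$ are ``facing'' in the sense that $h\cap h_i=\varnothing$ or one of the other three corner intersections is empty. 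The point is that the four types of configuration between $h$ and $h_i$ are: transverse; $\^h_i\subset h$; $\^h_i\subset h^*$; and nested-and-disjoint-ish (nesting $h \subseteq h_i$ or $h_i^* \subseteq h$ type relations).

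First I would observe that if $h\pitchfork h_1$ and $h\pitchfork h_2$ we are in the first bullet and done. So assume $h$ is not transverse to at least one of them, say (after possibly swapping the roles of $h_1,h_2$ — note the hypothesis $h_1\subsetneq h_2^*$ is symmetric under the involution that also swaps the two) not transverse to $h_1$. The key case-elimination: if $\^h_1\subset h$, then $h$ is of type (1) or type (3) depending on whether $h\pitchfork h_2$ or $\^h_2\subset h$; but wait — we also need to rule out that $\^h_1\subset h$ while $h$ is disjoint from $h_2$ in a way that forces $h\in\beta$ anyway. Here I would use $h_1\subsetneq h_2^*$: from $\^h_1\subset h$, i.e.\ $h_1\subsetneq h$ (the case $h_1^*\subsetneq h$ gives $h^*\subsetneq h_1\subsetneq h_2^*$, handled by the symmetry), we get $h_2^*\supsetneq h_1$, so $h\cap h_2^*\supset h_1 \neq\varnothing$ and $h^*\subset h_1^* $... combined with Helly-type reasoning (Theorem~\ref{Helly}) and Remark~\ref{shortcut} applied to the pair $h, h_2$, the only possibilities are $h\pitchfork h_2$, $\^h_2\subset h$, or $h_2\subseteq h$ — but $h_2\subseteq h$ with $h_1\subsetneq h$ and $h_1\subsetneq h_2^*$ forces a facing configuration contradicting... — in any of the genuinely ``outside $\beta$'' subcases one checks $h_1\subseteq h$ and $h\subseteq h_2^*$ or the transverse conclusion holds. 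So $\^h_1\subset h$ with $h\notin\beta$ forces $h_1\subseteq h\subseteq h_2^*$ (possibly with equalities absorbed), the second bullet.

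The remaining case is that $h$ is not transverse to $h_1$ and $\^h_1\not\subset h$ and $\^h_1\not\subset h^*$; then $h$ and $h_1$ must be ``facing'' in the literal sense that one of the four corner intersections $h\cap h_1$, $h\cap h_1^*$, $h^*\cap h_1$, $h^*\cap h_1^*$ is empty, and since $\^h_1\not\subset h^*$ rules out $h\cap h_1=\varnothing$ or $h^*\cap h_1^* = \varnothing$ being the \emph{only} empty one... — concretely, after replacing $h$ by $h^*$ if needed we may assume $h\supseteq h_1$ or $h_1^*\supseteq h$, and then I'd rerun the same dichotomy with $h_2$, using $h_1\subsetneq h_2^*$ to pin down the nesting, landing again in the second bullet $h_1\subseteq h\subseteq h_2^*$. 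The main obstacle I anticipate is the bookkeeping: there are genuinely several sub-configurations, and the cleanest path is probably to phrase everything through Remark~\ref{shortcut} (so that ``$h$ not transverse to $h_i$ and neither $\^h_i\subset h$ nor $\^h_i\subset h^*$'' becomes a statement about which of the four corners vanishes), then systematically use the single structural hypothesis $h_1\subsetneq h_2^*$ together with Helly's theorem to kill the configurations that would put $h$ into $\beta$. Everything else is routine once the case division is set up symmetrically in $(h_1,h_2)$ and in $(h,h^*)$.
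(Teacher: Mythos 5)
Your overall strategy is the same as the paper's --- classify the position of $h$ relative to $h_1,h_2$ via Remark \ref{shortcut} and use the exclusion of types (1)--(3) for both $h$ and $h^*$ --- but as written the argument has genuine gaps at exactly the decisive points. In the main case $\^h_1\subset h$, your enumeration of the possible relations of $h$ to $h_2$ (``$h\pitchfork h_2$, $\^h_2\subset h$, or $h_2\subseteq h$'') omits the one configuration that actually survives and yields the conclusion, namely $h\subseteq h_2^*$ (equivalently $\^h_2\subset h^*$, or $\^h=\^h_2$ with $h=h_2^*$), and the text trails off (``forces a facing configuration contradicting\dots'', ``one checks\dots'') precisely where the second bullet has to be derived. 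You also never dispose of the sub-case $h_1^*\subsetneq h$ of ``$\^h_1\subset h$'': there $h_1\subsetneq h_2^*$ gives $h_2\subsetneq h_1^*\subsetneq h$, so $h$ contains both walls and is of type (3), hence in $\beta$; without saying this, ``$\^h_1\subset h$'' does not entitle you to $h_1\subseteq h$. Your ``remaining case'' ($h$ not transverse to $h_1$, $\^h_1\not\subset h$, $\^h_1\not\subset h^*$) is misanalysed as well: two distinct non-transverse walls are nested, and every one of the four possible nestings places $\^h_1$ inside $h$ or inside $h^*$, so this case forces $\^h=\^h_1$, i.e.\ $h\in\{h_1,h_1^*\}$, which lands in the second bullet immediately from $h_1\subsetneq h_2^*$; the ``facing corners'' discussion and the promised re-run of the dichotomy with $h_2$ are never carried out. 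For comparison, the paper's proof goes: if $h\pitchfork h_1$, then since neither $h$ nor $h^*$ is of type (2) (and $\^h\neq\^h_2$ because $h_1\cap h_2=\varnothing$), necessarily $h\pitchfork h_2$; otherwise, with $\^h\neq\^h_1,\^h_2$, each wall $\^h_i$ lies in $h$ or $h^*$, non-membership in type (3) forces (up to complementation) $\^h_1\subset h$ and $\^h_2\subset h^*$, and disjointness of $h_1,h_2$ upgrades this to $h_1\subset h\subset h_2^*$.

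Separately, the lemma asserts an equality of sets, so the forward inclusion --- that a half-space transverse to both $h_1,h_2$, or satisfying $h_1\subseteq h\subseteq h_2^*$ up to complementation, lies (together with its complement) outside $\beta$ --- must also be verified; your proposal never addresses it. It is short (each of types (1)--(3) requires $\^h_1\subset h$ or $\^h_2\subset h$, which transversality forbids, and for $h_1\subsetneq h\subsetneq h_2^*$ one checks $h$ and $h^*$ against the three types directly, as the paper does), but it is part of the statement. Finally, Helly's Theorem plays no role here; everything is elementary nesting/transversality bookkeeping.
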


\begin{proof}
It is clear that if $h$ is a half-space that is transverse to both $h_1$ and $h_2$ then $h\notin\beta\sqcup \beta^*$. 
It is also clear that  if $\^h= \^h_1$ or $\^h= \^h_2$ then  $h\notin \beta\sqcup \beta^*$.

Now assume that $h_1 \subsetneq h \subsetneq h_2^*$.   Then   $h\supset\^h_1$ and $h$  does not contain nor  is it  transverse to $\^h_2$ and hence $h\notin\beta\sqcup \beta^*$.

Conversely, suppose that $h\notin\beta\sqcup \beta^*$. If $h\pitchfork h_1$ then since $h$ and $h^*$ are not type $(2)$ we must have that  $h\pitchfork h_{2}$. Assume then that $h$ is not transverse to both $h_1$ and $h_2$. 

If either $\^h = \^h_1$ or $\^h = \^h_2$ then up to replacing $h$ by $h^*$ we have that $h_1\subseteq h\subseteq h_2^*$. Therefore, suppose that $\^h \neq \^h_1$ and $\^h \neq \^h_2$. Then, each of $\^h_1$ and $\^h_2$ is contained in $h$ or $h^*$. Since both $h$ and $h^*$ are  not  of type (3), we must have that, up to replacing $h$ by $h^*$, $\^h_1\subset h$ and $\^h_2\subset h^*$. Now, of course, since $h_1\cap h_2=\varnothing$, we conclude that $h_1\subset h$ and $h_2\subset h^*$, i.e.
$$h_1\subset h\subset h_2^*.$$
\end{proof}

\begin{cor}\label{Cor unique endpoints bridge}
Assume $h_1 \subset h_2^*$ are strongly separated. With the notation as in Lemma \ref{this lemma}, $\beta$ gives a lifting decomposition of $\frakH'$. Furthermore,  there exists a unique $x_i \in h_i$ such that 
$$B(h_1, h_2) = \I(x_1, x_2).$$
\end{cor}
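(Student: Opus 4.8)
The plan is to deduce the statement from Lemma~\ref{this lemma} together with the two hypotheses---strong separation and nestedness $h_1 \subset h_2^*$---in two stages: first establish that $\beta = \beta(h_1, h_2)$ is a lifting decomposition of $\frakH'$, then identify the bridge with an interval and extract its endpoints.

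For the first stage, recall that a lifting decomposition of $\frakH'$ requires that $\frakH(X) = \frakH' \sqcup (\beta \sqcup \beta^*)$ with $\beta$ consistent. The disjointness $\frakH(X) = \frakH' \sqcup (\beta \sqcup \beta^*)$ is immediate from the very definition $\frakH' = \frakH \setminus (\beta \sqcup \beta^*)$, and consistency of $\beta$ is exactly Lemma~\ref{Bridge Consistent}. Here is the one place where strong separation enters: to know that $\beta$ has \emph{type-(3) only} half-spaces when reasoning about $\frakH'$, or rather, to know that the two bullet cases in Lemma~\ref{this lemma} are in fact mutually exclusive and that the set of half-spaces $h$ with $h_1 \subseteq h \subseteq h_2^*$ is itself well-behaved. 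More precisely, strong separation forbids any half-space transverse to both $h_1$ and $h_2$, so the first bullet of Lemma~\ref{this lemma} is vacuous, and $\frakH'$ consists \emph{precisely} of those $h$ (up to swapping with $h^*$) with $h_1 \subseteq h \subseteq h_2^*$. Thus $X(\frakH')$ is the CAT(0) cube complex built from the ``nested interval'' of half-spaces between $h_1$ and $h_2^*$, and Proposition~\ref{LiftingDecomp} gives an isometric embedding $\~X(\frakH') \hookrightarrow \~X$ with image $\Cap{h\in\beta}{} h = B(h_1,h_2)$; moreover $\beta$ satisfies the descending chain condition (again Lemma~\ref{Bridge Consistent}), so by part~(\ref{DCClifting}) of Proposition~\ref{LiftingDecomp} the image lands inside $X$, justifying the notation $B(h_1,h_2) \subset X$.

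For the second stage, I want to see $X(\frakH')$ as an interval. The half-spaces of $\frakH'$, being totally nested as a linearly ordered chain $h_1 \subseteq \cdots \subseteq h_2^*$ (after choosing orientations consistently, which strong separation makes possible since no two of them are transverse), contain no facing triple; indeed in a chain $h \subsetneq h'$ one has $h^* \cap h'^* = h^* \neq \varnothing$, but a facing triple would need all pairwise $\cap$ of complements empty, impossible once the half-spaces form a chain of length $\geq 2$. Wait---more carefully: the relevant point is that $X(\frakH')$ is one-dimensional-like, or rather, by Corollary~\ref{finallyIntervalEuclidean} (or directly, since the half-space poset of $\frakH'$ is a single chain so its width is $1 \leq D$), $X(\frakH')$ isometrically embeds into $\~\Z$, i.e.\ it is a (possibly bi-infinite) interval $\I(x_1,x_2)$ for some $x_1, x_2$ in its Roller compactification. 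Transporting along the embedding of the first stage, $B(h_1,h_2) = \I(x_1,x_2) \subset \~X$. Finally, to pin down $x_i \in h_i$ and its uniqueness: since $\frakH'$ as oriented has $h_1$ as its minimal element and $h_2^*$ as its maximal element, the two endpoints of the interval $X(\frakH')$ are characterized by $U_{x_1} \cap \frakH' = \{$all of $\frakH'$, oriented so that $h_1 \in U_{x_1}\}$ and dually for $x_2$; pushing forward, $x_1$ is the point of $B(h_1,h_2)$ with $h_1 \in U_{x_1}$ (hence $x_1 \in h_1$) and $h_2 \notin U_{x_1}$ (hence $x_1 \in h_2^*$), and symmetrically $x_2 \in h_2$. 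Uniqueness of such $x_i$ follows because any other point $y \in B(h_1,h_2)$ with the same intersection with $\frakH'$ would be identified with $x_i$ under the isometric embedding (which is injective), and a point of $\~X$ is determined by its ultrafilter $U_y$, whose restriction to $\frakH'$ is forced.

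The main obstacle I anticipate is the bookkeeping in the second stage: carefully orienting the half-spaces of $\frakH'$ so that they genuinely form a chain (this uses strong separation to rule out transversality within $\frakH'$, which is where the hypothesis is essential rather than merely $h_1 \subsetneq h_2^*$), and then correctly reading off which ``end'' of the resulting interval sits in $h_i$ versus $h_i^*$. The embedding and descending-chain statements are essentially bundled for us in Proposition~\ref{LiftingDecomp} and Lemma~\ref{Bridge Consistent}, so the genuinely new content is the identification of $\frakH'$ as a chain and the endpoint analysis; everything else is assembling earlier results.
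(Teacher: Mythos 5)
Your first stage (the lifting decomposition) is fine and matches the paper: disjointness is definitional, consistency and the descending chain condition are Lemma~\ref{Bridge Consistent}, and Proposition~\ref{LiftingDecomp} places $B(h_1,h_2)$ inside $X$. You are also right that strong separation makes the transverse case of Lemma~\ref{this lemma} vacuous, so every $h\in\frakH'$ satisfies $h_1\subseteq h\subseteq h_2^*$ up to involution. But the second stage rests on a false structural claim: that strong separation of $h_1$ and $h_2$ forces $\frakH'$ to be a \emph{chain} (no transversality within $\frakH'$), hence that $X(\frakH')$ embeds in $\overline{\Z}$. Strong separation only forbids a half-space transverse to \emph{both} $h_1$ and $h_2$; two half-spaces each nested between $h_1$ and $h_2^*$ may perfectly well cross each other, and then your width-one/segment picture, and the ``two endpoints of the chain'' analysis built on it, collapses. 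Concrete counterexample: take a single square with vertices $a,b,c,d$ ($a$ opposite $c$), attach a pendant edge at $a$ ending at $p$ and a pendant edge at $c$ ending at $q$, and let $h_1=\{p\}$, $h_2=\{q\}$. No half-space is transverse to either pendant wall, so $h_1,h_2$ are strongly separated; yet the two mid-walls of the square give half-spaces $\{a,d,p\}$ and $\{a,b,p\}$, both lying in $\frakH'$ (each contains $h_1$ and is contained in $h_2^*$) and transverse to one another. Here $\beta(h_1,h_2)=\varnothing$, the bridge is the whole complex, and it is the genuinely two-dimensional interval $\I(p,q)$ --- the corollary only asserts that the bridge is an interval, which in general is up to $D$-dimensional, not a segment. (Your appeal to Corollary~\ref{finallyIntervalEuclidean} is also unavailable: it requires an irreducible complex with an essential nonelementary $\Aut$-action, hypotheses absent here.)

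The paper's proof of the second half needs no chain structure. It sets $S_i=h_i\cap B(h_1,h_2)$, notes $S_i\neq\varnothing$ because $h_i\in\frakH'$ and $\frakH'$ is exactly the set of half-spaces separating points of the bridge, and then shows $S_i$ is a single vertex: any half-space separating two points of $B(h_1,h_2)$ lies in $\frakH'$, hence (up to involution) contains $h_1$, so it cannot separate two points that both lie in $h_1$; symmetrically for $h_2$. Writing $S_i=\{x_i\}$, one checks $\frakH'=\frakH(x_1,x_2)$, which identifies $B(h_1,h_2)$ with $\I(x_1,x_2)$ and gives uniqueness for free. The ultrafilter idea at the end of your argument can in fact be salvaged without the chain claim --- the ``all-oriented'' and ``all-reversed'' orientations of $\frakH'$ are consistent, satisfy the descending chain condition, and yield the two points $x_1\in h_1$, $x_2\in h_2$ --- but as written your proof derives the existence of these endpoints from the erroneous one-dimensionality of $X(\frakH')$, so the gap is genuine and needs the repair above.
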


\begin{proof}
The fact that $\beta$ is a lifting decomposition for 
$$\frakH' = \{h\in \frakH: h_1\subseteq h\subseteq h_2^* \text{ or }h_1\subseteq h^*\subseteq h_2^* \}$$
follows from Lemmas \ref{Bridge Consistent} and \ref{this lemma}. In particular, $\frakH'$ is precisely the set of half-spaces which separate points in $B(h_1, h_2)$. 

Let us show that $B(h_1, h_2)$ is an interval. To this end, let $S_i = h_i \cap B(h_1, h_2)$. Since $h_i \in \frakH'$ it follows that $S_i\neq \varnothing$. 

Fixing $i$, suppose that $x,y\in S_i$. Then, any wall separating them must belong to $\frakH'$. By Lemma \ref{this lemma} and the assumption that $h_1$ and $h_2$ are strongly separated,  (again replacing $h$ by $h^*$ if necessary) we see that $h_1\subset h\subset h_2^*$. This means of course that $h_1\cap h^*=\varnothing$ and hence $x=y$, i.e. $S_i$ is a singleton, for both $i=1,2$.

Set $S_i = \{x_i\}$. Once more, since $h_1$ and $h_2$ are strongly separated, the collection $\frakH'$ corresponds to half-spaces nested in between $h_1$ and $h_2^*$ and hence  $\frakH'\subset \frakH(x_1, x_2)$. Conversely, if $h\in \frakH(x_1, x_2)$ then $h$ separates the two points $x_1, x_2\in B(h_1, h_2)$ and hence $h\in \frakH'$. 
\end{proof}

\begin{lemma}\label{ss Points are median}
 Assume that  $h_1\subset h_2^*$ are strongly separated. If $\xi_i\in h_i \subset \~X$, and $p\in B(h_1, h_2)$, then 
 $$m(\xi_1, p, \xi_2) = p.$$
\end{lemma}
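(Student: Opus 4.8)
The plan is to compute the half-space set $U_{m(\xi_1,p,\xi_2)}$ using the Venn-diagram formula for the median and show it equals $U_p$. Recall from Section~\ref{subsec:intervals-median} that
$$U_{m(\xi_1,p,\xi_2)} = (U_{\xi_1}\cap U_p)\cup (U_p\cap U_{\xi_2})\cup (U_{\xi_2}\cap U_{\xi_1}).$$
Since $(U_{\xi_1}\cap U_p)\cup (U_p\cap U_{\xi_2})\subseteq U_p$ automatically, it suffices to prove the reverse inclusion $U_p \subseteq U_{m(\xi_1,p,\xi_2)}$, i.e. that every half-space $h$ with $p\in h$ satisfies $\xi_1\in h$ or $\xi_2\in h$. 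Equivalently: there is no half-space $h$ with $p\in h$, $\xi_1\in h^*$, and $\xi_2\in h^*$.

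So suppose toward a contradiction that $h\in\frakH$ satisfies $p\in h$ and $\xi_1,\xi_2\in h^*$. The key is to locate $h$ relative to the pair $h_1\subsetneq h_2^*$ using Lemma~\ref{this lemma}, applied to the bridge $B(h_1,h_2)=\I(x_1,x_2)$ from Corollary~\ref{Cor unique endpoints bridge}. Since $p\in B(h_1,h_2)$, either $h$ is transverse to both $h_1$ and $h_2$, or (after possibly swapping $h\leftrightarrow h^*$) we have $h_1\subseteq h\subseteq h_2^*$ — but the latter case would force $h\in\frakH' = \frakH(x_1,x_2)$, and then $h$ or $h^*$ separates the endpoints $x_1\in h_1$ and $x_2\in h_2$; I would handle this by noting that $h\in\frakH'$ means exactly that $h$ (in one of its two orientations) separates points of the bridge, and the orientation with $p\in h$ then places $x_1$ and $x_2$ on specified sides, contradicting $\xi_1,\xi_2\in h^*$ via the nesting $h_1\subseteq h$ (which gives $h_1\subseteq h$, so $\xi_1\in h_1\subseteq h$) — wait, $\xi_1$ need not lie in $h_1$'s interior side of $h$; more carefully, if $h_1\subseteq h$ then $h^*\subseteq h_1^*$, so $\xi_1\in h^*$ forces $\xi_1\in h_1^*$, contradicting $\xi_1\in h_1$. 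The symmetric orientation $h_1\subseteq h^*\subseteq h_2^*$ gives $p\in h^*$, contradicting $p\in h$. Hence $h$ must be transverse to both $h_1$ and $h_2$.

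Now derive a contradiction from transversality. Since $h\pitchfork h_1$, the set $h^*\cap h_1$ is nonempty and $h^*\cap h_1^*$ is nonempty; similarly $h\pitchfork h_2$ gives $h^*\cap h_2$ nonempty. But $\xi_1\in h^*\cap h_1$ and $\xi_2\in h^*\cap h_2$, consistent so far, so transversality of $h$ with each $h_i$ is not in itself contradictory — the contradiction comes from \emph{strong separation}: $h_1$ and $h_2$ are strongly separated, so no single half-space is transverse to both $h_1$ and $h_2$. Thus $h\pitchfork h_1$ and $h\pitchfork h_2$ cannot hold simultaneously, contradiction. This exhausts all cases from Lemma~\ref{this lemma}, so no such $h$ exists, giving $U_p\subseteq U_{m(\xi_1,p,\xi_2)}$ and hence $m(\xi_1,p,\xi_2)=p$.

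The main obstacle I anticipate is the bookkeeping in the middle case: verifying carefully that when $h$ is \emph{not} transverse to both $h_1$ and $h_2$, the conclusion of Lemma~\ref{this lemma} (after the $h\leftrightarrow h^*$ normalization there) is incompatible with the simultaneous conditions $p\in h$, $\xi_1\in h^*$, $\xi_2\in h^*$ — one has to track which orientation of $h$ the lemma produces versus which orientation our hypotheses constrain, and use $\xi_i\in h_i$ together with the nesting $h_1\subseteq h\subseteq h_2^*$ to get $\xi_i\in h$ for the appropriate $i$. Everything else is a direct application of the median formula, Lemma~\ref{this lemma}, and the definition of strong separation.
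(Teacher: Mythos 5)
Your reduction is fine (both $U_p$ and $U_{m(\xi_1,p,\xi_2)}$ are total and consistent, so the single inclusion $U_p\subseteq U_{m(\xi_1,p,\xi_2)}$ forces equality), and your strategy is just the dual of the paper's: instead of showing every half-space containing both $\xi_1$ and $\xi_2$ contains $p$, you try to rule out half-spaces $h$ with $p\in h$ and $\xi_1,\xi_2\in h^*$. But your case analysis is not exhaustive, and the missing case is where the real content lies. Lemma \ref{this lemma} describes only the half-spaces in $\frakH\setminus\bigl(\beta(h_1,h_2)\sqcup\beta(h_1,h_2)^*\bigr)$, so the dichotomy ``transverse to both $h_1,h_2$, or nested between $h_1$ and $h_2^*$ up to orientation'' may be invoked only after you know $h\notin\beta\sqcup\beta^*$. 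The hypothesis $p\in B(h_1,h_2)=\bigcap_{k\in\beta}k$ does exclude $h\in\beta^*$ (that would put $p\in h^*$), but it does not exclude $h\in\beta(h_1,h_2)$: for such an $h$ the condition $p\in h$ holds automatically, and none of your cases addresses it, so the claim that you have ``exhausted all cases'' fails. Closing this requires a separate argument that $h\in\beta(h_1,h_2)$ is incompatible with $\xi_1,\xi_2\in h^*$, e.g.\ each of the types (1)--(3) forces $\hat{h}_1\subset h$ or $\hat{h}_2\subset h$, and since $h_1\cap h_2=\varnothing$ this gives $h_1\subset h$ or $h_2\subset h$, contradicting $\xi_1\in h_1$, $\xi_2\in h_2$, $\xi_1,\xi_2\in h^*$. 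This omitted step is essentially the content of the paper's own proof (which shows directly, via strong separation and Remark \ref{shortcut}, that any half-space containing both $\xi_1$ and $\xi_2$ lies in $\beta(h_1,h_2)\subset U_p$), so it is not a formality.

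There is also a local error in your nested case: for the orientation $h_1\subseteq h^*\subseteq h_2^*$ you claim this ``gives $p\in h^*$,'' but nothing forces that --- such an $h$ lies in $\frakH(x_1,x_2)$ and separates the bridge endpoints $x_1\in h_1$ from $x_2\in h_2$, so bridge points occur on both sides of it, and $p\in h$ is perfectly possible. The correct contradiction in that orientation comes from the boundary points instead: $h^*\subseteq h_2^*$ gives $h_2\subseteq h$, hence $\xi_2\in h$, contradicting $\xi_2\in h^*$. With that repair and the $\beta$-case supplied, your dual route does go through, but as written these two steps fail.
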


\begin{proof}
 Let $m = m(\xi_1, p, \xi_2)$. Recall that $m$ is uniquely determined by 
 $$U_m = (U_{\xi_1}\cap U_{p})\cup (U_{p}\cap U_{\xi_2})\cup (U_{\xi_2}\cap U_{\xi_1}),$$ 
 and so we must show that if $h\in U_{\xi_2}\cap U_{\xi_1}$ then $h\in U_p$. In fact, we will show that if $\xi_1,\xi_2 \in h$ then $h \in \beta(h_1,h_2)\subset U_p$. 
 
By assumption $\xi_i \in h\cap h_i\neq \varnothing$. Furthermore, since $h_1$ and $h_2 $ are strongly separated, $h$ can not be transverse to both $h_1$ and $h_2$. Suppose that  $h$ is  parallel to $h_2$. Since, $\xi_2 \in h_2\cap h$ and $\xi_1 \in h_2^*\cap h$ by Remark \ref{shortcut} we have that $h\supset\^h_2$. The same argument shows that either $h$ is transverse to $h_1$ or contains $\^h_1$ and therefore  $h\in \beta(h_1, h_2)$. 
\end{proof}

\subsection{More Consequences}

\begin{lemma}\cite[Lemma 2.28]{CFI}\label{extend4ss} 
Suppose that $\G \to \Aut(X)$ is a nonelementary and essential action, with $X$ irreducible. 
\begin{itemize}
\item If $h\in \frakH$ then  there exists $\g, \g'\in \G$ such that the following are pairwise strongly  separated 
$$\g h\subset h\subset \g'h.$$ 
\item In each orbit, there are $n$-tuples of  facing and pairwise strongly separated half-spaces. 
\end{itemize}
\end{lemma}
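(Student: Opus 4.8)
The plan is to prove both items by iterating the Flipping and Double Skewering machinery of Caprace--Sageev (Lemmas \ref{flip} and \ref{Double Skewering Lemma}) together with the fact (Theorem \ref{th:ss} and Corollary \ref{Cor.Not an interval}) that an irreducible, nonelementary, essential $X$ contains a pair of strongly separated half-spaces. For the first item, fix $h\in\frakH$. Since the action is essential, $h$ is essential, so by the Flipping Lemma $h$ is flippable; more usefully, by Theorem \ref{th:ss} there is \emph{some} strongly separated pair $k_1\subsetneq k_2^*$. The idea is to transport the strong-separation phenomenon into the orbit of $h$: using essentiality one can find $\g_0\in\G$ with $\g_0 k_1\subsetneq h$ (skewer $h$ below $k_1$ after possibly flipping), and then Double Skewering (Lemma \ref{Double Skewering Lemma}) applied to $h\subsetneq \g_0 k_2^*$ or to the nested triple produces $\g,\g'$ with $\g h\subsetneq h\subsetneq \g' h$; the point is that the ``bridge'' half-spaces have been squeezed out, so $\g h, h$ (resp. $h,\g' h$) inherit strong separation. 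The key mechanism is Corollary \ref{Cor unique endpoints bridge}: strong separation of a nested pair is preserved when we conjugate and re-nest, because the set of half-spaces transverse to both endpoints is empty, and this is an orbit-invariant condition once we have arranged the inclusions correctly. One has to be a little careful to get \emph{pairwise} strong separation among the three half-spaces $\g h\subsetneq h\subsetneq\g' h$, not just consecutive strong separation; but $\g h\subsetneq h$ strongly separated and $h\subsetneq\g' h$ strongly separated forces $\g h\subsetneq\g' h$ strongly separated, since any half-space transverse to both $\g h$ and $\g' h$ would, by nesting, be transverse to $h$ as well, contradicting one of the two assumed strong separations.

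For the second item, I would build the $n$-tuple inductively. Start with a single essential half-space $h_0$; by item one (applied in the orbit, after flipping) one can find a half-space $k$ in the orbit of $h_0$ with $h_0^*\subsetneq k$ strongly separated (a flipped, strongly-separated version), so $\{h_0, k^*\}$ — wait, more carefully: one wants $h_0$ and $h_1$ with $h_0^*\cap h_1^* = \varnothing$ (facing) and strongly separated. Using the Flipping Lemma on a strongly separated nested pair $h\subsetneq \ell^*$ gives $\g$ with $\ell^*\subsetneq\g h$, and then $h$ and $\g\ell$ (or suitable translates) become facing; strong separation again survives because transversality to both would propagate through the nesting. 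Having produced a facing, pairwise strongly separated pair, suppose inductively we have $h_1,\dots,h_m$ facing and pairwise strongly separated. Their complements $h_i^*$ are pairwise disjoint, hence (Helly, Theorem \ref{Helly}, contrapositive, or directly) the $h_i$ do not cover, so there is a vertex $x$ with $x\in h_i$ for all $i$; pick any essential half-space $g$ with $x\in g^*$ deep on both sides (exists by essentiality and since $X$ is not bounded), flip and skewer to strongly separate it from each $h_i$, then translate it by a group element pushing its complement into $\bigcap h_i^*$'s complement region so that it becomes facing with all of them. This last adjustment is where one genuinely uses Double Skewering inside the orbit: one skewers $g$ deep enough past every $h_i$ simultaneously, which is possible because there are only finitely many $h_i$ and each skewering can be iterated.

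The main obstacle I expect is the bookkeeping that makes strong separation \emph{persist} through each flip/skewer/translate step: strong separation is not obviously preserved by arbitrary group elements, so at every stage one must re-derive it from the geometric configuration (empty set of common transversals) rather than from equivariance. The cleanest route is to always phrase the inductive configuration as a \emph{nested} chain or a facing tuple sitting in a single bridge-type picture, invoke Corollary \ref{Cor unique endpoints bridge} and Lemma \ref{ss Points are median} to control the relevant intervals, and only then apply Lemma \ref{Double Skewering Lemma} or \ref{flip} — whose outputs are again nested pairs — so that strong separation is checked once and then transported by the nesting relations $\g h\subsetneq h\subsetneq\g' h$ rather than by the (non-equivariant) strong-separation predicate directly. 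A secondary nuisance is ensuring all requested tuples lie in a \emph{single} $\G$-orbit (the statement says ``in each orbit''); this follows by applying the construction to the orbit of the given $h$ throughout, using that flippability and skewering produce translates of $h$ by construction.
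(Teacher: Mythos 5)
The paper itself gives no proof of this lemma (it is quoted from \cite[Lemma 2.28]{CFI}), so your proposal has to stand on its own, and as written it has gaps at exactly the load-bearing points. For the first item, the entire difficulty is the positioning step you dispose of in a parenthesis: ``using essentiality one can find $\g_0$ with $\g_0 k_1\subsetneq h$ (skewer $h$ below $k_1$ after possibly flipping)''. The Flipping Lemma \ref{flip} applies to a single half-space and the Double Skewering Lemma \ref{Double Skewering Lemma} to an already nested pair; neither lets you place a translate of $k_1$ inside an arbitrary $h$ (which may be transverse to $k_1$), let alone produce the squeeze $\g_0 k_1\subseteq h\subseteq \g_0 k_2^*$ that your next sentence tacitly assumes when you double-skewer $h\subsetneq \g_0k_2^*$. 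Moreover the mechanism you invoke is backwards: a half-space sandwiched between two strongly separated half-spaces need not be strongly separated from either of them (a wall $\ell$ can cross both $\hat h$ and $\g_0\hat k_1$ while staying inside $\g_0 k_2^*$); Corollary \ref{Cor unique endpoints bridge} is about bridges and plays no role here. What is true, and what the proof must run on, is the opposite propagation: if $a\subseteq b\subseteq c\subseteq d$ with $b,c$ strongly separated, then $a,d$ are strongly separated --- your closing ``pairwise'' argument for $\g h\subset h\subset\g' h$ is exactly this and is correct. A complete route with the paper's tools is: double-skewer the strongly separated pair from Theorem \ref{th:ss} repeatedly to get a bi-infinite chain $\{\g^n k_1\}$ of pairwise strongly separated half-spaces; $h$ is transverse to at most one member, and finiteness of the set of walls separating two vertices forces $h$ to be nested with two consecutive members on at least one side; the propagation above then yields a strongly separated half-space nested with $h$, and one further double skewering gives $\g h\subset h\subset \g' h$. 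None of this positioning argument appears in your sketch, and it is where the content of the lemma lies.

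The second item has the same problem in sharper form. A facing, strongly separated pair in the orbit of $h$ can indeed be produced by flipping the innermost member of a length-three strongly separated chain $\g h\subset h\subset\g' h$ and sandwiching the middle pair, roughly in the spirit you indicate (though ``the Flipping Lemma on $h\subsetneq\ell^*$ gives $\ell^*\subsetneq\g h$'' is not what Lemma \ref{flip} says). But your inductive step --- choose any essential half-space $g$ missing a vertex of $\bigcap_i h_i$, then ``flip and skewer to strongly separate it from each $h_i$'' and ``skewer $g$ deep enough past every $h_i$ simultaneously'' --- is not an argument: double skewering acts on one nested pair at a time, and nothing in it makes the output strongly separated from, or facing, the remaining $h_i$; in addition $g$ is drawn from an arbitrary orbit, while the statement requires the entire $n$-tuple to lie in a single orbit, so the inductive configuration has to be built from translates of $h$ (for instance by carrying, inside $\bigcap_i h_i$, a strongly separated chain from the orbit of $h$ to flip at the next stage). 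You correctly diagnose that persistence of strong separation is the delicate point, but the steps where it is supposed to persist are precisely the ones left unproved.
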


\begin{lemma}\label{ssContaining x}
Let $X$ be an irreducible CAT(0) cube complex with a  nonelementary and essential $\G$-action.  Let $h\in \frakH$ and $n\in \Z$ with $n\geq 2$. Then, there exists an n-tuple  $\{k_1, \dots, k_n\}$ contained in a single $\G$-orbit that are  facing and pairwise strongly separated such that 
$$\^h \in \Cap{i = 1}{n} k_i.$$
\end{lemma}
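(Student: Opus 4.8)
The plan is to produce all $n$ half-spaces inside the $\G$-orbit of $h^*$, driven by one small observation about facing tuples: if $\{c_1,\dots,c_{n+1}\}$ is facing, then for each $i\leq n$ the condition $c_i^{*}\cap c_{n+1}^{*}=\varnothing$ says precisely that $c_i^{*}\subseteq c_{n+1}$. Hence, if a single member $c_{n+1}$ of a facing $(n+1)$-tuple can be pushed strictly inside $h^{*}$, then every remaining complement $c_i^{*}$ is trapped strictly inside $h^{*}$, so every remaining member $c_i$ properly contains $h$; then $k_i:=c_i$ for $i\leq n$ does the job.

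Carrying this out, I would first apply the first part of Lemma~\ref{extend4ss} to $h^{*}$ to get $\gamma\in\G$ with $\gamma h^{*}\subsetneq h^{*}$, i.e.\ a half-space in the orbit of $h^{*}$ lying strictly inside $h^{*}$. Next I would apply the second part of Lemma~\ref{extend4ss} to the orbit of $h^{*}$ to obtain a facing, pairwise strongly separated $(n+1)$-tuple $\{c_1,\dots,c_{n+1}\}$ in that orbit. Since $c_{n+1}$ and $\gamma h^{*}$ lie in one $\G$-orbit, I would pick $g\in\G$ with $gc_{n+1}=\gamma h^{*}$ and replace the tuple by $\{gc_1,\dots,gc_{n+1}\}$ — still facing, still pairwise strongly separated, still inside the orbit of $h^{*}$ — and relabel so that $c_{n+1}=\gamma h^{*}\subsetneq h^{*}$. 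Then, for each $i\leq n$, facing gives $c_i^{*}\subseteq c_{n+1}\subsetneq h^{*}$, whence $h\subsetneq c_i$ and so $\^h\subset c_i$; I set $k_i:=c_i$. Finally I would check that $\{k_1,\dots,k_n\}$, being a sub-tuple of a facing, pairwise strongly separated tuple in a single orbit, is itself facing, pairwise strongly separated and in one orbit, and that $h\subseteq\bigcap_i k_i$ while $h^{*}\setminus\bigcup_i c_i^{*}\supseteq h^{*}\setminus\gamma h^{*}\neq\varnothing$; since $\bigcap_{i=1}^{n}k_i=\~X\setminus\bigcup_i c_i^{*}$, the wall $\^h$ crosses $\bigcap_{i=1}^{n}k_i$, which is what $\^h\in\Cap{i=1}{n}k_i$ asserts.

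The step I expect to be the real content is the observation in the first paragraph — that within a facing tuple each member absorbs every other member's complement, so one ambient member can corral all the rest. Without it, the natural attempts (trying to position an \emph{entire} facing tuple inside $h^{*}$, or to build $n$ pairwise disjoint, pairwise strongly separated half-spaces inside $h^{*}$ directly) collide with the genuine obstruction that the common region $\bigcap_i c_i$ of a facing tuple may be a single point, leaving no room for anything of positive size. Choosing $c_{n+1}$ strictly inside $h^{*}$ rather than equal to $h^{*}$ is a minor but necessary precaution: it forces $c_i\neq h$, and hence the \emph{proper} inclusion $h\subsetneq c_i$ demanded by $\^h\subset c_i$.
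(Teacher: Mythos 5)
Your proof is correct, and it reaches the conclusion by a genuinely different mechanism from the paper's. The paper likewise takes a facing, pairwise strongly separated $(n+1)$-tuple from Lemma \ref{extend4ss}(2) and discards one member, but it leaves the tuple where it is and runs a case analysis on how $h$ sits relative to each $b_i$ (equal walls, $h\pitchfork b_i$, $\^h\subset b_i^*$, or $\^h\subset b_i$), using strong separation to see that transversality occurs for at most one index and facing to see the other bad cases occur at most once, so all but one member satisfies $\^h\subset b_i$; the single-orbit requirement is then patched at the end (``skewer and flip''), and $n=2$ is handled separately via Lemma \ref{extend4ss}(1). You instead pre-position the tuple: Lemma \ref{extend4ss}(1) applied to $h^*$ gives $\gamma h^*\subsetneq h^*$ in the orbit of $h^*$, you translate an $(n+1)$-tuple taken in that orbit so that one member becomes $\gamma h^*$, and then the facing relation alone ($c_i^*\subseteq c_{n+1}\subsetneq h^*$, hence $h\subsetneq c_i$) does the work. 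This buys a uniform side ($h\subsetneq k_i$ for every $i$, rather than the paper's possibly mixed $h\subsetneq k_i$ or $h^*\subsetneq k_i$), makes the single-orbit claim automatic, and treats $n=2$ and $n>2$ together; the paper's in-place case analysis avoids the repositioning but leans on the strong separation of the ambient tuple at the key step. The only hypothesis you should flag is that the containment $\gamma h^*\subsetneq h^*$ coming from Lemma \ref{extend4ss}(1) is proper--which is exactly how the paper itself uses that lemma in its $n=2$ case--since, as you observe, properness is what rules out $k_i=h$ and yields $\^h\subset k_i$ in the sense of Remark \ref{What is a wall}.
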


\begin{proof}
 Fix $h\in \frakH$. For $n= 2$ we take $k_1 = \g h^*$ and $k_2= \g' h$ as in Lemma \ref{extend4ss}. 

Now, assume $n>2$. Let $\{b_1, \dots, b_{n+1}\}$ be the collection of  facing and pairwise strongly separated half-spaces guaranteed by Item (2) of Lemma \ref{extend4ss}.  For each $i = 1, \dots, n+1$ exactly one of the following possibilities hold:
\begin{itemize}
\item[(a)] $\^h = \^b_i$;
\item[(b)] $h\pitchfork b_i$; 
\item[(c)] $\^h\subset b_i^*$;  
\item[(d)] $\^h \subset b_i$. 
\end{itemize}

Furthermore, since the collection is strongly separated and facing, there is at most one $i$, assume it is $i = n+1$,  for which the mutually exclusive items (a) through (c) can occur. Therefore, we have that
$$\^h \subset \Cap{i = 1}{n} b_i.$$ 

Finally, if the constructed set does not belong to the same orbit, one may skewer and flip to assure that they do belong to the same orbit yielding the desired collection. 
\end{proof}

\begin{lemma}
Suppose that $X$ is an irreducible CAT(0) cube complex with a nonelementary and essential action of the group $\G$. Any nonempty subset $\frakH'\subset \frakH$ verifying the following properties must be equal to $\frakH$: 
 \begin{itemize}
\item \emph{(Symmetric):} $(\frakH')^*=\frakH'$;
\item \emph{($\G$-invariant):} $\G\cdot \frakH' = \frakH'$;
\item \emph{(Convex):} If $h, h' \in \frakH'$ with $h\subset h'$ and $k \in \frakH$ such that $h \subset k \subset h'$ then $k \in \frakH'$. 
\end{itemize}
\end{lemma}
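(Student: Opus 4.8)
The plan is to reduce the statement to the single assertion that $\frakH'$ ``engulfs'' every half-space, and then to finish off using double skewering together with convexity. First I would fix some $h\in\frakH'$, which exists as $\frakH'\neq\varnothing$; by symmetry $h^*\in\frakH'$, and by $\G$-invariance the whole set $\G h\cup\G h^*$ lies in $\frakH'$. Since the $\G$-action is nonelementary and essential and $X$ is irreducible, Lemma~\ref{extend4ss} applies, and its second item (with $n=2$) produces a pair $\{b_1,b_2\}$ of facing, pairwise strongly separated half-spaces inside the orbit $\G h$; in particular $b_1,b_2\in\frakH'$. I will use ``facing'' in the form $b_1^*\subseteq b_2$ and $b_2^*\subseteq b_1$, and ``strongly separated'' in the form that no half-space is transverse to both $b_1$ and $b_2$; the point is that this single pair already suffices.

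The key step --- and the one place requiring a little care --- is the claim that for every $k\in\frakH$, either $k\in\{b_1,b_1^*,b_2,b_2^*\}$ or $\^k\subset b_i$ for some $i\in\{1,2\}$ (recall $\^k\subset b_i$ means $k\subsetneq b_i$ or $k^*\subsetneq b_i$). I would prove this by inspecting the mutually exclusive ways the wall $\^k$ can lie relative to $b_1$: if $\^k=\^b_1$ then $k\in\{b_1,b_1^*\}$; if $\^k\subset b_1$ we are done; if $\^k\subset b_1^*$ then, since $b_1^*\subseteq b_2$, also $\^k\subset b_2$; and if $k\pitchfork b_1$ then $k$ cannot be transverse to $b_2$ as well (strong separation), so running through the same possibilities for $\^k$ against $b_2$ --- and observing that $\^k\subset b_2^*\subseteq b_1$ would contradict $k\pitchfork b_1$ --- forces $\^k=\^b_2$ or $\^k\subset b_2$. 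This is essentially the combinatorial bookkeeping already carried out in the proof of Lemma~\ref{ssContaining x}, and it is routine given Remark~\ref{shortcut}; I expect it to be the main, albeit modest, obstacle.

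Granting the claim, the lemma follows quickly. Let $k\in\frakH$. If $k\in\{b_1,b_1^*,b_2,b_2^*\}$ then $k\in\frakH'$. Otherwise $\^k\subset b_i$ for some $i$, and since $k\in\frakH'$ if and only if $k^*\in\frakH'$, we may assume $k\subsetneq b_i$. As the action is essential, $k$ and $b_i$ are essential half-spaces, so the Double Skewering Lemma~\ref{Double Skewering Lemma} supplies $\delta\in\G$ with $\delta b_i\subsetneq k\subsetneq b_i$. By $\G$-invariance $\delta b_i\in\frakH'$, and $b_i\in\frakH'$, so convexity of $\frakH'$ gives $k\in\frakH'$. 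As $k$ was arbitrary, $\frakH'=\frakH$.
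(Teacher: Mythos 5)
Your proof is correct and takes essentially the same route as the paper: both place strongly separated half-spaces of $\frakH'$ around an arbitrary $k\in\frakH$ (via Lemma \ref{extend4ss}), use strong separation to force $k$ to be nested rather than transverse with respect to them, and then trap $k$ between two elements of $\frakH'$ using the Double Skewering Lemma \ref{Double Skewering Lemma}, so that symmetry and convexity finish. The only difference is organizational: the paper first builds a bi-infinite pairwise strongly separated descending chain inside $\frakH'$ and sandwiches $k$ between chain elements two steps apart, whereas you start from a single facing strongly separated pair in one orbit and apply double skewering to each $k$ after locating it.
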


\begin{proof}
 Since $X$ is irreducible, and $\frakH'$ is nonempty and $\G$-invariant, we can apply Lemmas \ref{extend4ss} and \ref{Double Skewering Lemma} to obtain a bi-infinite sequence of pairwise strongly separated half-spaces $\{h_n : n \in \Z\} \subset \frakH'$ with $h_{n+1}\subset h_n$.

 Let $k \in \frakH$. Then, there is at most one element of $\{h_n: n\in \Z\}$ which is transverse to $k$. This means that, there is an $N\in \Z$ for which $\^k \subset {h}^*_{N+2}\cap  h_N$. Since $\frakH'$ is symmetric and convex, we conclude  that $k\in\frakH'$. 
\end{proof}

\begin{cor}\label{cor:product}
 Assume we have an essential and nonelementary action of $\G$  on $X$, and $\G_0\leq \G$ of finite index. If $\frakH' \subset \frakH$ is a non-empty symmetric convex $\G_0$-invariant collection of half-spaces. Then either $\frakH'= \frakH$ or  $X \cong X' \times X''$ and $\frakH'$ is the half-space structure for $X'$.
\end{cor}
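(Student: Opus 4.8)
The plan is to reduce the statement to the irreducible case settled by the Lemma immediately preceding it. Write $X = X_1 \times \cdots \times X_n$ for the irreducible decomposition, with corresponding partition $\frakH = \frakH_1 \sqcup \cdots \sqcup \frakH_n$ of the half-spaces (so that cross-factor half-spaces are transverse). Since $\Aut(X)$ contains $\Aut(X_1) \times \cdots \times \Aut(X_n)$ with finite index, there is a finite-index subgroup $\G_{\mathrm{dec}} \leq \G$ fixing each factor, and I would replace $\G_0$ by $\G_1 := \G_0 \cap \G_{\mathrm{dec}}$, still of finite index in $\G$; then $\frakH'$ is $\G_1$-invariant and $\G_1$ preserves each $\frakH_i$. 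Because the $\G$-action on $X$ is essential, the essential core is all of $X$, so Lemma \ref{lem:boundary of factors} gives that the $\G_{\mathrm{dec}}$-action on each $X_i$ is nonelementary and essential; both properties descend to $\G_1$ — essentiality by Remark \ref{Finite Index Ess}, and nonelementarity since a finite $\G_1$-orbit in $X_i$ or in $\partial_\sphericalangle X_i$ would have a finite $\G_{\mathrm{dec}}$-orbit.

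Next I would set $\frakH'_i := \frakH' \cap \frakH_i$, so that $\frakH' = \bigsqcup_i \frakH'_i$, and check that each $\frakH'_i$ is a symmetric, convex, $\G_1$-invariant subset of $\frakH_i$. Symmetry and $\G_1$-invariance follow at once because the involution and $\G_1$ preserve each $\frakH_i$. For convexity: half-spaces lying in different factors are transverse and hence never nested, so if $h, h' \in \frakH'_i$ with $h \subset h'$ and $k \in \frakH$ satisfies $h \subset k \subset h'$, then necessarily $k \in \frakH_i$, and convexity of $\frakH'$ forces $k \in \frakH' \cap \frakH_i = \frakH'_i$.

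Let $I := \{i : \frakH'_i \neq \varnothing\}$, which is nonempty since $\frakH' \neq \varnothing$. For each $i \in I$, applying the preceding Lemma to the irreducible complex $X_i$ with its nonelementary essential $\G_1$-action and the nonempty symmetric convex $\G_1$-invariant subset $\frakH'_i$ yields $\frakH'_i = \frakH_i$. Therefore $\frakH' = \bigsqcup_{i \in I} \frakH_i$. If $I = \{1, \dots, n\}$ this says $\frakH' = \frakH$; otherwise $I$ and its complement are both nonempty, and putting $X' := \prod_{i \in I} X_i$ and $X'' := \prod_{i \notin I} X_i$ gives, after regrouping the factors, a genuine product decomposition $X \cong X' \times X''$ whose $X'$-factor has half-space structure $\bigsqcup_{i \in I} \frakH_i = \frakH'$, which is the desired conclusion.

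The real geometric content lies entirely in the preceding Lemma, so the only substantive work here — and the step most prone to error — is the descent to the irreducible factors: making sure essentiality and nonelementarity survive passage to the finite-index subgroup $\G_1$ and its induced action on each $X_i$, and that the symmetry, convexity, and $\G_0$-invariance of $\frakH'$ restrict correctly to each $\frakH_i$. Once that bookkeeping is in place, the corollary follows immediately.
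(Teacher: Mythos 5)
Your proposal is correct and is precisely the deduction the paper intends (the corollary is stated without proof immediately after the lemma): pass to a finite-index subgroup preserving the irreducible decomposition, restrict $\frakH'$ to each $\frakH_i$, and apply the lemma factorwise. The bookkeeping is handled correctly — essentiality descends by Remark \ref{Finite Index Ess}, nonelementarity descends to finite-index subgroups since a finite orbit for the subgroup yields a finite orbit for the larger group, and convexity of $\frakH'\cap\frakH_i$ follows because half-spaces in distinct factors are transverse and hence never nested.
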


\section{The Furstenberg-Poisson Boundary}
We now assume that $\G$ is a  discrete countable group.

The interested reader should consult the following references for further details \cite{FurmanRW}, \cite{Kaimanovich}, \cite{BaderShalom}, \cite{CFI}, \cite{BaderFurman}. This exposition follows closely these sources, as well as a nice series of lectures by Uri Bader at CIRM in winter 2014.

\begin{definition}
Consider a measurable action $\a : \G \times M \to M$ of the group $\G$ on the measure space $(M,m)$ and $\mu$  a measure on $\G$.  The convolution as a measure on $M$ is  the push forward under the action map of the product measure from $\G\times M$: 
$$\mu*m = \a_*(\mu\otimes m).$$ 
\end{definition}

We shall make use of the following elementary fact:

\begin{lemma}\label{HaarConv}
 Let $Haar$ denote the counting measure on $\G$, $\delta_e$ the Dirac measure at the identity $e\in \G$ and $\mu \in \P(\G)$ be a probability measure. Then $Haar*\mu = Haar$, and $\delta_e*\mu = \mu= \mu*\delta_e$.
\end{lemma}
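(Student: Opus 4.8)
The statement to prove is Lemma~\ref{HaarConv}: for $Haar$ the counting measure on $\G$, $\delta_e$ the Dirac mass at the identity, and $\mu \in \P(\G)$, we have $Haar * \mu = Haar$ and $\delta_e * \mu = \mu = \mu * \delta_e$.

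\medskip

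The plan is to compute both convolutions directly from the definition $\nu * m = \a_*(\nu \otimes m)$, using that $\G$ acts on itself by left translation and that all measures in sight are sums of point masses, so everything reduces to bookkeeping on singletons. First I would record that for a countable discrete group it suffices to check the equality of two measures on singletons $\{g\}$, and that for any measures $\nu, m$ on $\G$ one has, by unwinding the pushforward along the multiplication map $\a(s,x) = sx$,
$$(\nu * m)(\{g\}) = (\nu \otimes m)\big(\{(s,x) : sx = g\}\big) = \Sum{s \in \G}{} \nu(\{s\})\, m(\{s^{-1} g\}).$$
For the second and third identities this is immediate: taking $\nu = \delta_e$ kills all terms except $s = e$, giving $(\delta_e * m)(\{g\}) = m(\{g\})$, hence $\delta_e * \mu = \mu$; taking $m = \delta_e$ kills all terms except $s^{-1} g = e$, i.e. $s = g$, giving $(\nu * \delta_e)(\{g\}) = \nu(\{g\})$, hence $\mu * \delta_e = \mu$.

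\medskip

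For the first identity I would take $\nu = Haar$, so $Haar(\{s\}) = 1$ for every $s \in \G$, and $m = \mu$. Then
$$(Haar * \mu)(\{g\}) = \Sum{s \in \G}{} 1 \cdot \mu(\{s^{-1} g\}) = \Sum{t \in \G}{} \mu(\{t\}) = 1 = Haar(\{g\}),$$
where the reindexing $t = s^{-1} g$ is a bijection of $\G$ (for fixed $g$, the map $s \mapsto s^{-1} g$ is a bijection), and the total mass of the probability measure $\mu$ is $1$. Summing over singletons then gives $Haar * \mu = Haar$ as measures on all of $\G$, since both are $\sigma$-finite and agree on the atoms that generate the $\sigma$-algebra $2^\G$.

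\medskip

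There is essentially no obstacle here; the only point requiring a word of care is the interchange of the (possibly infinite) sum over $s$ with the pushforward/Fubini step, which is legitimate because all the terms are nonnegative (Tonelli), so no integrability hypothesis is needed. I would state the singleton formula for $(\nu * m)(\{g\})$ once as a preliminary observation and then read off all three claimed equalities from it.
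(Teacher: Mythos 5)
Your proposal is correct and follows essentially the same route as the paper: unwind the convolution as a pushforward along multiplication, evaluate on singletons, and reindex the sum using the total mass of $\mu$ (the paper sums over the second coordinate, $\Sum{\g_1\in\G}{}Haar(\g\g_1^{-1})\mu(\g_1)$, which is the same computation as yours by Tonelli). The $\delta_e$ identities, which the paper dismisses as "a similar calculation," you carry out explicitly, which is fine.
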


The proof of this is straightforward but we record it to exemplify the usefulness of thinking of convolution of measures in the context of pushforwards as above:

\begin{proof}
Let us show that $Haar*\mu(\g) = 1$ for every $\g \in \G$. Indeed,
 \begin{eqnarray*}
Haar*\mu(\g) &=& Haar\otimes \mu\{(\g_0, \g_1) : \g_0\g_1 = \g\}\\
&=& \Sum{\g_1 \in \G} Haar(\g\g_1^{-1}) \mu(\g_1) \\
&=& \Sum{\g_1 \in \G} \mu(\g_1)= 1
\end{eqnarray*}

A similar calculation shows that $\delta_e*\mu = \mu = \mu*\delta_e$.
\end{proof}

\begin{definition}
A probability measure $\mu \in \P(\G)$ is said to be generating if for every $\g \in \G$ there are $h_i \in \supp(\mu)$ such that $\g = h_1\cdots h_n$, i.e. the support of $\mu$ generates $\G$ as a semigroup.
\end{definition}

Given a generating measure $\mu$, we will associate two spaces to the $\mu$-random walk. The space of increments and the path space. As sets, these two spaces will be the same, but the measures on them will be different. 

Let $\G^\N=\{ \~\om=(\om_n)_{n\geq1}: \om_n\in \G\}$. The measure $\mu$ on $\G$ naturally induces a measure $\mu^\N$ on $\G^\N$ which assigns measure $\mu(g_1)\cdots\mu(g_n)$ to the  cylinder set:
$$C_{i_1, \dots, i_n}(g_1, \dots, g_n) = \{\~\om \in \G^\N : \om_{i_j} = g_j \text{ for } j = 1, \dots, n\}.$$ 

Let $\Om := \G \times \G^\N =\{(\om_0, \om_1, \dots) : \om_n \in\G\}$.
Given another measure $\theta$ on $\G$, which is not assumed to be a probability measure, we can consider the associated measure $\theta\otimes \mu^\N$ on $\Om$. This is the \emph{space of increments}, where we see the first factor as where to start the random walk (with distribution $\theta$). We will consider the action of $\G$ on $\Om$ which is transitive on the first factor and trivial on the rest. 

Next let $ \Om'= \G^\N$. We will consider the diagonal action of $\G$ on $\Om'$.
Observe that there is a natural map $W: \Om \to \Om'$, $(\om_0, \om_1, \om_2, \dots) \mapsto \~\om'$ where the $n$-th component of the image is given by  %
$$\om_n'= \om_0 \om_1\om_2\cdots \om_{n-1}.$$

With the actions of $\G$ defined above on $\Om$ and $\Om'$ we note that $W$ is $\G$-equivariant. 
We think of the image of this map as the space of sample paths. 
Consider the  time shift map:
$$S: \Om \to \Om; (\om_0,\om_1, \om_2 \dots) \mapsto (\om_0\om_1, \om_2, \om_3, \dots)$$
which is just a composition of the standard action map $\G\times \G \to \G$ given by $(\om_0, \om_1) \mapsto \om_0\om_1$ with the  time shift map:
$$S':  \Om'\to \Om'; (\om'_0,\om'_1, \om'_2\dots) \mapsto (\om'_1, \om'_2, \om'_3, \dots)$$
With these definitions in place, we observe that $W\circ S = S'\circ W$. 

Finally, applying Lemma \ref{HaarConv}, we deduce:
$$S_*(Haar\otimes \mu^\N) = (Haar*\mu)\otimes\mu^\N = Haar\otimes\mu^\N,$$
(i.e. that $S$ preserves $Haar\otimes \mu^\N$) and 
$$S_*(\delta_e\otimes \mu^\N) = (\delta_e*\mu)\otimes\mu^\N = \mu\otimes\mu^\N.$$

As it will be important below, we denote by $\mathbf{P} = \delta_e \otimes \mu^\N$ and $\mathbf{P}' = W_*\mathbf{P}$ the probability measures on $\Om$ and $\Om'$ respectively.

\begin{definition}
The space of ergodic components of the semi-group action generated by $S'$ on the space of sample paths $\Om'$ with measure class $W_*(Haar \otimes\mu^\N)$ is the \emph{Furstenberg-Poisson Boundary} for the $\mu$-random walk on $\G$ and will be denoted by $B$. Define the \emph{probability measure} $\nu$ on $B$ to be the push forward of $\mathbf{P}'$ under the natural projection $ \Om'\to B$.  
\end{definition}

Observing that $W_*(Haar \otimes\mu^\N)$ is $\G$-invariant and that the action of $\G$ commutes with the semigroup-action of $S'$, one sees that $\G$ must preserve the ergodic components of $S'$ and hence, the action of $\G$ descends to $B$. 

Furthermore, $S'$ preserves the measure $W_*(Haar\otimes \mu^\N)$ and $W_*(\delta_e\otimes \mu^\N)$ is absolutely continuous with respect to $W_*(Haar\otimes \mu^\N)$ so that $\nu$ is well defined on $B$. Finally, the following calculation  shows that $\mu *\nu = \nu$ and hence that $\nu$ is $\mu$ stationary:
\begin{eqnarray}
\mu*W_*(\delta_e \otimes \mu^\N) &=& W_*((\mu*\delta_e)\otimes \mu^\N) \nonumber\\
&=& W_*S_*(\delta_e\otimes\mu^\N) \nonumber\\
&=& S'_* W_*(\delta_e \otimes\mu^\N).  \nonumber
\end{eqnarray}

\begin{definition}
Let $\mu$ be a probability measure on $\G$. A  $\G$-equivariant measurable quotient of the Furstenberg-Poisson boundary $(B,\nu)$ is called a \emph{$(\G,\mu)$-boundary}.
\end{definition}

\section{Isometric Ergodicity}

Recall that a (quasi-measure-preserving) action of $\G$ on a measure space $(E,\nu)$ is said to be \emph{ergodic} if any $\G$-invariant Borel map $f: E\to \R$ is essentially constant.

The Furstenberg-Poisson boundary has very robust ergodicity properties. Bader and Furman  have developed a  general and powerful framework within which one can exploit these ergodicity properties in what is part of a great unification (and extension) program of previous super-rigidity results (see \cite{BaderFurman}).

Let $(E,\nu)$ be a Borel space on which the group $\G$ acts measurably and quasi-preserves the measure $\nu$. Such a space will be called a \emph{Lebesgue $\G$-space}. We say that the $\G$ action is \emph{isometrically ergodic} if the following holds: 

Let $(M,d)$ be a separable metric space and $\G\to \Isom(M,d)$ an action by isometries. If $f: E\to M$ is a $\G$-equivariant map, then it is essentially constant. 

We remark that, for an isometrically ergodic action, the existence of such a map $f$ is equivalent to the existence of a $\G$-fixed point in $M$. 

 Let $\M$ and $V$ be standard Borel spaces. We say that a Borel map $q: \M \to V$  is \emph{relatively metrizable} if there is a Borel map on the fibered product $d:\M\times_V\M \to [0,\8)$ such that the restriction $d_v$ to each fiber $M_v:=q^{-1}(v)$ is a separable metric. Such a Borel map $d$ is called a \emph{relative metric} on $q: \M \to V$. Furthermore, a \emph{relatively isometric action} of $\G$ on $q: \M \to V$ is a pair of $q$-compatible Borel-actions of $\G$ on $\M$ and $V$ so that as maps on fibers, each $\g\in \G$ is an isometry; that is, if $v\in V$ and  $x,y \in M_v$ then 
 $$d_{\g v}(\g x, \g y) = d_v(x,y).$$

\begin{definition}
 Suppose $B$ and $B'$ are Lebesgue $\G$-spaces. A $\G$-equivariant Borel map $p : B' \to B$ is said to be \emph{relatively isometrically ergodic} if for every relatively isometric action of $\G$ on $q: \M \to V$, and any $(p,q)$-compatible maps $u: B' \to \M$ and $\ell : B \to V$ there exists a map $f: B \to \M$ making the following diagram commute:
\begin{diagram}
B' & \rTo^u & M  \\
\dTo^{p} & \ruDashto^{\exists\,f} & \dTo_{q} \\
B & \rTo^\ell& V
\end{diagram}
%
\end{definition}

\begin{remark}
 We note that if one replaces the target spaces $B$ and $V$ with the one point space $\{*\}$ then one will recover the notions defined above in ``non-relative" terms. Namely, with respect to the one point projection relative metrizability  is just metrizability; a relatively isometric action is just an isometric action; and relatively isometrically ergodic is just isometrically ergodic.
\end{remark}

\begin{definition}
 Let $\G$ be a locally compact second countable group. A pair $(B_-, B_+)$ of $\G$-Lebesgue spaces forms a \emph{boundary pair} if the $\G$-action on both $B_\pm$ are amenable and the projections $B_-\times B_+\to B_\pm$  are relatively isometrically ergodic. 
\end{definition}

Recall that if a Lebesgue $\G$-space $B$ is amenable (in the sense of Zimmer) then given a compact metrizable space on which $\G$ acts by homeomorphisms, there is a $\G$-equivariant map $B\to \P(K)$.

The following is a strengthening of a result of Kaimanovich \cite{Kaimanovich}. We state it here for discrete countable groups and note that the same statement holds for a  locally compact second countable group under the additional assumption that the measure $\mu$ is ``spread out". 

\begin{theorem}\label{FPBoundaryisBoundary}\cite[Theorem 2.7, Remark 2.4]{BaderFurman} Let $\G$ be a  discrete countable group and $\mu\in \P(\G)$ a generating probability measure. Let $(B_-, \nu_-)$ and $(B_+, \nu_+)$ be the Furstenberg-Poisson boundaries for $(\G, \mu)$ and $(\G, \ch\mu)$, respectively. Then $B_-\times B_+$ is isometrically ergodic and $(B_-, B_+)$ is a boundary pair for $\G$ and any of its lattices. 
\end{theorem}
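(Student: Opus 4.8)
This is, in essence, \cite[Theorem~2.7 and Remark~2.4]{BaderFurman}, refining Kaimanovich's ergodicity results for the double boundary \cite{Kaimanovich}; the plan is to verify the two defining conditions separately and then add the remark about lattices. One must show that the $\G$-action on each of $B_-$ and $B_+$ is amenable in the sense of Zimmer, and that the two coordinate projections $B_-\times B_+\to B_\pm$ are relatively isometrically ergodic. Taking the base of the projection to be a one-point space, the second condition specializes to plain isometric ergodicity of $B_-\times B_+$, so it is enough to establish amenability and the relative statement.

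\textbf{Amenability.} This is Zimmer's theorem that the action of a group on the Poisson boundary of a random walk is amenable, and I would prove it by exhibiting the Reiter-type data directly from the walk. With the position process $(\om'_n)$ of the preceding section, let $\phi_n\colon B_+\to\P(\G)$ assign to $b$ the law of $\om'_n$ conditioned on the event that the sample path $(\om'_n)$ converges to $b$. Using the cocycle identity for the random walk together with the fact that, conditioned on the exit point, the walk started at $e$ asymptotically couples with the walk started at any $g\in\supp(\mu^{*n})$, one gets $\|g_*\phi_n(b)-\phi_n(g b)\|_{TV}\to 0$ for a.e.\ $b$ and every $g\in\G$; this is exactly the condition characterizing amenability of $\G\curvearrowright(B_+,\nu_+)$. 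The identical argument with $\ch\mu$ replacing $\mu$ gives amenability of $\G\curvearrowright(B_-,\nu_-)$.

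\textbf{Isometric ergodicity.} The device here is the bilateral random walk. Let $\widehat\Om=\G^{\Z}$, equipped with the $\sigma$-finite measure $\widehat{\mathbf P}$ under which the position at time $0$ has the counting distribution $Haar$ and the increments $(\om_n)_{n\in\Z}$ are i.i.d.\ of law $\mu$; by Lemma~\ref{HaarConv}, applied as in the construction of $\mathbf P$ above, the bilateral shift $U$ preserves $\widehat{\mathbf P}$ and commutes with the measure-preserving diagonal $\G$-action by left translation of positions. The ``past'' of a bilateral trajectory converges in $\~X$ to a point $b_-\in B_-$ and the ``future'' to a point $b_+\in B_+$, and since the one-sided tail boundary is the Poisson boundary while the past and the future of a transient walk are asymptotically independent, the resulting $\G$-equivariant, $U$-invariant map $\widehat\Om\to B_-\times B_+$ identifies $B_-\times B_+$ with the space of ergodic components of $U$ and carries $\widehat{\mathbf P}$ to a $\sigma$-finite $\G$-invariant measure in the class of $\nu_-\otimes\nu_+$. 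To prove relative isometric ergodicity of $\mathrm{pr}\colon B_-\times B_+\to B_+$ (the other projection being symmetric, and the one-point base giving isometric ergodicity of the product), take a relatively isometric $\G$-action on $q\colon\M\to V$ with $(\mathrm{pr},q)$-compatible maps $u\colon B_-\times B_+\to\M$ and $\ell\colon B_+\to V$, and pull $u$ back to a $U$-invariant, $\G$-equivariant map on $\widehat\Om$. Conditioning a bilateral trajectory on its future pins down $b_+$ and $v=\ell(b_+)$ and exhibits the past as a fresh $\ch\mu$-walk with exit point $b_-$; along this walk, Jaworski's strong approximate transitivity of $(B_-,\nu_-)$ collapses $\nu_-$ to $\delta_{b_-}$, so that the image measure $u(\cdot,b_+)_*\nu_-$ on the fiber $\M_v$, transported along the walk, tends to a point mass. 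As the transports act by isometries of fibers, comparing two independent samples forces $u(\cdot,b_+)_*\nu_-=\delta_{u(b_-,b_+)}$; hence $b_-\mapsto u(b_-,b_+)$ is $\nu_-$-essentially constant, its value depends measurably on $b_+$, and this defines the sought map $f\colon B_+\to\M$ with $q\circ f=\ell$ and $f\circ\mathrm{pr}=u$.

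\textbf{Lattices and the main obstacle.} For a lattice $\Lambda<\G$, which here means a finite-index subgroup since $\G$ is discrete, amenability of the $\G$-action on $B_\pm$ restricts to $\Lambda$, and $(B_\pm,\nu_\pm)$ is again a Poisson boundary --- for the first-return (hitting) measure of the $\mu$-walk on $\Lambda$ --- so relative isometric ergodicity of the two projections persists; this is \cite[Remark~2.4]{BaderFurman}. I expect the main difficulty to be the measure-theoretic bookkeeping: establishing that $B_-\times B_+$ really is the space of $U$-ergodic components (equivalently, that the tail $\sigma$-algebra of the bilateral walk is generated by $b_-$ and $b_+$), justifying the ``condition on one half of time, treat the other half as a fresh independent walk'' step while keeping the $\G$-equivariance of $u$ intact, and handling the non-finiteness of the $\G$-invariant measure on $\widehat\Om$. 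Packaged differently, the entire argument reduces to Kaimanovich's double-ergodicity theorem together with Jaworski's strong approximate transitivity, and it is their proofs --- ray convergence of the conditioned walk plus a zero--one law --- that carry the real weight.
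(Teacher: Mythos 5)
The paper offers no proof of this statement: it is imported verbatim from Bader--Furman \cite{BaderFurman} (Theorem 2.7 together with Remark 2.4), and the only commentary the paper adds is the observation, immediately after the statement, that for a discrete countable group a lattice is just a finite-index subgroup. So there is no internal argument to compare yours against; what you have written is a reconstruction of the external proof. Your outline does identify the right ingredients: Zimmer's theorem that the $\G$-action on a Furstenberg--Poisson boundary is amenable, Kaimanovich's identification of $B_-\times B_+$ with the space of ergodic components of the two-sided shift on the bilateral path space \cite{Kaimanovich}, the collapse of transported conditional measures to point masses (SAT-type behaviour) combined with the fiberwise isometry hypothesis to force essential constancy in the $B_-$-variable, and the finite-index/hitting-measure remark for lattices. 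In fact your ``condition on the future, compare two independent pasts'' step is close in spirit to Bader--Furman's own martingale argument, which works directly on $B_-\times B_+$ via a.e.\ convergence of the measures $\om'_n\nu_-$ to Dirac masses rather than through the bilateral path space. Be aware, though, that as written the decisive steps are asserted rather than proved --- that the tail of the bilateral walk is generated by the two one-sided exit points, that the transported fiber measures converge a.e.\ to point masses, and that the resulting value $u(b_-,b_+)$ is $\nu_-$-essentially independent of $b_-$ with measurable dependence on $b_+$ --- and you acknowledge this yourself; so your proposal functions as an annotated citation of \cite{Kaimanovich} and \cite{BaderFurman} rather than a self-contained proof, which is, in effect, exactly how the paper treats the theorem.
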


Of course, since we have stated the theorem for discrete countable groups, a lattice is necessarily a finite index subgroup. 

\begin{prop}\label{BaderFurmanProp}\cite[Proposition 2.2]{BaderFurman}
The property of relative isometric ergodicity is closed under composition of $\G$-maps.
\end{prop}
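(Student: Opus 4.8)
The statement to prove is Proposition \ref{BaderFurmanProp}: relative isometric ergodicity is closed under composition of $\G$-maps. Here is the plan.

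\medskip

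The plan is to set up two composable $\G$-maps $p_1 \colon B'' \to B'$ and $p_2 \colon B' \to B$, each relatively isometrically ergodic, and show the composite $p = p_2 \circ p_1 \colon B'' \to B$ is relatively isometrically ergodic. So I fix an arbitrary relatively isometric action of $\G$ on some $q \colon \M \to V$ together with $(p,q)$-compatible maps $u \colon B'' \to \M$ and $\ell \colon B \to V$, and I must produce $f \colon B \to \M$ with $q \circ f = \ell$ and $f \circ p = u$. The idea is to factor the construction through $B'$ in two stages, applying the hypothesis on $p_1$ first and then the hypothesis on $p_2$.

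\medskip

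First I would build an intermediate lift over $B'$. The composite $\ell \circ p_2 \colon B' \to V$ together with $u \colon B'' \to \M$ are $(p_1, q)$-compatible (since $q \circ u = \ell \circ p \circ \ $ suitably, and $p = p_2\circ p_1$), so relative isometric ergodicity of $p_1$ yields a $\G$-map $g \colon B' \to \M$ with $q \circ g = \ell \circ p_2$ and $g \circ p_1 = u$. Next I would feed $g$ into the hypothesis on $p_2$: the maps $g \colon B' \to \M$ and $\ell \colon B \to V$ are $(p_2, q)$-compatible, so relative isometric ergodicity of $p_2$ produces $f \colon B \to \M$ with $q \circ f = \ell$ and $f \circ p_2 = g$. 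Then $q\circ f = \ell$ as required, and $f \circ p = f \circ p_2 \circ p_1 = g \circ p_1 = u$, so $f$ is the desired lift and $p$ is relatively isometrically ergodic.

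\medskip

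The only subtlety — and the step to be careful about rather than a genuine obstacle — is bookkeeping the compatibility conditions: one must check that at each stage the pair of maps handed to the hypothesis really is $(p_i, q)$-compatible, i.e. that the relevant square of $\G$-maps commutes, which is a diagram chase using $q\circ u = \ell \circ p$ and $p = p_2 \circ p_1$. There is no new analysis or measure theory needed; the relative metric on $q \colon \M \to V$ and the relatively isometric $\G$-action are simply carried along unchanged through both applications. Thus the proof is essentially a two-step diagram chase, and I would present it as such, displaying the two commuting triangles.
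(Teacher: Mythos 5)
Your proof is correct: the two applications of the hypothesis (first to $p_1$ with the pair $(u,\ell\circ p_2)$, then to $p_2$ with the pair $(g,\ell)$) are exactly the standard diagram chase, and the compatibility checks $q\circ u=\ell\circ p_2\circ p_1$ and $q\circ g=\ell\circ p_2$ go through as you say. Note that the paper does not prove this proposition at all---it is quoted directly from \cite{BaderFurman}---and your argument is essentially the one given there; the only point worth making explicit is that the intermediate lift $g\colon B'\to \M$ furnished by the first step is itself $\G$-equivariant (as the definition of relative isometric ergodicity provides), since this is what entitles you to feed it into the second application.
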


As a direct consequence of these, and by considering the second countable metric space $M = \ell^2(C)$, we have the following:

\begin{cor}\label{cor:EquivToCountable}
Let $C$ be a countable set on which $\G$ acts by permutations and $(P,\vartheta)$ an isometrically ergodic $\G$-space.  There is a $\G$-equivariant map $P \to C$ if and only if there is a $\G$-fixed point in $C$. In particular, this holds for $P= \P_-\times \P_+$ where $(\P_-,\P_+)$ is a $\G$-equivariant quotient of a $\G$-boundary pair $(B_-, B_+)$.
\end{cor}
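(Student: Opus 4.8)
The plan is to reduce the statement to the definition of isometric ergodicity by linearizing the countable set $C$. One implication is immediate: if $c_0\in C$ is a $\G$-fixed point, then the constant map $p\mapsto c_0$ is a $\G$-equivariant map $P\to C$. For the converse, suppose $\f\colon P\to C$ is a $\G$-equivariant measurable map. I would embed $C$ into $M:=\ell^2(C)$ via $\iota\colon c\mapsto\delta_c$. Since $C$ is countable, $M$ is a separable Hilbert space, and the permutation action $\g\cdot\delta_c:=\delta_{\g c}$ extends to a unitary — hence isometric — $\G$-action on $M$ for which $\iota$ is $\G$-equivariant. Thus $\iota\circ\f\colon P\to M$ is a $\G$-equivariant map from the Lebesgue $\G$-space $(P,\vartheta)$ into a separable metric space carrying an isometric $\G$-action.

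Isometric ergodicity of $(P,\vartheta)$ then applies and forces $\iota\circ\f$ to be essentially constant, equal to some $m_0\in M$. Since $\iota(C)=\{\delta_c:c\in C\}$ is a \emph{discrete} subset of $M$ and $\iota\circ\f$ takes values in it, necessarily $m_0=\delta_{c_0}$ for a unique $c_0\in C$, so $\f\equiv c_0$ almost everywhere. Finally, combining equivariance of $\iota\circ\f$ with the quasi-$\G$-invariance of $\vartheta$ (which lets one arrange a single conull set valid for all $\g$, as $\G$ is countable) yields $\delta_{\g c_0}=\g\cdot m_0=m_0=\delta_{c_0}$ for every $\g\in\G$; injectivity of $\iota$ gives $\g c_0=c_0$, i.e.\ $c_0$ is a $\G$-fixed point of $C$. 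This establishes the equivalence.

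For the ``in particular'' clause, take $P=\P_-\times\P_+$, where $(\P_-,\P_+)$ is a $\G$-equivariant measurable quotient of the Furstenberg–Poisson boundary pair $(B_-,B_+)$ for $(\G,\mu)$ and $(\G,\ch\mu)$; by Theorem~\ref{FPBoundaryisBoundary} the product $B_-\times B_+$ is isometrically ergodic, and $\P_-\times\P_+$ is a $\G$-equivariant measurable quotient of it via the product of the two quotient maps. It then suffices to observe that isometric ergodicity descends along such quotients: given the quotient map $q\colon B_-\times B_+\to\P_-\times\P_+$ and any $\G$-equivariant $g\colon\P_-\times\P_+\to N$ into a separable metric $\G$-space $N$, the composite $g\circ q$ is essentially constant by isometric ergodicity of $B_-\times B_+$, and since $q$ pushes the measure of $B_-\times B_+$ forward into the class of $\vartheta$, $g$ itself is essentially constant. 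Hence $\P_-\times\P_+$ is isometrically ergodic and the first part completes the proof.

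The only step I expect to require real care is this last descent of isometric ergodicity to the quotient $\P_-\times\P_+$: it genuinely uses that $q$ is a measure-class-preserving quotient map rather than an arbitrary $\G$-map, so that $\vartheta(\{g\neq m_0\})$ equals the $(B_-\times B_+)$-measure of $\{g\circ q\neq m_0\}$, which vanishes. Everything else — separability of $\ell^2(C)$, unitarity of the permutation representation, discreteness of $\iota(C)$, and the conull-set bookkeeping justified by quasi-invariance and countability of $\G$ — is routine.
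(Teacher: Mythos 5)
Your proof is correct and follows essentially the same route the paper intends: it embeds $C$ into the separable metric space $\ell^2(C)$ via Dirac masses, invokes isometric ergodicity to force the composed map to be essentially constant and hence to land on a $\G$-fixed Dirac, and obtains the ``in particular'' clause by descending isometric ergodicity from $B_-\times B_+$ (Theorem \ref{FPBoundaryisBoundary}) along the measure-class-preserving quotient to $\P_-\times\P_+$. The details you supply (discreteness of $\{\delta_c\}$, unitarity of the permutation representation, conull-set bookkeeping) are exactly the routine verifications the paper leaves implicit.
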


\section{Tools}

As we saw in the previous section, a key characteristic of the Furstenberg-Poisson boundary is that it is Zimmer amenable. This connects the study of boundary maps to the study of probability measures on the space of interest. We now develop some background and tools for this purpose. More specifically, we would like to understand a probability measure on the Roller compactification $\~X$ from a geometric perspective.

\subsection{Measures on $\~X$}

Let $\P(\~X)$ denote the space of probability measures on $\~X$. If $m\in\P(\~X)$ define

\begin{eqnarray*}\label{eq:h_mu}
H_m:=\{h\in\frakH(X):m(h)=m(\*h)\}\\
H_m^+:=\{h\in\frakH(X):m(h)>1/2\}\\
H_m^-:=\{h\in\frakH(X):m(h)<1/2\}\\
H_m^\pm:=\{h\in\frakH(X):m(h)\neq1/2\}\.
\end{eqnarray*}

\begin{lemma}\cite[Lemmas 4.6, 4.7]{CFI}\label{Hmufacts} 
The maps $\P(\~X)\to 2^{\frakH(X)}$ given by ${m} \mapsto H_{m}, H_{m}^+, H_{m}^-$ are $\Aut(X)$-equivariant 
for the natural actions on $\P(\~X)$ and $2^{\frakH(X)}$. 
Furthermore, for $m,m'\in\P(\~X)$ the following hold:

\begin{enumerate}
\item\label{item:nofacingtriplesinhmu} There are no facing triples in $H_{m}$.  
If $X$ is not Euclidean then $H_{m}^+$ has facing triples and in particular $H_{m}^+\neq\varnothing$.
\item\label{item:involution}\label{HmuIsEuclidean} The collection $H_{m}$ is convex, involution invariant, and the associated complex $\~X(H_m)$ is an interval. 
\item\label{item:partition} The collection of half-spaces $H_m^+$ is consistent and yields a lifting decomposition 
$\frakH(X) = H_{m} \sqcup (H_{m}^+\sqcup H_m^-)$ and corresponds to the subcomplex denoted by $\~X_m\subset \~X$. Therefore $\~X_m\cong \~X(H_m)$.

\end{enumerate}
 \end{lemma}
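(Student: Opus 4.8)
The plan is to establish the three numbered assertions of Lemma \ref{Hmufacts} essentially by unwinding the definitions of $H_m, H_m^+, H_m^-$ and invoking the structural results already at our disposal, in particular Corollary \ref{finallyIntervalEuclidean}, Theorem \ref{Helly}, and Proposition \ref{LiftingDecomp}. The equivariance statement is the easiest: if $\g\in\Aut(X)$ then $(\g_*m)(h) = m(\g^{-1}h)$, so $h\in H_{\g_* m}$ iff $m(\g^{-1}h) = m(\g^{-1}h^*) = m((\g^{-1}h)^*)$ iff $\g^{-1}h\in H_m$ iff $h\in \g\cdot H_m$; the same computation works verbatim for $H_m^{+}$ and $H_m^{-}$ using the strict inequalities. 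So I would dispose of that in one or two lines.

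For Item \eqref{item:nofacingtriplesinhmu}, suppose $h_1,h_2,h_3\in H_m$ were a facing triple, i.e. $h_i^*\cap h_j^* = \varnothing$ for $i\neq j$. Since $h_1^*,h_2^*,h_3^*$ are then pairwise disjoint subsets of $\~X = h_i\sqcup h_i^*$, we get $m(h_1^*) + m(h_2^*) + m(h_3^*)\le 1$; but each $h_i\in H_m$ forces $m(h_i^*) = 1/2$, giving $3/2\le 1$, a contradiction. For the second half, if $X$ is not Euclidean then by Corollary \ref{finallyIntervalEuclidean} (in its irreducible, essential, nonelementary incarnation — here I would note that the statement of the lemma is being applied in that context, as in \cite{CFI}) $\frakH(X)$ contains a facing triple $h_1,h_2,h_3$; then $m(h_1)+m(h_2)+m(h_3) = m(h_1)+m(h_2)+m(h_3)$ and since the $h_i$ are pairwise disjoint this sum is $\le 1$, hence at least one $m(h_i) \le 1/3 < 1/2$, so $h_i^*\in H_m^+$ and in particular $H_m^+\neq\varnothing$; running the flipping/orbit argument on this facing triple (or citing \cite{CFI} directly) yields an honest facing triple inside $H_m^+$.

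For Item \eqref{HmuIsEuclidean}: involution invariance of $H_m$ is immediate from the symmetry of the defining condition $m(h) = m(h^*)$. Convexity: if $h,k\in H_m$ with $h\subset \ell\subset k$, then $m(h)\le m(\ell)\le m(k)$ (monotonicity of measure, since $h\subseteq \ell \subseteq k$ as subsets of $\~X$), and $m(h) = m(k) = 1/2$ forces $m(\ell) = 1/2$, so $\ell\in H_m$. Since $H_m$ is convex and involution invariant, Theorem \ref{CCCMagic} produces the CAT(0) cube complex $X(H_m)$ with Roller compactification $\~X(H_m)$, and by Item \eqref{item:nofacingtriplesinhmu} it has no facing triple; invoking Corollary \ref{finallyIntervalEuclidean} (or rather the ``(3)$\Rightarrow$(1)'' direction recorded in the remark following it, which needs only the absence of facing triples) gives that $\~X(H_m)$ is an interval.

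For Item \eqref{item:partition}: consistency of $H_m^+$ is the point to check with a little care — if $h\in H_m^+$ and $h\subset k$ then $m(k)\ge m(h) > 1/2$ so $k\in H_m^+$; and $H_m^+\cap (H_m^+)^* = \varnothing$ because $m(h)>1/2$ and $m(h^*)>1/2$ cannot both hold. So $H_m^+$ is a consistent set of half-spaces in the sense of Section \ref{Subsection Roller Duality}. Now $\frakH(X) = H_m\sqcup H_m^{\pm}$ and $H_m^{\pm} = H_m^+\sqcup H_m^-$ with $H_m^- = (H_m^+)^*$, so $\frakH(X) = H_m\sqcup(H_m^+\sqcup (H_m^+)^*)$, which is exactly the shape of a lifting decomposition (Definition \ref{defi:lifting}) of $H_m$ with $\mathfrak{s} = H_m^+$. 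Proposition \ref{LiftingDecomp} then gives the isometric embedding $\~X(H_m)\hookrightarrow\~X$ onto $\bigcap_{h\in H_m^+}h =: \~X_m$, whence $\~X_m\cong\~X(H_m)$. I expect the only genuinely delicate point is making sure the Euclidean/interval dichotomy is being invoked in a regime where it is valid (irreducible essential nonelementary, or more precisely the pure combinatorial ``(3)$\Rightarrow$(1)'' statement which is context-free); everything else is bookkeeping with monotonicity of measures and the definitions.
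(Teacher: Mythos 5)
First, a point of comparison: the paper does not prove this lemma at all --- it is quoted wholesale from \cite[Lemmas 4.6, 4.7]{CFI} --- so there is no internal proof to measure you against, and your proposal has to stand on its own (or fall back on the same citation). The routine parts of your argument are correct and are surely what CFI do: equivariance is the one-line computation you give; the measure count $m(h_1^*)+m(h_2^*)+m(h_3^*)\leq 1$ against $m(h_i^*)=1/2$ kills facing triples in $H_m$ (note that with the paper's convention it is the \emph{complements} $h_i^*$ that are pairwise disjoint --- in your second half you switch conventions and treat the $h_i$ themselves as disjoint, a harmless slip that only exchanges $h_i$ with $h_i^*$ in the conclusion); involution invariance and convexity of $H_m$, consistency of $H_m^+$, the identity $H_m^-=(H_m^+)^*$, and the identification $\~X_m\cong\~X(H_m)$ via Definition \ref{defi:lifting} and Proposition \ref{LiftingDecomp} are all fine.

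There are, however, two genuine gaps. (i) The lemma asserts that $H_m^+$ contains \emph{facing triples} when $X$ is not Euclidean; your argument only produces a single element of $H_m^+$ from one facing triple of $\frakH(X)$, and the appeal to a ``flipping/orbit argument'' does not close this: the lemma carries no hypothesis that any group acts essentially or nonelementarily (it is stated for arbitrary $m\in\P(\~X)$ on an arbitrary finite dimensional complex), and even when such an action is present, $m$ is not invariant, so flipping a half-space does not keep you inside $H_m^+$. Producing a whole facing triple inside $H_m^+$ is precisely the nontrivial content of the cited CFI lemma and is missing from your proposal. (ii) Both your uses of the Euclidean/interval dichotomy --- ``$X$ not Euclidean $\Rightarrow$ $\frakH(X)$ has a facing triple'' and ``$H_m$ has no facing triple $\Rightarrow$ $\~X(H_m)$ is an interval'' --- invoke Corollary \ref{finallyIntervalEuclidean}, which in this paper is stated only for irreducible complexes with an essential, nonelementary $\Aut(X)$-action. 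You assert that the direction (3)$\Rightarrow$(1) is ``context-free,'' but nothing in the paper establishes that, and justifying it (or quoting the appropriate hypothesis-free statement from \cite{CFI}) is exactly the substance of the result you are reproving; as written, this step is an unsupported claim rather than a proof. Since the paper itself simply cites \cite{CFI}, deferring these two points to that reference is legitimate, but then the proposal is a citation plus bookkeeping rather than an independent proof of items (1) and (2).
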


\begin{remark}\label{DCC Halfspaces}
Fix $m\in \P(\~X)$. By Proposition \ref{LiftingDecomp}, it follows that $X_m\subset X$ whenever $H_m^+$ satisfies the descending chain condition. Furthermore, as in Remark \ref{Remark product intervals is interval in product}, if $X$ has an irreducible decomposition corresponding to $\frakH= \frakH_1\sqcup \cdots \sqcup \frakH_n$, then we have that $X_m\subset X$ whenever $H_m\cap \frakH_i$ satisfies the descending chain condition for each $i = 1, \dots, n$. 
\end{remark}

\subsection{Strong Separation and Measures on $\~X$}
As above, consider $X = X_1\times\dots \times X_n$ be the product decomposition of $X$ into irreducible factors. Let $\S_i\subset \frakH_i\times \frakH_i$ denote the pairs of disjoint strongly separated half-spaces in  $X_i$  and let $\S = \S_1\sqcup \cdots \sqcup \S_n$. With this notation in place, and recalling that an interval in a product is the product of the corresponding intervals (see Remark \ref{Remark product intervals is interval in product}) we have the following easy generalization of \cite[Lemma 4.18]{CFI}: 

\begin{lemma}\label{DCCHeavy}
 Let $X$ be a CAT(0) cube complex with product decomposition corresponding to $\frakH = \frakH_1\sqcup \cdots \sqcup \frakH_n$. If $m\in \P(\~X)$ and $(H_m\times H_m) \cap \S_i \neq \varnothing$ for each $i = 1, \dots, n$, then $H_m^+$ satisfies the descending chain condition. 
\end{lemma}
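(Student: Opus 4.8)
The plan is to reduce the statement about the whole product to a statement about each irreducible factor, and then on each factor to argue that a strongly separated pair inside $H_m$ forces $H_m^+$ to have finite descending chains there. First I would invoke Lemma~\ref{Hmufacts}: $H_m^+$ is consistent and $\frakH = H_m \sqcup (H_m^+ \sqcup H_m^-)$. Since the product decomposition $\frakH = \frakH_1 \sqcup \cdots \sqcup \frakH_n$ is respected by the half-space data, I get $H_m^+ = \bigsqcup_i (H_m^+ \cap \frakH_i)$ and $H_m = \bigsqcup_i (H_m \cap \frakH_i)$, and a descending chain in $H_m^+$ must eventually lie in a single $\frakH_i$ (a descending chain of half-spaces in the product is, after a bounded initial segment, a descending chain in one factor, because half-spaces from different factors are transverse, not nested). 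So it suffices to fix $i$ and show: if $(h_1,h_2) \in (H_m \times H_m) \cap \S_i$ — i.e. $h_1, h_2$ are disjoint strongly separated half-spaces of $X_i$ with $m(h_1) = m(h_1^*)$ and $m(h_2) = m(h_2^*)$ — then $H_m^+ \cap \frakH_i$ has no infinite descending chain.

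Next I would use the bridge machinery from Section~\ref{bridge}. Up to replacing $h_1, h_2$ by complements we may assume $h_1 \subsetneq h_2^*$, so $B(h_1,h_2) = \I(x_1,x_2)$ for unique $x_i \in h_i$ by Corollary~\ref{Cor unique endpoints bridge}, and by Lemma~\ref{this lemma} the half-spaces separating points of $B(h_1,h_2)$ are exactly those nested between $h_1$ and $h_2^*$; every other half-space of $\frakH_i$ is transverse to both $h_1$ and $h_2$, hence — by strong separation — transverse to at most... actually transverse to both, which is impossible, so in fact every half-space of $\frakH_i$ is either transverse to both $h_1$ and $h_2$ (forbidden by strong separation unless we are in the degenerate $\^h=\^h_1$ or $\^h_2$ case) or nested between them. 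The upshot is that a half-space $k \in \frakH_i$ that is \emph{not} between $h_1$ and $h_2^*$ must satisfy $\^h_1 \subset k$ and $\^h_2 \subset k$, or $\^h_j = \^k$ for one $j$. In particular an infinite descending chain $k_1 \supsetneq k_2 \supsetneq \cdots$ in $H_m^+ \cap \frakH_i$ either eventually lies strictly between $h_1$ and $h_2^*$, or is eventually cofinally of the form $k_n \subsetneq h_1$ or $k_n \subsetneq h_2$ (pushing past $h_1$ or $h_2$).

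The key point — and where the hypothesis $h_1, h_2 \in H_m$ is used — is to rule out a descending chain that escapes past $h_1$ (or, symmetrically, $h_2$). Suppose $k_1 \supsetneq k_2 \supsetneq \cdots$ with all $k_n \in H_m^+$ and, say, $k_n \subsetneq h_1$ for all large $n$. Then $k_n^* \supsetneq h_1^*$, so as subsets of $\~X$ we have $h_1^* \subset k_n^*$, giving $m(k_n^*) \geq m(h_1^*) = 1/2$ (using $h_1 \in H_m$). But $k_n \in H_m^+$ means $m(k_n) > 1/2$, whence $m(k_n^*) < 1/2$, a contradiction. The same argument with $h_2$ handles a chain escaping on the other side. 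So an infinite descending chain in $H_m^+ \cap \frakH_i$ would have to lie eventually strictly between $h_1$ and $h_2^*$ — but there are only finitely many half-spaces between any two (property of walled spaces / Helly), contradiction. Hence $H_m^+ \cap \frakH_i$ satisfies the descending chain condition for each $i$, and therefore so does $H_m^+$.

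\textbf{Main obstacle.} The step I expect to require the most care is the bookkeeping that a descending chain in $H_m^+$, once it "passes" $h_1$, is genuinely trapped inside $h_1$ (and likewise for $h_2$) rather than wandering among half-spaces transverse to $h_1$; this is exactly where strong separation of the pair $(h_1,h_2)$ does the work, via Lemma~\ref{this lemma}, and where one must be careful that the relevant nesting/transversality relations computed in $\frakH_i$ agree with those in $\frakH(X)$ (Remark~\ref{rem half-spaces in proj}). Everything else is a direct application of the consistency and lifting-decomposition structure from Lemma~\ref{Hmufacts} together with the observation that $m(h) + m(h^*) \le 1$ turns the inclusions of half-spaces $h_j^* \subset k_n^*$ into the measure inequalities that produce the contradiction.
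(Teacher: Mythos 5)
First, note that the paper does not actually prove this lemma: it is stated as an ``easy generalization'' of \cite[Lemma 4.18]{CFI}, so there is no in-paper argument to compare yours against. Your overall strategy is the right one: a strictly descending chain in $H_m^+$ lies in a single $\frakH_i$ (any two of its terms are nested, and half-spaces from different factors are transverse), and then the balanced pair $(h_1,h_2)\in (H_m\times H_m)\cap\S_i$ together with monotonicity of $m$ and finiteness of the set of half-spaces between any two is exactly what kills the chain. Your measure step ($k_n\subsetneq h_1$ forces $m(k_n)\leq m(h_1)=1/2$, contradicting $k_n\in H_m^+$) is correct.

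There is, however, a genuine gap in the middle: the classification you extract from the bridge machinery is false. It is not true that a half-space $k\in\frakH_i$ which is not nested between $h_1$ and $h_2^*$ must contain both $\^h_1$ and $\^h_2$ or coincide with one of the walls; strong separation only forbids $k$ from being transverse to \emph{both} $h_1$ and $h_2$ (Definition \ref{defi:strongly separated}), so $k$ may be transverse to exactly one of them, or be contained in any of $h_1,h_1^*,h_2,h_2^*$ (compare the types (1)--(3) in Lemma \ref{this lemma}, which include half-spaces transverse to one of the pair). Consequently your trichotomy for where an infinite descending chain eventually lives is not exhaustive: for instance a chain with $k_n\subsetneq h_1^*\cap h_2^*$ (already visible in a tree), or a chain transverse to $h_1$ with $k_n\subsetneq h_2^*$, falls into none of your three cases. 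The repair is short and makes the bridge material unnecessary: for each $n$, strong separation in $\frakH_i$ gives some $j$ with $k_n$ not transverse to $h_j$; since $m(k_n)>1/2=m(h_j)=m(h_j^*)$, monotonicity rules out $k_n\subseteq h_j$ and $k_n\subseteq h_j^*$, so by Remark \ref{shortcut} necessarily $\^h_j\subsetneq k_n$. By pigeonhole some fixed $h\in\{h_1,h_1^*,h_2,h_2^*\}$ satisfies $h\subsetneq k_n$ for infinitely many $n$, and all those $k_n$ separate a vertex of $h$ from a vertex of $k_1^*$, contradicting the finiteness of the set of half-spaces separating two vertices. With this corrected case analysis (and your factor reduction and Lemma \ref{Hmufacts}), the proof is complete.
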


\begin{lemma}\label{comparing measures1}

Assume $X$ is an  irreducible CAT(0) cube complex with the action of $\Aut(X)$ nonelementary and essential. Let $m, m' \in \P(\~X)$ such that $H_m\cap H_{m'} \neq \varnothing$. Then, there exists strongly separated  half spaces $h, k \in \frakH$ such that:
 \begin{itemize}
\item $h\subset k$;
\item $h \in H_m^-\cap H_{m'}^-$ and $k\in H_m^+\cap H_{m'}^+$;
\item $\^x \in h^*\cap k$ for every $x\in H_m\cup H_{m'}$.
\end{itemize} 
\end{lemma}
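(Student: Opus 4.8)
The plan is to exploit the three hypotheses one at a time: irreducibility (to get strongly separated half-spaces via Theorem~\ref{th:ss}), nonelementarity and essentiality (to skewer, flip, and extend), and the assumption $H_m \cap H_{m'} \neq \varnothing$ (to ``anchor'' the construction at a half-space on which both $m$ and $m'$ are split evenly). First I would pick $h_0 \in H_m \cap H_{m'}$. By Lemma~\ref{extend4ss} applied to the nonelementary essential irreducible action, there exist $\g, \g' \in \G$ with $\g h_0 \subset h_0 \subset \g' h_0$ pairwise strongly separated; here $\G = \Aut(X)$ (or a suitable subgroup — note $H_m$-type collections are only $\G$-equivariant in general, but the Caprace–Sageev machinery applies to $\Aut(X)$ acting essentially). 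Since $H_m$ and $H_{m'}$ are involution-invariant and $\Aut(X)$-equivariant (Lemma~\ref{Hmufacts}), the images $\g h_0$ and $\g' h_0$ again lie in $H_m \cap H_{m'}$. So we already have nested strongly separated half-spaces sitting in $H_m \cap H_{m'}$, but we want them in $H_m^- \cap H_{m'}^-$ and $H_m^+ \cap H_{m'}^+$ respectively, i.e.\ we need to push the measures off the ``even'' value $1/2$.

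Next I would use the Double Skewering Lemma~\ref{Double Skewering Lemma} together with the Flipping Lemma~\ref{flip} to replace the nested strongly separated pair by a new nested strongly separated pair $h \subset k$ that is ``deep inside'' — more precisely, one obtains a bi-infinite descending chain of pairwise strongly separated half-spaces $\{h_n : n \in \Z\}$ in the orbit of $h_0$ (as in the proof of the last Lemma before Corollary~\ref{cor:product}), all lying in $H_m \cap H_{m'}$. The key quantitative point: for a probability measure $m$, along a nested strongly separated chain $h_n \supsetneq h_{n+1}$ the values $m(h_n)$ are monotone, so $m(h_n) \to$ some limit; one shows that for $n$ large $m(h_n) > 1/2$ is impossible to maintain together with $m(h_{-n}) < 1/2$ unless... — actually the cleaner route is: since $H_m^+$ is consistent (Lemma~\ref{Hmufacts}) and the $h_n$ form a descending chain, if infinitely many $h_n \in H_m^+$ then by consistency all of them are, contradicting $m(h_n) \to 1 - m(h_n^*)$ being forced below $1/2$ eventually by $\sum$-considerations; so only finitely many $h_n$ lie in $H_m \cup H_m^+$, and symmetrically for $m'$ and for the reversed chain. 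Hence we may choose indices $N \ll M$ with $h := h_M \in H_m^- \cap H_{m'}^-$ and $k := h_N^* \in H_m^+ \cap H_{m'}^+$ — or rather, arrange the chain and its flips so that $h \subset k$, $h \in H_m^- \cap H_{m'}^-$, $k \in H_m^+ \cap H_{m'}^+$, and $h, k$ are strongly separated (strong separation is preserved since these are among the $h_n$, and nestedness of $h \subset k$ after flipping the appropriate end).

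For the third bullet, $\^x \in h^* \cap k$ for every $x \in H_m \cup H_{m'}$: any such $x$ is a half-space with $m(x) \geq 1/2$ or $m'(x) \geq 1/2$; since $h \in H_m^- \cap H_{m'}^-$ and $k \in H_m^+ \cap H_{m'}^+$ are strongly separated, $x$ cannot be transverse to both $h$ and $k$, and by the consistency of $H_m^+$ and $H_{m'}^+$ together with the location of $h,k$ one rules out $\^x \subset h$ (that would force $x$ or $x^*$ a proper subset of $h \in H_m^-$, contradicting consistency pushing $m$-mass the wrong way) and $\^x \subset k^*$ similarly; using Remark~\ref{shortcut} and strong separation this leaves exactly $\^x \subset h^* \cap k$, i.e.\ $h \subsetneq x$ or $h \subsetneq x^*$ and symmetrically against $k$ — precisely $\^x \in h^* \cap k$ in the notation of Remark~\ref{What is a wall}. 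The main obstacle I expect is the second step: carefully extracting from the strongly separated chain a \emph{nested} pair $h \subset k$ that simultaneously lands strictly below $1/2$ for both $m$ and $m'$ and strictly above $1/2$ for both — this requires the monotonicity-along-chains argument for each of the four conditions and the fact that a probability measure can sit on at most finitely many terms of a strongly separated descending chain with value $> 1/2$, which is where consistency of $H_m^+$ and finiteness of total mass do the real work.
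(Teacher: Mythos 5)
There is a genuine gap, and it sits at the heart of your construction of the pair $h\subset k$. You claim that because the maps $m\mapsto H_m$ are $\Aut(X)$-equivariant (Lemma \ref{Hmufacts}), the translates $\g h_0$ and $\g' h_0$ produced by Lemma \ref{extend4ss} ``again lie in $H_m\cap H_{m'}$.'' Equivariance says $\g H_m = H_{\g_* m}$, not that $H_m$ is $\G$-invariant; since $m$ and $m'$ are arbitrary (non-invariant) probability measures, there is no reason that a translate of a balanced half-space is balanced, and in general it is not. The subsequent step is also internally inconsistent: you first place the whole strongly separated chain $\{h_n\}$ inside $H_m\cap H_{m'}$ (all terms balanced for both measures) and then try to select terms of the same chain lying in $H_m^-\cap H_{m'}^-$ and $H_m^+\cap H_{m'}^+$. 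Moreover the ``key quantitative point'' you rely on — that a probability measure can take value $>1/2$ on only finitely many terms of a strongly separated descending chain — is false: if $m$ is a Dirac mass at a point of $\Cap{n}{}h_n$ (e.g.\ a boundary point), every term has measure $1$, and the consistency of $H_m^+$ gives upward closure along the chain but no contradiction. So your argument has no mechanism forcing the measures to cross $1/2$ simultaneously for both $m$ and $m'$.

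The paper's proof supplies exactly that mechanism, and it is the ingredient your proposal is missing: it takes $y\in H_m\cap H_{m'}$ and applies Lemma \ref{ssContaining x} to get a \emph{facing} $6$-tuple of pairwise strongly separated half-spaces $k_1,\dots,k_6$ all containing the wall $\^y$. Since $H_m$ and $H_{m'}$ contain no facing triples (Lemma \ref{Hmufacts}), at most two of the $k_i$ are balanced for $m$ and at most two for $m'$, so some two, say $k_1,k_2$, are unbalanced for both; because they contain $\^y$ with $m(y)=m'(y)=1/2$, they are forced to be strictly heavy for both measures. Only then is Lemma \ref{extend4ss} used, to extend outward to strongly separated $h\subset k_1^*\subset y\subset k_2\subset k$, which yields $h\in H_m^-\cap H_{m'}^-$ and $k\in H_m^+\cap H_{m'}^+$; the final bullet is then the measure-plus-strong-separation argument you sketch (and that part of your proposal is essentially in the spirit of the paper). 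Without the facing-tuple/no-facing-triple step, the simultaneous strict inequalities for the two measures cannot be obtained by translating or chaining alone, so the proof as proposed does not go through.
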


\begin{proof}
By Lemma \ref{Hmufacts},  $H_m$ and $H_{m'}$ do not contain facing $n$-tuples for $n\geq 3$. Let $y\in H_m\cap H_{m'}$. Applying Lemma \ref{ssContaining x} for $n=6$, we find $k_1, \dots, k_6$ pairwise strongly separated facing half-spaces with 
$$ \^y \subset \Cap{i = 1}{6} k_i.$$
At most two of these belong to $H_m$ and similarly, at  most two belong to $H_{m'}$ meaning that at least two, say $k_1, k_2 \in H_{m}^\pm \cap H_{m'}^\pm$. Up to replacing $y$ by $y^*$ we have that $k_1^*\subset y\subset k_2$, and so $k_1, k_2 \in H_m^+\cap H_{m'}^+$.

By Lemma \ref{extend4ss} we find $h, k\in \frakH$  such that $h$ and $k_1^*$ are strongly separated, $k_2$ and $k$ are strongly separated, and 
$$h\subset k_1^*\subset y\subset k_2 \subset k$$
meaning that 
for $m'' \in \{m, m'\}$
$$m''(h)\leq m''( k_1^*)< m''(y) = 1/2< m''( k_2 )\leq m''( k).$$
In particular, $h\in H_m^-\cap H_{m'}^-$ and $k\in H_m^+\cap H_{m'}^+$. 

Once more, let $m''\in\{m,m'\}$  and let $x \in H_{m''}$. Since $m''(k_2)>1/2$ and $m''(k_1^*)<1/2$ it follows that $x \not \subset k_1^*$ and $k_2 \not\subset x$. This means that either $x\pitchfork k_1^*$ or $x\pitchfork k_2$ or $\^x \subset k_1\cap k_2$. Either way, we conclude that $\^x \subset h^*\cap k$. 
\end{proof}

\begin{lemma}\label{ssmeas}
Let $X$ be an irreducible essential and nonelementary  CAT(0) cube complex. 
 Let $E \subset \P(\~X)$ be nonempty and $m\in \P(\~X)$. Suppose that if  $m'\in E$ then $H_{m'}$ does not contain strongly separated pairs and  $H_m\cap H_{m'}\neq \varnothing$. Then, there exist a strongly separated pair $h, k \in \frakH$ such that $h\subset k$, and for every  $x\in\Cup{m' \in E}{} H_{m'}$
 $$\^x \subset h^*\cap k.$$
\end{lemma}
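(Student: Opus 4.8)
The plan is to reduce \texttt{ssmeas} to \texttt{comparing measures1} by exploiting the failure of strong separation in the various $H_{m'}$. First I would fix an arbitrary $m_0 \in E$ and apply Lemma \ref{comparing measures1} to the pair $m, m_0$, obtaining strongly separated half-spaces $h_0 \subset k_0$ with $h_0 \in H_m^-\cap H_{m_0}^-$, $k_0 \in H_m^+\cap H_{m_0}^+$, and $\^x \subset h_0^*\cap k_0$ for every $x \in H_m \cup H_{m_0}$. The issue is that $h_0, k_0$ only control the half-spaces in $H_{m_0}$, not in all of $\Cup{m'\in E}{}H_{m'}$; so the task is to upgrade this to a pair that simultaneously dominates every $H_{m'}$.

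The key observation I would use is that the hypothesis ``$H_{m'}$ contains no strongly separated pair'' is exactly the situation controlled by the proof of Lemma \ref{comparing measures1}: because $H_{m'}$ has no strongly separated pair and no facing triple (Lemma \ref{Hmufacts}), a pair $h \subset k$ of strongly separated half-spaces that ``straddles'' some $y \in H_m \cap H_{m'}$ is automatically forced, via Remark \ref{shortcut}, to have every $x \in H_{m'}$ satisfy $\^x \subset h^*\cap k$ --- an element of $H_{m'}$ can neither contain $\^k$ (strong separation of $h,k$ would be violated by transversality arguments, or $H_{m'}$ would acquire a strongly separated pair) nor be crossed by both $h$ and $k$. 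So what I really want is a \emph{single} strongly separated pair $h \subset k$ that straddles a point lying in $H_m \cap H_{m'}$ \emph{for every} $m' \in E$ at once. Since by hypothesis $H_m \cap H_{m'} \neq \varnothing$ for each $m'$, but these intersection points may differ, I would instead run the construction of Lemma \ref{ssContaining x} / Lemma \ref{comparing measures1} starting from a half-space whose associated wall $\^y$ lies deep inside the essential core, and then use the Double Skewering Lemma \ref{Double Skewering Lemma} together with Lemma \ref{extend4ss} to push $h$ and $k$ far enough out (nesting $h \subset k_1^* \subset \cdots \subset k_2 \subset k$ as in Lemma \ref{comparing measures1}) that the ``heavy'' half-space $k$ contains $1/2$-more than every finite-measure obstruction. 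Concretely: because each $H_{m'}$ has no strongly separated pair, every element of $H_{m'}$ is transverse to one of a fixed pair of strongly separated half-spaces, or nested between them; by skewering outward I can arrange that \emph{the same} outer pair $h \subset k$ works, since ``being nested inside $[h^*, k]$'' is a cofinite condition along a bi-infinite strongly separated chain in the $\G$-orbit, and the family $\{H_{m'}\}_{m'\in E}$ contributes no strongly separated pairs to interfere.

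The main obstacle I anticipate is the uniformity over the (possibly infinite) family $E$: Lemma \ref{comparing measures1} handles one $m'$ at a time and produces a pair depending on the chosen straddling point $y \in H_m \cap H_{m'}$. To get a single pair $h \subset k$ valid for all $m'$ simultaneously, I expect to argue as follows. Take a bi-infinite sequence $\{g_n : n \in \Z\}$ of pairwise strongly separated half-spaces in a single $\G$-orbit with $g_{n+1}\subset g_n$, guaranteed by Lemmas \ref{extend4ss} and \ref{Double Skewering Lemma}. For any half-space $x \in \frakH$, all but at most one $g_n$ is non-transverse to $x$; since the $g_n$ are strongly separated and $H_{m'}$ has \emph{no} strongly separated pair, at most one $g_n$ can lie ``between'' the heavy and light parts of $m'$ --- more precisely, $H_{m'}^+$ being consistent (Lemma \ref{Hmufacts}(3)) and free of strongly separated pairs means $H_{m'}^+$ meets the chain $\{g_n\}$ in an up-set that misses at most finitely many terms, so $m'(g_n) > 1/2$ for all $n$ sufficiently negative and $m'(g_n) < 1/2$ for all $n$ sufficiently positive, with the transition confined to a bounded range \emph{independent of $m'$} (the range is bounded by the dimension $D$, since more would force a facing or strongly separated configuration in $H_{m'}$). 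Choosing $h = g_N^*$-complement-side and $k = g_{-N}$ for $N$ larger than $D$ then yields $h \in H_{m'}^-$, $k \in H_{m'}^+$, and, by Remark \ref{shortcut} applied as in Lemma \ref{comparing measures1}, $\^x \subset h^*\cap k$ for every $x \in H_{m'}$ and every $m' \in E$; strong separation of $h \subset k$ is inherited from the chain. This pins down the uniform pair and completes the proof. If the bounded-transition claim resists a clean argument, the fallback is to apply Lemma \ref{ssContaining x} with $n$ depending on $D$ to engulf $\^y$ for a fixed $y \in H_m$ and note that $H_m \cap H_{m'}\neq\varnothing$ forces the same engulfing pair to straddle a point of each $H_{m'}$, then finish as in Lemma \ref{comparing measures1}.
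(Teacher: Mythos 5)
Your opening move (apply Lemma \ref{comparing measures1} to $m$ and a fixed $m_0\in E$, then bring in Lemma \ref{extend4ss}) is the paper's, but the step where you glue the family $E$ together contains a genuine gap. Your main route hinges on the claim that, along a bi-infinite pairwise strongly separated chain $\{g_n\}$, the heavy-to-light transition of every $m'\in E$ is ``confined to a bounded range independent of $m'$, bounded by the dimension $D$.'' That is not justified and is false as stated: the dimension bounds families of pairwise \emph{transverse} half-spaces, not positions along a nested chain, and nothing in your construction ties the chain to $H_m$, so the transition index $n_0(m')$ can sit anywhere along the chain as $m'$ ranges over $E$ (only \emph{one} chain element can be $m'$-balanced, but its location is unconstrained). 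Your fallback has the same defect in a different guise: engulfing a single fixed wall $\^y$ with $y\in H_m$ does not make the engulfing pair straddle a point of each $H_{m'}$, because the witness $x\in H_m\cap H_{m'}$ varies with $m'$ and need not be the chosen $y$.

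The missing observation is that the pair $h_0\subset k_0$ from Lemma \ref{comparing measures1} already controls \emph{all} of $H_m$, not just $H_{m_0}$: its conclusion gives $\^x\subset h_0^*\cap k_0$ for every $x\in H_m$. Since $H_m\cap H_{m'}\neq\varnothing$ for each $m'\in E$, each $m'$ has a balanced wall pinched between $h_0$ and $k_0$, whence $m'(h_0)\leq 1/2\leq m'(k_0)$ for \emph{every} $m'\in E$ simultaneously — this is the uniformity you were trying to manufacture with the chain. The paper then uses Lemma \ref{extend4ss} to insert pairwise strongly separated half-spaces $h_2\subset h_1\subset h_0\subset k_0\subset k_1\subset k_2$; the hypothesis that $H_{m'}$ has no strongly separated pair upgrades the weak inequalities to strict ones ($h_1,h_2\in H_{m'}^-$, $k_1,k_2\in H_{m'}^+$, since e.g. $m'(h_1)=1/2$ would force $m'(h_0)=1/2$ and put the strongly separated pair $h_0,h_1$ in $H_{m'}$); finally any $y\in H_{m'}$ satisfies $y\not\subset h_1$ and $k_1\not\subset y$, so $y$ is transverse to $h_1$ or to $k_1$ or has $\^y\subset h_1^*\cap k_1$, and in every case $\^y\subset h_2^*\cap k_2$, so $(h,k)=(h_2,k_2)$ works. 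Your intuition about the role of ``no strongly separated pairs in $H_{m'}$'' and Remark \ref{shortcut} is the right mechanism, but without anchoring the uniform pair to $H_m$ via Lemma \ref{comparing measures1} the argument does not close.
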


\begin{proof}
Fix $m_0\in E$. By Lemma \ref{comparing measures1} applied to the measures $m$ and $m_0$, it follows that there is a strongly separated pair $h_0, k_0 \in \frakH$ such that $h_0 \subset k_0$, $h_0 \in H_m^-$, $k_0\in H_m^+$ and $\^x \in h_0^* \cap k_0$ for every $x \in H_m$. 

Now we apply  Lemma \ref{extend4ss}, to find pairwise strongly separated half-spaces $h_2, h_1, k_1, k_2$ such that
$$h_2\subset h_1 \subset h_0\subset k_0 \subset k_1 \subset k_2.$$

Now, for each $m' \in E$, and  for each $x\in H_m\cap H_{m'}$, up to replacing $x$ by $x^*$ if necessary,
$$h_2\subset h_1 \subset h_0\subset x\subset k_0 \subset k_1 \subset k_2.$$
This means that for $m'\in E$ and $x\in H_m\cap H_{m'}\neq \varnothing$, we conclude that $m'(h_i) \leq 1/2$ and $m'(k_i) \geq 1/2$ for $i = 0, 1, 2$. Furthermore, since $h_0$ and $h_1$ are strongly separated, and $H_{m'}$ does not have strongly separated pairs, it follows that $h_1 \in H_{m'}^-$, and hence $h_2\in H_{m'}^-$. Similarly, we conclude that $k_1, k_2\in H_{m'}^+$. 

Now, let $y \in H_{m'}$. Then, by measure considerations as above $y\not \subset h_1 $ and $k_1\not \subset y$ and hence $y\pitchfork h_1$, $y\pitchfork k_1$ or $\^y \subset h_1^*\cap k_1$. In either case, we conclude that $\^y \subset h_2^*\cap k_2$. 

\end{proof}

\begin{definition}
Let $H\subset\frakH(X)$. An element $h \in H$ is called:
 \begin{itemize}
\item  {\em minimal in $H$} if for every $k\in H$ either $k \pitchfork h$, $h \subset k$,  
or $h \subset \*k$;
\item  {\em maximal in $H$} if for every $k\in H$ either $k \pitchfork h$, $k\subset h$, 
or $\*k \subset h$, 
that is to say, $h$ is maximal if $\*h$ is minimal;
\item {\em terminal in $\frakH'$} if it is either maximal or minimal.
\end{itemize}
\end{definition}

\begin{lemma}[\cite{CFI}]\label{Lem Terminal map measu}
The map $\tau: 2^{\frakH(X)}\to 2^{\frakH(X)}$ taking a collection of half-spaces to its possibly empty collection of terminal elements is measurable and $\Aut(X)$-equivariant. 
\end{lemma}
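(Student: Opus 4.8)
The plan is to prove the two assertions separately, in both cases by unwinding the purely combinatorial definition of terminality.

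Equivariance is routine: for $g\in\Aut(X)$ the induced bijection of $\frakH(X)$ preserves each of the relations $k\pitchfork h$, $h\subseteq k$ and $h\subseteq k^*$ that appear in the definitions of ``minimal in $H$'' and ``maximal in $H$''. Consequently, for any $H\in 2^{\frakH(X)}$ and any $h\in H$, the half-space $h$ is minimal (resp. maximal) in $H$ if and only if $gh$ is minimal (resp. maximal) in $gH$; hence $h$ is terminal in $H$ iff $gh$ is terminal in $gH$, which is exactly $\tau(gH)=g\,\tau(H)$.

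For measurability, recall that since $\frakH(X)$ is countable the space $2^{\frakH(X)}$ is standard Borel, with Borel structure generated by the cylinders $\{H:h\in H\}$, $h\in\frakH(X)$; so it suffices to check that $\tau^{-1}\{H:h\in H\}=\{H:h\in\tau(H)\}$ is Borel for each fixed $h$. Now $h\in\tau(H)$ iff $h\in H$ and $h$ is minimal or maximal in $H$. The key observation is that, with $h$ fixed, the relations ``$k\pitchfork h$'', ``$h\subseteq k$'' and ``$h\subseteq k^*$'' are facts about $X$ alone that do not involve $H$. Therefore, setting
$$K_h:=\{k\in\frakH(X):k\not\pitchfork h,\ h\not\subseteq k,\ h\not\subseteq k^*\},$$
the condition ``$h$ is minimal in $H$'' is precisely ``$H\cap K_h=\varnothing$''; applying the same analysis to $h^*$ (using $h$ maximal in $H$ $\iff$ $h^*$ minimal in $H$) produces a fixed set $K_h'$ such that ``$h$ is maximal in $H$'' is precisely ``$H\cap K_h'=\varnothing$''. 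Each of these is a countable intersection of clopen cylinders $\{H:k\notin H\}$, hence a closed subset of $2^{\frakH(X)}$, and so
$$\{H:h\in\tau(H)\}=\{H:h\in H\}\cap\big(\{H:H\cap K_h=\varnothing\}\cup\{H:H\cap K_h'=\varnothing\}\big)$$
is Borel, being a Boolean combination of closed sets. Since this holds for every $h$, $\tau$ is measurable.

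I do not anticipate a genuine obstacle; the only point requiring care is the remark that the relations defining terminality are static in $h$ and $k$, which is precisely what collapses the ``for every $k\in H$'' quantifier into the single coordinate constraint $H\cap K_h=\varnothing$. The degenerate cases $k=h$ and $k=h^*$ are harmless, since $h\subseteq h$ and $h\subseteq(h^*)^*$, so that $h,h^*\notin K_h$ and likewise $h,h^*\notin K_h'$.
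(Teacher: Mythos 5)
Your proof is correct. Note that the paper itself offers no argument for this lemma --- it is quoted from \cite{CFI} --- so there is no in-paper proof to compare against; your self-contained verification is the standard one. The two points you isolate are exactly the ones that matter: equivariance holds because automorphisms preserve the relations $\pitchfork$, $\subseteq$ and commute with the involution $*$, and measurability holds because, for fixed $h$, the universally quantified condition ``for every $k\in H$'' collapses to the single constraint $H\cap K_h=\varnothing$, where $K_h$ depends only on $X$ and not on $H$; the preimage of each generating cylinder $\{H: h\in H\}$ is then a Boolean combination of countably many clopen cylinders, hence Borel, and this suffices since the cylinders generate the Borel structure on $2^{\frakH(X)}$. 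Your closing remark on the degenerate cases $k=h,h^*$ is also the correct reading of the definition: with strict inclusions no element could ever be minimal (take $k=h$), so the non-strict interpretation you adopt is the intended one.
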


Let us look at some examples of sets of half-spaces that do and do not have terminal elements. Consider $x\in \~X$ and the associated Dirac mass $\delta_x$. Then, $U_x$, the collection of half-spaces that contain $x$, corresponds to the heavy half-spaces of $\delta_x$, namely $U_x= H_{\delta_x}^+$. Now, if $x\in X$ then $U_x$  satisfies the descending chain condition. This means exactly that $\tau(H_{\delta_x}^+) \neq \varnothing$. Furthermore, if $x\in X$ and $x$ belongs to infinitely many cubes (which may be the case if $X$ is not locally finite) then,  $\tau(H_{\delta_x}^+) $ is in fact infinite. On the other hand, if we set $X = \Z$ with the standard cubulation and take $m =\frac{1}{2}(\delta_{-\8} + \delta_{+\8})$, where $\partial \Z =\{-\8, +\8\}$, then we see that all half-spaces are balanced for $m$ and that $\tau(H_m)= \varnothing$.

We record the following straightforward but important fact:

\begin{remark}\label{rem: finite terminal Hm}
Recall that to each measure $m\in \P(\~X)$ the space associated to $H_m$ is $\~X(H_m)$ which is an interval and therefore  Euclidean. This means that, if $H\subset H_m$ is any subset, then it must have finitely many terminal elements. 
 
\end{remark}

\subsection{Space of Intervals}
As we saw above, it is natural to associate to a probability measure $m\in \P(\~X)$ an interval $\~X_m\subset \~X$. For this reason, let us now look at intervals from a more global perspective.  

 Consider the map $2^\frakH\times 2^\frakH \to 2^\frakH\times 2^\frakH$ where $(S, T) \mapsto (S\cap T, S\triangle T)$. The \emph{space of intervals} is the image under this map of $\~X\times \~X\subset 2^\frakH\times 2^\frakH$ and is denoted by  $\I(\~X)$. Once more,  for $v,w\in \~X$ we set $\frakH(v,w) = U_v\triangle U_w$, and $\frakH(v,w)^+ = U_v\cap U_w$. Then
  $$\I(\~X) = \{\(\frakH(v,w), \frakH(v,w)^+\): v, w\in \~X\}.$$ 
Observe that by Corollary \ref{Interval on Finitely Many}  the map $\~X\times \~X \to \I(\~X)$ is finite-to-one.

\begin{lemma}\label{Lem Intervals Borel}
 The space of intervals $\I(\~X)$ is closed and hence Borel as a subset of $2^\frakH \times 2^\frakH$. 
\end{lemma}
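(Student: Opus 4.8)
The plan is to show that $\I(\~X)$ is closed in $2^{\frakH}\times 2^{\frakH}$ by exhibiting it as the intersection of the closed set $\~X\times\~X$ (under a suitable reformulation) with a collection of closed conditions, or more directly, by showing that the image of the compact-ish domain under the relevant map is closed. Since $2^{\frakH}$ with the product topology is compact and metrizable, and $\~X$ is closed in $2^{\frakH}$, the product $\~X\times\~X$ is compact. If I can show the map $\Phi\colon 2^{\frakH}\times 2^{\frakH}\to 2^{\frakH}\times 2^{\frakH}$, $(S,T)\mapsto (S\cap T, S\triangle T)$, is continuous, then $\I(\~X)=\Phi(\~X\times\~X)$ is the continuous image of a compact set, hence compact, hence closed. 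So the real content is the continuity of $\Phi$.

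First I would recall that the topology on $2^{\frakH}$ is that of pointwise convergence of indicator functions: a net $S_\alpha\to S$ iff for every $h\in\frakH$, eventually ($h\in S_\alpha \iff h\in S$). This is the product topology on $\{0,1\}^{\frakH}$. Then continuity of $\Phi$ is checked coordinatewise: fix $h\in\frakH$; whether $h\in S\cap T$ depends only on whether $h\in S$ and whether $h\in T$, i.e. only on the $h$-coordinates of $S$ and $T$, so $S\mapsto \1_{h\in S\cap T}$ is continuous; similarly $h\in S\triangle T$ depends only on the $h$-coordinates of $S$ and $T$. Hence $\Phi$ is continuous. Since $\~X$ is closed in the compact space $2^{\frakH}$ (it is defined as a closure), $\~X\times\~X$ is compact, so $\I(\~X)=\Phi(\~X\times\~X)$ is compact, hence closed; being closed in a second-countable space it is Borel.

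The step I expect to be the main (minor) obstacle is purely bookkeeping: making sure that the topology on $2^{\frakH}$ referred to throughout the paper really is the product/Tychonoff topology, so that $\Phi$ is genuinely continuous and $\~X$ is genuinely compact. (The footnote earlier in the paper about the extended metric on $2^{\frakH}$ being discontinuous when $\frakH$ is infinite is a warning that one must \emph{not} use the metric topology here — convergence in the extended metric $d(A,B)=\frac12\#(A\triangle B)$ is strictly finer. But the Roller compactification $\~X$ is by definition the closure of $X$ in $2^{\frakH}$ with the product topology, and it is this topology that makes $\~X$ compact, so everything is consistent.) Once that is pinned down, the argument is immediate; alternatively, if one prefers to argue directly that $\I(\~X)$ is closed without invoking compactness of $\~X\times\~X$, one can take a convergent net $(\frakH(v_\alpha,w_\alpha),\frakH(v_\alpha,w_\alpha)^+)\to (A,B)$, pass (by compactness of $2^{\frakH}\times 2^{\frakH}$) to a subnet along which $(U_{v_\alpha},U_{w_\alpha})$ converges to some pair $(U,U')$ which lies in $\~X\times\~X$ since $\~X$ is closed, and observe that by continuity of $(S,T)\mapsto(S\cap T,S\triangle T)$ the limit is $(U\cap U', U\triangle U')$, so $(A,B)=(\frakH(v,w),\frakH(v,w)^+)\in\I(\~X)$ where $U=U_v$, $U'=U_w$.
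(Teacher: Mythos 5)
Your proposal is correct and is essentially the paper's own argument: the paper proves closedness by taking a convergent sequence $(S_n,S_n^+)$ in $\I(\~X)$, lifting to endpoints $(v_n,w_n)$, extracting a convergent subsequence using compactness of $\~X$, and then checking half-space by half-space that the limit equals $\(\frakH(v,w),\frakH(v,w)^+\)$ — which is exactly your continuity-of-$\Phi$-plus-compactness argument unpacked by hand, and your alternative net/subnet argument is the paper's proof almost verbatim. No gaps; your remark that the relevant topology on $2^\frakH$ is the product topology (not the extended-metric topology) is the right point to pin down and is consistent with the paper's usage.
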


\begin{proof}
 Let $(S_n, S_n^+) \in \I(\~X)$ and assume that $(S_n, S_n^+) \to (S, T)\in 2^\frakH \times 2^\frakH$. By assumption, there exists $v_n, w_n \in \~X$ such that $S_n = \frakH(v_n, w_n)$ and $S_n^+ = \frakH(v_n, w_n)^+$. Since $\~X$ is compact, there is a subsequence so that $(v_{n_j} , w_{n_j}) \to (v,w) \in \~X\times \~X$.  We claim that $(S,T) = (\frakH(v, w), \frakH(v, w)^+)$. 
 
 Let $(h, k) \in \frakH(v, w)\times \frakH(v, w)^+$ and without loss of generality, assume that $w\in h$ and $v\in h^*$. Then, since $(v_{n_j} , w_{n_j}) \to (v,w)$, for $j$ sufficiently large we have that $w_{n_j}\in h$, $v_{n_j}\in h^*$ and $w_{n_j}, v_{n_j}\in k$. Since $(\frakH(v_n, w_n), \frakH(v_n, w_n)^+) \to (S,T)$ it follows that $(h,k) \in S\times T$. 

Conversely, if $(h,k) \in S\times T$ then for all $n$ sufficiently large, we have that $(h,k) \in \frakH(v_n, w_n)\times  \frakH(v_n, w_n)^+$. In particular this holds for $n=n_j$ sufficiently large and so $(h,k) \in \frakH(v, w)\times  \frakH(v, w)^+$.

\end{proof}

Let $\I(\~X, *)$ denote the collection of pointed intervals of $\~X$, it is defined to be the collection of elements $2^\frakH\times2^\frakH \times 2^\frakH$  
$$\{\( \frakH(v,w), \frakH(v,w)^+, U_x\): x\in \I(v,w),  v, w\in \~X\}.$$

The proof of Lemma \ref{Lem Intervals Borel} easily generalizes to show:

\begin{lemma}\label{markedNice}
 The subset of pointed intervals $\I(\~X, *)\subset 2^\frakH\times2^\frakH \times 2^\frakH$ is closed and hence Borel. 
 \end{lemma}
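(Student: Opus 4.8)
The plan is to mimic the proof of Lemma \ref{Lem Intervals Borel} essentially verbatim, adding the bookkeeping for the third coordinate $U_x$. First I would take a sequence of pointed intervals $(S_n, S_n^+, T_n) \in \I(\~X,*)$ converging in $2^\frakH \times 2^\frakH \times 2^\frakH$ to some $(S, S^+, T)$; by definition there are $v_n, w_n \in \~X$ and $x_n \in \I(v_n, w_n)$ with $S_n = \frakH(v_n, w_n)$, $S_n^+ = \frakH(v_n, w_n)^+$, and $T_n = U_{x_n}$. Using compactness of $\~X$, pass to a subsequence so that $(v_{n_j}, w_{n_j}, x_{n_j}) \to (v, w, x) \in \~X \times \~X \times \~X$. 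I would then claim $(S, S^+, T) = (\frakH(v,w), \frakH(v,w)^+, U_x)$ and that $x \in \I(v,w)$, which gives that $(S, S^+, T) \in \I(\~X,*)$ and hence that $\I(\~X,*)$ is closed, therefore Borel.

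The first two coordinates are handled exactly as in Lemma \ref{Lem Intervals Borel}: the key point is that membership of a fixed half-space $h$ in $\frakH(v,w)$ (resp.\ $h \in \frakH(v,w)^+$) is, for each fixed $h$, an open-and-closed condition on $(v,w) \in \~X \times \~X$ — because $h$ and $h^*$ are clopen in $\~X$ — so it persists along convergent sequences in both directions, yielding $S = \frakH(v,w)$ and $S^+ = \frakH(v,w)^+$. For the third coordinate, the same reasoning with the single clopen set $h$ in place of products shows $T = U_x$: if $h \in U_x$ then $x \in h$, so $x_{n_j} \in h$ for large $j$, so $h \in U_{x_{n_j}} = T_{n_j}$, so $h \in T$; conversely if $h \in T$ then $h \in T_{n_j}$ for large $j$, i.e.\ $x_{n_j} \in h$, so $x \in h$, i.e.\ $h \in U_x$.

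The one genuinely new point — and the only place I expect any friction — is verifying that the limit point $x$ lies in $\I(v,w)$, i.e.\ that $U_v \cap U_w \subset U_x$. But this is now immediate from what we just proved: $x_{n_j} \in \I(v_{n_j}, w_{n_j})$ means $U_{v_{n_j}} \cap U_{w_{n_j}} \subset U_{x_{n_j}}$, i.e.\ $S_{n_j}^+ \subset T_{n_j}$; since inclusion is preserved under coordinatewise convergence in $2^\frakH \times 2^\frakH$ (for a fixed half-space $h$, if $h \in S_{n_j}^+$ eventually forces $h \in T_{n_j}$ eventually), we get $S^+ \subset T$, i.e.\ $\frakH(v,w)^+ \subset U_x$, which is exactly $x \in \I(v,w)$. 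So there is really no obstacle; the proof is a routine strengthening of Lemma \ref{Lem Intervals Borel}, and I would simply remark that "the proof of Lemma \ref{Lem Intervals Borel} easily generalizes," carrying out the extra coordinate as above.
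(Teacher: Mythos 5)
Your proposal is correct and matches the paper, which proves this lemma exactly by observing that the argument of Lemma \ref{Lem Intervals Borel} generalizes with the extra coordinate; your verification that the limit satisfies $\frakH(v,w)^+\subset U_x$ (via $S^+_{n_j}\subset T_{n_j}$ passing to the limit coordinatewise) is precisely the additional bookkeeping needed.
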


We will employ $\I(\~X, *)$  in the proof of Theorem \ref{BoundaryMap}. For the sake of simplicity, we will think of an element in $\I(\~X,*)$ rather than as triples of sets of half-spaces, as a pair $(\I,x)$ such that $\I$ is an interval of $\~X$ and $x\in \I$.

\begin{remark}\label{projection}
 We observe that there is a natural continuous projection $\I(\~X,*) \to \~X$ where $(\I,x) \mapsto x$. 
\end{remark}

\section{Boundary Maps}

In this section we will prove the existence of measurable equivariant maps between the Furstenberg-Poisson boundary and the Roller boundary. The existence of such maps, when the target has  a convergence action of the group in question is guaranteed by \cite[Theorem 3.2]{BaderFurman}.  However, as we shall see in Section \ref{Convergence Section}, the action on the Roller compactification is rarely a convergence action. 

In this section, we will prove the existence of such maps, and also study some consequences associated to them.

\subsection{Existence}

A similar theorem to the following is proved in \cite{CFI}. 

\begin{theorem}\label{BoundaryMap}
Let $\G$ be a discrete countable group, $(B_-, \nu_-)$ and $(B_+, \nu_+)$ be a boundary pair for $\G$. Assume that $\G \to \Aut(X)$ is an action by automorphisms on a CAT(0) cube complex $X$. Then there exists a subgroup $\G'$ of finite index in $\G$ and $\G'$-equivariant measurable maps $\f_\pm : B_\pm \to \~ X$. Furthermore, if the action of $\G$ is nonelementary then $\G' = \G$ and $\f_\pm(B_\pm) \subset \partial X$. 
  \end{theorem}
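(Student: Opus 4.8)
The strategy is to use amenability of the boundary to produce equivariant maps into the space of probability measures $\P(\~X)$, and then to ``pin down'' these measures to actual points (or boundary points) of $\~X$ using the half-space combinatorics developed in Section 6, together with isometric ergodicity of $B_-\times B_+$ (Theorem \ref{FPBoundaryisBoundary}, Corollary \ref{cor:EquivToCountable}). Since the measurable equivariant structure of a CAT(0) cube complex is governed by its essential core and irreducible factorization, I would first reduce to the case where $X$ is a product of irreducible factors each carrying a nonelementary, essential action.

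\emph{Step 1: reduction to the essential, product case.} Pass to the finite-index subgroup $\G'$ provided by Proposition \ref{prop:visual-to-roller} (and further shrink it, still finite index, so that it preserves the irreducible decomposition of the essential core, using Section \ref{subsec:decomposition} and Lemma \ref{lem:boundary of factors}). Note that $(B_-,B_+)$ remains a boundary pair for $\G'$ since $\G'$ has finite index (Theorem \ref{FPBoundaryisBoundary}). It suffices to build $\G'$-equivariant maps $\f_\pm\colon B_\pm \to \~X'$ into the essential core, which embeds $\G'$-equivariantly into $\~X$ by Proposition \ref{LiftingDecomp}; composing gives the maps into $\~X$. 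Writing $X' = X_1 \times \cdots \times X_n$ and using $\~{X'} = \~X_1 \times \cdots \times \~X_n$ together with $\P(\~{X'}) \supset \P(\~X_1) \times \cdots \times \P(\~X_n)$ — actually, handle each factor separately by projecting the stationary measures — reduces us to a single irreducible factor with a nonelementary essential action.

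\emph{Step 2: from amenability to measures, then to $H_m$.} By Zimmer amenability of $B_\pm$ and compactness of $\~X$ there are $\G'$-equivariant maps $\psi_\pm\colon B_\pm \to \P(\~X)$. Post-composing with the $\Aut(X)$-equivariant map $m \mapsto H_m$ of Lemma \ref{Hmufacts} gives equivariant maps $B_\pm \to 2^{\frakH(X)}$ whose images consist of convex involution-invariant collections with no facing triples, each defining an interval subcomplex $\~X(H_m)$ (Lemma \ref{Hmufacts}, Remark \ref{rem: finite terminal Hm}). The plan is to show that for a.e.\ $b_\pm$ the collections $H_{\psi_\pm(b_\pm)}$ are ``small enough'' to determine a point. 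Here is where the core argument runs: apply the terminal-element map $\tau$ of Lemma \ref{Lem Terminal map measu}, which is measurable and equivariant and lands in countable-type data after intersecting with the relevant finite sets (Remark \ref{rem: finite terminal Hm}, Corollary \ref{Interval on Finitely Many}); feed this through Corollary \ref{cor:EquivToCountable} applied to $B_- \times B_+$. Using Lemma \ref{ssmeas} / Lemma \ref{comparing measures1}: if $H_{\psi_-(b_-)}$ and $H_{\psi_+(b_+)}$ generically intersected, one would produce a $\G'$-invariant strongly separated pair (or a $\G'$-fixed point in a countable orbit of such pairs), contradicting nonelementarity via Lemma \ref{lem:NoFiniteColInvHalf} and Theorem \ref{th:ss}; hence generically $H_{\psi_-(b_-)} \cap H_{\psi_+(b_+)} = \varnothing$, forcing at least one of the two collections to be all of $\frakH$-complement-free, i.e.\ one of the measures has all half-spaces unbalanced. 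Refining this — separately on $B_-$ using $\ch\mu$ in place of $\mu$ — shows that for a.e.\ $b_+$ the set $H^+_{\psi_+(b_+)}$ satisfies the descending chain condition (invoke Lemma \ref{DCCHeavy}, noting $H_m$ must contain a strongly separated pair in the contrary case, which is the excluded scenario), so by Facts (2) and Remark \ref{DCC Halfspaces} the subcomplex $\~X_m$ meets $X$; iterating/collapsing the (finitely many, by Remark \ref{rem: finite terminal Hm}) terminal elements eventually pins $\psi_+(b_+)$ to a single point of $\~X$, defining $\f_+$; symmetrically for $\f_-$ via $\ch\mu$.

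\emph{Step 3: landing in $\partial X$ when the action is nonelementary on all of $X$.} In that case Proposition \ref{prop:visual-to-roller} gives $X' = X$ the essential core and $\G' = \G$, so no finite-index passage is needed. It remains to rule out that $\f_\pm$ takes values in $X$ on a positive-measure set. If $\f_+^{-1}(X)$ had positive measure, its ($\mu$-stationary, hence by the Markov/martingale convergence argument — or simply by $\G$-quasi-invariance of $\nu_+$ and Corollary \ref{cor:EquivToCountable}) structure would give a $\G$-invariant probability measure on $X$ giving positive mass to some orbit, equivalently a finite orbit in $X$ after pushing to the countable set of vertices, contradicting nonelementarity (an action with a finite orbit in $X$ fixes a point). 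More carefully: the pushforward $\f_{+*}\nu_+$ restricted and normalized on $X$ is a $\G$-quasi-invariant (indeed one can arrange stationary) probability measure on the countable set $X$; by Corollary \ref{cor:EquivToCountable} applied with $C = X$ and $P = B_- \times B_+$ this yields a $\G$-fixed vertex, contradicting nonelementarity. Hence $\f_\pm(B_\pm) \subset \partial X$ a.e.

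\textbf{Main obstacle.} The delicate point is Step 2: showing that the equivariant family of measures can actually be collapsed to points rather than merely to intervals. The mechanism — that a persistent nontrivial interval forces either a $\G$-invariant strongly separated pair (impossible in an irreducible nonelementary essential complex) or a descending-chain-condition situation that lets Facts (2) and the terminal-element map finish the job — is exactly what Lemmas \ref{comparing measures1}, \ref{ssmeas}, \ref{DCCHeavy} and the space-of-intervals machinery of Section 6 were built for, and orchestrating the finitely-many-terminal-elements induction equivariantly (so that each collapse step is itself given by a measurable $\G$-map into countable data, eligible for Corollary \ref{cor:EquivToCountable}) is the technical heart of the argument.
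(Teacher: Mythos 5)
Your scaffolding (amenability of $B_\pm$ giving maps to $\P(\~X)$, passing to the balanced sets $H_m$, using ergodicity of $\P_-\times\P_+$ to force a dichotomy on $H_{m_-}\cap H_{m_+}$, and ruling out the intersecting cases with the strong-separation, product, and terminal-element machinery) is the same as the paper's, and your Step 3 argument for landing in $\partial X$ (ergodicity plus Corollary \ref{cor:EquivToCountable} with $C=X$) is fine. But the core of Step 2 has a genuine gap. From the generic disjointness $H_{m_-}\cap H_{m_+}=\varnothing$ you infer that ``one of the measures has all half-spaces unbalanced''; this does not follow --- disjointness only says that every $m_+$-balanced half-space is $m_-$-unbalanced and vice versa, and both balanced sets can perfectly well be nonempty (even infinite). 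Your fallback, ``iterating/collapsing the (finitely many) terminal elements eventually pins $\psi_+(b_+)$ to a single point,'' supplies no actual mechanism: $\~X_{m_+}$ is an interval with no canonical point, the collapse need not terminate when $H_{m_+}$ is infinite, and nothing in the procedure produces a choice that is equivariant and depends measurably on $b_+$ alone. (Your justification of disjointness is also looser than the paper's: the intersecting cases are killed not by an invariant strongly separated pair but by three separate arguments --- a product/interval-factor contradiction via Corollary \ref{cor:product} and Corollary \ref{Cor.Not an interval}, a non-flippable pair via Lemma \ref{ssmeas} and the Flipping Lemma, and a finite invariant set of half-spaces via terminal elements and Lemma \ref{lem:NoFiniteColInvHalf}. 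Also, Step 1 skips the Roller-elementary case, where Proposition \ref{prop:visual-to-roller} gives no essential subcomplex and one instead maps a finite-index subgroup to a fixed point.)

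What the paper does at exactly the point where your argument stalls is to use the \emph{other} boundary to orient the balanced walls: when $H_{m_-}\cap H_{m_+}=\varnothing$, the collection $U_{p_{m_+}(m_-)}=H_{m_+}^+\cup\bigl(H_{m_+}\cap H_{m_-}^+\bigr)$ is consistent and total, hence defines a point $p_{m_+}(m_-)\in\~X_{m_+}$ depending on both coordinates. The dependence on $m_-$ is then removed not by plain ergodicity but by \emph{relative} isometric ergodicity of the projection $\P_-\times\P_+\to\P_+$, applied to the Borel fibration $q:\I(\~X,*)\to\I(\~X)$ of pointed intervals with the discrete metric on the (countable) fibers (Lemmas \ref{Lem Intervals Borel} and \ref{markedNice}); the lift produced by relative isometric ergodicity is a map $\psi_+:\P_+\to\I(\~X,*)$, and composing with the projection $\I(\~X,*)\to\~X$ gives $\f_+$ (and symmetrically $\f_-$). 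Your ``main obstacle'' paragraph names this machinery, but your Step 2 as written never constructs the oriented point $p_{m_+}(m_-)$ and never invokes relative isometric ergodicity, so the proposed proof does not go through; with that mechanism inserted in place of the ``one measure is unbalanced / collapse terminal elements'' step, it becomes the paper's argument.
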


\begin{proof}
 If $\G$ has a finite orbit in $\~ X$ then there is a finite index subgroup $\G_0$ fixing a point $x_0\in \~ X$ and a $\G_0$-equivariant measurable map $B_\pm \to \{x_0\}$. 
 
 Now, suppose that $\G$ does not have a finite orbit in the Roller compactification $\~ X$. Then, by Proposition \ref{prop:visual-to-roller} there exists a finite index subgroup $\G'\leq \G$ and a subcomplex $X'\subset \~ X$ on which the   $\G'$-action is nonelementary and essential, with $\frakH(X') \subset \frakH$. Furthermore, if the $\G$ action was assumed to be nonelementary on $X$, then we have that $X'\subset X$ is $\G$-invaraint and essential.

We record the fact that we have possibly passed to a finite index subgroup and an invariant subcomplex which, a priori can belong to the Roller boundary. We also observe that $(B_-, B_+)$ continues to be a boundary pair for $\G'$ by Theorem \ref{FPBoundaryisBoundary}.  Finally, we  note that $\partial X' \subset \partial X$ is invariant under $\G'$. And so to conserve notation, we assume now that the $\G$-action itself is nonelementary and essential on $X$. 

By amenability of the $\G$ action on $B_\pm$ there exist  $\G$-equivariant maps  $ B_\pm \to \P(\~X)$. Let $\vartheta_\pm$ be the push forward under these maps of the measures $\nu_\pm$. To avoid confusion, let us denote by $\P_\pm$ the space  $\P(\~X)$ with the measure $\vartheta_\pm$. Then our goal now is to extract  maps $\f_\pm:\P_\pm \to \~X$ which are measurable, $\G$-equivariant, and defined on a co-null set. To this end, observe that by Proposition \ref{BaderFurmanProp} the projection maps $\pi_\pm:\P_-\times \P_+ \to \P_\pm$ are relatively isometrically ergodic.

Let $m \in \P(\~X)$ and recall that we have an associated interval  denoted by $\~X_{m}$ whose half-space structure corresponds to the $m$-balanced half-spaces $H_{m}$. As a subset of $\~X$ we have
$$\~X_m = \Cap{h\in H_{m}^+}{} h.$$

We will be dealing with various natural maps $\P(\~X)$ to  $2^\frakH$ or $\R$. We cite  \cite{CFI} for the measurability of all of them and do not address the issue again.

Consider the maps is $(m_-, m_+)\mapsto \#(H_{m_-}\cap H_{m_+} ),   \#(H_{m_-}\triangle H_{m_+} )$. Since these are $\G$-invariant, they must be essentially constant by ergodicity of $\P_-\times \P_+$. Hence, we must be in one of the following cases:

\vskip.2in
\noindent
\underline{{\tt{I.}}   $H_{m_-}\cap H_{m_+} = \varnothing$, for $\vartheta_-\otimes \vartheta_+$-a.e. $(m_-, m_+)$. }

Fix a generic $(m_-,m_+) \in \P_-\times \P_+$. Then, we must have that $H_{m_-}\subset H_{m_+}^\pm$ and $H_{m_+}\subset H_{m_-}^\pm$. By Lemma \ref{Hmufacts} both $H_{m_-}^+$ and $H_{m_+}^+$ are consistent sets. This means that the the following collections of half-spaces satisfy both consistency and totality, i.e. they correspond to points in the Roller compactification (which will be denoted by $p_{m_-}(m_+) \in \~X_{m_-}$ and $p_{m_+}(m_-)  \in \~X_{m_+}$ respectively):
$$U_{p_{m_-}(m_+) }=H_{m_-}^+\cup (H_{m_-}\cap H_{m_+}^+)$$
$$U_{p_{m_+}(m_-)  }= H_{m_+}^+\cup (H_{m_+}\cap H_{m_-}^+).$$ 

Recall that, as was developed at the end of Section \ref{subsec:intervals-median},  $\I(\~X, *)$, the collection of pointed intervals in $\~X$, is Borel and that there is a natural Borel map from $\I(\~X, *)$ to $\I(\~X)$ and $\~X$, obtained by ``forgetting" the additional information of the point, or interval, respectively. This gives rise to the following commutative diagram: 

\begin{diagram}
\P_-\times \P_+\; & \rTo&\I(\~X,*) &\rTo&  \~X \\
\dTo^{\pi_+} & \ruDashto^{\exists\,\psi_+
} & \dTo_{q} & &\\
\P_+ & \rTo&\I(\~X) && 
\end{diagram}

%
The lower horizontal map $\P_+
\to \I(\~X)$ corresponds to the map $$m_+ \mapsto \~X_{m_+},$$
and the upper horizontal map $\P_-\times \P_+ \to  \I(\~X,*)$ is 
$$ (m_-, m_+) \mapsto \( \~X_{m_+}, p_{m_+}(m_-)\).$$
We observe that the preimage $q^{-1}(\I) = \{(\I, x):x\in \I\}$ is a countable set and therefore, the map $d: \I(\~X,*)\times \I(\~X,*) \to [0,\8)$ defined by 
$$d\((\I_1, x_1),(\I_2, x_2)\) = 1-\delta(x_1, x_2)$$
clearly makes the preimage $q^{-1}(\I) $ into a separable metric space and so that $\G$ acts relatively isometrically on  $q:\I(\~X,*)\to \I(\~X)$.

Now, since the quotient $\P_-\times \P_+ \to \P_+$ is relatively isometrically ergodic for $\G$,  we deduce that there is a  measurable  $\G$-equivariant map defined on a conull set $\psi_+:\P_+ \to \I(\~X,*)$. The same argument with the obvious modifications yields $\psi_- :\P_- \to \I(\~X,*)$.  Post composing these with the map $\I(\~X,*)\to \~X$ we obtain
$$ \f_\pm: \P_\pm \to \~X.$$

To finish the proof, we will show that all other cases lead to a contradiction. Recall that we remain under the assumption that the $\G$ action on $X$ is essential and nonelementary. 

\vskip.2in
\noindent
\underline{{\tt{II.}}  $H_{m_-}= H_{m_+}\neq \varnothing$, for $\vartheta_-\otimes \vartheta_+$-a.e. $(m_-, m_+)$. }

Fix a generic $m_-\in \P_-$ and a $\vartheta_+$-conull and $\G$-invariant set $P_+ \subset \P_+$ so that $H_{m_+}= H_{m_-}$ for every $m_+\in P_+$. Set $\frakH' := H_{m_-}$, and observe that this is a nonempty, symmetric, and convex  $\G$-invariant set of half-spaces.  By Corollary \ref{cor:product}, it follows that either $\frakH'=\frakH$ or $X \cong X_{m_-}\times X_2$. This contradicts Corollary \ref{Cor.Not an interval}: The $\G$ action is essential and nonelementary so  $X$ can not have an interval as a factor. 

\vskip.2in
\noindent
\underline{{\tt{III.}}   $H_{m_-}\cap H_{m_+} \neq \varnothing$ and $H_{m_-}\triangle H_{m_+}\neq \varnothing$, for $\vartheta_-\otimes \vartheta_+$-a.e. $(m_-, m_+)$.}

Let $X= X_1\times \cdots \times X_n$ be the decomposition of $X$ into irreducible factors, $\frakH = \frakH_1 \sqcup \cdots \sqcup\frakH_n$ be the corresponding decomposition of half-spaces into pairwise transverse collections, and $\G_0$ be a normal finite index subgroup whose image is in $\Aut(X_1) \times \cdots \times \Aut(X_n)$. 
Recall that $\S_i\subset \frakH_i\times \frakH_i$ denotes the pairs of disjoint strongly separated half-spaces in the irreducible factor $X_i$  and  $\S = \S_1\sqcup \cdots \sqcup \S_n$. Observe that $\S_i$ is $\G_0$-invariant for each $i= 1, \dots, n$ and that $\S$ is $\G$-invariant. We then have that the map $m \mapsto \#\((H_m\times H_m)\cap \S_i\)$ is measurable, $\G_0$-invariant, and hence essentially constant for both $\vartheta_-$ and $\vartheta_+$.

Up to changing the roles of $\vartheta_-$ and $\vartheta_+$  we have the following two cases corresponding to the essential values of $m \mapsto \#\((H_m\times H_m)\cap \S_i\)$ for $i \in \{1, \dots, n\}$: Either the $\vartheta_+$-essential value is zero for some $i\in \{1, \dots, n\}$, or  the $\vartheta_-$ and $\vartheta_+$ essential values are both nonzero for every $i \in \{1, \dots, n\}$. 

\vskip.2in
\noindent
\underline{{\tt{III.a}} $(H_{m_+}\times H_{m_+})\cap \S_i = \varnothing$ for some $i$ and $\vartheta_+$-a.e. $m_+\in \P_+$.}

Let us assume that $i= 1$. Fix a $\vartheta_-$-generic $m_- \in \P_-$ and a $\vartheta_+$-conull $\G_0$-invariant set $P_+\subset \P_+$ such that the following hold for every $m_+\in P_+$:

\begin{itemize}
\item $H_{m_-}\cap H_{m_+} \neq \varnothing$
\item $H_{m_-}\triangle H_{m_+}\neq \varnothing$
\item  $(H_{m_+}\times H_{m_+})\cap \S_1 = \varnothing$ 
\end{itemize}

Consider now the $\G_0$-equivariant projection $\~X\to \~X_1$ which induces a $\G_0$-equivariant map $\P(\~X) \to \P(\~X_1)$. Recall  that the $\G_0$ action remains essential (Remark \ref{Finite Index Ess}) and nonelementary (Lemma \ref{lem:boundary of factors}) on each irreducible factor, in particular, on $X_1$.

Set $E$ and $m$ to be the push forwards of $P_+$ and $m_-$ respectively   under the $\G_0$-equivariant projection $\~X\to \~X_1$. Observe that $E$ is $\G_0$-invariant   and  that the above assumptions on $P_+$ and $m_-$ descend to $E$ and $m$ respectively. In particular,  the hypotheses of Lemma \ref{ssmeas} are satisfied. This means that there exists a strongly separated pair $(h,k) \in \S_1$ such that $h\subset k$, and for every $x\in \Cup{m'\in E}{}H_{m'}$ we have that
$$\^x \subset h^*\cap k.$$
By $\G_0$-invariance of $E$, it follows that $h^*$ and $k$ are not $\G_0$ flippable, which contradicts the Flipping Lemma \ref{flip}, as the action of $\G_0$ is essential and nonelementary on $X_1$. 

\vskip.2in
\noindent
\underline{{\tt{III.b}} For each $i$ we have  $(H_{m}\times H_{m})\cap \S_i \neq \varnothing$ for $\vartheta_\pm$-a.e. $m\in \P_\pm$. }
 
Fix a generic $(m_-,m_+) \in \P_-\times \P_+$. 
 In this case, by Lemma \ref{DCCHeavy} we must have that $H_{m_-}^+\cup H_{m_+}^+$ satisfies the descending chain condition, i.e. every descending chain has a terminal element. Furthermore,  our assumption  that $H_{m_-}\triangle H_{m_+} \neq \varnothing$ implies that 
 $$\(H_{m_-}\cap H_{m_+}^+\) \cup \(H_{m_+}\cap H_{m_-}^+\)\neq \varnothing.$$ 
 Hence, as a subset of $H_{m_-}\cup H_{m_+}$, as in Remark \ref{rem: finite terminal Hm}, there are finitely many terminal elements in $\(H_{m_-}\cap H_{m_+}^+\) \cup \(H_{m_+}\cap H_{m_-}^+\)$ and there is at least one because these are nonempty subsets of $ H_{m_-}^+\cup H_{m_+}^+$. This yields a $\G$-equivariant map from $\P_-\times \P_+$ to the countable collection of finite subsets of $\frakH$ and so by Corollary \ref{cor:EquivToCountable}, there is a finite set $\mathcal F\subset \frakH$ which is $\G$-invariant. But Lemma \ref{lem:NoFiniteColInvHalf} shows that this is incompatible with our assumption that the action is both essential and nonelementary.

\end{proof}

Recall that the Furstenberg-Poisson Boundaries associated to $\ch\mu$ and $\mu$ and (for generating $\mu \in \P(\G)$) give a boundary pair for a group $\G$ (see Theorem \ref{FPBoundaryisBoundary}) and hence by Theorem \ref{BoundaryMap} 
  we deduce:

\begin{cor}\label{PFBoundaryMap}
Let $\G$ be a discrete countable group and $\mu \in \P(\G)$ a generating probability measure. Suppose furthermore that  $\G\to \Aut(X)$ is a nonelementary and essential action  action on the CAT(0) cube complex $X$. Then there exist quasi-$\G$-invariant probability measures $\lambda_\pm \in \P(\partial X)$ so that $(\partial X, \lambda_-)$ and $(\partial X, \lambda_+)$ are $(\G,\ch\mu)$ and $(\G,\mu)$-boundaries, respectively. 
\end{cor}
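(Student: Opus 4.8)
The plan is to obtain the Corollary as a direct packaging of Theorems~\ref{FPBoundaryisBoundary} and \ref{BoundaryMap}. First I would invoke Theorem~\ref{FPBoundaryisBoundary}: since $\mu$ is generating, the Furstenberg--Poisson boundaries $(B_-,\nu_-)$ of $(\G,\mu)$ and $(B_+,\nu_+)$ of $(\G,\ch\mu)$ form a boundary pair $(B_-,B_+)$ for $\G$. Next, because $\G\to\Aut(X)$ is nonelementary (it is also essential, but that is not needed in order to apply Theorem~\ref{BoundaryMap}), Theorem~\ref{BoundaryMap} gives $\G'=\G$ together with $\G$-equivariant measurable maps $\f_\pm\colon B_\pm\to\~X$, defined on conull sets, whose essential images are contained in $\partial X$.

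Then I would push the boundary measures forward: set $\lambda_+:=(\f_-)_*\nu_-$ and $\lambda_-:=(\f_+)_*\nu_+$, which are probability measures on $\partial X$ by the last sentence of Theorem~\ref{BoundaryMap}. To see that each $\lambda_\pm$ is quasi-$\G$-invariant, recall that $\nu_-$ is $\mu$-stationary and $\nu_+$ is $\ch\mu$-stationary, and that a measure stationary under a generating probability measure is automatically quasi-$\G$-invariant: if $g\in\supp\mu$ then $\nu_-\geq\mu(g)\,g_*\nu_-$, so $\nu_-$ is quasi-invariant under $g$, and the semigroup generated by $\supp\mu$ is all of $\G$ (likewise for $\ch\mu$, which is generating whenever $\mu$ is, since $g^{-1}\in\G$ is a product of elements of $\supp\mu$). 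Equivariance of $\f_\pm$ transfers this to the pushforwards $\lambda_\pm$. Finally, $\f_-\colon(B_-,\nu_-)\to(\partial X,\lambda_+)$ and $\f_+\colon(B_+,\nu_+)\to(\partial X,\lambda_-)$ are $\G$-equivariant measurable maps carrying the boundary measure onto $\lambda_\pm$, hence exhibit $(\partial X,\lambda_+)$ as a $\G$-equivariant measurable quotient of the Furstenberg--Poisson boundary of $(\G,\mu)$ and $(\partial X,\lambda_-)$ as one of $(\G,\ch\mu)$; by the definition of a $(\G,\mu)$-boundary this is precisely the assertion.

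All of the substance sits in the cited results --- Theorem~\ref{BoundaryMap} and, underneath it, Theorem~\ref{FPBoundaryisBoundary}, Proposition~\ref{prop:visual-to-roller}, and the relative-isometric-ergodicity machinery --- so I do not expect a genuine obstacle here; the corollary is assembly. The only points that require care are bookkeeping ones: the two subscripts $\pm$ get interchanged in passing from ``Furstenberg--Poisson boundary of $(\G,\mu)$'' to ``$(\G,\ch\mu)$-boundary'', and, if one insists that a quotient map be essentially onto, one should replace $\partial X$ by the closed support of $\lambda_\pm$ (equivalently, use the measure-algebra notion of quotient, for which no surjectivity is needed). Neither is a real difficulty.
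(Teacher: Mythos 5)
Your proposal is correct and follows exactly the paper's route: the paper derives Corollary \ref{PFBoundaryMap} in one line by combining Theorem \ref{FPBoundaryisBoundary} (the Furstenberg--Poisson boundaries of $\mu$ and $\ch\mu$ form a boundary pair) with Theorem \ref{BoundaryMap}, and your write-up simply makes explicit the routine steps (pushforward of $\nu_\pm$ under $\f_\pm$, quasi-invariance via stationarity of a generating measure, and the quotient structure) that the paper leaves implicit. The bookkeeping of the $\pm$ labels also matches the paper's conventions, so there is nothing to correct.
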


\subsection{The Image and Regular Points}
Nevo and Sageev refine the description of the Furstenberg-Poisson boundary by passing from the full Roller boundary to the closure of the non-terminating elements \cite{NevoSageev}. In this section we give a further refinement in terms of the regular points in the Roller boundary, along with some corollaries. 

\begin{definition}
 Let $X$ be an irreducible  CAT(0) cube complex. Define $\partial_{\mathrm{r}} X $, the \emph{regular points} as the set of $\xi\in \partial X$ such that if $h_1, h_2\in U_\xi$ then there is a $ k \in \frakH $ such that $  k\subset h_1\cap h_2$ and $k$ is strongly separated from both $h_1$ and $h_2$. If $X$ is reducible, define the regular points to be the product of the regular points in each factor, i.e. $$\partial_{\mathrm{r}} X = \partial_{\mathrm{r}} X_1\times\cdots \times \partial_{\mathrm{r}} X_n.$$ 
\end{definition}

Note that $\partial_{\mathrm{r}} X$ could be empty. Consider for example the connected complex obtained by removing the second and fourth quadrants in the plane. This example is admittedly degenerate having only finitely many automorphisms. 

\begin{prop}\label{Rank1Char}
Let $X$ be an \emph{irreducible} CAT(0) cube complex and $\a \in \partial X$. The following are equivalent:
 \begin{enumerate}
\item $\a\in \partial_{\mathrm{r}} X$;
\item There exists an infinite descending chain $\{s_n\}_{n\in \N} \subset U_\a$ of pairwise strongly separated half-spaces.
\item There exists a bi-infinite descending chain $\{s_n\}_{n\in \Z} \subset U_\a$ of pairwise strongly separated half-spaces.
\end{enumerate}
\end{prop}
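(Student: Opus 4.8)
The plan is to prove the cycle $(3)\Rightarrow(2)\Rightarrow(1)\Rightarrow(3)$. The implication $(3)\Rightarrow(2)$ is immediate: discard the terms of negative index.

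For $(2)\Rightarrow(1)$, the first step is a general observation about a descending chain $s_1\supsetneq s_2\supsetneq\cdots$ of pairwise strongly separated half-spaces contained in $U_\a$: for every $h\in U_\a$ one has $s_n\subseteq h$ for all sufficiently large $n$. Indeed, no half-space can be transverse to two of the $s_n$ (that half-space would be transverse to two strongly separated ones), so for large $n$ the half-space $s_n$ is nested with $h$; since $\a\in s_n\cap h$ the only possibilities are $s_n\subseteq h$, $h\subseteq s_n$, or $h^*\subseteq s_n$, and the last two cannot hold for infinitely many $n$, for a vertex of $h$, respectively of $h^*$, would then lie in infinitely many $s_n$, contradicting the descending chain condition satisfied by $U_v$ at a vertex $v\in X$. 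Granting this, fix $h_1,h_2\in U_\a$, pick $n$ with $s_n\subseteq h_1\cap h_2$, and set $k:=s_{n+1}$. Then $k\subseteq h_1\cap h_2$, and if a half-space $\ell$ were transverse both to $k$ and to some $h_i$, then using $s_{n+1}\subseteq s_n\subseteq h_i$ one checks that all four of $\ell\cap s_n$, $\ell\cap s_n^*$, $\ell^*\cap s_n$, $\ell^*\cap s_n^*$ are nonempty, so $\ell\pitchfork s_n$, contradicting strong separation of $s_n$ and $s_{n+1}$. Hence $k$ witnesses $\a\in\partial_{\mathrm{r}}X$. (The same transversality computation shows that a descending chain whose \emph{consecutive} terms are strongly separated is automatically \emph{pairwise} strongly separated, which is convenient below.)

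For $(1)\Rightarrow(3)$, since $\a\in\partial X$ the set $U_\a$ contains an infinite descending chain; the goal is to refine it, using the regularity of $\a$, to an infinite descending chain $s_1\supsetneq s_2\supsetneq\cdots$ of half-spaces \emph{in $U_\a$} with consecutive terms strongly separated, and then to extend it to the left. The extension is the easy half: given $s_1\in U_\a$, Lemma~\ref{extend4ss} provides $\g\in\G$ with $s_1\subsetneq\g s_1$ and $\g s_1$ strongly separated from $s_1$; since $s_1\subseteq\g s_1$ we get $\a\in\g s_1$, so $\g s_1\in U_\a$, and the transversality computation above (using $s_n\subseteq s_1\subseteq\g s_1$) shows $\g s_1$ is strongly separated from every $s_n$. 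Setting $s_0:=\g s_1$ and iterating produces the required bi-infinite family $\{s_n\}_{n\in\Z}\subset U_\a$.

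I expect the main obstacle to be the construction of the one-sided chain \emph{inside} $U_\a$: the regularity hypothesis only yields, for $h_1,h_2\in U_\a$, a half-space $k\in\frakH$ with $k\subseteq h_1\cap h_2$ strongly separated from both, with no a priori guarantee that $\a\in k$. One must show the strongly separated half-spaces can nevertheless be chosen in $U_\a$. The idea is that if a witness $k\subseteq h$ misses $\a$, then $\a$ lies in the region $h\cap k^*$ squeezed between the strongly separated walls $\^k$ and $\^h$; analysing this region through the combinatorial bridge of Section~\ref{bridge} — which lies in $X$ and is a finite interval, so cannot contain the boundary point $\a$ — one pushes $\a$ further into $h$ and repeats, and the work lies in making this terminate, i.e. in producing strongly separated half-spaces of $U_\a$ that shrink down to $\a$.
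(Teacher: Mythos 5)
Your (3)$\Rightarrow$(2) and (2)$\Rightarrow$(1) are correct and essentially identical to the paper's argument: the paper opens its proof with the same observation that a half-space meeting every member of a pairwise strongly separated descending chain eventually contains its tail, and its (2)$\Rightarrow$(1) is exactly your choice of witness $k=s_{n+1}$ (the sandwich computation you spell out is left implicit there). Your left-extension step also matches the paper in spirit: the paper extends the chain to the left by double skewering (Lemma \ref{Double Skewering Lemma}) applied to $s_2\subsetneq s_1$, while you use Lemma \ref{extend4ss}; both invocations import standing nonelementarity/essentiality hypotheses on a group action that the proposition does not state, so you are no worse off than the paper on that point.

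The genuine gap is the one you flag yourself: you never construct the forward chain $\{s_n\}_{n\in\N}\subset U_\a$ from (1). Your bridge-based sketch for handling a regularity witness $k$ with $\a\notin k$ is not carried out, and you give no argument that the proposed process terminates, so as written the implication (1)$\Rightarrow$(3) (equivalently (1)$\Rightarrow$(2)) is missing. The paper does not face this issue because it reads the definition of $\partial_{\mathrm{r}}X$ as producing a witness lying in $U_\xi$: its induction applies regularity to the pair $(s_{n-1},s_n)\in U_\a\times U_\a$ and takes $s_{n+1}\in U_\a$ with $s_{n+1}\subsetneq s_n\cap s_{n-1}$ strongly separated from $s_n$, pairwise strong separation then following by precisely your sandwich computation. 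That this is the intended reading (even though the definition literally only asks for $k\in\frakH$) is confirmed by the remark immediately following Proposition \ref{Rank1Char}, which asserts that (1) ``immediately'' gives $\a\in\partial_{NT}X$ --- something that is not immediate under your literal reading. So under the paper's reading your difficulty evaporates and your own induction closes up; under the literal reading you have not proved (1)$\Rightarrow$(2), and your proposal is incomplete at exactly that step.
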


\begin{proof}

We begin by observing that if $\{s_n\}$ is an infinite descending chain of strongly separated half-spaces and $h$ is a half-space whose intersection with each $s_n$ is nontrivial, then for $n$ sufficiently large we must have that $s_n\subset h$. By strong separation, we may assume that $h$ is parallel to each $s_n$. Now, if $s_1\subset h$ then we are done. Otherwise, $s_1\cap h^*\neq \varnothing$ and since $s_1 \cap h$ is nonempty as well,  by  Remark \ref{shortcut},  we have that $\^h\subset s_1$. Since $h\cap s_n\neq \varnothing$ then $s_n$ is not contained in $h^*$ for any $n$. Finally, since there are finitely many half-spaces in between any two, we conclude that for $n$ sufficiently large, $s_n\subset h$.

\noindent
 $(1) \implies (2)$:  Let $\a\in \partial_{\mathrm{r}} X$. Fix $s_1\in U_\a$. Then there exists $s_2\in U_\a$ such that $s_2\subset s_1 $ and $s_2$ and $s_1$ are strongly separated.

Assume that $s_1, \dots, s_n\in U_\a$  are decreasing and pairwise strongly separated. Since $\a\in \partial_{\mathrm{r}}X $ there is $s_{n+1}\in U_\a$ such that $s_{n+1}\subsetneq s_n\cap s_{n-1}$ with $s_{n+1}$ strongly separated with $s_{n}$.

\noindent
 $(2) \implies (1)$: This is straightforward. Assume $\{s_n\}\subset U_\a$ is an infinite descending sequence of pairwise strongly separated half-spaces and $h_1, h_2\in U_\a$. As was observed in the beginning of this proof, for $n$ sufficiently large,  
 $$s_{n+1}\subset s_n\subset h_1\cap h_2.$$
 $(2) \implies (3)$: Let $\{s_n: n\in \N\}\subset U_\a$ be an infinite descending sequence of pairwise strongly separated half-spaces. By the Double Skewering Lemma \ref{Double Skewering Lemma}, there exists $\g$ such that $\g s_1\subset s_2 \subset s_1$, that is, $s_1\subset \g^{-1}s_2 \subset \g^{-1} s_1$ and setting $s_{-n} = \g^{-n} s_1$ completes the desired sequence.
 
 \noindent
 $(3) \implies (2)$: This is trivial.
\end{proof}

We note that  conditions (1) -- (3) of Proposition \ref{Rank1Char} imply that $\a\in \partial_{NT} X$. That this is true for (1) is immediate from the definition of a regular point for irreducible complexes. That this is true for condition (2) and (3) follows as well: if $s_n, h\in U_\a$ then for $n$ sufficiently large we must have that $s_n \subset h$.

\begin{cor}\label{distinct} Let $X$ be irreducible.
The intersection of any infinite descending chain of strongly separated half-spaces, is a singleton. In particular, if $\xi_1, \xi_2\in \partial_{\mathrm{r}}X$ are distinct, then  $\xi_1 \in h_1$ and $\xi_2\in h_2$ for some strongly separated disjoint pair $h_1, h_2\in \frakH$.
\end{cor}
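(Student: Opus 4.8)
The plan is to reduce both assertions to Proposition~\ref{Rank1Char} together with the elementary observation recorded at the very beginning of its proof: \emph{if $\{s_n\}$ is an infinite descending chain of pairwise strongly separated half-spaces and $h\in\frakH$ meets every $s_n$, then $s_n\subseteq h$ for all sufficiently large $n$.}

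For the first assertion, let $\{s_n\}_{n\geq 1}$ be an infinite descending chain of pairwise strongly separated half-spaces. Each $s_n$ is a clopen subset of $\~X$ (it is the trace on $\~X$ of a basic clopen cylinder of $2^{\frakH}$), and the $s_n$ are nonempty and nested, so compactness of $\~X$ gives $\bigcap_n s_n\neq\varnothing$. If $u\neq v$ both lie in this intersection, pick $h\in\frakH$ with $u\in h$ and $v\in h^*$. Then $h$ meets every $s_n$ (it contains $u$), so by the observation $s_n\subseteq h$ for $n$ large; but then $v\in\bigcap_n s_n\subseteq s_n\subseteq h$ contradicts $v\in h^*$. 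Hence $\bigcap_n s_n$ is a single point. (If one prefers not to invoke the observation: for every $n$ both $h\cap s_n$ and $h^*\cap s_n$ are nonempty, so by Remark~\ref{shortcut} either $h\pitchfork s_n$ or one of $h,h^*$ is a proper subset of $s_n$; strong separation allows $h\pitchfork s_n$ for at most one $n$, so $h$ or $h^*$ is a proper subset of $s_n$ for infinitely many $n$, placing infinitely many half-spaces between a fixed pair — impossible.)

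For the second assertion, given distinct $\xi_1,\xi_2\in\partial_{\mathrm{r}}X$, Proposition~\ref{Rank1Char} supplies infinite descending chains of pairwise strongly separated half-spaces $\{s_n\}\subset U_{\xi_1}$ and $\{t_n\}\subset U_{\xi_2}$, and by the first assertion $\bigcap_n s_n=\{\xi_1\}$ and $\bigcap_n t_n=\{\xi_2\}$. Since $\xi_2\neq\xi_1$ there is $N$ with $\xi_2\in s_N^*$; applying the observation to the chain $\{t_m\}$ and the half-space $s_N$ (were $s_N$ to meet every $t_m$, then $t_m\subseteq s_N$ eventually, forcing $\xi_2\in s_N$) produces an $M$ with $t_M\cap s_N=\varnothing$, i.e.\ $t_M\subseteq s_N^*$. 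I then take $h_1=s_{N+1}$ and $h_2=t_{M+1}$: these satisfy $\xi_1\in h_1$, $\xi_2\in h_2$, and $h_1\cap h_2\subseteq s_N\cap t_M=\varnothing$, so the pair is disjoint. The only point needing argument — and the main, though mild, obstacle — is strong separation of $h_1$ and $h_2$: if $k\in\frakH$ were transverse to both, then from $k\pitchfork s_{N+1}$ and $s_{N+1}\subseteq s_N$ one sees that $k$ and $k^*$ both meet $s_N$, so by Remark~\ref{shortcut} either $k\pitchfork s_N$ — impossible, since $s_N$ and $s_{N+1}$ are strongly separated and $k$ is transverse to both — or $k\subsetneq s_N$ or $k^*\subsetneq s_N$. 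In the first case $h_2\subseteq t_M\subseteq s_N^*\subsetneq k^*$, contradicting $k\cap h_2\neq\varnothing$; in the second $h_2\subseteq s_N^*\subsetneq k$, contradicting $k^*\cap h_2\neq\varnothing$. This contradiction shows no such $k$ exists, so $\{h_1,h_2\}$ is the desired strongly separated disjoint pair, and the proof is complete.
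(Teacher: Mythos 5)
Your proof is correct and, for the singleton claim, follows essentially the same route as the paper: compactness of $\~X$ gives nonemptiness, and a separating half-space is ruled out via Remark \ref{shortcut}, strong separation (transversality to at most one $s_n$), and the finiteness of half-spaces between any two. The paper leaves the ``in particular'' clause implicit, so your explicit derivation — choosing $N$, $M$ with $t_M\subset s_N^*$ and then passing one step deeper to $h_1=s_{N+1}$, $h_2=t_{M+1}$ to verify strong separation of the disjoint pair — is a correct filling-in of that step rather than a departure from the paper's argument.
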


\begin{proof}
Let us show that if $\{s_n\}$ is a strongly separated descending chain of half-spaces then $\Cap{n\in \N}{} s_n$ is a singleton. Indeed, since every finite intersection of these half-spaces is non-empty, and $\~X$ is compact, $\Cap{n\in \N}{} s_n$ is nonempty. Suppose that $x,y \in \Cap{n\in \N}{} s_n$ are distinct. Then for some $h$ we have that $x\in h$ and $y\in h^*$. This of course means that for each $n$, $h\cap s_n$ and $h^*\cap s_n$ are both non-empty. By Remark \ref{shortcut}, we must have that for each $n$, either $h\pitchfork s_n$ or  $\^h \subset s_n$. By strong separation, it follows that $\^h \subset s_n$ for all $n$ sufficiently large. But this is impossible since $s_n$ is descending and there are finitely many half-spaces in between any two. Therefore, no such $h$ exists and $x= y$. 
\end{proof}

\begin{cor}
 If $X$ is a CAT(0) cube complex and the action of $\Aut(X)$ is essential and nonelementary, then $\partial_{\mathrm{r}}X\neq \varnothing$.
\end{cor}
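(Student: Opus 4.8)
The plan is to reduce to the irreducible case and then manufacture a regular point by repeatedly applying Lemma~\ref{extend4ss} to build an infinite descending chain of pairwise strongly separated half-spaces, whose intersection in $\~X$ lies in $\partial_{\mathrm{r}}X$ by Proposition~\ref{Rank1Char}. For the reduction, write $X = X_1\times\cdots\times X_n$ for the decomposition into irreducible factors and let $\G_0\leq\Aut(X)$ be the finite index subgroup preserving it. Since the $\Aut(X)$-action is essential, the essential core of $X$ is all of $X$, so Lemma~\ref{lem:boundary of factors} shows that the $\G_0$-action on each $X_i$ is nonelementary and essential; a fortiori $\Aut(X_i)$ acts nonelementarily and essentially on $X_i$. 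As $\partial_{\mathrm{r}}X = \partial_{\mathrm{r}}X_1\times\cdots\times\partial_{\mathrm{r}}X_n$ by definition, it suffices to treat the irreducible case.

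So assume $X$ is irreducible with $\Aut(X)$ acting essentially and nonelementarily, and fix any $s_1\in\frakH$. Suppose inductively that $s_1\supsetneq\cdots\supsetneq s_n$ have been constructed and are pairwise strongly separated; applying the first item of Lemma~\ref{extend4ss} to $s_n$ produces $s_{n+1}\subsetneq s_n$ strongly separated from $s_n$. The point is that $s_{n+1}$ is then automatically strongly separated from every $s_i$ with $i<n$ as well: if some $k\in\frakH$ were transverse to both $s_{n+1}$ and $s_i$, then, since $s_{n+1}\subseteq s_n$, the intersections $k\cap s_n$ and $k^*\cap s_n$ would both be nonempty, so by Remark~\ref{shortcut} either $k\pitchfork s_n$ — and then $k$ is transverse to both $s_n$ and $s_i$, contradicting that $\{s_1,\dots,s_n\}$ is pairwise strongly separated — or $\^k\subset s_n\subseteq s_i$, which forces one of the four intersections defining $k\pitchfork s_i$ to be empty, again a contradiction. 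This produces an infinite, strictly descending chain $\{s_n\}_{n\in\N}\subset\frakH$ of pairwise strongly separated half-spaces.

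By Corollary~\ref{distinct} the intersection $\Cap{n\in\N}{} s_n$ is a single point $\a$. Then $\{s_n\}\subset U_\a$ is an infinite, non-eventually-constant descending chain, so $U_\a$ violates the descending chain condition and hence $\a\in\partial X$ by the third item of the Facts. Since $\{s_n\}\subset U_\a$ is an infinite descending chain of pairwise strongly separated half-spaces, Proposition~\ref{Rank1Char} gives $\a\in\partial_{\mathrm{r}}X$, which completes the argument. I expect the only delicate point to be the inductive upgrade in the middle paragraph — passing from consecutive strong separation to pairwise strong separation along the nested chain — which is exactly the step where Remark~\ref{shortcut} does the work; everything else is assembly of facts already established above.
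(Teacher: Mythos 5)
Your proof is correct and takes essentially the same route as the paper: reduce to the irreducible factors via Lemma \ref{lem:boundary of factors}, construct an infinite descending chain of pairwise strongly separated half-spaces, and conclude with Proposition \ref{Rank1Char}. The only difference is cosmetic: the paper obtains the chain from a single strongly separated pair (Theorem \ref{th:ss}) by taking powers of one double-skewering element, whereas you iterate Lemma \ref{extend4ss} and verify the consecutive-to-pairwise strong separation upgrade (and the fact that the chain's intersection is a boundary point) by hand.
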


\begin{proof}
Recall that the action of $\Aut(X)$ is essential and nonelementary if and only if the action of $\Aut(X_i)$ is essential and nonelementary for each irreducible factor $X_i$ of $X$ (Lemma \ref{lem:boundary of factors} and Theorem \ref{EssNonEmpty}). Caprace and Sageev's Theorem \ref{th:ss} characterizes such irreducible complexes by the existence of strongly separated pairs $s_1\subset s_0$ in $\frakH_i$. Applying the Double Skewering Lemma \ref{Double Skewering Lemma}, we find $\g s_1 \subset \g s_0 \subset s_1 \subset s_0$ and $\g s_1$ is strongly separated from $s_1$. Setting $s_n = \g^{n-1} s_1$ we obtain an infinite descending strongly separated chain and by Proposition \ref{Rank1Char}, we have that $\partial_{\mathrm{r}} X_i$ is nonempty and hence $\partial_{\mathrm{r}} X$ is nonempty.
\end{proof}

\begin{theorem}\label{minimal target}
Let $\G$ be a discrete countable group, $(B_-, \nu_-)$ and $(B_+, \nu_+)$ be a boundary pair for $\G$. Assume that $\G \to \Aut(X)$ is an essential and nonelementary action by automorphisms on a CAT(0) cube complex $X$. Then any $\G$-equivariant measurable map $\f_\pm : B_\pm \to \partial X$ has essential target in $\partial_{\mathrm{r}} X$. 

\end{theorem}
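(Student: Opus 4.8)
The plan is to reduce immediately to the irreducible case and then run a stationarity/measure-rigidity argument on the push-forward measures $\vartheta_\pm$ on $\P(\~X)$ to rule out non-regular limit points. Since $\partial_{\mathrm r}X$ is defined factor-wise and $\partial X$ decomposes over the factors, and since by Lemma~\ref{lem:boundary of factors} each factor of the essential core carries an essential nonelementary $\G_0$-action (for $\G_0$ of finite index), it suffices to prove the statement after composing $\f_\pm$ with the projection $\partial X\to\partial X_i$, i.e.\ we may assume $X$ is irreducible. The boundary pair property passes to $\G_0$ by Theorem~\ref{FPBoundaryisBoundary}, so no generality is lost.

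Next I would set up the relevant equivariant data. As in the proof of Theorem~\ref{BoundaryMap}, amenability gives $\G$-maps $B_\pm\to\P(\~X)$; let $\vartheta_\pm$ be the push-forwards of $\nu_\pm$ and write $\P_\pm=(\P(\~X),\vartheta_\pm)$. The map $\f_\pm$ factors (after the constructions in Theorem~\ref{BoundaryMap}) through an essentially defined measurable map $\P_\pm\to\~X$; I want to show its essential image lies in $\partial_{\mathrm r}X$. The key external inputs are: (i) Proposition~\ref{Rank1Char}, which characterizes $\a\in\partial_{\mathrm r}X$ by the existence of an infinite descending chain of pairwise strongly separated half-spaces in $U_\a$; (ii) Corollary~\ref{PosMeasHalfSpaces}/Corollary~\ref{cor:EquivToCountable}, giving that every half-space has positive measure under a quasi-invariant measure and that there is no finite $\G$-invariant collection of half-spaces; and (iii) Lemma~\ref{extend4ss} and the Double Skewering Lemma, to manufacture strongly separated chains inside any orbit. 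The strategy is: for a $\vartheta_\pm$-generic point $m$ with associated limit $\f_\pm(m)=\a$, consider $U_\a$ and show that the ``terminal-type'' obstruction to extending a strongly separated descending chain below a given $h\in U_\a$ must fail generically, because otherwise one extracts a measurable $\G$-equivariant assignment of a finite (hence by Corollary~\ref{cor:EquivToCountable} a $\G$-invariant finite) set of half-spaces, contradicting Lemma~\ref{lem:NoFiniteColInvHalf}. Concretely, one shows the set of $\a$ admitting \emph{some} infinite strongly separated descending chain in $U_\a$ is conull: using isometric ergodicity of $B_-\times B_+$ one plays $\f_-(b_-)$ against $\f_+(b_+)$, picks via Lemma~\ref{extend4ss} strongly separated half-spaces containing one endpoint and nested past it, and uses that by Corollary~\ref{distinct}-type arguments these can be chosen in a single orbit; the absence of such a chain on a positive-measure set would force a finite invariant configuration.

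The main obstacle, I expect, is the measurability and genericity bookkeeping: one must produce the strongly separated descending chain in a \emph{measurably varying, equivariant} way, not just pointwise, so that the non-regular locus, if of positive measure, yields an honest equivariant map to a countable (finite-subsets) set to which Corollary~\ref{cor:EquivToCountable} applies. This is where the interplay of $B_-$ and $B_+$ is essential: a single boundary does not see enough to pin down a chain, but on $B_-\times B_+$ the two generic limit points $\a_-,\a_+$ together with the strong-separation geometry (Lemma~\ref{ss Points are median}, Corollary~\ref{Cor unique endpoints bridge}) let one canonically select terminal half-spaces of the relevant balanced sets, exactly as in Cases II and III of Theorem~\ref{BoundaryMap}. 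Once the conull set of $\a$ with an infinite strongly separated descending chain in $U_\a$ is in hand, Proposition~\ref{Rank1Char} immediately gives $\a\in\partial_{\mathrm r}X$, completing the proof in the irreducible case and hence, by the factor-wise reduction, in general.
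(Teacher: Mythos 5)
You have the right skeleton --- reduce to irreducible factors, recast regularity via Proposition \ref{Rank1Char} as the existence of an infinite descending chain of pairwise strongly separated half-spaces in $U_\a$, and exploit isometric ergodicity of $B_-\times B_+$ together with Corollary \ref{cor:EquivToCountable} and Lemma \ref{lem:NoFiniteColInvHalf} --- and this is indeed the paper's general route (the paper deduces the theorem from the stronger Theorem \ref{minimalRankFullMeas}). But the central step is missing: you never explain how, for a.e.\ pair of generic points, the chain is actually produced, and your stated mechanism (``the absence of such a chain on a positive-measure set would force a finite invariant configuration'') does not follow as written. If $U_\a$ contains no infinite strongly separated descending chain, there is no canonical finite set of half-spaces attached to $\a$: the finiteness of terminal elements in Remark \ref{rem: finite terminal Hm} is special to subsets of balanced collections $H_m$, equivalently to subsets of $[x,y]\cup[y,x]$ for a pair of points, and $U_\a$ for a single boundary point is not of this type --- it can have infinitely many terminal elements. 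So the dichotomy ``regular, or else an equivariant map to finite subsets of $\frakH$'' is not established by your argument, and this is precisely the hard part of the theorem.

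The paper closes this gap with a quantitative counting device absent from your sketch: the $\G$-invariant families $\S^{(N)}_R$ of chained, pairwise strongly separated tuples $h_1\subsetneq\cdots\subsetneq h_N\subsetneq k^*$ with bounded gap $\delta(h_1,k)\leq R$. Lemma \ref{InfiniteEssvalueOnS} shows that the number of such tuples inside $[\xi_+,\xi_-]^N\times[\xi_-,\xi_+]$ is a.e.\ infinite (essential value $0$ is excluded by Corollary \ref{PosMeasHalfSpaces}, a finite nonzero value by Corollary \ref{cor:EquivToCountable} together with Lemma \ref{lem:NoFiniteColInvHalf}); Lemma \ref{This good enough} shows these families are nonempty for suitable $R$ via the Double Skewering Lemma \ref{Double Skewering Lemma}; and Corollary \ref{FiniteTermElemntsN-Tuples} shows the count of terminal elements is a.e.\ zero --- this is exactly where the interval between the two generic points, rather than a single $U_\a$, is indispensable, since Remark \ref{rem: finite terminal Hm} applies there. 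Even then one must still assemble a bi-infinite pairwise strongly separated chain out of infinitely many bounded-gap triples: the paper's argument (setting $s_m=a_{mR}$ and using $\delta(a_m,a_{m-R}^*)\geq R+1$ to force $b_m\subset a_{m-R}$, hence strong separation of $a_m$ and $a_{m-R}$) is a genuine combinatorial step, not bookkeeping. Without the bounded-gap families, the terminal-count-zero statement, and this assembly argument, your proposal does not reach the conull set of regular points, so as it stands there is a real gap rather than a complete alternative proof.
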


This will follow immediately from Theorem \ref{minimalRankFullMeas}. The rest of this section is devoted to proving this and other key results.  We first establish a bit of notation: for $x,y\in \~X$ the collection of half-spaces containing $y$ and not $x$ will be denoted by $[x,y]$. In terms of $U_x$ and $U_y$ we have $[x,y] = U_y\setminus U_x$.

\break

\begin{lemma}\label{InfiniteEssvalueOnS}
Assume $\G\to \Aut(X)$ is a nonelementary and essential action,  $\lambda_\pm$ quasi-$\G$-invariant measures on $\partial X$ such that $(\partial X^2, \lambda_-\otimes \lambda_+)$ is isometrically ergodic. Let  $N>0$ and $S \subset \frakH^N\times \frakH$ be a $\G$-invariant collection of $(N+1)$-tuples such that there exists $(h_1, \dots, h_N,k)\in S$ with 
$$h_1\subset \cdots \subset h_N\subset k^*.$$ 
The map $\partial X^2\to \N\cup \{\8\}$ defined by $(\xi_-, \xi_+) \mapsto\#\( [\xi_+, \xi_-]^N\times [\xi_-, \xi_+] \cap S\)$ is  $\lambda_-\otimes \lambda_+$-essentially constant with infinite essential value. 
 \end{lemma}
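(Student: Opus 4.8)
The map in question is $\G$-invariant because $S$ is $\G$-invariant and the assignments $(\xi_-,\xi_+)\mapsto [\xi_+,\xi_-]$, $(\xi_-,\xi_+)\mapsto[\xi_-,\xi_+]$ are $\G$-equivariant (they are built from $U_{\xi_\pm}$, which transform equivariantly). The target $\N\cup\{\8\}$ is countable with the trivial $\G$-action, so by isometric ergodicity of $(\partial X^2,\lambda_-\otimes\lambda_+)$ the map is essentially constant; call its value $c\in\N\cup\{\8\}$. The whole content of the lemma is therefore that $c=\8$, i.e.\ that generically the nested $(N+1)$-tuples of $S$ that are ``straddled'' by the pair $(\xi_-,\xi_+)$ — meaning $h_1,\dots,h_N\subset[\xi_+,\xi_-]$ and $k\in[\xi_-,\xi_+]$ — are infinite in number.

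First I would rule out $c=0$. Fix the given tuple $(h_1,\dots,h_N,k)\in S$ with $h_1\subset\cdots\subset h_N\subset k^*$. The idea is to produce, on a positive-measure set of $(\xi_-,\xi_+)$, at least one translate $\g(h_1,\dots,h_N,k)\in S$ sitting in the right position. Note the position requirement: a half-space $h$ lies in $[\xi_+,\xi_-]=U_{\xi_-}\setminus U_{\xi_+}$ exactly when $\xi_-\in h$ and $\xi_+\in h^*$. Since the action is nonelementary and essential, $k^*$ is deep and by the Flipping Lemma (applied after possibly skewering to ensure essentiality of all the $h_i$; alternatively just use that $k$ and $k^*$ are deep) we can find, using double skewering and flipping on the essential core, a group element $\g$ with $\g k \subsetneq h_N^{*}$... more carefully: I want $\g h_1\subset\cdots\subset\g h_N\subset (\g k)^*$ with $\xi_-\in \g h_1\cap\cdots\cap \g h_N$ and $\xi_+\in (\g h_1)^*$ together with $\xi_-\in(\g k)^*$, $\xi_+\in \g k$. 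Because $\lambda_\pm$ are quasi-$\G$-invariant, for each half-space $h$ one has $\lambda_\pm(h)>0$ (Corollary~\ref{PosMeasHalfSpaces}), so by quasi-invariance each of the finitely many positivity conditions defines a positive-measure set; using the Flipping Lemma to arrange the nesting $\g k\subsetneq \g h_N\subsetneq\cdots\subsetneq \g h_1$ and its image under a further flip to be pushed off $\xi_-$ resp.\ $\xi_+$, I can intersect finitely many positive-measure events to get a positive-measure set of $(\xi_-,\xi_+)$ on which at least one element of $S$ is counted. Since $c$ is constant a.e., $c\geq 1$.

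The heart of the argument is the promotion from $c\geq 1$ to $c=\8$, and this is where I expect the real work to be. The mechanism is iteration via double skewering: given one straddling tuple $(g_1,\dots,g_N,k')=\g(h_1,\dots,h_N,k)$ with $g_1\subset\cdots\subset g_N\subset (k')^*$ counted by a generic $(\xi_-,\xi_+)$, I want a second, ``disjoint'' one nested strictly inside, so that the count is at least $2$, then at least $3$, etc.; since $c$ is an a.e.\ constant, $c\geq n$ for every $n$ forces $c=\8$. Concretely, since $g_1$ is essential, the Double Skewering Lemma~\ref{Double Skewering Lemma} gives $\eta\in\G$ with $\eta g_1\subsetneq (k')^{*}\subsetneq g_1$ — wait, I need the new tuple to again be straddled: $\eta g_i\subset[\xi_+,\xi_-]$ needs $\xi_-\in\eta g_i$ and $\xi_+\in(\eta g_i)^*$, and $\eta k'\in[\xi_-,\xi_+]$ needs $\xi_-\in(\eta k')^*$, $\xi_+\in\eta k'$. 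The point is that if $\eta$ double-skewers so that $\eta k'\subsetneq \eta g_1\subsetneq\cdots\subsetneq\eta g_N\subsetneq$ (everything) $\subsetneq g_1$, then the containments $\xi_-\in g_1\Rightarrow\xi_-\in\eta g_i$ can fail, so one has to be more careful: one should double-skewer the pair $(k'^*,g_1)$ or rather nest the new tuple \emph{between} $k'$ and $g_1$ in a way compatible with $\xi_\pm$. The clean way is: the set of $(\xi_-,\xi_+)$ for which the count is $\geq n$ is $\G$-invariant (being defined by a $\G$-invariant condition on the invariant counting function), so if it has positive measure it is conull; thus it suffices to show for each $n$ that the count is $\geq n$ on a positive-measure set, and for that I iterate the $c\geq 1$ construction inside a strictly descending strongly-separated-type chain produced by repeated double skewering and flipping on the essential core, at each stage intersecting with the previous positive-measure event (using quasi-invariance to keep positivity). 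The main obstacle is bookkeeping the simultaneous position constraints at $\xi_-$ and at $\xi_+$ across the iteration — ensuring the $n$-th nested translate of the tuple still has $\xi_-$ on the ``$h_i$ side'' and $\xi_+$ on the ``$k$ side'' — which I would handle by choosing the skewering elements so that all $n$ tuples lie strictly between two fixed deep half-spaces $A\subsetneq B$ with $\xi_-\in A$, $\xi_+\in B^*$, a configuration of positive measure by Corollary~\ref{PosMeasHalfSpaces} and quasi-invariance, and then arranging the $n$ disjoint straddling tuples inside the ``annulus'' $B\setminus A$ by Double Skewering applied to essential half-spaces there.
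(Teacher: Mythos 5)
Your proposal reaches the right conclusion but takes a genuinely different route from the paper at the decisive step, and along the way it overcomplicates the easy step. Both arguments start the same: $\G$-invariance of the count plus ergodicity of $\lambda_-\otimes\lambda_+$ gives an essentially constant value, and positivity of half-space measures (Corollary \ref{PosMeasHalfSpaces}) rules out the value $0$ --- but here the paper is much simpler than you: no flipping, skewering, or translating is needed, since for any $(\xi_-,\xi_+)\in h_1\times k$ the \emph{given} tuple already lies in $[\xi_+,\xi_-]^N\times[\xi_-,\xi_+]$, and $\lambda_-\otimes\lambda_+(h_1\times k)=\lambda_-(h_1)\lambda_+(k)>0$; your detour about ``ensuring essentiality of the $h_i$'' is unnecessary because under an essential action every half-space is essential by definition. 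Where you genuinely diverge is the promotion to $\8$: the paper never builds many tuples; it notes that a finite nonzero essential value would yield a $\G$-equivariant map from $\partial X^2$ to the countable set of finite subsets of $\frakH$, hence by Corollary \ref{cor:EquivToCountable} a finite $\G$-invariant collection of half-spaces, contradicting Lemma \ref{lem:NoFiniteColInvHalf}; so the value is $0$ or $\8$. Your alternative --- show the count is at least $n$ on a positive-measure set for every $n$ --- does work, and the bookkeeping you flag as the main obstacle is in fact easy once you nest the translates: iterate the Double Skewering Lemma \ref{Double Skewering Lemma} to get $\g_1,\g_2,\dots$ with $\g_1k^*\subsetneq h_1$, $\g_2k^*\subsetneq\g_1h_1$, and so on; then every constraint on $\xi_-$ collapses to membership in the innermost translate of $h_1$, every constraint on $\xi_+$ collapses to $\xi_+\in k$ (all translates of $k$ contain $k$), so the relevant event is a product of two half-spaces of positive measure, and the $n$ tuples are distinct because their $k$-entries are strictly nested. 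In terms of trade-offs, the paper's dichotomy argument is shorter and recycles the isometric-ergodicity-to-countable-targets machinery used throughout, whereas your construction is more elementary, needing only plain ergodicity, Corollary \ref{PosMeasHalfSpaces}, and double skewering; but as written your key step is only a sketch, and it needs the explicit nesting argument above (rather than the vague ``annulus'' arrangement) to count as a complete proof.
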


\begin{proof}
The measurability of the map in question relies on the $\G$-invariance of the non-empty set $S$. The proof is straight forward and similar to that of \cite[Corollary A.2]{CFI}. 

Since $S$ is  $\G$-invariant, it follows that the map in question is $\G$-invariant and hence essentially constant by (isometric) ergodicity. If the essential value is finite and non-zero, then this gives a $\G$-equivariant map from $\partial X^2$ to the countable collection of finite subsets of $\frakH$. By isometric ergodicity and Corollary \ref{cor:EquivToCountable} this yields a finite $\G$-invariant set in $\frakH$ which contradicts the assumption that the action is essential and nonelementary by Lemma \ref{lem:NoFiniteColInvHalf}. Therefore the essential value must be 0 or $\8$.

Fix $(h_1, \dots, h_N,k) \in S$ with $h_1\subset\cdots \subset h_N\subset k^*$. It follows that if $(\xi_-, \xi_+) \in h_1\times k$ then $(h_1, \dots, h_N,k) \in [\xi_+, \xi_-]^N\times [\xi_-, \xi_+] $, and since $\lambda_-\otimes \lambda_+(h_1\times k) >0$ by  Lemma \ref{PosMeasHalfSpaces}, we have that the essential value is not zero and hence infinite. 
\end{proof}

From this we derive some important consequences:

\begin{lemma} With the hypotheses as in Lemma \ref{InfiniteEssvalueOnS},
for $\lambda_-\otimes \lambda_+$ a.e. $(\xi_-, \xi_+) \in \partial X^2$ we have that $\I(\xi_-, \xi_+)\cap X\neq \varnothing$.
\end{lemma}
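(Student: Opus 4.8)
The plan is to apply Lemma~\ref{InfiniteEssvalueOnS} with $N=1$ to produce, for a generic pair $(\xi_-,\xi_+)$, an infinite descending chain of half-spaces lying in $[\xi_-,\xi_+]$, and then to exploit that such a chain must be "eventually strongly separated" enough to have its intersection meet $X$. More precisely, first I would take $S$ to be a single $\G$-orbit of a pair of disjoint strongly separated half-spaces $h\subset k^*$; such an orbit exists by Lemma~\ref{extend4ss} (after possibly passing to irreducible factors as in the proof of Theorem~\ref{BoundaryMap}; I will write the argument assuming $X$ irreducible, the general case following factor by factor since $\~X$ is a product of the $\~X_i$ and an interval in a product is the product of intervals). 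Lemma~\ref{InfiniteEssvalueOnS} then says that for $\lambda_-\otimes\lambda_+$-a.e.\ $(\xi_-,\xi_+)$ there are infinitely many pairs $(s,t)\in S$ with $s\subset t^*$ and both $s,t\in[\xi_-,\xi_+]$, i.e.\ $\xi_+\in s\subset t^*$ and $\xi_-\in t\subset s^*$.

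Next I would extract from this an honest infinite descending chain. Since all these $s$'s lie in $U_{\xi_+}$ and are pairwise comparable-or-transverse within a single orbit of strongly separated pairs, and since $S$ consists of strongly separated pairs, distinct pairs $(s,t), (s',t')$ from this family satisfy a nesting relation: two half-spaces $s,s'$ both containing $\xi_+$ and both disjoint from their partners cannot be transverse once one also controls $\xi_-$, so after reindexing I get $s_1\supsetneq s_2\supsetneq\cdots$ all in $U_{\xi_+}$ with each $s_n$ strongly separated from $s_{n+1}$ (this is the same bookkeeping used in the proof of Proposition~\ref{Rank1Char}, $(1)\Rightarrow(2)$). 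Now invoke the consistency facts: the set $\mathfrak{s}$ generated by $\{s_n\}$ — or rather the relevant consistent cone — satisfies the descending chain condition on the complement, because a strongly separated descending chain is \emph{cofinal} in the poset of half-spaces separating $\xi_-$ from $\xi_+$ in the following sense: any $h\in[\xi_-,\xi_+]$ has nontrivial intersection with every $s_n$ (as $\xi_+\in h\cap s_n$ is forced... here I must be a little careful), hence by the observation opening the proof of Proposition~\ref{Rank1Char}, $s_n\subset h$ for $n$ large. Therefore $\bigcap_n s_n$ is exactly the point obtained by the lifting decomposition associated to the strongly separated chain, and by Lemma~\ref{Bridge Consistent} (or directly, since a strongly separated chain gives bridges $B(s_{n+1},s_n^*)$ which are finite intervals) the relevant consistent set satisfies the descending chain condition; by Fact~\ref{DCC} / Proposition~\ref{LiftingDecomp}(\ref{DCClifting}) its intersection meets $X$. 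Concretely: $m(\xi_-,\xi_+,\text{anything})$-type reasoning plus the median point $x_n := m$ of the bridge $B(s_{n+1},s_n^*)$ stabilizes or, better, the chain $\{s_n\}$ being strongly separated forces $\bigcap_n s_n\cap\bigcap_n s_n^{*\,c}$...

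Let me state the cleanest route for the key step: the half-spaces in $[\xi_-,\xi_+]$ that are \emph{not} crossed by the strongly separated chain, i.e.\ $\frakH':=\{h\in[\xi_-,\xi_+] : h\pitchfork s_n \text{ for some }n\}$, together with the chain itself, give a consistent set $\mathfrak{s}=U_{\xi_+}\setminus[\xi_-,\xi_+]$ whose complement in the separating set is controlled; since each $s_n$ is strongly separated from $s_{n+1}$, only finitely many half-spaces sit between $s_{n+1}$ and $s_n$, and every element of $[\xi_-,\xi_+]$ either is one of finitely many transverse-to-some-$s_n$ or lies between two consecutive $s_n$'s — so $U_{\xi_+}$ restricted to $[\xi_-,\xi_+]$ satisfies the descending chain condition along the cofinal chain, hence by Facts~(\ref{DCC}) the point $m:=\I(\xi_-,\xi_+)$-median with $\xi_-$, namely the point $z$ with $U_z = U_{\xi_+}\cup(U_{\xi_-}\cap\text{something})$ lands in $X$. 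Cleanest: let $z$ be the point of $\~X$ with $U_z = \big(U_{\xi_-}\cap U_{\xi_+}\big)\cup\{h\in[\xi_-,\xi_+] : s_n\subset h \text{ for some }n\}$; consistency is clear, and the descending chain condition holds because any infinite descending chain in $U_z$ is eventually inside $\bigcap s_n$, forcing it to coincide eventually with a subsequence of the $s_n$... no, that still descends. The honest fix: $z$ is a \emph{vertex} iff $U_z$ has DCC; here instead take $z$ to be the median $m(\xi_-,\xi_+,o)$ for the basepoint $o$, and show $z\in\I(\xi_-,\xi_+)\cap X$ directly — $z\in X$ because $U_z\subset U_o\cup\text{(finite)}$ thanks to strong separation bounding the excursion.

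\medskip

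I expect the main obstacle to be exactly this last point: turning "there is an infinite strongly separated descending chain between $\xi_-$ and $\xi_+$" into "$\I(\xi_-,\xi_+)$ contains a vertex of $X$". The subtlety is that an infinite descending chain of half-spaces by itself does \emph{not} force a point of $X$ in the interval (the interval could be entirely in the boundary); what saves us is strong separation, which makes the chain \emph{cofinal} in $[\xi_-,\xi_+]$ and makes consecutive bridges $B(s_{n+1},s_n^{*})$ \emph{finite} intervals (Corollary~\ref{Cor unique endpoints bridge}), so that one can glue the unique bridge endpoints $x_n$ into a coherent sequence converging to a vertex, or equivalently verify the descending chain condition for the consistent set cut out by the $s_n$ inside $[\xi_-,\xi_+]$ and then apply Facts~(\ref{DCC}) / Proposition~\ref{LiftingDecomp}(\ref{DCClifting}). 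So the write-up will: (i) reduce to $X$ irreducible; (ii) apply Lemma~\ref{InfiniteEssvalueOnS} with $N=1$ and $S$ a strongly-separated orbit; (iii) extract a strongly separated descending chain $\{s_n\}\subset U_{\xi_+}$ with $\{s_n^*\}\subset U_{\xi_-}$; (iv) show cofinality of $\{s_n\}$ in $[\xi_-,\xi_+]$ via the opening observation of Proposition~\ref{Rank1Char}'s proof; (v) conclude $\I(\xi_-,\xi_+)\cap X\neq\varnothing$ by the bridge finiteness plus DCC argument. Step (v) is the crux; everything else is bookkeeping already present in the paper.
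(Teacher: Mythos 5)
You start in the right place (Lemma \ref{InfiniteEssvalueOnS} with $N=1$, applied per irreducible factor to a $\G$-invariant family of disjoint strongly separated pairs), but the proof does not close, and it fails exactly at the step you yourself flag as the crux. Two concrete problems. First, the chain extraction in your step (iii) is unjustified: from infinitely many pairs $(h,k)\in\S_i$ with $h\in[\xi_+,\xi_-]$ and $k\in[\xi_-,\xi_+]$ you cannot simply ``reindex'' to obtain a descending chain of \emph{pairwise strongly separated} half-spaces. You give no argument that two of the half-spaces on the $\xi_+$-side cannot be transverse (the parenthetical ``cannot be transverse once one also controls $\xi_-$'' is an assertion, not a proof), and even after passing to a nested subfamily (which finite dimension permits via Dilworth), strong separation of the extracted members \emph{from each other} does not follow from each one being strongly separated from its own partner. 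Tellingly, when the paper really needs such chains (Theorem \ref{minimalRankFullMeas}) it works much harder, introducing the sets $\S^{(N)}_R$ and the terminal-element count. Second, and more seriously, your step (v) — converting the chain into a vertex of $\I(\xi_-,\xi_+)$ — is never actually carried out: your DCC attempt visibly stalls (``no, that still descends''), and the fallback of taking $z=m(\xi_-,\xi_+,o)$ and claiming $U_z\subset U_o\cup(\text{finite set})$ ``by strong separation bounding the excursion'' is a restatement of the desired conclusion ($d(z,o)<\infty$, i.e.\ $z\in X$), not a proof of it.

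The paper's proof needs no chain at all. For $m=\frac12(\delta_{\xi_-}+\delta_{\xi_+})$ one has $\~X_m=\I(\xi_-,\xi_+)$ and $H_m^+=U_{\xi_-}\cap U_{\xi_+}$, and a \emph{single} disjoint strongly separated pair in $H_m\cap\frakH_i$ for each irreducible factor — which is exactly what Lemma \ref{InfiniteEssvalueOnS}, applied to the $\G_0$-action with $S=\S_i$ for the finite-index factor-preserving subgroup $\G_0$ (using Remark \ref{Finite Index Ess}, Lemma \ref{lem:boundary of factors}, Theorem \ref{th:ss}), provides — already forces the descending chain condition for $H_m^+$ by Lemma \ref{DCCHeavy}; then Remark \ref{DCC Halfspaces} and Proposition \ref{LiftingDecomp} place $X_m$ inside $X$, so the interval meets $X$. (Alternatively, the bridge machinery you gesture at can be made to work directly: if $(h,k)$ is a disjoint strongly separated pair with $\xi_-\in h$ and $\xi_+\in k$, then any $p\in B(h,k)\cap X$, nonempty by Lemma \ref{Bridge Consistent} and Fact (\ref{DCC}), satisfies $p=m(\xi_-,p,\xi_+)$ by Lemma \ref{ss Points are median}, hence $p\in\I(\xi_-,\xi_+)\cap X$.) Either Lemma \ref{DCCHeavy} or this bridge argument is precisely the missing ingredient in your write-up.
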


\begin{proof}
Let $X = X_1\times \cdots \times X_n$ be the decomposition of $X$ into irreducible factors and let $\G_0$ be the finite index subgroup of $\G$ which maps to $\Aut(X_1) \times \cdots \times \Aut(X_n)$. As before, we let $\S_i \subset \frakH_i\times \frakH_i$ be the collection of disjoint strongly separated pairs of half-spaces in the irreducible factor $X_i$, and $\S = \S_1\sqcup \cdots\sqcup \S_n$. Note that $\S_i$ is $\G_0$ invariant for each $i$.

Now, by assumption that the $\G$ action is essential and nonelementary on $X$, it follows that the quotient action of $\G_0$ on each $X_i$ is also essential and nonelementary (see Remark \ref{Finite Index Ess} and Lemma \ref{lem:boundary of factors}). By Caprace and Sageev's Theorem \ref{th:ss}, $\S_i\neq \varnothing$ and therefore, there is an $(h_i, k_i)\in \S_i$ and $h_i \cap k_i = \varnothing$. Observing that all the hypotheses remain true for $\G_0$, we may  apply Lemma \ref{InfiniteEssvalueOnS}, to the action of $\G_0$ on $S = \S_i$ for each $i=1, \dots, n$, and we deduce that, the essential value of $(\xi_-, \xi_+) \mapsto\#\( [\xi_+, \xi_-]\times [\xi_-, \xi_+] \cap \S_i\)$ is infinite, for each $i$. 

Next, as is observed in Remark \ref{DCC Halfspaces}, it follows that $X_m\subset X$, where $m$ is the average of the Dirac masses at $\xi_1$ and $\xi_2$. 
Of course, $\I(\xi_-, \xi_+) = \~X_m$ and hence $\I(\xi_-, \xi_+)\cap X\neq \varnothing$. 
\end{proof}

Consider two disjoint half-spaces $h$ and $k$ and define the map
 $$\delta(h,k) = \#\{\ell: h \subseteq \ell \subseteq k^*\}.$$
We note that this is not a distance on half-spaces,\footnote{ However, if $h$ and $k$ are disjoint and strongly separated, and $x\in h$ and $y \in k$ are as in Corollary \ref{Cor unique endpoints bridge} then $\delta(h,k) = d(x,y)$.}  although it is true that if $h\subsetneq \ell \subseteq k^*$ then $\delta(h,\ell^*) \leq \delta(h,k)$. %

\begin{definition}
 Suppose $\frakH= \frakH_1\sqcup\cdots \sqcup \frakH_n$ corresponds to the irreducible factor decomposition of $X$. Let $\mathcal S^{(N)}_R$ denote the collection of $(h_1, \dots, h_N, k) \in \frakH^{N+1}$ such that $h_1\subsetneq \cdots \subsetneq h_{N}\subsetneq k^* $, $\delta(h_1, k)\leq R$, and for some $i\in\{1, \dots, n\}$ 
$$(h_1, h_{2}), \dots, (h_{N-1}, h_N), (h_N, k)\in \mathcal S_i.$$

\end{definition}

\begin{lemma}\label{This good enough}
Assume the hypotheses as in Lemma \ref{InfiniteEssvalueOnS}. For each $N$ there is an $R$ such that for  each $i = 1, \dots, n$ and $\lambda_+\otimes \lambda_-$-a.e. $(\xi_-, \xi_+) \in \partial X\times \partial X$ the cardinality of $\mathcal   [\xi_+, \xi_-]^{N}\times [\xi_-, \xi_+] \cap \S^{(N)}_R\cap \frakH_i^{N+1}$ is infinite.
\end{lemma}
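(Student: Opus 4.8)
```latex
\textbf{Proof proposal.}
The plan is to reduce the statement to the already-established infinite-essential-value
result of Lemma~\ref{InfiniteEssvalueOnS}. First I would fix $N$ and work inside a single
irreducible factor $X_i$, replacing $\G$ by the finite index subgroup $\G_0$ that preserves
the product decomposition; as noted before (Remark~\ref{Finite Index Ess},
Lemma~\ref{lem:boundary of factors}) the $\G_0$-action on $X_i$ remains essential and
nonelementary, and $(\partial X^2,\lambda_-\otimes\lambda_+)$ is still isometrically ergodic
for $\G_0$ by Theorem~\ref{FPBoundaryisBoundary}. So it suffices to produce, for each $i$,
a single tuple $(h_1,\dots,h_N,k)\in\S^{(N)}_R\cap\frakH_i^{N+1}$ witnessing that the
relevant set is nonempty and $\G_0$-invariant, and then invoke Lemma~\ref{InfiniteEssvalueOnS}.

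The key construction is this. Since $X_i$ is irreducible with an essential nonelementary
action, Caprace--Sageev's Theorem~\ref{th:ss} gives a strongly separated disjoint pair
$s\subset t$ in $\frakH_i$. Apply the Double Skewering Lemma~\ref{Double Skewering Lemma}
repeatedly to obtain $\g\in\G_0$ with $\g t\subsetneq s$, so that iterating $\g$ produces an
infinite descending chain of pairwise strongly separated half-spaces; taking $N+1$
consecutive terms of this chain yields $h_1\subsetneq\cdots\subsetneq h_N\subsetneq k^*$
with consecutive pairs strongly separated, i.e.\ an element of $\S^{(N)}_R$ for
$R:=\delta(h_1,k)$ (which is finite since there are only finitely many half-spaces between
any two, by the walled-space axiom). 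Crucially, because $\g$ is a group element, the whole
collection $\S^{(N)}_R\cap\frakH_i^{N+1}$ is $\G_0$-invariant, and we have just exhibited a
nonempty element. Note that the choice of $R$ depends only on $N$ and the fixed finite data
$s\subset t,\g$, not on $(\xi_-,\xi_+)$, which is exactly what the statement demands.

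Now set $S:=\S^{(N)}_R\cap\frakH_i^{N+1}$. By construction $S\subset\frakH^N\times\frakH$ is
a $\G_0$-invariant collection of $(N+1)$-tuples containing a nested tuple, so the hypotheses
of Lemma~\ref{InfiniteEssvalueOnS} are met. That lemma then tells us that
$(\xi_-,\xi_+)\mapsto\#\big([\xi_+,\xi_-]^N\times[\xi_-,\xi_+]\cap S\big)$ is
$\lambda_-\otimes\lambda_+$-essentially constant with infinite essential value, which is
precisely the assertion. Running this for each $i=1,\dots,n$ and intersecting the finitely
many conull sets completes the proof.

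The main obstacle I anticipate is bookkeeping rather than a genuine difficulty: one must be
careful that the $R$ produced is genuinely independent of the boundary points (it is, since
it is read off from the fixed skewering data) and that the ``consecutive strongly separated
pairs'' condition defining $\S^{(N)}_R$ is satisfied by the chain $\{\g^{\,j}s\}$ --- this
follows because strong separation is preserved under the group action and $\g t\subsetneq s$
forces $\g^{j+1}s\subsetneq\g^{j}s$ with the pair strongly separated, but it should be spelled
out. The appeal to Lemma~\ref{InfiniteEssvalueOnS} does the real work of turning ``nonempty
and invariant'' into ``infinite'', via the positivity of $\lambda_-\otimes\lambda_+$ on
products of half-spaces (Corollary~\ref{PosMeasHalfSpaces}) together with isometric
ergodicity and Corollary~\ref{cor:EquivToCountable}.
```
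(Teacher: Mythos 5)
Your proposal is correct and takes essentially the same route as the paper: restrict to the finite index subgroup $\G_0$ preserving the irreducible decomposition, produce a nonempty witness tuple in $\S^{(N)}_R\cap \frakH_i^{N+1}$ by double-skewering a disjoint strongly separated pair in $\frakH_i$, and then apply Lemma \ref{InfiniteEssvalueOnS} to the $\G_0$-invariant set $S=\S^{(N)}_R\cap\frakH_i^{N+1}$. The only cosmetic difference is that the paper fixes a single uniform constant $R = \max{i=1,\dots,n}\ \delta(\g_i^{N-1}h_i,k_i)$ at once, whereas you choose $R$ factor by factor; since $\S^{(N)}_R$ only grows with $R$, taking the maximum over the finitely many factors repairs this immediately.
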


\begin{proof}
Let $\G_0$ be the finite index subgroup which preserves the irreducible factor decomposition $X= X_1\times \cdots \times X_n$ and note that the $\G_0$-action is still essential and non-elementary (Remark \ref{Finite Index Ess} and Lemma \ref{lem:boundary of factors}). Then, the result follows by applying Lemma \ref{InfiniteEssvalueOnS} to the $\G_0$ action on   $S=\S^{(N)}_R\cap \frakH_i^{N+1}$. We must therefore show that for each $N$ there is an $R$ so that for each $i= 1, \dots, n$, this collection is  not empty. 

Fix $i$, and  $h_i\subset k_i^*$ with $(h_i, k_i )\in \S_i$. Applying the Double Skewering Lemma \ref{Double Skewering Lemma}, we find $\g_i \in \G_0$ such that $\g_i k_i^*\subset h_i$. In particular, $\g_i h_i\subset h_i$ are strongly separated in $\frakH_i$.
Setting $R = \max{i =1, \dots, n} \delta(\g_i^{N-1} h_i, k_i)$, we have that $(\g_i^{N-1}h_i, \dots, h_i, k_i)\in \mathcal S^{(N)}_R\cap \frakH_i^{N+1}$. 
 
 \end{proof}

Recall that $\tau : 2^\frakH \to 2^\frakH$ measurably assigns to a set its terminal elements (Lemma \ref{Lem Terminal map measu}). If $H\subset \frakH^N$, by abuse of notation, we shall use $\tau(H)$ to denote the terminal elements in the union of the projections of $H$ to each factor. Namely, if the $i$-th projection is $p_i : \frakH^N \to \frakH$ and $H\subset \frakH^N$ then  
$$\tau(H): = \tau\(\Cup{i = 1}{N} p_i(H)\).$$
By Remark \ref{rem: finite terminal Hm}, (and by considering the average of the Dirac masses at two points $x,y\in \~X$) 
there are finitely many terminal elements in any subset of  $[x,y]\cup[y,x]$ and hence, by Corollary \ref{cor:EquivToCountable} we deduce:

\begin{cor}\label{FiniteTermElemntsN-Tuples}
Assume the hypotheses  in Lemma \ref{InfiniteEssvalueOnS} and let $\frakH= \frakH_1\sqcup\cdots \sqcup \frakH_n$ be the irreducible factor decomposition. For each $N$ and each $i=1, \dots, n$
 the following has $\lambda_-\otimes \lambda_+$-essential value zero: 
 $$(\xi_-, \xi_+)\mapsto \#\(\tau( [\xi_+, \xi_-]^{N}\times [\xi_-, \xi_+]\cap \S_R^{(N)}\cap\frakH_i^{N+1})\).$$
\end{cor}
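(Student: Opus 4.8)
The plan is to combine Lemma \ref{This good enough} with the measurability and equivariance of the terminal-element map $\tau$ (Lemma \ref{Lem Terminal map measu}) and the finiteness constraint coming from Remark \ref{rem: finite terminal Hm}. First I would observe that for $\lambda_-\otimes\lambda_+$-a.e. $(\xi_-,\xi_+)$ the half-spaces occurring in $[\xi_+,\xi_-]^N\times[\xi_-,\xi_+]\cap\S_R^{(N)}\cap\frakH_i^{N+1}$ all lie in $[\xi_+,\xi_-]\cup[\xi_-,\xi_+]=U_{\xi_-}\triangle U_{\xi_+}$, which is exactly $H_m$ for $m=\tfrac12(\delta_{\xi_-}+\delta_{\xi_+})$ (the balanced half-spaces of the averaged Dirac mass). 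By Remark \ref{rem: finite terminal Hm}, since $\~X(H_m)$ is an interval and hence Euclidean, any subset of $H_m$ has only finitely many terminal elements; in particular the set $\tau\big([\xi_+,\xi_-]^N\times[\xi_-,\xi_+]\cap\S_R^{(N)}\cap\frakH_i^{N+1}\big)$ is always finite.

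Next I would use $\G$-invariance: fix $R$ as provided by Lemma \ref{This good enough} for the given $N$. Since $\S_R^{(N)}\cap\frakH_i^{N+1}$ is $\G_0$-invariant (it is defined purely in terms of the combinatorial data: nesting, $\delta$-bound, and membership in $\S_i$, all of which are $\G_0$-equivariant notions once one passes to the finite-index subgroup $\G_0$ preserving the factor decomposition), the map $(\xi_-,\xi_+)\mapsto\tau\big([\xi_+,\xi_-]^N\times[\xi_-,\xi_+]\cap\S_R^{(N)}\cap\frakH_i^{N+1}\big)$ is $\G_0$-equivariant and measurable — measurability of $\tau$ is Lemma \ref{Lem Terminal map measu}, and measurability of $(\xi_-,\xi_+)\mapsto[\xi_+,\xi_-]^N\times[\xi_-,\xi_+]\cap\S_R^{(N)}\cap\frakH_i^{N+1}$ follows as in the measurability arguments already invoked in Lemma \ref{InfiniteEssvalueOnS}. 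By the preceding paragraph its image lands in the countable set of finite subsets of $\frakH$, on which $\G_0$ acts by permutations.

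Now I would invoke isometric ergodicity of $(\partial X^2,\lambda_-\otimes\lambda_+)$ together with Corollary \ref{cor:EquivToCountable} (applied to $\G_0$, which still has the boundary-pair property by Theorem \ref{FPBoundaryisBoundary}): a $\G_0$-equivariant map from $\partial X^2$ into a countable set is essentially constant with value a $\G_0$-fixed point. So the essential value of $(\xi_-,\xi_+)\mapsto\tau(\cdots)$ is a fixed finite subset $\mathcal F\subset\frakH$ which is $\G_0$-invariant. By Lemma \ref{lem:NoFiniteColInvHalf}, a $\G_0$-invariant finite collection of half-spaces forces the $\G_0$-action to be either elementary or non-essential, contradicting that the $\G_0$-action is essential and nonelementary (Remark \ref{Finite Index Ess} and Lemma \ref{lem:boundary of factors}). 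The only remaining possibility is $\mathcal F=\varnothing$, i.e. the essential value of the cardinality is zero, which is the assertion. The main obstacle I anticipate is purely bookkeeping: making sure the set $\S_R^{(N)}\cap\frakH_i^{N+1}$ is genuinely $\G_0$-invariant (rather than merely $\G$-invariant as a union over $i$) and that the composite map is Borel; the conceptual content — finitely many terminal elements in a Euclidean interval, plus the standard ergodicity/no-finite-invariant-set dichotomy — is already packaged in the cited results.
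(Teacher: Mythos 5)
Your proposal is correct and follows essentially the same route as the paper: the paper's one-line derivation is exactly the combination of Remark \ref{rem: finite terminal Hm} (terminal elements of any subset of $U_{\xi_-}\triangle U_{\xi_+}$ are finite in number, via the averaged Dirac mass) with Corollary \ref{cor:EquivToCountable} and the finite-invariant-set contradiction of Lemma \ref{lem:NoFiniteColInvHalf}, which you have simply spelled out. Your extra care about $\G_0$-invariance and passing the boundary-pair property to the finite-index subgroup matches how the paper itself applies Lemma \ref{InfiniteEssvalueOnS} to the $\G_0$-action.
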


We will denote by $\Delta$ the \emph{fat diagonal} in $\partial_{\mathrm{r}} X^2$. Namely, if $X= X_1\times \cdots \times X_n$ is the irreducible decomposition of $X$ then $\Delta$ is the  collection $((\xi^1_1, \dots \xi^n_1),(\xi^1_2, \dots \xi^n_2)) \in \partial_{\mathrm{r}}X^2$ such that $\xi_1^i =\xi_2^i$, for some $i$.

Theorem \ref{minimal target} is a corollary to:

\begin{theorem}\label{minimalRankFullMeas}
 With the hypotheses as in Lemma \ref{InfiniteEssvalueOnS}, it follows that 
 $$\lambda_-\otimes \lambda_+(\partial_{\mathrm{r}}X^2)= 1.$$
 and
 $$\lambda_-\otimes \lambda_+(\Delta)= 0.$$
\end{theorem}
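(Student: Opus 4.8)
The plan is to prove both assertions factor by factor, feeding off Lemma \ref{This good enough} and Corollary \ref{FiniteTermElemntsN-Tuples}. Fix the irreducible decomposition $X=X_1\times\dots\times X_n$, $\frakH=\frakH_1\sqcup\dots\sqcup\frakH_n$, and recall $\~X=\prod_i\~{X_i}$, $\partial_{\mathrm{r}}X=\prod_i\partial_{\mathrm{r}}X_i$, and that $\Delta\subseteq\bigcup_i\{(\xi_-,\xi_+):\xi_-^i=\xi_+^i\}$. Fix an index $i$ and set $T_i:=[\xi_+,\xi_-]\times[\xi_-,\xi_+]\cap\S^{(1)}_R\cap\frakH_i^2$, the case $N=1$ of Lemma \ref{This good enough} with its attendant $R$. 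By Lemma \ref{This good enough} and Corollary \ref{FiniteTermElemntsN-Tuples} there is a co-null set of pairs $(\xi_-,\xi_+)$ on which $T_i$ is infinite and $\tau(T_i)=\varnothing$ (the latter with the convention that $\tau$ acts on the union of the two coordinate projections). On that set $T_i\neq\varnothing$ already forces a half-space of $\frakH_i$ separating $\xi_-$ from $\xi_+$, hence $\xi_-^i\neq\xi_+^i$; taking the union over $i$ of the null complements gives $\lambda_-\otimes\lambda_+(\Delta)=0$.

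For the main claim I would analyze $I:=\I(\xi_-^i,\xi_+^i)$, whose half-space poset embeds into that of $\~{\Z}^D$ by Theorem \ref{th:Euclidean equivalences}. An element of $T_i$ is a disjoint pair $h\subsetneq k^*$ of half-spaces of $\frakH_i$, separating $\xi_-^i$ from $\xi_+^i$ and strongly separated in $X_i$, i.e.\ a nested strongly separated pair inside the half-space set of $I$. Two nested half-spaces of $I$ lie in a single coordinate direction of $\~{\Z}^D$, while any half-space of $I$ in a different direction is transverse to both; as these are honest half-spaces of $\frakH_i$, the presence of such a pair forces $I$ to have a single coordinate direction. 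Since $\tau(T_i)=\varnothing$ means that the half-spaces occurring in $T_i$ have positions unbounded on both sides of that direction, $\frakH_i(\xi_-^i,\xi_+^i)$ must be a bi-infinite chain $(a_t)_{t\in\Z}$ with $a_t\subsetneq a_{t+1}$, $a_t\in U_{\xi_-^i}$, $a_t^*\in U_{\xi_+^i}$; in particular $\xi_\pm^i\in\partial X_i$. Each pair in $T_i$ then occupies a window $[s,s']\subseteq\Z$ with $s<s'$, $s'-s\leq R-1$, supplying the strongly separated pair $a_s\subsetneq a_{s'}$, and $\tau(T_i)=\varnothing$ forces these windows to march off both to $-\infty$ and to $+\infty$ (otherwise an extreme half-space occurring in the projections would be terminal there).

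To conclude I would extract the infinite descending strongly separated chain required by Proposition \ref{Rank1Char}. Choosing windows $[s_1,s_1'],[s_2,s_2'],\dots$ with $s_{m+1}'<s_m$ (possible as the windows reach $-\infty$), the half-spaces $a_{s_m}$ descend strictly in $U_{\xi_-^i}$, and the consecutive pair $(a_{s_{m+1}},a_{s_m})$ is strongly separated because it is obtained from the strongly separated pair $(a_{s_{m+1}},a_{s_{m+1}'})$ by enlarging the larger half-space, since $a_{s_{m+1}}\subsetneq a_{s_{m+1}'}\subsetneq a_{s_m}$. Here one uses the elementary fact that a half-space transverse to both ends of a nested triple $a\subseteq b\subseteq c$ is transverse to the middle one; this makes strong separation of a nested pair stable under shrinking the smaller side and enlarging the larger side, and it also promotes "consecutively strongly separated'' to "pairwise strongly separated'' along a nested chain. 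Thus $U_{\xi_-^i}$ carries an infinite descending chain of pairwise strongly separated half-spaces, so $\xi_-^i\in\partial_{\mathrm{r}}X_i$ by Proposition \ref{Rank1Char}; the symmetric argument with $(a_t^*)$ and windows reaching $+\infty$ gives $\xi_+^i\in\partial_{\mathrm{r}}X_i$. Intersecting the resulting co-null sets over $i=1,\dots,n$ yields $\lambda_-\otimes\lambda_+(\partial_{\mathrm{r}}X^2)=1$.

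The step I expect to be the main obstacle is exactly this last extraction: strongly separated pairs coming from different $\delta$-bounded windows need not be strongly separated to one another, so they cannot be concatenated directly into one long chain. The resolution rests on the two geometric observations above — that inside the interval of a factor a strongly separated chain is confined to a single Euclidean direction, forcing that interval to be a line, and that along such a line strong separation of a nested pair only improves under enlargement — together with the no-terminal-element output of Corollary \ref{FiniteTermElemntsN-Tuples}, which provides windows at arbitrarily deep positions on both ends so that consecutive enlargements can be linked.
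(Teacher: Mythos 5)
Your treatment of $\Delta$ is fine (nonemptiness of $[\xi_+,\xi_-]\times[\xi_-,\xi_+]\cap\S_i$ forces a separating half-space in $\frakH_i$, hence $\xi_-^i\neq\xi_+^i$, on a co-null set), and your closing ``middle-of-a-nested-triple'' observation is sound. But the argument for $\lambda_-\otimes\lambda_+(\partial_{\mathrm{r}}X^2)=1$ has a genuine gap at the linearization step. The claim that a nested strongly separated pair $h\subsetneq k^*$ among the half-spaces separating $\xi_-^i$ from $\xi_+^i$ forces $\I(\xi_-^i,\xi_+^i)$ to have a single coordinate direction is false, and its justification misreads Theorem \ref{th:Euclidean equivalences}: the embedding into $\~{\Z}^D$ comes from a Dilworth partition of the separating half-spaces into at most $D$ chains, which is not a product decomposition, so a half-space lying in a ``different direction'' need not be transverse to both $h$ and $k$. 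Strong separation of $(h,k)$ only forbids a half-space transverse to \emph{both}; a separating half-space $\ell$ with $\xi_-^i\in\ell$ can be transverse to $h$ while disjoint from $k$ (the case $\ell\subset k^*$), which is perfectly compatible with $(h,k)\in\S_i$. Concretely, in an irreducible, essential, nonelementary two-dimensional (hyperbolic-like) square complex an interval between two regular boundary points can cross a square, so $\frakH_i(\xi_-^i,\xi_+^i)$ contains transverse pairs even though it also contains strongly separated pairs; such $(\xi_-,\xi_+)$ are not exceptional, so they cannot be discarded as a null set. Consequently there is no bi-infinite chain $(a_t)_{t\in\Z}$ exhausting $\frakH_i(\xi_-^i,\xi_+^i)$, no well-defined windows $[s,s']\subset\Z$, and the containments $a_{s_{m+1}}\subsetneq a_{s_{m+1}'}\subsetneq a_{s_m}$ on which your concatenation step rests are not established. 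Since your final paragraph routes the acknowledged main difficulty (pairs from different windows need not be strongly separated to one another) precisely through this false linearity claim, the proof does not go through as written.

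For comparison, the paper makes no claim about the shape of the interval: it applies Lemma \ref{This good enough} and Corollary \ref{FiniteTermElemntsN-Tuples} with chains of \emph{three} pairwise strongly separated half-spaces inside a $\delta$-window of size $R$ (the sets $\S_i(R)=(\S_i)^{(3)}_R$), produces from the infinitude and absence of terminal elements a bi-infinite descending chain $a_m$ in $[\xi_+,\xi_-]$ each member of which carries companions $a_m\subsetneq b_m\subsetneq c_m$ pairwise strongly separated with $\delta(a_m,c_m^*)\leq R$, and then shows $s_m:=a_{mR}$ are pairwise strongly separated: the comparison $\delta(a_m,a_{m-R}^*)\geq R+1$ versus $\delta(a_m,c_m^*)\leq R$ forces $a_{m-R}\not\subseteq c_m$ and hence $b_m\subset a_{m-R}$, after which any half-space transverse to both $a_m$ and $a_{m-R}$ would be transverse to the middle term $b_m$, contradicting strong separation of $(a_m,b_m)$. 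In other words, the nesting you tried to extract from a (nonexistent) global linear order is instead obtained in the paper from the $\delta$-bounds together with the companion half-spaces; if you replace your window bookkeeping by that mechanism, your remaining steps (extraction of the chain, Proposition \ref{Rank1Char}, intersection over the factors) are essentially the paper's.
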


\begin{proof}
Let $X= X_1\times \cdots \times X_n$  be the irreducible factor decomposition of $X$ and let $\G_0$ be the finite index subgroup of $\G$ which preserves each factor. Then, applying Lemma \ref{This good enough} and Corollary \ref{FiniteTermElemntsN-Tuples} to the  $\G_0$ action, and setting $\S_i({R}) = (\S_i)^{(3)}_{R}$  we deduce that, as maps  $\partial X\times \partial X \to \N\cup\{\8\}$, the essential value of
$$(\xi_-, \xi_+)\mapsto \#\([\xi_+, \xi_-]\times [\xi_-, \xi_+]\cap \S_i(R)\)$$
and 
$$(\xi_-, \xi_+)\mapsto \#\(\tau([\xi_+, \xi_-]\times [\xi_-, \xi_+]\cap \S_i(R))\)$$
is $\8$ and $0$ respectively. 

We claim that if $R>0$ then $\partial_{\mathrm{r}} X^2$ 
 contains the intersection of 
$$
\Cap{i=1}{n}\{(\xi_-, \xi_+)\in \partial X\times \partial X:\#\( [\xi_+, \xi_-]\times [\xi_-, \xi_+] \cap \S_i(R)\) = \8 \}
$$
with 
$$
\Cap{i=1}{n}\{(\xi_-, \xi_+)\in \partial X\times \partial X:\#\(\tau( [\xi_+, \xi_-]\times [\xi_-, \xi_+] \cap \S_i(R))\) = 0 \},
$$
which for $R$ sufficiently large, has  full $\lambda_-\otimes \lambda_+$-measure.

Suppose that $(\xi_-, \xi_+)$ 
is such that for some $R>0$ and for each $i= 1, \dots, n$,
$$\#\( [\xi_+, \xi_-]\times [\xi_-, \xi_+] \cap \S_i(R)\) = \8,$$
and
$$\#\(\tau( [\xi_+, \xi_-]^N\times [\xi_-, \xi_+] \cap \S_i(R)\)) = 0.$$
From these hypotheses, we will now construct a bi-infinite descending chain $\{s_{m}:m\in \Z\}\subset [\xi_+, \xi_-]\cap \frakH_i$ which, as elements of $\frakH_i$ are strongly separated. Proposition \ref{Rank1Char} and the definition of regular points (in the reducible case) complete the proof that $\lambda_-\otimes \lambda_+(\partial_{\mathrm{r}}X^2)= 1.$

Fix $i$. Suppose that $a_{m} \subsetneq a_{m-1}$, for $m\in \Z$ is a bi-infinite chain in $[\xi_+, \xi_-]$ such that for each $m$ there exists $b_m, c_m\in [\xi_+, \xi_-]$ such that $a_m \subsetneq b_m \subsetneq c_m$ are pairwise strongly separated in $\frakH_i$ with $\delta(a_m, c_m^*) \leq R$. We claim that $s_m:= a_{mR}$ is a bi-infinite descending chain, pairwise strongly separated in $\frakH_i$.

Fix $m$. Observe that $\delta(a_m, a_{m-R}^*)\geq R+1$. Since $a_m\subset c_m$ and $\delta(a_m, c_m^*) \leq R$ it must be that  $a_{m-R} \not\subseteq c_{m}$ i.e. $c_{m}^* \cap a_{m-R}\neq \varnothing$. Also, as  $a_{m-R} \cap c_m$ and $a_{m-R}^* \cap c_m^*$ contain $\xi_-$ and $\xi_+$ respectively we deduce   that either  $ c_{m} \subset a_{m-R}$ or $c_m \pitchfork a_{m-R}$. Either way, $b_m\subset a_{m-R}$ and hence $a_m$ and $a_{m-R}$ are strongly separated.

We now show that  $\lambda_-\otimes \lambda_+(\Delta) = 0$. Indeed, by  Corollary \ref{distinct}, it follows that $\Delta$ is contained in the union of the measure 0 set
$$\Cup{i= 1}{n}\{(\xi_-, \xi_+)\in \partial X\times \partial X:\#\( [\xi_+, \xi_-]\times [\xi_-, \xi_+] \cap \S_i(R)\) = 0\}.$$
\end{proof}

\subsection{Bridge Points}\label{Section Bridge Points}

Recall that as in Lemma \ref{ss Points are median}, associated to a strongly separated pair $h\subset k^*$ there is a combinatorial bridge $B(h, k)$ with the property that if $p \in B(h,k)$ then $p = m(x,p,y)$ for every $(x,y) \in h\times k$. 

\begin{definition}
Assume that $X = X_1 \times \dots \times X_n$ is the irreducible decomposition of $X$ into irreducible factors, corresponding to the decomposition $\frakH = \frakH_1\sqcup \cdots \sqcup \frakH_n$.  An element $x\in X$ is called a \emph{bridge point} if for each $i$, there exists a pair of disjoint half-spaces $h_i, k_i$, strongly separated in $\frakH_i$ such that $x \in B(h_1, k_1)\times \cdots \times B(h_n, k_n)$. 
\end{definition}

We note that the property of being a bridge point is $\G$-invariant.

\begin{lemma}\label{StripNonempty}
Assume $\G\to \Aut(X)$ is a nonelementary and essential action,  $\lambda_\pm$ quasi-$\G$-invariant measures on $\partial X$ such that $(\partial X^2, \lambda_-\otimes \lambda_+)$ is isometrically ergodic. There are disjoint half-spaces $h_i,k_i$ strongly separated in $\frakH_i$ such that for every bridge point $x\in B(h_1,k_1) \times \cdots \times B(h_n,k_n)$ and  $\lambda_-\otimes \lambda_+$-a.e. $(\xi_-, \xi_+) \in \partial _MX^2$ the map $(\xi_-, \xi_+) \mapsto \#(\I(\xi_-, \xi_+) \cap \G\cdot x)$ is  infinite.
\end{lemma}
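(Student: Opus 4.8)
The plan is to combine the ingredients already established: the infinite-essential-value results for strongly separated tuples (Lemmas \ref{InfiniteEssvalueOnS}, \ref{This good enough}), the full-measure regularity of $\lambda_-\otimes\lambda_+$ on $\partial_{\mathrm r}X^2$ together with vanishing on the fat diagonal (Theorem \ref{minimalRankFullMeas}), and the median-characterization of bridges (Lemma \ref{ss Points are median}) which identifies $B(h_i,k_i)$ as a set of points fixed by the median with any pair in $h_i\times k_i$. The essential idea is: for a generic pair $(\xi_-,\xi_+)$ the interval $\I(\xi_-,\xi_+)$ contains an infinite strongly separated descending chain in each factor $\frakH_i$ (this is exactly what Lemma \ref{This good enough} gives, upgraded as in the proof of Theorem \ref{minimalRankFullMeas}), and one can translate a fixed bridge point $x$ into $\I(\xi_-,\xi_+)$ by a group element $\g$ that skewers the chain appropriately; producing infinitely many such $\g$ with distinct images $\g x$ is then a pigeonhole/compactness argument on the infinitely many strongly separated half-spaces available.

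First I would fix the strongly separated disjoint pairs $h_i\subsetneq k_i^*$ in each $\frakH_i$ guaranteed by Theorem \ref{th:ss} (these exist since the $\G_0$-action on each irreducible factor is essential and nonelementary, by Remark \ref{Finite Index Ess} and Lemma \ref{lem:boundary of factors}), and take $x\in B(h_1,k_1)\times\cdots\times B(h_n,k_n)$ an arbitrary bridge point. Next I would invoke Theorem \ref{minimalRankFullMeas} and Lemma \ref{This good enough}: for $R$ sufficiently large and $\lambda_-\otimes\lambda_+$-a.e.\ $(\xi_-,\xi_+)$, the set $[\xi_+,\xi_-]\times[\xi_-,\xi_+]\cap\S_i(R)$ is infinite with no terminal elements, which (as in the construction inside the proof of Theorem \ref{minimalRankFullMeas}) yields a bi-infinite descending chain $\{s^{(i)}_m:m\in\Z\}\subset[\xi_+,\xi_-]\cap\frakH_i$ of pairwise strongly separated half-spaces, for each $i$. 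The product over $i$ of these chains gives, by the Double Skewering Lemma \ref{Double Skewering Lemma} applied in the essential core, group elements $\g$ with $\g h_i\subsetneq s^{(i)}_m\subsetneq s^{(i)}_{m'}\subsetneq \g k_i^*$ for suitable $m<m'$; then $\g x$ lands in $B(\g h_1,\g k_1)\times\cdots$ which, because $s^{(i)}_{m},s^{(i)}_{m'}\in\frakH(\xi_-,\xi_+)$, forces $\g x\in\I(\xi_-,\xi_+)$ via the median characterization of Lemma \ref{ss Points are median} (the median of $\xi_-,\g x,\xi_+$ equals $\g x$, so $U_{\xi_-}\cap U_{\xi_+}\subset U_{\g x}$, i.e.\ $\g x\in\I(\xi_-,\xi_+)$).

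To get \emph{infinitely many} distinct orbit points in the interval, I would exploit that the chain $\{s^{(i)}_m\}$ is infinite in both directions: by double-skewering repeatedly one produces a sequence $\g_k\in\G$ with $\g_k h_i$ nested deeper and deeper inside the chain, and the points $\g_k x$ are pairwise distinct because they are separated by half-spaces $s^{(i)}_m$ for arbitrarily large $|m|$ — distinct levels of the chain separate them, so they cannot coincide. Finally, $\G$-invariance of the full-measure set and the fact that all choices were made $\G$-equivariantly (or at worst for a finite-index $\G_0$, which still acts ergodically on $\partial X^2$ by Theorem \ref{FPBoundaryisBoundary}) give the claim for $\lambda_-\otimes\lambda_+$-a.e.\ $(\xi_-,\xi_+)$.

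\textbf{The main obstacle} I anticipate is the bookkeeping needed to simultaneously handle all $n$ irreducible factors: the chains $\{s^{(i)}_m\}$ live in different factors, and one needs a single group element $\g$ (or sequence $\g_k$) that skewers all of them into the desired position at once. Since $\G_0$ maps into $\Aut(X_1)\times\cdots\times\Aut(X_n)$ only up to finite index and the factors need not admit a common skewering element, the careful point is to either work factor-by-factor using independence of the essential values (each obtained from a separate application of Lemma \ref{InfiniteEssvalueOnS} to $\S_i(R)\cap\frakH_i^{N+1}$) and then assemble, or to pass to the finite-index subgroup preserving the decomposition and use that ergodicity persists there. Verifying that the resulting $\g_k x$ are genuinely distinct bridge points lying in $\I(\xi_-,\xi_+)$ — rather than, say, all collapsing to a single vertex — is where the strong-separation hypothesis does its essential work, via Corollary \ref{distinct} and Lemma \ref{ss Points are median}.
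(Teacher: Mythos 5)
There is a genuine gap at the core of your construction. Your plan is to move a fixed bridge point into the generic interval by finding $\g\in\G$ with $\g h_i\subsetneq s^{(i)}_m\subsetneq s^{(i)}_{m'}\subsetneq \g k_i^*$, but the Double Skewering Lemma \ref{Double Skewering Lemma} does not produce such elements: it skewers a \emph{given nested pair} ($h\subsetneq k$ yields $\g$ with $\g k\subsetneq h\subsetneq k$), whereas here the pair $(h_i,k_i)$ bears no a priori relation (nesting, transversality, or otherwise) to the chain elements $s^{(i)}_m$ living in $\I(\xi_-,\xi_+)$, and no tool in the paper positions an arbitrary half-space pair relative to a prescribed chain. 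Moreover, even granting such a $\g$, your bracketing is in the wrong direction for Lemma \ref{ss Points are median}: that lemma needs $\xi_-\in\g h_i$ and $\xi_+\in\g k_i$, but from $\g h_i\subsetneq s^{(i)}_m$ you only get $\xi_-\in s^{(i)}_m$, which says nothing about membership in the smaller set $\g h_i$ (similarly for $\xi_+$ and $\g k_i$); what you would actually need is the reverse sandwich $s^{(i)}_m\subset\g h_i$ and $\g k_i^*\subset s^{(i)}_{m'}$, and producing that is exactly the unjustified step. Finally, you attempt the conclusion for \emph{arbitrary} strongly separated pairs from Theorem \ref{th:ss}, while the lemma only asserts existence of suitable pairs --- a sign that the quantifier is doing work you are not using.

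The paper's proof sidesteps the entire construction. Since $\G\cdot x$ is countable and the map $(\xi_-,\xi_+)\mapsto\#\(\I(\xi_-,\xi_+)\cap\G\cdot x\)$ is $\G$-invariant, isometric ergodicity together with Corollary \ref{cor:EquivToCountable} and Lemma \ref{lem:NoFiniteColInvHalf} forces the essential value to be $0$ or $\8$; so it suffices to show the count is nonzero on a set of positive measure. By Proposition \ref{Rank1Char} and Corollary \ref{distinct}, the full-measure set $\partial_{\mathrm{r}}X^2\setminus\Delta$ (Theorem \ref{minimalRankFullMeas}) is covered by the sets $(h_1\times\cdots\times h_n)\times(k_1\times\cdots\times k_n)$ with $(h_i,k_i)\in\S_i$, so \emph{some} such product has positive measure --- this is where the pairs $(h_i,k_i)$ of the statement are chosen --- and for any bridge point $x$ of these pairs, Lemma \ref{ss Points are median} applied with $\g$ the identity already gives $x\in\I(\xi_-,\xi_+)$ for every $(\xi_-,\xi_+)$ in that product. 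If you want to salvage your approach, the missing ingredient is precisely this zero--infinity dichotomy; with it, one witness point on a positive-measure set suffices and no skewering of the bridge pair into the chain is needed.
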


Before proceeding with the proof, we note the straightforward but important fact that an interval in a product is just the product of the corresponding intervals (see Remark \ref{Remark product intervals is interval in product}).

\begin{proof}[Proof of Lemma \ref{StripNonempty}]

For every $x\in X$, the map $(\xi_-, \xi_+) \mapsto \#(\I(\xi_-, \xi_+) \cap \G\cdot x)$ is $\G$-invariant and hence essentially constant by  ergodicity. Furthermore,  $\G\cdot x \subset X$  is countable, and so the essential value must be 0 or $\8$ by Corollary \ref{cor:EquivToCountable}. 

Now, by Proposition \ref{Rank1Char}, Corollary \ref{distinct}, it follows that 
$$\partial_{\mathrm{r}}X^2 \setminus \Delta \subset \underset{i=1, \dots, n}{\underset{(h_i, k_i) \in \S_i}{\bigcup} }(h_1\times \cdots \times h_n) \times (k_1\times \cdots \times k_n).$$ 
By Theorem \ref{minimalRankFullMeas} this union has full measure and hence one of the sets must have positive measure. 
Fix $(h_1\times \cdots \times h_n) \times (k_1\times \cdots \times k_n)$ of positive measure   and note that it is chosen precisely to have a positive measure intersection with $\partial_{\mathrm{r}}X^2\setminus\Delta$. Let $x\in (h_1^*\cap k_1^*)\times \cdots \times (h_n^*\cap k_n^*)$ be a bridge point. Then, by Lemma \ref{ss Points are median}, the map $(\xi_-, \xi_+) \mapsto \#\(\I(\xi_-, \xi_+) \cap \G\cdot x\)$  takes non-zero values on this positive measure set and hence the map has infinite essential value.

\end{proof}

\begin{cor}\label{cor:StripsNonempty}
With the hypotheses and notation as in Lemma \ref{StripNonempty},  the strip $S: \partial X\times \partial X \to 2^\G$  given by $S(\xi_-, \xi_+) = \{\g \in \G: \g x \in \I(\xi_-, \xi_+)\}$ is infinite for $\lambda_-\otimes\lambda_+$-a.e. $(\xi_-, \xi_+)\in \partial X\times \partial X$ whenever $x\in B(h_1,k_1) \times \cdots \times B(h_n,k_n)$.
\end{cor}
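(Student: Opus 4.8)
The plan is to read off Corollary~\ref{cor:StripsNonempty} directly from Lemma~\ref{StripNonempty}, the only new ingredient being the passage from counting points of the orbit $\G\cdot x$ inside an interval to counting the group elements that carry $x$ into that interval. First I would fix a bridge point $x\in B(h_1,k_1)\times\cdots\times B(h_n,k_n)$, with the strongly separated pairs $(h_i,k_i)\in\S_i$ chosen as in Lemma~\ref{StripNonempty}, and record the elementary set-theoretic identity
$$\{\,\g x:\g\in S(\xi_-,\xi_+)\,\}=\I(\xi_-,\xi_+)\cap\G\cdot x,$$
valid for \emph{every} pair $(\xi_-,\xi_+)\in\partial X\times\partial X$: if $\g\in S(\xi_-,\xi_+)$ then $\g x$ lies in $\I(\xi_-,\xi_+)$ by definition and obviously in $\G\cdot x$; conversely any point of $\G\cdot x$ lying in $\I(\xi_-,\xi_+)$ is of the form $\g x$ for some $\g\in\G$, and that $\g$ then belongs to $S(\xi_-,\xi_+)$. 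Hence the evaluation map $\g\mapsto\g x$ is a surjection $S(\xi_-,\xi_+)\onto\I(\xi_-,\xi_+)\cap\G\cdot x$.

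Next I would invoke Lemma~\ref{StripNonempty} for this same $x$: for $\lambda_-\otimes\lambda_+$-a.e.\ $(\xi_-,\xi_+)$ the set $\I(\xi_-,\xi_+)\cap\G\cdot x$ is infinite. Since a surjection onto an infinite set has infinite domain, $S(\xi_-,\xi_+)$ is infinite on the same conull set, which is exactly the assertion of the corollary. In fact the fibre of $\g\mapsto\g x$ over an orbit point is a left coset of $\stab_\G(x)$, so $\#S(\xi_-,\xi_+)=\#\stab_\G(x)\cdot\#\bigl(\I(\xi_-,\xi_+)\cap\G\cdot x\bigr)$; but only the lower bound $\#S(\xi_-,\xi_+)\geq\#\bigl(\I(\xi_-,\xi_+)\cap\G\cdot x\bigr)$ is needed, and it requires no hypothesis on stabilizers. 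The measurability of $(\xi_-,\xi_+)\mapsto S(\xi_-,\xi_+)\subset 2^\G$ is inherited, just as for the counting function in Lemma~\ref{StripNonempty}, from the $\G$-invariance of the countable orbit $\G\cdot x$ together with the Borel structure on the space of intervals from Lemma~\ref{Lem Intervals Borel}.

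I do not expect any genuine obstacle here; the corollary is a bookkeeping restatement of Lemma~\ref{StripNonempty} that merely trades orbit points for group elements. The one point to keep in mind is that the conull set of ``good'' pairs $(\xi_-,\xi_+)$ depends a priori on the chosen bridge point $x$, but this matches the way the statement is quantified (``whenever $x\in B(h_1,k_1)\times\cdots\times B(h_n,k_n)$''), so nothing further is required.
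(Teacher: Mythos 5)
Your proposal is correct and matches the paper's (implicit) reasoning: the corollary is stated there without proof precisely because, as you observe, $\g\mapsto\g x$ surjects $S(\xi_-,\xi_+)$ onto $\I(\xi_-,\xi_+)\cap\G\cdot x$, so infiniteness is immediate from Lemma \ref{StripNonempty} on the same conull set. Your extra remarks on stabilizer fibres and measurability are accurate but not needed.
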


\section{Maximality}

\subsection{The Kaimanovich Strip Condition}\label{Kaimanovich Strip}

\begin{definition}
 A pseudo-norm on a group $\G$ with identity $1_\G$ is a map $|\cdot| : \G \to \R_{\geq 0}$ such that for all $\g, \g' \in \G$:
\begin{description}
\item[(normalized)] $|1_\G| = 0$;
\item[(symmetric)] $|\g^{-1}|= |\g|$;
\item[(subadditive)] $|\g\g'|\leq |\g|+|\g'|$.
\end{description}
A pseudonorm satisfying the property that $|\g|\to \8$ as $\g\to \8$  is said to be \emph{proper}. Furthermore, if $|\g| = 0$ implies that $\g=1_\G$ then it is called a norm.
\end{definition}

Suppose that $\G$ acts by isometries on $(X,d)$. Fix a base point $ \o\in X$. This allows us to consider the associated pseudonorm $ |\g|_\o = d(\g \o,  \o)$ on $\G$ which in turn yields the following nested increasing subsets which exhaust $\G$: 
$$\Ga_k =  \Ga_k (\o)= \{ \g \in \G : d(\g \o,  \o) \leq k\}.$$

\begin{definition}
For a fixed pseudo-norm $|\cdot|: \G \to \R$, a probability measure $\mu$ on $\G$  is said to have 
\begin{itemize}
\item \emph{finite first logarithmic moment} (with respect to $|\cdot|$) if 
$$\Sum{\g\in \G}{}\mu(\g)\log|\g|<\8;$$
\item \emph{finite entropy} if $H(\mu):= -\Sum{\g\in \G}{}\mu(\g)\log\mu(\g)<\8$.
\end{itemize}
\end{definition}

We would like to know that under reasonable conditions, the random walk, when translated to an orbit on $X$ is \emph{transient}.\footnote{ Recall that a random walk on a discrete countable group is said to be \emph{transient} if for $\mathbf P'$-a.e. $\omega' \in \Omega'$ we have $\omega_n \to \8$ in $\G$.} 

\begin{lemma}\label{Transient}
Let $\mu$ be a generating probability measure on the nonamenable group $\G$. 
 Then, for any proper pseudonorm $|\cdot|: \G\to \R_+$ and $\mathbf P'$-a.e. $\omega' \in \Omega'$ we have that $|\omega_n|\to \8$. 
\end{lemma}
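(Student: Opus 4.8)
The plan is to show that the random walk on $\G$ is transient, i.e. that $|\omega_n|_o \to \infty$ for $\mathbf{P}'$-a.e.\ sample path, whenever $|\cdot|$ is a proper pseudonorm and $\G$ is nonamenable. First I would reduce the claim to showing that for every $k$, the event that $\omega_n$ returns to the ball $\Ga_k = \{\g : |\g| \le k\}$ infinitely often has $\mathbf{P}'$-measure zero; since the balls $\Ga_k$ exhaust $\G$ and are nested, this gives $|\omega_n| \to \infty$ on a conull set. So fix $k$, and let $R$ be the event $\{\omega' \in \Omega' : \omega_n \in \Ga_k \text{ for infinitely many } n\}$. Note $R$ is $S'$-invariant (shifting the path forward does not change whether it returns to $\Ga_k$ infinitely often), and hence is a measurable subset of the Furstenberg--Poisson boundary picture; but more to the point, it is a tail event, and I would argue it has measure $0$ or $1$ by a Hewitt--Savage / $0$-$1$ law for the increments $\mu^{\mathbb{N}}$, or alternatively by ergodicity of $S'$ with respect to the stationary measure.

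The heart of the argument is to rule out the value $1$. Suppose for contradiction that $\mathbf{P}'(R) = 1$. Then, by stationarity and the Borel--Cantelli-type reasoning, the expected number of returns $\sum_{n\ge 0} \mathbf{P}'(\omega_n \in \Ga_k)$ would have to diverge in a way controlled by the Green's function: writing $G(\g) = \sum_{n\ge 0} \mu^{*n}(\g)$ for the Green's function of the walk, the expected number of visits of $(\omega_n)$ to $\Ga_k$ starting from $e$ is $\sum_{\g \in \Ga_k} G(\g)$, which is finite precisely when the walk is transient, so I must instead derive transience directly from nonamenability. The cleanest route is Kesten's criterion: a symmetrized version of $\mu$ (or $\mu$ itself if one first reduces to a symmetric generating measure, which one may do since finiteness of the relevant moment is preserved) on a nonamenable group has spectral radius $\rho(\mu) < 1$, and therefore $\mu^{*n}(e) \le C\rho^n$ decays exponentially, giving $\sum_n \mu^{*n}(\g) < \infty$ for each $\g$, hence $\sum_{\g \in \Ga_k} G(\g) < \infty$. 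By the converse direction of Borel--Cantelli (the walk, reversed, spends only finitely much time in each finite set in expectation, hence a.s.), the walk visits $\Ga_k$ only finitely often almost surely, so $\mathbf{P}'(R) = 0$. This contradicts $\mathbf{P}'(R) = 1$, and since $R$ has measure $0$ or $1$, we conclude $\mathbf{P}'(R) = 0$.

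A slightly more self-contained variant, which avoids quoting Kesten and is perhaps closer in spirit to what the paper wants, is to use the entropy/Avez machinery implicitly available: nonamenability of $\G$ forces the random walk to have nonzero rate of escape with respect to \emph{any} proper pseudonorm, because a subexponentially-escaping walk would concentrate on slowly-growing sets and produce an invariant mean. Concretely, if $|\omega_n|$ did not tend to infinity on a positive-measure set, then by the $0$-$1$ law it fails on a conull set, so the walk returns to some fixed ball $\Ga_k$ infinitely often a.s.; one then builds an invariant mean on $\ell^\infty(\G)$ by averaging the occupation measures of the walk along a subsequence of return times, using that $\mu$ is generating to get translation-invariance in the limit, contradicting nonamenability. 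Either way, the main obstacle is the same: producing the transience/escape statement from nonamenability alone with no assumption on $|\cdot|$ beyond properness, and the key input is that nonamenability is equivalent to exponential decay of return probabilities (Kesten) or, dually, to the nonexistence of an invariant mean that could be manufactured from a recurrent walk. I would lead with the Kesten-criterion route as the main line, since it is shortest and the cited literature (Kaimanovich's work referenced in Section~\ref{Kaimanovich Strip}) already uses it freely.
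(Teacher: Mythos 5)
Your argument is correct and is essentially the paper's: the paper disposes of the main point in one line by citing \cite[Theorem 2]{DerriennicGuivarc'h} for transience of any generating random walk on a nonamenable group, and then uses properness of the pseudonorm exactly as you do to convert escape from every finite subset of $\G$ into $|\omega_n|\to\infty$. Your Kesten/spectral-radius paragraph simply unpacks that citation (the clean statement to invoke is the non-symmetric, adapted version due to Day/Derriennic--Guivarc'h, since transience of a symmetrized walk does not by itself transfer back to the $\mu$-walk), and the $0$--$1$ law and proof-by-contradiction scaffolding are unnecessary once you have finite expected occupation of each finite ball.
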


\begin{proof}
The $\mu$-random walk is transient since $\G$ is nonamenable and $\mu$ is generating \cite[Theorem 2]{DerriennicGuivarc'h}. Since $|\cdot|: \G\to \R_+$ is a proper function, it follows that $|\g|\to \8$ precisely when $\g\to \8$ in $\G$. 
\end{proof}

We will also require the following whose proof is straightforward: 

\begin{lemma}\label{Cto1}
 Let $ \o \in X$ be a point so that the $\stab_\G(\o)$ is finite and set $C= \#\stab_\G(\o)$. If $S \subset X$ then 
 $$\#\{\g \in \G: \g \o \in S\} = C\cdot\#(S\cap \G\cdot \o).$$
\end{lemma}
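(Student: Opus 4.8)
Lemma \ref{Cto1} is an elementary counting statement, so the plan is short. First I would note that the map $\g \mapsto \g\o$ sends the set $A := \{\g \in \G : \g\o \in S\}$ onto $S \cap \G\cdot\o$, since an element $s \in S$ lies in the image of $A$ precisely when $s \in \G\cdot\o$; conversely every $\g \in A$ maps into $S \cap \G\cdot\o$ because $\g\o \in S$ and $\g\o \in \G\cdot\o$ trivially. Thus the orbit map restricts to a surjection $A \onto S \cap \G\cdot\o$.

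Next I would analyze the fibers of this surjection. For a point $s = \g_0\o \in S \cap \G\cdot\o$, the preimage in $A$ is $\{\g \in \G : \g\o = \g_0\o\} = \{\g \in \G : \g_0^{-1}\g \in \stab_\G(\o)\} = \g_0\stab_\G(\o)$, a left coset of the stabilizer. Every such fiber therefore has exactly $\#\stab_\G(\o) = C$ elements, and these fibers are finite precisely because $\stab_\G(\o)$ is assumed finite. Since $A$ is the disjoint union of the fibers over the $\#(S \cap \G\cdot\o)$ points of $S \cap \G\cdot\o$, we conclude
$$\#A = C \cdot \#(S \cap \G\cdot\o),$$
which is the claimed identity. (If $S \cap \G\cdot\o = \varnothing$ then $A = \varnothing$ and both sides are $0$; the argument also makes sense with the convention that one side is $+\infty$ exactly when the other is, so the finiteness hypothesis on $\stab_\G(\o)$ is what keeps the bookkeeping honest.)

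There is essentially no obstacle here: the only subtlety is making sure the fiber computation uses that $\G$ acts so that $\stab_\G(\o)$ really is a subgroup and that $\g\o = \g_0\o \iff \g \in \g_0\stab_\G(\o)$, which is immediate from the group action axioms. I would present the proof in the two steps above — surjectivity of the orbit map onto $S \cap \G\cdot\o$, then constancy of fiber size equal to $C$ — and invoke the partition of $A$ into these cosets to finish.
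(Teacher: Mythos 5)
Your argument is correct: the fibers of the orbit map over $S\cap\G\cdot\o$ are left cosets of $\stab_\G(\o)$, each of size $C$, and summing over fibers gives the identity. The paper itself omits the proof as ``straightforward,'' and your coset-counting argument is exactly the intended one.
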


And finally, we have our main tool for showing maximality:

\begin{theorem}\cite[Strip Condition]{Kaimanovich}\label{Strip Condition}
 Let $\mu$ be a probability measure with finite entropy $H(\mu)$ on a countable group $\G$, and let $(B_-, \lambda_-)$ and $(B_+, \lambda_+)$ be $\ch\mu$- and $\mu$-boundaries, respectively. If there exists a pseudo-norm $|\cdot|:\G \to \R$ and a measurable $\G$-equivariant map $S: B_-\times B_+ \to 2^\G$ such that for all $\g \in \G$ and $\lambda_-\otimes \lambda_+$-a.e. $(b_-,b_+) \in B_-\times B_+$ 
 $$\frac{1}{n} \log \#[S(b_-,b_+)\cdot \g \cap \mathcal{G}_{|\om_n'|}] \underset{n\to \8}{\longrightarrow} 0$$
 in measure $\mathbf{P}'$ on the space of sample paths $\~\om '\in \Om'$, then the boundary $(B_+, \lambda_+)$ is maximal. 
\end{theorem}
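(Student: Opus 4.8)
\medskip
\noindent\textbf{Proof proposal.} The plan is to run Kaimanovich's argument: first convert maximality into an asymptotic entropy equality, then use the strip $S$ as a ``thin'' substitute for the set of all possible positions of the walk at time $n$. For the reduction I would invoke the Kaimanovich--Vershik entropy criterion, which is where the hypothesis $H(\mu)<\8$ is genuinely needed: a $\mu$-boundary $(B_+,\lambda_+)$ is maximal if and only if the asymptotic conditional entropy of the walk given the boundary vanishes, i.e.\ $\tfrac1n H(\om_n'\mid B_+)\longrightarrow 0$ as $n\to\8$, all quantities computed on the path space, with $\om_n'$ the position of the walk at time $n$ (so that $H(\om_n')\leq nH(\mu)$ by subadditivity of entropy under convolution). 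It then suffices to prove this convergence.

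Next I would pass to the two-sided $\mu$-walk: extend the increments to a bi-infinite i.i.d.\ family, keeping the notation $\om_n'$ for the position at times $n\geq 0$; let $b_+\in B_+$ be the forward boundary point of $(\om_n')_{n\geq0}$ and $b_-\in B_-$ the boundary point of the reversed ($\ch\mu$-)walk formed from the non-positive increments, so that $(b_-,b_+)$ has law $\lambda_-\otimes\lambda_+$. The key elementary observation is that $b_-$ depends only on the non-positive increments, hence is independent of the pair $(\om_n',b_+)$, which depends only on the positive increments; therefore conditioning additionally on $B_-$ costs no entropy:
$$H(\om_n'\mid B_+)=H(\om_n'\mid B_-\vee B_+).$$

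The geometric heart is to show that, once \emph{both} endpoints are known, the position $\om_n'$ is pinned to a bounded translate of the strip. Using $\G$-equivariance of $S$ and applying $(\om_n')^{-1}$, one has $(\om_n')^{-1}S(b_-,b_+)=S\!\big((\om_n')^{-1}b_-,(\om_n')^{-1}b_+\big)$, the strip attached to the walk shifted by $n$ steps; since the shifted walk sits at the identity at time $0$, a bounded-distance argument (this is why the hypothesis is quantified over all $\g\in\G$) places $\om_n'$ in $S(b_-,b_+)\cdot\g$ for a suitable $\g$ of bounded size. As $\om_n'$ trivially lies in the $|\cdot|$-ball $\mathcal{G}_{|\om_n'|}$ of radius $|\om_n'|$, it lies in $S(b_-,b_+)\cdot\g\cap\mathcal{G}_{|\om_n'|}$, whence, up to the bounded corrections,
$$H(\om_n'\mid B_-\vee B_+)\ \leq\ \mathbf{E}\Big[\log\#\big(S(b_-,b_+)\cdot\g\cap\mathcal{G}_{|\om_n'|}\big)\Big].$$
Dividing by $n$ and using that the strip hypothesis gives $\tfrac1n\log\#\big(S(b_-,b_+)\cdot\g\cap\mathcal{G}_{|\om_n'|}\big)\to 0$ in measure $\mathbf{P}'$, together with the a priori bound $\tfrac1n H(\om_n')\leq H(\mu)$ and a uniform-integrability (reverse-Fatou) argument to move from convergence in measure to convergence of expectations, I would conclude $\tfrac1n H(\om_n'\mid B_-\vee B_+)\to 0$, hence $\tfrac1n H(\om_n'\mid B_+)\to 0$, hence $(B_+,\lambda_+)$ is maximal.

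I expect the two delicate points to be: the reduction to the entropy criterion, which packages the full Kaimanovich--Vershik theory and is where finite entropy is genuinely indispensable; and the assertion that $\om_n'$ stays within bounded distance of the strip, which forces one to set up the correct two-sided model, to carry the equivariance bookkeeping carefully, and then to control the passage from convergence in measure of $\tfrac1n\log\#(\cdots)$ to convergence of the conditional entropies.
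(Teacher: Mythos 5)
First, a point of comparison: the paper does not prove this statement at all --- it is Kaimanovich's Strip Criterion, quoted with the citation \cite{Kaimanovich} and used as a black box in the proof of Theorem \ref{maximal} --- so there is no in-paper argument to measure you against; what you have written is in effect a sketch of Kaimanovich's own proof. Your skeleton is the right one and does track his argument: the reduction to the conditional entropy criterion ($(B_+,\lambda_+)$ is maximal iff $\frac{1}{n}H(\om_n'\mid B_+)\to 0$, which is where $H(\mu)<\8$ enters), the passage to the bilateral walk, and the observation that $b_-$ is independent of $(\om_n',b_+)$, so that $H(\om_n'\mid B_+)=H(\om_n'\mid B_-\vee B_+)$, are all correct and are exactly the first steps of the original proof.

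The genuine gap is in what you call the geometric heart. There is no ``bounded-distance argument'' placing $\om_n'$ in $S(b_-,b_+)\cdot\g$ for a $\g$ of bounded size: nothing in the hypotheses prevents the walk at time $n$ from being arbitrarily far from the strip, and no deterministic geometric containment is available at this level of generality. The correct mechanism is distributional: since the shift on the bilateral path space preserves $\mathbf{P}$ and $S$ is equivariant, the set $(\om_n')^{-1}S(b_-,b_+)=S\bigl((\om_n')^{-1}b_-,(\om_n')^{-1}b_+\bigr)$ has the same law for every $n$; hence, \emph{provided the strips are a.e.\ nonempty}, for every $\delta>0$ there is a single finite set $F\subset\G$ with $\mathbf{P}\bigl[\om_n'\in S(b_-,b_+)\cdot F\bigr]\geq 1-\delta$ for all $n$ simultaneously. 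Nonemptiness is essential (it is implicit in Kaimanovich's formulation and is precisely what Corollary \ref{cor:StripsNonempty} supplies in this paper); without it the growth hypothesis is vacuously satisfied by $S\equiv\varnothing$ and the conclusion fails, and your argument never invokes it. Because the localization holds only with probability $1-\delta$, your inequality $H(\om_n'\mid B_-\vee B_+)\leq\mathbf{E}\bigl[\log\#\bigl(S(b_-,b_+)\cdot\g\cap\mathcal{G}_{|\om_n'|}\bigr)\bigr]$ and the appeal to reverse Fatou/uniform integrability do not work as stated --- uniform integrability of $\frac{1}{n}\log\#(\cdots)$ is not available. The standard repair is to split the conditional entropy along the event $\{\om_n'\in S(b_-,b_+)\cdot F \text{ and } \#(S(b_-,b_+)\g\cap\mathcal{G}_{|\om_n'|})\leq e^{\varepsilon n} \text{ for all } \g\in F\}$ (the quantifier over all $\g$ handles the finitely many elements of $F$) and to bound the complementary contribution by roughly $\delta\, nH(\mu)+O(1)$, which is the second place finite entropy is used. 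Finally, note that the hypothesis is stated for a.e.\ \emph{fixed} $(b_-,b_+)$ with only $|\om_n'|$ random, whereas in the entropy estimate $(b_-,b_+)$ are the walk's own boundary points, and $b_+$ is correlated with $|\om_n'|$; transferring the hypothesis to this coupled setting requires an additional conditional-measure argument that your sketch passes over silently.
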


\subsection{Proof of Maximality}
By Theorem \ref{BoundaryMap} if $\G\to \Aut(X)$ is a nonelementary and essential action on the finite dimensional CAT(0) cube complex $X$ then there exists probability measures $\lambda_\pm$ on the Roller boundary $\partial X$ so that $(\partial X, \lambda_-)$ and $(\partial X, \lambda_+)$ are $\ch\mu$ and $\mu$-boundaries, respectively.

\begin{theorem}\label{maximal}
Let  $\G$ be a countable discrete group. 
Assume $\G\to \Aut(X)$ is a nonelementary, essential, and proper action on the finite dimensional CAT(0) cube complex $X$. Let $\mu$ be a probability measure  on $\G$ with finite entropy $H(\mu)$. If there is a base point $\o\in X$ for which $\mu$ has  finite first logarithmic-moment $\Sum{\g\in \G}{}\mu(\g)\log|\g|_\o<\8$ then $\partial X$ admits probability measures $\lambda_-$ and $\lambda_+$ making it the Furstenberg-Poisson boundary for $\ch\mu$ and $\mu$, respectively.
\end{theorem}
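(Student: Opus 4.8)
The plan is to establish maximality by verifying the Kaimanovich Strip Condition, Theorem~\ref{Strip Condition}, for the boundary maps already at our disposal. First I would apply Corollary~\ref{PFBoundaryMap} to obtain quasi-$\G$-invariant probability measures $\lambda_\pm$ on $\partial X$ making $(\partial X,\lambda_-)$ a $(\G,\ch\mu)$-boundary and $(\partial X,\lambda_+)$ a $(\G,\mu)$-boundary. These are $\G$-equivariant quotients of the Furstenberg--Poisson boundaries for $\ch\mu$ and $\mu$, so $(\partial X\times\partial X,\lambda_-\otimes\lambda_+)$ is isometrically ergodic, being a quotient of the isometrically ergodic space $B_-\times B_+$ of Theorem~\ref{FPBoundaryisBoundary}; in particular the hypotheses of Lemma~\ref{StripNonempty} and Corollary~\ref{cor:StripsNonempty} hold. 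I then fix a bridge point $x\in X$ together with the associated measurable $\G$-equivariant strip
\[
S\colon\partial X\times\partial X\longrightarrow 2^{\G},\qquad S(\xi_-,\xi_+)=\{\g\in\G:\g x\in\I(\xi_-,\xi_+)\},
\]
whose fibres are infinite for $\lambda_-\otimes\lambda_+$-a.e.\ $(\xi_-,\xi_+)$ by Corollary~\ref{cor:StripsNonempty}. The pseudonorm will be $|\g|:=|\g|_\o=d(\g\o,\o)$, for which $\mu$ (and hence $\ch\mu$) has finite first logarithmic moment by hypothesis; properness of the action guarantees that every $\Ga_k=\Ga_k(\o)$ is finite and that $C:=\#\stab_\G(x)<\8$.

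The main step is a polynomial-in-$r$ upper bound for $\#\big[S(\xi_-,\xi_+)\cdot\g\cap\Ga_r\big]$. Suppose $\eta\in S(\xi_-,\xi_+)$ and $\eta\g\in\Ga_r$. Then $\eta x\in\I(\xi_-,\xi_+)$ and, by the triangle inequality,
\[
d(\eta x,\o)\ \le\ d(\eta x,\eta\g\o)+d(\eta\g\o,\o)\ \le\ d(x,\g\o)+r ,
\]
so $\eta x$ lies in the finite set $\I(\xi_-,\xi_+)\cap B\big(\o,\,r+d(x,\g\o)\big)$. By Theorem~\ref{embedintervals} the interval $\I(\xi_-,\xi_+)$ embeds isometrically into $\~\Z^{D}$, hence $\I(\xi_-,\xi_+)\cap X$ embeds into $\Z^{D}$ with the $\ell^1$-metric; centring an $\ell^1$-ball at a point of this intersection shows it contains at most $\big(4r+4d(x,\g\o)+1\big)^{D}$ points. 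Since $\eta\mapsto\eta\g$ is a bijection, Lemma~\ref{Cto1} applied at the point $x$ gives
\[
\#\big[S(\xi_-,\xi_+)\cdot\g\cap\Ga_r\big]\ \le\ C\,\big(4r+4d(x,\g\o)+1\big)^{D},
\]
a polynomial of degree $D$ in $r$ whose coefficients depend only on $D$, $\g$ and $x$.

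Plugging $r=|\om_n'|_\o$ into this estimate, the quantity controlled by the Strip Condition satisfies
\[
\tfrac1n\log\#\big[S(b_-,b_+)\cdot\g\cap\Ga_{|\om_n'|_\o}\big]\ \le\ \tfrac{D}{n}\log\big(4|\om_n'|_\o+4d(x,\g\o)+1\big)+\tfrac{\log C}{n},
\]
so it remains to show $\tfrac1n\log|\om_n'|_\o\to 0$ $\mathbf P'$-almost surely, hence in measure. The increments $\om_1,\om_2,\dots$ of the $\mu$-random walk are i.i.d.\ of law $\mu$, so subadditivity of $|\cdot|_\o$ gives $|\om_n'|_\o\le\sum_{k=1}^{n}|\om_k|_\o$; since $\Sum{\g\in\G}{}\mu(\g)\log^{+}|\g|_\o<\8$, a Borel--Cantelli argument yields $\tfrac1n\log^{+}|\om_n|_\o\to 0$ a.s., whence $\tfrac1n\max{k\le n}\log^{+}|\om_k|_\o\to 0$ a.s.\ and therefore $\tfrac1n\log|\om_n'|_\o\le\tfrac{\log n}{n}+\tfrac1n\max{k\le n}\log^{+}|\om_k|_\o\to 0$. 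Theorem~\ref{Strip Condition} then shows that $(\partial X,\lambda_+)$ is the Furstenberg--Poisson boundary for $\mu$; applying the same argument with $B_-$ and $B_+$ interchanged---that is, with $\mu$ replaced by $\ch\mu$, which likewise has finite entropy and finite first logarithmic moment with respect to $|\cdot|_\o$---shows $(\partial X,\lambda_-)$ is the Furstenberg--Poisson boundary for $\ch\mu$. I expect the only real difficulty to be organisational: the substantial inputs (the boundary maps of Theorem~\ref{BoundaryMap}, the infinitude of the strip from Corollary~\ref{cor:StripsNonempty}, and the $\Z^D$-embedding of intervals of Theorem~\ref{embedintervals}) are already established, and the remaining content is just translating ``polynomial growth of intervals'' into the sub-exponential estimate required by Theorem~\ref{Strip Condition}, which is precisely where the finite first logarithmic moment enters.
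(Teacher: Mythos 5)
Your proposal is correct and follows essentially the same route as the paper: boundary measures from Corollary \ref{PFBoundaryMap}, the bridge-point strip of Corollary \ref{cor:StripsNonempty}, a polynomial bound on $\#\big[S\cdot\g\cap\Ga_r\big]$ via the $\~\Z^D$-embedding of intervals (Theorem \ref{embedintervals}) together with Lemma \ref{Cto1}, and finite first logarithmic moment to force $\tfrac1n\log|\om_n'|_\o\to0$, feeding all of this into Theorem \ref{Strip Condition}; your direct triangle-inequality treatment of the translate by $\g$ replaces the paper's conjugation identity $S(b_-,b_+)\g_0=\g_0^{-1}S(\g_0b_-,\g_0b_+)\g_0$, and your Borel--Cantelli argument replaces the citation of Aaronson, but these are cosmetic variations. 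The one detail you omit is the lower bound: since the Strip Condition asks for convergence to $0$ in measure, you must rule out $\#\big[S(b_-,b_+)\cdot\g\cap\Ga_{|\om_n'|_\o}\big]=0$ (which gives $\log 0=-\8$) on a set of nonvanishing measure; the paper handles this by combining the infinitude of the strips with transience of the walk (Lemma \ref{Transient}, using nonamenability of $\G$), so that $|\om_n'|_\o\to\8$ and the intersection is eventually nonempty almost surely.
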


\begin{proof}

Assume $\G\to \Aut(X)$ is a nonelementary, essential, and proper action on the CAT(0) cube complex $X$. Fix a generating probability measure $\mu$ of finite entropy. By Corollary \ref{PFBoundaryMap} there exists $\lambda_\pm \in \P(\partial X)$ so that $(B_-, \lambda_-): = (\partial X, \lambda_-)$ and $(B_+, \lambda_+) := (\partial X, \lambda_+)$ are $(\G,\ch\mu)$ and $(\G,\mu)$ boundaries, respectively. Fix a base point $\o\in X$ for which $\mu$ has finite first logarithmic moment with respect to the pseudonorm induced by $\o$; it will be denoted by $|\cdot|$.

Also recall that Corollary \ref{cor:StripsNonempty} guarantees the existence of disjoint half-spaces $h_i,k_i$ strongly separated in $\frakH_i$ such that if $\o'\in B(h_1,k_1) \times \cdots \times B(h_n,k_n)$ is a bridge point then for $\lambda_-\otimes\lambda_+$-a.e. $(b_-, b_+)$   these associated strips are infinite:
$$S(b_-, b_+)= \{\g\in \G: \g \o' \in \I(b_-,b_+)\}.$$ 

Maximality of the boundary will follow from Kaimanovich's Strip Condition, Theorem \ref{Strip Condition} if we show that for every $\g_0 \in \G$, and $\lambda_-\otimes\lambda_+$-a.e. $(b_-, b_+)\in B_-\times B_+$ the following converges in measure $\mathbf{P}$ for $\~\om\in \Om$:
$$
 \frac{1}{n}\log\#[S(b_-,b_+)\cdot \g_0\cap \Ga_{|\om_1\cdots\om_{n}|}] \underset{n\to \8}{\longrightarrow}0.
 $$

 To this end, fix $\g_0 \in \G$ and a generic $(b_-, b_+) \in B_-\times B_+$ and note that:
\begin{eqnarray*}
S(b_-,b_+) \g_0 &=& \{\g\in \G: \g\g_0^{-1} \o' \in \I(b_-,b_+)\} \\
&=& \{\g\in \G: \g_0\g\g_0^{-1} \o' \in \I(\g_0 b_-,\g_0 b_+)\}\\
&=& \g_0^{-1}S(\g_0b_-,\g_0b_+)\g_0.
\end{eqnarray*}

Observe that by subadditivity and symmetry $|\g_0\g\g_0^{-1}| \leq |\g| + 2|\g_0|$, and so if $R>0$ then $\g_0 \Ga_R\g_0^{-1} \subset \Ga_{R+2|\g_0|}$. We deduce:
\begin{eqnarray*}
 \#\[S(b_-, b_+)\cdot \g_0\cap \Ga_R\] 
 &\leq &  \#\[S(\g_0b_-, \g_0b_+)\cap\Ga_{R+2|\g_0|
 }\].
\end{eqnarray*}

Next,  we would like to bound $\#[S(b_-,b_+)\cap \Ga_{R}]$ as a function of $R$ that is independent of the generic point $(b_-, b_+)$.

By comparing the volume in the $\ell^1$-metric in $\Z^D$ to the $\ell^\8$-metric we see that the cardinality of any ball of radius $R$ in Euclidean $D$-space is bounded above by $(2R+1)^D$.  Theorem \ref{th:Euclidean equivalences} guarantees the existence of an isometric embedding $\I(b_-,b_+) \hookrightarrow \~\R^D$ and so, it follows that a ball of radius $R$ in an interval also has cardinality bounded above by   $(2R+1)^D$. 

 Fix $x\in \I(b_-, b_+)\cap N_R(\o')$, where $N_R(\o')$ denotes the ball of radius $R$ in $X$ centered at $\o'$. Then applying the triangle inequality we deduce that   
 $$\I(b_-, b_+)\cap N_R(\o') \subset \I(b_-, b_+)\cap N_{2R}(x) $$
  and by the previous paragraph
$$\#\(\I(b_-, b_+)\cap N_R(\o') \) \leq \#\( \I(b_-, b_+)\cap N_{2R}(x) \) \leq (4R+1)^D.$$

We also have that $d(\g\o',\o') \leq 2d(\o,\o') + d(\g\o,\o)$ and so setting $\delta= 2d(\o,\o')$ and $C'= \#\stab_\G(\o')$ we apply Lemma \ref{Cto1} to deduce:
\begin{eqnarray*}
\#[S(b_-,b_+)\cap \Ga_{R}] &\leq &\# \{\g: d(\g\o',\o')\leq R+2d(\o,\o') \text{ and } \g\o'\in \I(b_-,b_+)\}\\
&=& C' \cdot \#\( \I(b_-,b_+)\cap N_{R+\delta}(\o')\cap \G\cdot \o'\)\\
&\leq & C'\cdot [4(R+\delta) + 1]^D.
\end{eqnarray*}

By Lemma \ref{Transient} and Corollary \ref{cor:StripsNonempty} we have that for $\mathbf{P}$-a.e. $\~\omega \in \Omega$, $\lambda_-\otimes\lambda_+$-a.e. $(b_-,b_+)$, and  $n$ sufficiently large the following is nonempty:  
$$S(b_-, b_+)\cdot \g_0\cap \Ga_{|\om_1\cdots\om_n|}.$$ 
And so:
\begin{eqnarray*}
0&\leq&\frac{1}{n} \log \#\(S(b_-, b_+)\cdot \g_0\cap \Ga_{|\om_1\cdots\om_n|}\) 
\\%
&\leq& \frac{1}{n} \log \#\(S(\g_0b_-,\g_0 b_+)\cap \Ga_{|\om_1\cdots\om_n| +2|\g_0|}\)
\\ %
&\leq &
\frac{1}{n} \log \(C'\cdot [4\( |\om_1\cdots\om_n| +2|\g_0|+\delta \) + 1]^D\).
\end{eqnarray*}
Therefore, the quantity on the first line converges in measure to 0 if the quantity on the last line converges  to 0 in measure, if and only if the following converges  in measure:
$$ \frac{1}{n} \log  |\om_1\cdots\om_n|\to 0. $$
To this end, observe that since the pseudonorm is subadditive, we have that  
$$ \frac{1}{n} \log  |\om_1\cdots\om_n| \leq  \frac{1}{n} \log \(\,\overset{n}{\Sum{k = 1}} |\om_k|\) .$$
The right hand side of this inequality converges in measure to 0 since $\mu$ has finite first logarithmic moment with respect to $|\cdot|$ (see Proposition 2.3.1 \cite{Aaronson}).

\end{proof}

\section{Proof of the Tits' Alternative}

We have gathered almost all the necessary tools for the proof of the Tits' Alternative. Here are a few more.

\begin{prop}\label{Roller-Elementary means invariant interval}
If $\G\to \Aut(X)$ is Roller-elementary, then there is $v,w\in \~X$ such that $\G\cdot \I(v,w) = \I(v,w)$. Furthermore, if $\G$ has an orbit whose cardinality is an odd integer, then $\G$ has a fixed point in $\~X$.
\end{prop}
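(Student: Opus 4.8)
The plan is to manufacture the $\G$-invariant interval from a $\G$-invariant probability measure on $\~X$, and to extract the fixed point from the degenerate case of the same construction.

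Since the action is Roller-elementary, fix a finite orbit $F\subset\~X$ and let $m\in\P(\~X)$ be the uniform probability measure on $F$; being supported on a single $\G$-orbit, $m$ is $\G$-invariant. By Lemma~\ref{Hmufacts} the maps $m\mapsto H_m$ and $m\mapsto H_m^+$ are $\Aut(X)$-equivariant, so $H_m$ and $H_m^+$ are $\G$-invariant; moreover $H_m$ is convex and involution invariant, $\~X(H_m)$ is an interval (Lemma~\ref{Hmufacts}(\ref{item:involution})), and $H_m^+$ is consistent and furnishes the lifting decomposition $\frakH(X)=H_m\sqcup(H_m^+\sqcup H_m^-)$ (Lemma~\ref{Hmufacts}(\ref{item:partition})). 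Applying the $\G$-equivariant clause of Proposition~\ref{LiftingDecomp} to the $\G$-invariant consistent set $H_m^+$ produces a $\G$-equivariant isometric embedding $\~X(H_m)\hookrightarrow\~X$ onto the $\G$-invariant subset $\~X_m:=\Cap{h\in H_m^+}{}h$, given on the level of $U$-collections by $U\mapsto U\sqcup H_m^+$.

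To exhibit $\~X_m$ as an interval of $\~X$, use that $\~X(H_m)$ is an interval to choose $v',w'\in\~X(H_m)$ with $\~X(H_m)=\I(v',w')$; then every half-space of $\~X(H_m)$, namely every element of $H_m$, separates $v'$ from $w'$, so, computed inside $H_m$, $U_{v'}\cap U_{w'}=\varnothing$. Letting $v,w\in\~X_m$ be the images of $v',w'$ under the embedding, we get $U_v\cap U_w=(U_{v'}\cap U_{w'})\sqcup H_m^+=H_m^+$, whence $\I(v,w)=\{p\in\~X:U_v\cap U_w\subset U_p\}=\Cap{h\in H_m^+}{}h=\~X_m$. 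As $\~X_m$ is $\G$-invariant, $\G\.\I(v,w)=\I(v,w)$; note that $\G$ may permute the ``corners'' $v,w$ of $\~X_m$ without fixing either one, but the set $\I(v,w)=\~X_m$ is preserved, which is all that is asserted.

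For the final statement, suppose a $\G$-orbit $F$ has $\#F$ an odd integer and again take $m$ uniform on $F$. For every $h\in\frakH(X)$ we have $m(h)=\#(F\cap h)/\#F\neq1/2$ since $\#F$ is odd, so $H_m=\varnothing$; then the lifting decomposition of Lemma~\ref{Hmufacts} reads $\frakH(X)=H_m^+\sqcup H_m^-$ with $H_m^+$ consistent, and $m(h)<1/2\iff m(h^*)>1/2$ gives $H_m^-=(H_m^+)^*$, so $H_m^+$ is total as well. By Roller duality there is a unique $v\in\~X$ with $U_v=H_m^+$, and the $\G$-invariance of $H_m^+$ yields $U_{\g v}=\g\.U_v=\g\.H_m^+=H_m^+=U_v$ for every $\g\in\G$, hence $\g v=v$; thus $v$ is a $\G$-fixed point of $\~X$. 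The one step demanding care is the identification $\~X_m=\I(v,w)$: its content is that an interval carries a pair of antipodal points separated by all of its half-spaces, which is immediate once one recalls that $h$ and $h^*$ are both non-empty for every half-space $h$; everything else is bookkeeping with the equivariant lifting decomposition.
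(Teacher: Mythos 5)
Your proposal is correct and follows essentially the same route as the paper: form the uniform ($\G$-invariant) measure $m$ on a finite orbit, use the balanced/heavy half-space decomposition of Lemma \ref{Hmufacts} and the lifting decomposition to get the $\G$-invariant interval $\~X_m$, and observe that an odd orbit forces $H_m=\varnothing$, so $H_m^+$ is total and consistent and $\~X_m$ is a fixed point. The only difference is that you spell out the identification $\~X_m=\I(v,w)$ explicitly, which the paper leaves implicit.
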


\begin{proof}
 Let $ o\subset \~X$ be a point whose $\G$-orbit is  finite. Let $m$ be the average of the Dirac masses on $\G\cdot o$. Then clearly $\G$ preserves the measure $m$ and therefore $\~X_m$. Furthermore, if $ o$ is an orbit whose cardinality is an odd integer, then $m(h) \neq 1/2$ for every $h\in \frakH$, and in particular, $H_m^+$ satisfies both consistency and totality meaning that $\~X_m$ is a single point. 
 \end{proof}
 
 From this we obtain the following version of a classical result of Adams and Ballman which states that for a locally compact Hadamard space if $\G$ is an amenable group acting by isometries then it either fixes a point in the visual boundary or preserves a flat \cite{AdamsBallmann}. This result has been generalized in many contexts such as \cite{CapraceLytchak}, \cite{CapraceMonod}.
 
\begin{lemma}\label{AdamsBallmann}
Any action of an amenable group on a CAT(0) cube complex is Roller-elementary. Furthermore, if there is an odd orbit then there is a fixed point in $\~X$, and otherwise there is an invariant interval. 
\end{lemma}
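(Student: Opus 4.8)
The plan is to manufacture a $\G$-invariant interval directly from amenability and then read off Roller-elementarity from the finiteness of its endpoints. First I would observe that $\~X$ is a compact space (it sits inside $2^{\frakH(X)}$, and $\frakH(X)$ is countable) on which $\G$ acts by homeomorphisms, since every automorphism of $X$ permutes $\frakH(X)$. By amenability of $\G$ there is then a $\G$-invariant probability measure $m\in\P(\~X)$.

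Next I would feed $m$ into the balanced-half-space machinery of Lemma \ref{Hmufacts}. Since $m$ is honestly $\G$-invariant (not merely quasi-invariant), the collections $H_m$ and $H_m^+$ are $\G$-invariant, so the subcomplex $\~X_m=\bigcap_{h\in H_m^+}h$ of Lemma \ref{Hmufacts}(\ref{item:partition}) is a $\G$-invariant subset of $\~X$. By Lemma \ref{Hmufacts}(\ref{item:involution}) we have $\~X_m\cong\~X(H_m)$, which is an interval; transporting this identification through the isometric embedding $\~X(H_m)\hookrightarrow\~X$ of Proposition \ref{LiftingDecomp} --- which by Remark \ref{rem half-spaces in proj} carries the half-space relations faithfully --- I would obtain $v,w\in\~X$ with $\~X_m=\I(v,w)$. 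This already settles the ``otherwise'' clause: $\I(v,w)$ is a $\G$-invariant interval, valid in every case.

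To deduce Roller-elementarity I would invoke Corollary \ref{Interval on Finitely Many}: the set of elements of $\~X$ on which $\I(v,w)$ is an interval is nonempty, has cardinality at most $2^{\dim X}$, and is permuted by $\G$, because $\g\I(v,w)=\I(\g v,\g w)=\I(v,w)$ for every $\g\in\G$. Hence $\G$ has a finite orbit in $\~X$, i.e.\ the action is Roller-elementary. Finally, for the odd-orbit clause I would apply Proposition \ref{Roller-Elementary means invariant interval}, which promotes any orbit of odd cardinality to a $\G$-fixed point in $\~X$.

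I do not expect a real obstacle: the proof is a short assembly of results already in hand. The only point requiring any care is that ``$\~X(H_m)$ is an interval'' must be upgraded to ``$\~X_m$ equals a genuine interval $\I(v,w)$ \emph{inside} $\~X$'', and this is precisely what Proposition \ref{LiftingDecomp} together with Remark \ref{rem half-spaces in proj} provides.
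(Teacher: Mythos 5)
Your proposal is correct and follows essentially the same route as the paper: amenability yields a $\G$-invariant measure $m$ on the compact space $\~X$, invariance of $H_m$ and $H_m^+$ makes the interval $\~X_m$ invariant, and Corollary \ref{Interval on Finitely Many} gives a finite invariant set of endpoints, hence Roller-elementarity, with the odd-orbit/fixed-point clause supplied by Proposition \ref{Roller-Elementary means invariant interval}. Your extra care in identifying $\~X_m$ with an honest interval $\I(v,w)\subset\~X$ via Proposition \ref{LiftingDecomp} is a harmless elaboration of what the paper leaves implicit.
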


\begin{proof}
 Suppose $\G$ is an amenable group acting on $X$. Then, it admits an invariant probability measure $m$ on $\~X$. By invariance, we have that $H_m$ and $H_m^+$ are invariant as well, and hence $\~X_m$ is $\G$-invariant. By Corollary \ref{Interval on Finitely Many}, there are finitely many elements on which $\~X_m$ is an interval and therefore, that set is $\G$-invariant, hence the $\G$-action is Roller-elementary.
 \end{proof}

Recall that a subgroup is said to be $X$-locally elliptic if every finitely generated subgroup has a fixed point. Also, a subgroup is said to be the $X$-locally elliptic radical if it is the unique maximal normal subgroup which is $X$-locally elliptic. We then have the following (see also \cite[Theorem A.5]{CapraceLecureux}):

\begin{theorem}\cite[Caprace's Theorem B1]{CFI}\label{CapraceShortExact} 
Consider the stabilizer $\stab(x) \leq \Aut(X)$, for $x\in \partial X$. Then there is a virtually abelian group   $A$ of rank $n\leq \mathrm{dim}(X)$ such that if $N$ is the $X$-locally elliptic radical then we have an exact sequence:
$$1\to N\to \stab(x) \to A \to 1.$$
\end{theorem}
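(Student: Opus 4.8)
\textbf{Proof plan for Theorem~\ref{CapraceShortExact}.} The approach is to identify $A$ as the image of $\stab(x)$ under its natural action on the combinatorial structure that a boundary point $x\in\partial X$ carries, and then check that this image is virtually abelian of bounded rank while its kernel is precisely the locally elliptic radical $N$. Recall that $x\in\partial X$ corresponds to a consistent, total collection $U_x\subset\frakH$ containing a nontrivial infinite descending chain. First I would consider the set of half-spaces $h\in U_x$ together with the order induced by inclusion, and focus on the ``terminal behaviour at infinity'': associated to $x$ there is a canonical finite-width poset structure (by Theorem~\ref{embedintervals}/Dilworth, the relevant chains partition into at most $D=\dim X$ pieces). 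The group $\stab(x)$ permutes $U_x$ preserving inclusion, and I would build from this a homomorphism $\stab(x)\to A$ where $A$ records the ``translation lengths'' along the at most $D$ independent escaping directions determined by $x$; concretely, one cubulates the sub-half-space-system of $U_x$ that escapes to $x$, obtains an interval-like (Euclidean) complex of dimension $n\le D$ on which $\stab(x)$ acts, and takes $A$ to be the image of $\stab(x)$ in the isometry group of that Euclidean factor, which is virtually $\Z^n$.

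Next I would show the kernel of $\stab(x)\to A$ is $X$-locally elliptic and in fact the $X$-locally elliptic radical $N$. For local ellipticity: if $g$ lies in the kernel, then $g$ fixes (setwise, with no translation) each of the escaping chains through $x$, so for any finitely generated subgroup $H\le\ker$, one can find a half-space $h\in U_x$ deep enough that $H$ stabilizes $h$ and acts on the bounded-looking complement; iterating and using that intervals embed in $\~\Z^D$ (Theorem~\ref{embedintervals}) together with the finiteness of half-spaces between any two, $H$ must fix a vertex — this is where one argues that a finitely generated group with no translation in any escaping direction and stabilizing an appropriate nested family must be elliptic. Conversely any normal $X$-locally elliptic subgroup maps to a normal $X$-locally elliptic, hence (being virtually $\Z^n$ and torsion-if-elliptic) trivial, subgroup of $A$, so it lies in $\ker$; this gives $N=\ker$ and the exact sequence $1\to N\to\stab(x)\to A\to 1$. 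Finally I would note $\mathrm{rank}(A)=n\le\dim(X)$ since the escaping chains partition into at most $D$ antichains, and a set of pairwise transverse half-spaces has size $\le D$.

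The main obstacle I expect is making precise the ``Euclidean quotient at the boundary point'' and verifying that the induced $\stab(x)$-action on it has virtually abelian image: one must carefully choose the right sub-half-space system attached to $x$ (roughly, walls that are eventually nested with every escaping chain), check it is $\stab(x)$-invariant and convex, apply the lifting decomposition machinery of Proposition~\ref{LiftingDecomp} and the characterization of Euclidean/interval complexes in Corollary~\ref{finallyIntervalEuclidean}, and then invoke the classical Bieberbach-type fact that a group acting on $\Z^n$-cubulated Euclidean space is virtually abelian of rank $\le n$. The bookkeeping needed to see that this construction is canonical (independent of choices) and that its kernel is exactly the locally elliptic radical — rather than merely contained in it — is the delicate part; however, since the statement is cited from \cite[Theorem B1]{CFI} (see also \cite[Theorem~A.5]{CapraceLecureux}), I would ultimately reduce to those references for the technical core, presenting the above as the conceptual skeleton.
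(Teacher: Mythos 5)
This statement is not proved in the paper at all: it is imported verbatim as Caprace's Theorem B1 from \cite{CFI} (see also \cite[Theorem A.5]{CapraceLecureux}), so there is no in-paper argument to compare yours against, and the "correct" proof here is simply the citation — which is where your proposal also ends up. Your sketch is broadly consistent with the actual argument of Caprace (a homomorphism from $\stab(x)$ recording translation lengths along the at most $\dim X$ descending chains of half-spaces in $U_x$, with locally elliptic kernel), so as a conceptual outline it points in the right direction.

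Read as a proof, however, it has genuine gaps that you partly acknowledge. The construction of the homomorphism to $A$ is not pinned down: "cubulating the escaping sub-half-space system" and invoking a "Bieberbach-type fact" is not justified by anything in this paper — Corollary~\ref{finallyIntervalEuclidean} only applies to irreducible complexes with essential, nonelementary $\Aut$-action, and the half-spaces in $U_x$ need not form such a system, nor is it shown that the relevant quotient is Euclidean of dimension $\le\dim X$ rather than merely finite-width. The identification of the kernel with the locally elliptic radical is the technical heart and is only asserted: you need that the kernel is $X$-locally elliptic (your "deep enough half-space" argument does not yet produce a fixed vertex for an arbitrary finitely generated subgroup with trivial image), that it is maximal among normal locally elliptic subgroups, and that any locally elliptic normal subgroup has trivial image in $A$ — the last step is not automatic, since $A$ is only said to be virtually abelian of rank $n$ and could a priori contain nontrivial torsion images. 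Since the paper itself discharges all of this by citation, the honest version of your proof is the final sentence of your proposal: quote \cite[Theorem B1]{CFI} (or \cite[Theorem A.5]{CapraceLecureux}) and do not present the sketch as a proof.
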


We will now turn to the proof of the following:

\begin{theoremnn}[Tits' Alternative]
 Let $X$ be a  finite dimensional CAT(0) cube complex and $\G \leq \Aut(X)$.  The following are equivalent:
\begin{enumerate}
\item $\G$ does not preserve any interval $\I \subset \~X$.
\item The $\G$-action is  Roller nonelementary. 
\item $\G$ contains a  nonabelian  free subgroup acting freely on $X$. 
\end{enumerate}
Furthermore, these imply:
\begin{enumerate}
\item[(4)] The closure $\~\G$ in $\Aut(X)$ is nonamenable;
\end{enumerate}
and if $X$ is locally compact then then (4) implies (1).
\end{theoremnn}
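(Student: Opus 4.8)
The plan is to prove the cycle of implications $(1)\Rightarrow(2)\Rightarrow(3)\Rightarrow(1)$ and then the two statements about the closure, relying throughout on the structural results of Section 3 and the boundary-map machinery of Sections 6--8. First I would dispatch $(1)\Leftrightarrow(2)$: the implication $(2)\Rightarrow(1)$ is immediate since a preserved interval $\I(v,w)$ has, by Corollary \ref{Interval on Finitely Many}, only finitely many endpoints, so the set of those endpoints is a finite $\G$-invariant subset of $\~X$, contradicting Roller nonelementarity; conversely, if the $\G$-action is Roller-elementary then Proposition \ref{Roller-Elementary means invariant interval} produces $v,w\in\~X$ with $\G\cdot\I(v,w)=\I(v,w)$, so $\neg(2)\Rightarrow\neg(1)$.

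For $(2)\Rightarrow(3)$ I would invoke Proposition \ref{prop:visual-to-roller}: passing to a finite-index subgroup $\G'$ one gets an invariant subcomplex $\~X'\hookrightarrow\~X$ on which $\G'$ acts nonelementarily and essentially. Now take any generating measure $\mu$ of finite support on $\G'$ (which has finite entropy and finite logarithmic moment trivially); Corollary \ref{cor:StripsNonempty} together with the regular-points analysis (Theorem \ref{minimalRankFullMeas}) gives that the boundary measures $\lambda_\pm$ on $\partial X'$ are supported on $\partial_{\mathrm r}X'$, and Corollary \ref{distinct} shows distinct regular points are separated by a strongly separated disjoint pair of half-spaces. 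One then runs a ping-pong argument: using the Double Skewering Lemma \ref{Double Skewering Lemma} and Lemma \ref{extend4ss} one finds $\g,\g'\in\G'$ whose axes (the bi-infinite strongly separated descending chains of Proposition \ref{Rank1Char}) are ``independent'', i.e. the four attracting/repelling regular endpoints are pairwise distinct, hence pairwise strongly separated. Ping-pong on these four half-space regions in $\~X$ yields a nonabelian free subgroup $F\leq\G'$. To make the action \emph{free} on $X$, I would use that in a finite-dimensional CAT(0) cube complex a cube-automorphism fixing a vertex and all of its neighbors is trivial, so fixed-point sets of nontrivial elements are proper convex subcomplexes; after a further ping-pong refinement (replacing generators by high powers and using Lemma \ref{lem:NoFiniteColInvHalf} / essentiality to avoid finite invariant configurations) one arranges that every nontrivial element of $F$ is hyperbolic with an axis, hence acts freely. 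Finally $(3)\Rightarrow(1)$: a nonabelian free group cannot preserve an interval, because Theorem \ref{th:Euclidean equivalences} embeds the interval in $\~\Z^D$, whose automorphism group (even with boundary) is virtually abelian, while a nonabelian free group has no finite-index abelian subgroup.

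For the last two assertions: $(3)\Rightarrow(4)$ is trivial, since $\~\G\supseteq\G\supseteq F$ contains a nonabelian free subgroup and is therefore nonamenable. The substantive converse, that for $X$ \emph{locally compact} $(4)\Rightarrow(1)$, I would prove by contraposition: suppose $\G$ preserves an interval $\I=\I(v,w)$. Then the closure $\~\G$ also preserves $\I$ (the conditions defining $\I(v,w)$ are closed), and in the locally compact setting $\~\G$ is itself a locally compact group acting continuously; by Corollary \ref{Interval on Finitely Many} it fixes a finite set of endpoints, so a finite-index closed subgroup $\~\G_0$ fixes $v$ and $w$. Then Theorem \ref{CapraceShortExact} applied to $\stab(v)$ gives an exact sequence $1\to N\to\stab(v)\to A\to 1$ with $N$ the $X$-locally elliptic radical and $A$ virtually abelian; $N$ is amenable (being locally elliptic, by Lemma \ref{AdamsBallmann} each finitely generated subgroup is Roller-elementary, and in the locally compact case locally elliptic groups are compact-by-(locally finite), hence amenable), $A$ is amenable, so $\stab(v)$ is amenable, whence $\~\G_0$ and therefore $\~\G$ is amenable, contradicting (4). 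The main obstacle is the freeness refinement in $(2)\Rightarrow(3)$: producing a free subgroup by ping-pong is routine, but ensuring the generators (and all nontrivial words) act without fixed points on $X$ requires carefully controlling fixed-point sets, and the cleanest route is to argue that each generator can be taken hyperbolic with a strongly-separated axis and that the ping-pong dynamics force every nontrivial word to translate along a combinatorial geodesic, hence act freely.
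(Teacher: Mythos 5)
Your outline agrees with the paper on $(1)\Leftrightarrow(2)$ and on the locally compact implication $(4)\Rightarrow(1)$, but the two load-bearing steps have genuine gaps. For $(3)\Rightarrow(1)$ your argument is a non sequitur: Theorem \ref{th:Euclidean equivalences} embeds the interval $\I$ into $\~\Z^D$, but that gives no control on $\Aut(\I)$ (an automorphism of $\I$ need not extend to $\~\Z^D$), so "the automorphism group of the interval is virtually abelian" is unproved; and even granting it, the homomorphism $F\to\Aut(\I)$ would then have an infinite free kernel $K$ fixing $\I$ pointwise, which does \emph{not} contradict freeness of the action on $X$, because a $\G$-invariant interval $\I(v,w)$ may have $v,w\in\partial X$ and may contain no vertex of $X$ at all (fixing boundary points is compatible with acting freely on $X$ -- every hyperbolic element does it). The paper closes exactly this gap by passing, via Corollary \ref{Interval on Finitely Many}, to a finite-index subgroup fixing an endpoint $x\in\partial X$ and then invoking Caprace's Theorem \ref{CapraceShortExact}: the stabilizer of $x$ is (locally elliptic)-by-(virtually abelian), so a finitely generated subgroup such as $[F\cap\G_0,F\cap\G_0]$ fixes a vertex, killing freeness. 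Without an input of this kind your implication does not close.

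For $(2)\Rightarrow(3)$ the probabilistic detour is both unnecessary and shaky (a finitely supported generating measure on $\G'$ requires $\G'$ to be finitely generated, which is not assumed; the paper gets a facing quadruple directly from Corollary \ref{finallyIntervalEuclidean} and the Double Skewering Lemma \ref{Double Skewering Lemma}, with no measures), but the real gap is freeness: your supporting claim that a cube automorphism fixing a vertex and all its neighbours is trivial is false (already for trees), and "a further ping-pong refinement arranges that every nontrivial element is hyperbolic" is an assertion, not an argument. The paper proves freeness by a genuine median argument: it shows every vertex of $X$ lies in some $F_2$-translate of the fundamental region $\mathcal F=A\cap aA^*\cap B\cap bB^*$, while the ping-pong dynamics forbid fixed points in any translate of $\mathcal F$. (A workable alternative along your lines would be to check that every cyclically reduced word sends one of the four ping-pong half-spaces strictly inside itself and then use the descending chain condition for vertices, but you would have to write that out.) Finally, $(3)\Rightarrow(4)$ is not "trivial because $\~\G$ contains a free subgroup": compact groups contain nonabelian free subgroups, so for the non-discrete group $\~\G$ you must use that $F$ meets the open vertex stabilizers trivially -- i.e.\ precisely the freeness of the action -- to see that $F$ is discrete, hence a closed nonamenable subgroup of $\~\G$.
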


\begin{proof}
 
 \medskip
(\ref{Interval}) $\implies $ (\ref{nonelem}): This is Proposition \ref{Roller-Elementary means invariant interval}.

 \medskip
\noindent
(\ref{nonelem}) $\implies$ (\ref{free subgroup}): 
If the action is Roller nonelementary then by Proposition \ref{prop:visual-to-roller} there is a finite index subgroup $\G'\leq \G$ and a subcomplex $X'\subset \~X$ with half-space structure $\frakH'$ where the $\G'$-action is essential and nonelementary. By Corollary \ref{finallyIntervalEuclidean}, there exists  facing triple $h_1, h_2, h_3\in \frakH'$ so that as subsets of $X'$ they are facing. 
Applying the  Double Skewering Lemma \ref{Double Skewering Lemma}, we may assume there is a facing quadruple $h_1, h_2,h_3, h_4 \in \frakH'\subset \frakH$. Once more, the Double Skewering Lemma \ref{Double Skewering Lemma} guarantees  the existence of $a,b\in \G'$ such that $a h_1\subset h_2^* \subset h_1$, and $b h_3\subset h_4^* \subset h_3$. Letting $A= h_1$, and $B= h_3$, we see that (the complements of) $A,aA^*,B, bB^*$ form a standard Ping-Pong table and so the obvious  homomorphism  $F_2\to \<a,b\>$ is an isomorphism. We note that by Remark \ref{rem half-spaces in proj}, these half-spaces as subsets of $X$ have the same relationships and hence we now think of them as subsets of $X$.

Without loss of generality, we may assume that the $F_2$ orbits of $A, aA^*, B, bB^*$ are pairwise disjoint. Indeed, we may pass to the cubical subdivision if necessary to assure that  $F_2\cdot\{A\}\cap F_2\cdot\{aA^*\}=\varnothing = F_2\cdot\{B\}\cap F_2\cdot\{bB^*\}$. And up to replacing this copy of $F_2$ with another (i.e. $\<a^p,b^q\>$ for some $p,q\in \N$, we may assume that $F_2\cdot\{A\}\cap F_2\cdot\{bB^*\}=\varnothing= F_2\cdot\{B\}\cap F_2\cdot\{A\}$.
 
 We now show that this  action is free. To this extent, observe that the proof of the Ping-Pong Lemma shows that if there is a  point fixed by an element of $F_2\setminus\{1\}$, then it does not belong to $\mathcal F := A\cap aA^*\cap B\cap bB^*$ or any of its translates. Therefore, we show that if $x\in X$ then $x\in w\mathcal F$ for some $w\in F_2$. 
Let us make an easy observation: if $o \in w\mathcal F= wA\cap wB\cap wbB^* \cap waA^*$ with $\#\([o, x]\cap F_2\cdot\{A,aA^*,B,bB^*\}\) =0$ then $x\in w\mathcal F$. Therefore, we aim to produce such a pair $w\in F_2$ and $o \in w\mathcal F$. 

Fix $\o \in \mathcal F$. Consider the linearly ordered (by inclusion) set of half-spaces 
$$[\o,x]\cap F_2\cdot\{A,aA^*,B,bB^*\} =: \{w_1h_1, \cdots w_nh_n\}$$
 where $w_i \in F_2$ and $h_n \in \{A,aA^*,B,bB^*\}$, with $w_i h_i\subset w_{i+1}h_{i+1}$. Let $o = m(\o, w_n \o, x)$. We claim that $o \in w_n \mathcal F$. Indeed, $w_n h_n \in U_x\cap U_{w_n \o}$ and so $w_nh_n\in U_o$. 
 
Observe that $w_n\cdot \{A,B,aA^*, bB^*\}\setminus[o,x] = w_n\cdot \{A,B,aA^*, bB^*\}\setminus \{w_nh_n\}$. Indeed, the half-spaces $w_n\cdot \{A,B,aA^*, bB^*\}$ are facing, and $w_n h_n$ is minimal in $[o,x]\cap F_2\cdot \{A,B,aA^*, bB^*\} $. Therefore, if $h\in w_n\cdot \{A,B,aA^*, bB^*\}\setminus \{w_nh_n\}$ then either both $\o, x\in h$ or both $\o, x\in h^*$. Again, using that these half-spaces are facing, if $\o, x\in h^*$ then $\o \in h^*\cap w_nh_n^*= \varnothing$. Therefore, $\o, x\in h$ and $h\in U_o$. 

Observe $o \in w_n\mathcal F= wA\cap wB\cap wbB^* \cap waA^*$. By construction we have that 
$$[o,x]\cap F_2\cdot \{A,B,aA^*, bB^*\}\subset \([\o,x]\cap F_2\cdot \{A,B,aA^*, bB^*\}\)\setminus \{w_1h_1, \dots, w_nh_n\} = \varnothing$$ 
and hence $x\in w_n\mathcal F$.

 \medskip
\noindent
(\ref{free subgroup}) $\implies$ (\ref{Interval}): Suppose that $\G$ preserves the interval $\I$. If $\I \subset X$ then it is finite and $\G$ has a finite index subgroup fixing each element of $\I$ therefore, no free subgroup can act freely.

Suppose now that $\I = \I(x,y)$ with $x\in \partial X$. Once more, $\G$ has a finite index subgroup $\G_0$ by Corollary \ref{Interval on Finitely Many}, fixing $x$. Let $F\leq \G$ be a nonabelian free subgroup and  $F_n = \G_0 \cap F$. Since $F_n$  is finitely generated, by Caprace's Theorem \ref{CapraceShortExact}, the commutator subgroup $[F_n,F_n]$ has a fixed point in $X$, and hence the action of $F$ is not free.

 \medskip
\noindent
(\ref{free subgroup}) $\implies$ (\ref{nonamen closure}): This follows from the basic fact that containing a closed free subgroup is an obstruction to amenability.

 \medskip

\noindent
(\ref{nonamen closure}) $\implies$ (\ref{Interval}): Assume that $X$ is locally compact.  By 
Corollary \ref{Interval on Finitely Many}, preserving an interval implies the existence of a finite index subgroup with a fixed point in $\~X$. Therefore, the contrapositive will be shown if we demonstrate that point stabilizers are amenable. If $x\in X$ the stabilizer $\stab(x)$ is compact and hence amenable. It follows that if $x\in \partial X$ then  $N_x$ the $X$-locally elliptical radical of $\stab(x)$ is also amenable. Indeed, $N_x$ is a union of the compact groups $\stab(y) \cap \stab(x)$, where $y\in X$ and is hence amenable. Applying  Caprace's Theorem \ref{CapraceShortExact} once more, finishes the proof. 
 \end{proof}

\section{Convergence Actions}\label{Convergence Section}

An action of a countable discrete group $\G$ by homeomorphisms on a compact metrizable space $M$ is said to be a \emph{convergence action} if the diagonal action on the space of distinct triples is proper. Namely, if $x,y,z\in M$ are pairwise distinct, and $\g_n\in \G$ any sequence, then up to passing to a subsequence $\#\{\g_nx, \g_n y, \g_n z\} \to N<3$. 

Bader and Furman show that if $\G$ admits a convergence action on $M$ then there is a $\G$-equivariant map $\phi : B \to M$, where $B$ is a  Furstenberg-Poisson boundary of  $\G$ \cite[Theorem 3.2]{BaderFurman}. In this section, we show that the action of $\G$ on the Roller compactification $\~X$ is not a convergence action if there is an interval $\I\subset \~X$ with the following properties:

\begin{itemize}
\item The stabilizer of $\I$ in $\G$ is infinite.
\item There exist $x_1,x_2,y_1, y_2\in \I$ such that $\I = \I(x_1, y_1)=\I(x_2, y_2) $ and 
$$\#\{x_1,x_2,y_1, y_2\} \geq 3.$$
\end{itemize}

We observe that these are rather weak conditions on higher dimensional CAT(0) cube complexes, for example they are satisfied if $\G$ has commuting independent hyperbolic elements. 

Let us now show that under the above conditions, the diagonal action of $\G$ on distinct triples is not proper. To this end, recall that since $\I$ is an interval, it embeds into $\~\Z^D$, where $D$ is the dimension of $X$. Then, the collection of $x\in \I$ for which there is an $y\in \I$ such that $\I = \I(x,y)$ has cardinality bounded above by $2^D$  by Corollary \ref{Interval on Finitely Many} and  bounded below by 3,  by assumption. Therefore, there is a subgroup of finite index which fixes each of these elements. The assumption that the stabilizer of $\I$ is infinite implies that the point-wise fixator of each of these points is also infinite. Therefore, any distinct triple from that set  has an infinite stabilizer and hence the action is not a convergence action. 

\bibliographystyle{alpha}

\bibliography{CCCPoisson}

\end{document}